\documentclass{amsart}
\usepackage{amsmath,amssymb,verbatim,mathtools}
\usepackage{enumerate}
\newtheorem{theorem}{Theorem}[section]
\newtheorem{lemma}[theorem]{Lemma}
\newtheorem{proposition}[theorem]{Proposition}
\newtheorem{corollary}[theorem]{Corollary}

\theoremstyle{definition}

\theoremstyle{remark}
\newtheorem{remark}{Remark}[section]
\numberwithin{equation}{section}

\DeclareMathOperator{\sech}{sech}

\DeclareMathOperator{\spann}{span}
\DeclareMathOperator{\supp}{supp}
\DeclareMathOperator{\diag}{diag}

\newcommand{\R}{\mathbb{R}}
\newcommand{\C}{\mathbb{C}}

\newcommand{\Z}{\mathbb{Z}}
\newcommand{\la}{\langle}
\newcommand{\ra}{\rangle}
\newcommand{\pd}{\partial}
\newcommand{\eps}{\epsilon}
\newcommand{\mM}{\mathcal{M}}
\newcommand{\mY}{\mathcal{Y}}
\newcommand{\tb}{\tilde{\beta}}
\newcommand{\tg}{\tilde{g}}
\newcommand{\tu}{\tilde{u}}
\newcommand{\tchi}{\tilde{\chi}}
\newcommand{\mF}{\mathcal{F}}
\newcommand{\mL}{\mathcal{L}}
\newcommand{\wmL}{\widetilde{\mathcal{L}}}
\newcommand{\calX}{\mathcal{X}}
\renewcommand{\a}{\alpha} 
\renewcommand{\k}{\kappa}
\begin{document}
\title{Spectral stability of resonant Y-solitons for the KP-II equation}
\author{Tetsu Mizumachi}
\thanks{The author was supported by JSPS KAKENHI Grant Numbers JP21K03328
and JP24H00185.}
\address{Division of Mathematical and Information Sciences\\
Hiroshima University, 739 - 8521 Japan}
\email{tetsum@hiroshima-u.ac.jp}
\subjclass[2020]{Primary, 35B35, 37K40; Secondary, 35Q35}
\begin{abstract}
  The KP-II equation was introduced by Kadomtsev and Petviashvili to
  explain the transverse stability of line solitary waves in shallow
  water. Using Darboux transformations, we prove the spectral
  stability of Y-shaped line solitons in an exponentially weighted
  space that captures the amount of waves receding behind the line
  solitons. We show that the spectrum of the linearized operator
  consists of the continuous spectrum of the zero background together
  with three residual spectral domains whose boundaries contain
  continuous spectra associated with the constituent line solitons.

Exploiting the analytic dependence of the pairings with adjoint eigenfunctions
on the spectral parameters, we prove that the orthogonality
conditions associated with all adjoint eigenfunctions can be reduced
to orthogonality conditions against three families of adjoint
continuous eigenfunctions.

Moreover, we investigate linear stability of Y-solitons in a subspace
satisfying orthogonality conditions against adjoint continuous
eigenfunctions associated with outgoing modulations along each line
soliton constituting the Y-soliton.
\end{abstract}
\maketitle
\section{Introduction}
\label{sec:intro}
The KP-II equation
\begin{equation}\label{eq:KPII}
\partial_x(4\pd_tu+\pd_x^3u+3\pd_x(u^2))+3\partial_y^2u=0
\quad\text{for $t>0$ and $(x,y)\in \R^2$,}
\end{equation}
is a generalization to two spatial dimensions of the KdV equation
\begin{equation}
  \label{eq:KdV}
4\pd_tu+\pd_x^3u+3\pd_x(u^2)=0\,,
\end{equation}
and has been derived to explain the transverse stability of solitary wave solutions
to the KdV equation when the surface tension is weak or absent.
See \cite{KP} for the derivation of \eqref{eq:KPII}.
The Cauchy problem of \eqref{eq:KPII} has been studied by
\cite{Bourgain,Hadac,HHK,IM,Tak,TT,Tz,Ukai} and by \cite{MST}
on the background of line solitons.
The spectral stability of $1$-line solitons on $\R^2$ was proved by \cite{APS}.
Stability and phase shifts of $1$-line solitons on $\R^2$ was studied by
\cite{Miz15,Miz18,Miz19} in exponentially and polynomially weighted spaces.
Recently, a $1$-codimensional stable manifold on which modulating $1$-line solitons have no phase shift has been constructed by \cite{Pompili}.
Linear stability of elastic $2$-line solitons has been studied by \cite{Miz23}
and spectral stability of periodic solutions was studied by \cite{HLP}.
Space-time decay of solutions for the KP-II equation as well as the KP-I equation in regions that do not include solitons or lumps has been studied by
\cite{MMPP24}. They extend  virial identities obtained by
\cite{deBouard-Martel}. The KP-II equation is a $2+1$-dimensional
integrable system and the inverse scattering method for solutions around
line soliton solutions have been studied by
\cite{VA,BPP12,BPP,Wudc1,Wudc2,Wudc3}.
\par
In this paper, we consider spectral and linear stability of resonant line
solitons of Y-shape. Multi-line solutions of the KP-II
equation can be expressed as
\begin{equation}
  \label{eq:exact}
u(t,x,y)=2\pd_x^2\log \tau(t,x,y)\,,  
\end{equation}
\begin{gather}
  \label{eq:tau-linesol}
\tau=K\Theta B\,,\enskip
K=
\begin{pmatrix}
1 &  \ldots & 1 \\
\k_1 & \ldots & \k_M \\
\vdots  &  & \vdots \\
\k_1^{N-1}  & \ldots & \k_M^{N-1}
\end{pmatrix}\,,\enskip
\Theta=\diag(e^{\theta_1},\cdots,e^{\theta_M})\,,
\\ \notag
\k_1<\k_2<\cdots<\k_M\,,\quad 
\theta_j=\k_jx+\k_j^2-\k_j^3t\,,
\end{gather}
and $B$ is an $M\times N$ matrix whose $N$-th minors are nonnegative.
See, e.g. \cite{CK,Hirota,Nim-Fr}.
In particular, if $M=2$, $N=1$ and $\tau(t,x,y)=e^{\theta_1}+e^{\theta_2}$,
then we have a solution of the $1$-line soliton
$$ u(t,x,y)=\frac12(\k_2-\k_1)^2\sech^2\frac{\theta_2-\theta_1}{2}\,.$$
Since
$\theta_2-\theta_1=(\k_2-\k_1)\{x+(\k_1+\k_2)y-(\k_1^2+\k_1\k_2+\k_2^2)t\}$,
the line soliton propagates toward $(1,\k_1+\k_2)$.
\par
Let
\begin{equation}
  \label{eq:tau-Y}
u_{\mY}=2\pd_x^2\log\tau_{\mY}\,,\quad \tau_{\mY}=e^{\theta_1}+e^{\theta_2}+e^{\theta_3}\,,
\end{equation}
that is $M=3$, $N=1$ and $B=\begin{pmatrix}  1 & 1 &1\end{pmatrix}$
in \eqref{eq:tau-linesol}.
In the region on the $xy$-plane where one of the exponential functions $e^{\theta_i}$
$(1\le i\le 3)$ is dominant,
we have $u=2\pd_x^2\log \tau\simeq 2\pd_x^2\theta_i=0$.
For $\pm y\gg 1$, line solitons of $u=2\pd_x^2\log\tau_{\mY}$ appear
in the region $\theta_i\simeq \theta_j$ $(i\ne j)$
if two of exponential terms $e^{\theta_i}$ and $e^{\theta_j}$ in \eqref{eq:tau-Y} are of the
same order and much larger than the other.
The peak of line soliton is located along a line $\theta_1=\theta_3 $ for $y\gg 1$
and along lines $\theta_1=\theta_2$ and $\theta_2=\theta_3$ for $y\ll -1$.
We call each line soliton as $[1,3]$-soliton,
$[1,2]$-soliton and $[2,3]$-soliton.
The asymptotics of $u_{\mY}$ is given by
\begin{equation}
  \label{eq:u1-asymptotics}
u_{\mY}(t,x,y)\simeq \left\{
  \begin{gathered}  
u_{13}  \quad\text{for $y\gg1$,}
\\ u_{12}+u_{23} \quad\text{for $y\ll-1$,}
\end{gathered}\right.
\end{equation}
where
\begin{gather}
  \label{eq:ujk}
  u_{jk}=2\pd_x^2\log(e^{\theta_j}+e^{\theta_k})=2c_{jk}\sech^2\sqrt{c_{jk}}z_{jk}\,,
  \\ \notag
z_{jk}=x+2a_{jk}y-\omega_{jk}t\,,\enskip a_{jk}=\frac{\k_j+\k_k}{2}\,,
\\ c_{jk}=\frac{(\k_k-\k_j)^2}{4}\,,\enskip
\omega_{jk}=\k_j^2+\k_j\k_k+\k_k^2\,.
\end{gather}
\par
Let
\begin{equation}
  \label{eq:b1-b2Y}
b_1=-(\k_1\k_2+\k_2\k_3+\k_3\k_1)\,,\; b_2=\k_1+\k_2+\k_3\,,\;
X=x-b_1t\,,\; Y=y-b_2t\,.
\end{equation}
Then $b_1+2a_{jk}b_2=\omega_{jk}$ and 
\begin{gather*}
\theta_k-\theta_j=(\k_k-\k_j)(X+2a_{jk}Y)
=\sqrt{c_{ij}}z_{ij}
\quad\text{for $1\le j<k\le 3$,}  
\end{gather*}
and \eqref{eq:tau-Y} is the stationary solution in the
moving coordinate $(X,Y)$. 
\par
By the change of variables
$(\k_1,\k_2,\k_3)\mapsto (-\k_1,-\k_2,-\k_3$) and $y\mapsto -y$,
\eqref{eq:tau-Y} is transformed into another Y-shaped soliton
$$u=2\pd_x^2\log\tau\,,\quad
\tau=\sum_{1\le j<k\le 3}e^{\theta_j+\theta_k}\,,
$$
which have $[1,2]$-soliton and $[2,3]$-soliton for $y\gg1$ and 
$[1,3]$-soliton for $y\ll -1$.
\par
The Y-shaped soliton is fundamental because it is a prototype of
generic multi-line solitons which have web patterns in the middle.
Moreover, the Y-shaped solitons can be used to explain Mach reflection.
See e.g. \cite{LYK,KY,RHB22}.
\par
In this paper, we will prove the spectral stability of $u_{\mY}$
in an exponentially weighted space $\calX$ defined by \eqref{eq:defcalX1}.
We introduce the weighted space in order to measure how waves move out from areas
around Y-solitons following the idea of the pioneering work \cite{PW}
that proves asymptotic stability of KdV solitons.
Indeed, for $1$-line solitons, these weighted estimates
make it possible to measure interactions
between modulating line solitons and the remainder part of solutions
(\cite{Miz15,Miz18,Miz19}). 
We expect our result would be the first step to justify modulation analysis of
perturbed Y-solitons (\cite{HanKodamaLi,RHB22}).
\par
For linearized operators around line solitons, there are two types of
continuous eigenfunctions. One is plane wave solutions that behave like
solutions to the linearized equation around $0$ at $Y\simeq\pm\infty$
and the other is continuous eigenfunctions that are localized
as $x\to\infty$ but do not decay in the transverse direction.
Let $\mL_0$ be the linearized operator of \eqref{eq:KPII}
around $0$, $\mL_\mY$ be the linearized operator of \eqref{eq:KPII}
around \eqref{eq:tau-Y} and $\mL_{jk}$ be a linearized operator of \eqref{eq:KPII}
around $u_{jk}$ for $(j,k)=(1,3)$, $(1,2)$, $(2,3)$. 
We remark that continuous eigenmodes assocaited with $\sigma(\mL_0)$ are
distorted plane waves and that continuous eigenmodes assocaited with
$\sigma(\mL_{jk})\setminus\sigma(\mL_0)$ decay as $z_{jk}\to\infty$
but do not decay in the transverse direction.
\par
In the present paper, we prove that
$\sigma(\mL_\mY)=\sigma(\mL_0)\cup\overline{\sigma_r(\mL_\mY)}$ and that
$\sigma(\mL_{jk})\setminus\sigma(\mL_0)$
$(1\le j<k\le3)$ are on the boundaries of $\sigma_r(\mL_\mY)$. 

Utilizing the analyticity of the secular conditions with respect to
the spectral parameter, we prove that if a function is orthogonal to
the adjoint continuous eigenfunctions corresponding to three families
of outgoing modulations along the $[1,2]$- and $[2,3]$-solitons (or to
three families of incoming modulations along the $[1,3]$- and
$[2,3]$-solitons), then it is orthogonal to every adjoint
eigenfunction, thanks to Lusin--Privalov-type theorems
(see Lemma~\ref{lem:orth-condition-identity} and
Corollary~\ref{cor:orth-condition-identity}).
\par
Our plan of the present paper is as follows.
In Section~\ref{sec:resonant}, we give explicit formulas of continuous eigenfunctions
for $\mL_\mY$ and $\mL_\mY^*$ by using Jost functions and dual Jost functions.
We compute the group velocity of continuous eigenfunctions that are localized
in the $x$-direction and find that modulations along $[1,2]$-soliton are incoming,
that modulations along $[2,3]$-soliton are outgoing, and that we have both outgoing and incoming modulations
for $[1,3]$-soliton.
We state our main result at the end of Section~\ref{sec:resonant}.
\par
In Section~\ref{sec:[i,j]-stability},
we collect spectral stability results of $1$-line solitons posed in oblique directions.
In Section~\ref{sec:spectral}, we will prove that if
$\lambda\in\sigma(\mL_\mY)$ and
$\lambda\in\cap_{1\le j<k\le 3}\rho(\mL_{jk})\cap\rho(\mL_0)$, then
$\lambda$ is either an eigenvalue of $\mL_\mY$ or $\mL_\mY^*$
by using the spectral stability of $[j,k]$-solitons.
We find that there are three domains of adjoint eigenvalues of $\mL_\mY$,
with $\sigma(\mL_{jk})\setminus\sigma(\mL_0)$ $(1\le j<k\le 3)$
lying on their respective boundaries.
Furthermore, we will prove that $u\in\calX$ is orthogonal to the
adjoint eigenfunctions if $u$ is orthogonal to adjoint
continuous eigenfunctions that correspond to outgoing modulations
along $[1,2]$-soliton and $[1,3]$-soliton.
\par
In Section~\ref{sec:nonexistence}, we will prove that
there is no eigenvalue for $\mL_\mY$ and $\mL_\mY^*$ in a half plane
$\{\lambda\in\C\mid \Re\lambda>-\delta\}$, where $\delta>0$.
The eigenvalue problem of $\mL_\mY$ is a $1+1$-dimensional linear equation
akin to Boussinesq equations.
Note that if a solution of \eqref{eq:KPII} is time independent, then it is
a solution of a $1$-dimensional good Boussinesq equation, where we regard $y$
as a time variable. Combining this property with Darboux transformations
and the linearized modified KP-II equation, we prove non-existence of eigenvalues.
Our method can be regarded as a $2$-dimensional extension of
\cite[Appendix~B]{PW} that
computes the Evans function for KdV and mKdV by using Darboux transformations. 
Miura transformations have been used to study nonlinear stability of
 KdV solitons and KPII $1$-line solitons (\cite{BK,MV,MT,Pompili}).
But we have some difficulty in treating the phase shifts of line-solitons on $\R^2$
via Miura transformations. This is one of our motivations to study
linear stability of line soliton solutions.
\par
In Section~\ref{sec:proof-main}, we investigate linear stability of
Y-soliton on a subspace of $\calX$ satisfying secular conditions.
Let $\wmL$ be a resriction of $\mL_\mY$ to a subspace $\calX_\mY$ satisfying
the secular condition.
Gearhart-Pr\"{u}ss theory (\cite{Gh,Pruss}) tells us that $e^{t\wmL}$
is exponentially stable if 
$\sigma(\wmL)\subset \{\lambda\in\C\mid \Re \lambda\le -b\}$ for a $b>0$.
We find that there is a $b>0$ such that $\lambda-\wmL$ is invertibile
if $\lambda\ne0$ and $\Re\lambda\ge -b$ for a $b>0$.  But it remains
open whether $0\in\rho(\wmL)$.  We prove linear stability
of Y-solitons in a weighted space whose weight function grows slightly
faster than that of $\calX$ as $y\to\infty$.
\bigskip

\section{Dispersion relations of waves along a Y-soliton}
\label{sec:resonant}
In this section, we will introduce explicit formulas of
continuous eigenmodes of $\mL_\mY$ and $\mL_\mY^*$
that are localized in $x$ but do not decay in the transverse direction,
and state our main result at the end.
\par

For the purpose, we recall the definitions of Jost solutions.
We say that $\Phi$ is a Jost solution if $\Phi$ satisfies
\begin{gather}
\label{eq:Jost-eqs}
L\Phi=0\,,\quad B\Phi=0\,,
\\ \notag
L=-\pd_y+\pd_x^2+u\,,\quad
B=4(\pd_t+\pd_x^3)+6u\pd_x+3\left(\pd_xu+\pd_x^{-1}\pd_yu\right)\,,
\end{gather}
and $\Phi^*$ is a dual Jost solution if $\Phi^*$ satisfies
\begin{gather}
\label{eq:dualJost-eqs}
L^*\Phi^*=0\,,\quad B^*\Phi^*=0\,.
\end{gather}

It is well known that the $x$ derivative of a product of a Jost solution and a dual Jost solution
gives a solution of the linearized KP-II equation (the squared eigenfunction method).
\begin{lemma}
\label{lem:prodJdJ}
Let $\Phi$ be a solution of \eqref{eq:Jost-eqs}
and $\Phi^*$ be a solution of \eqref{eq:dualJost-eqs}.
Then $v=\pd_x(\Phi\Phi^*)$ satisfies
\begin{equation}
  \label{eq:linearKP}
\pd_x(4\pd_tv+6\pd_x(uv)+\pd_x^3v)+3\pd_y^2v=0\,,
\end{equation}
and $w=\Phi\Phi^*$ satisfies the adjoint equation
\begin{equation}
  \label{eq:adjLKP}
\pd_x(4\pd_tw+6u\pd_xw+\pd_x^3w)+3\pd_y^2w=0\,.  
\end{equation}
\end{lemma}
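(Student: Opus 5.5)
Because $v=\pd_x w$, and applying $\pd_x$ to the adjoint equation \eqref{eq:adjLKP} turns $6u\pd_x w$ into $6\pd_x(u\pd_x w)=6\pd_x(uv)$, the two claims are equivalent. It therefore suffices to prove that $w=\Phi\Phi^*$ satisfies
\[
4\pd_t w+6u\pd_x w+\pd_x^3 w+3\pd_x^{-1}\pd_y^2 w=0 ,
\]
with $\pd_x^{-1}$ understood consistently with its meaning in $B$; equivalently, we verify the $\pd_x$-differentiated form \eqref{eq:adjLKP} directly. Here $L^*=\pd_y+\pd_x^2+u$. Since $\pd_x^{-1}$ is antisymmetric, $B^*=-4(\pd_t+\pd_x^3)-6\pd_x u\,\cdot+3(\pd_x u+\pd_x^{-1}\pd_y u)$.

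\textbf{Eliminating the time and $y$-derivatives.} From $L\Phi=0$ and $L^*\Phi^*=0$ we get $\pd_y\Phi=\Phi_{xx}+u\Phi$ and $\pd_y\Phi^*=-\Phi^*_{xx}-u\Phi^*$. Hence
\[
\pd_y w=\Phi_{xx}\Phi^*-\Phi\Phi^*_{xx}=\pd_x(\Phi_x\Phi^*-\Phi\Phi^*_x)=:\pd_x q .
\]
Thus $\pd_x^{-1}\pd_y w=q$ and $\pd_x^{-1}\pd_y^2 w=\pd_y q$, which we compute again from the $y$-equations:
\[
\pd_y q=\Phi_{xxx}\Phi^*+\Phi\Phi^*_{xxx}-\Phi_x\Phi^*_{xx}-\Phi_{xx}\Phi^*_x+2u_x\Phi\Phi^*+2u\,\pd_x w .
\]
Next, $B\Phi=0$ and $B^*\Phi^*=0$ give $4\pd_t\Phi$ and $4\pd_t\Phi^*$ in terms of $x$-derivatives plus the nonlocal term $3(\pd_x^{-1}\pd_y u)$. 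Expanding $\pd_x(u\Phi^*)=u_x\Phi^*+u\Phi^*_x$ in $B^*$, we obtain
\begin{gather*}
4\Phi_t=-4\Phi_{xxx}-6u\Phi_x-3(u_x+\pd_x^{-1}u_y)\Phi ,\\
4\Phi^*_t=-4\Phi^*_{xxx}-6u\Phi^*_x-3u_x\Phi^*+3(\pd_x^{-1}u_y)\Phi^* .
\end{gather*}
In the product $4\pd_t w=4\Phi_t\Phi^*+4\Phi\Phi^*_t$ the nonlocal terms cancel. This leaves $4\pd_t w=-4(\Phi_{xxx}\Phi^*+\Phi\Phi^*_{xxx})-6u\,\pd_x w-6u_x w$.

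\textbf{The algebraic identity.} Substituting, the target expression $4w_t+6uw_x+w_{xxx}+3\pd_y q$ becomes
\[
-(\Phi_{xxx}\Phi^*+\Phi\Phi^*_{xxx})+3(\Phi_{xxx}\Phi^*+\Phi\Phi^*_{xxx})-3(\Phi_x\Phi^*_{xx}+\Phi_{xx}\Phi^*_x)+w_{xxx}+6u\,w_x .
\]
Here the $u_x w$ terms cancel and the $u w_x$ terms sum to $6uw_x$. Using $w_{xxx}=\Phi_{xxx}\Phi^*+3\Phi_{xx}\Phi^*_x+3\Phi_x\Phi^*_{xx}+\Phi\Phi^*_{xxx}$, everything reduces to $3(\Phi_{xxx}\Phi^*+\Phi\Phi^*_{xxx}+\Phi_{xx}\Phi^*_x+\Phi_x\Phi^*_{xx})+6uw_x$.

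This is not zero as it stands, so one more step is needed: apply $\pd_x$ and use the $y$-equations once more to rewrite the $x$-derivatives. This is the main obstacle, namely sign bookkeeping and the normalization of the Lax pair. The coefficients $4,6,3$ in $B$ must match the normalization $L=-\pd_y+\pd_x^2+u$. I would first recheck $B^*$ and the sign in $\pd_y\Phi^*$, since a sign error there flips the $-3$ versus $+3$ combinations above.

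After fixing signs, the expected outcome is that the $\Phi_{xxx}$-type terms combine to $4(\Phi_{xxx}\Phi^*+\Phi\Phi^*_{xxx})-4(\ldots)$, which cancels exactly against the $-4$ from $w_t$ together with $w_{xxx}$ and $3\pd_y q$. I would verify this by an independent check with the vacuum $u=0$, taking $\Phi=e^{kx+k^2y-k^3t}$ and $\Phi^*=e^{-lx+l^2y+l^3t}$. In that case everything is explicit, and the dispersion relation of $w$ must match the linear KP-II equation, which pins down the signs. The general case then follows by the same bookkeeping. Finally, differentiating in $x$ gives \eqref{eq:linearKP} for $v$.
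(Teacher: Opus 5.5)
Your route is the right one, and it is the ``direct computation'' the paper has in mind. You eliminate $\pd_y$ and $\pd_t$ with the Lax equations, write $\pd_y w=\pd_x q$ with $q=\Phi_x\Phi^*-\Phi\Phi^*_x$, and show that $4w_t+6uw_x+w_{xxx}+3\pd_y q=0$. Your $L^*$ and $B^*$, the formula $\pd_y\Phi^*=-\Phi^*_{xx}-u\Phi^*$, and your expression for $4\pd_t w$ are all correct.

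As written, however, the proposal does not prove the lemma. It ends at a nonzero residue and defers to an unspecified ``fix the signs'' step. That residue comes from two arithmetic slips, not from any sign or normalization problem.

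\emph{First slip.} In $\pd_y q=\Phi_{xy}\Phi^*+\Phi_x\Phi^*_y-\Phi_y\Phi^*_x-\Phi\Phi^*_{xy}$, two pairs of terms cancel. The term $u\Phi_x\Phi^*$ from $\Phi_{xy}\Phi^*$ cancels $-u\Phi_x\Phi^*$ from $\Phi_x\Phi^*_y$. The term $-u\Phi\Phi^*_x$ from $-\Phi_y\Phi^*_x$ cancels $+u\Phi\Phi^*_x$ from $-\Phi\Phi^*_{xy}$. Hence
\[
\pd_y q=\Phi_{xxx}\Phi^*+\Phi\Phi^*_{xxx}-\Phi_x\Phi^*_{xx}-\Phi_{xx}\Phi^*_x+2u_xw ,
\]
and there is no $2u\pd_x w$ term.

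\emph{Second slip.} In your tally the $+3$ that $3\pd_y q$ contributes to $\Phi_{xxx}\Phi^*+\Phi\Phi^*_{xxx}$ appears twice: your ``$-(\cdots)$'' is already $-4+3$. Your ``reduces to'' line also does not follow from the line before it.

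With correct bookkeeping, the coefficients are:
\begin{itemize}
\item $\Phi_{xxx}\Phi^*+\Phi\Phi^*_{xxx}$: $-4+3+1=0$, from $4w_t$, $3\pd_y q$ and $w_{xxx}$;
\item $\Phi_x\Phi^*_{xx}+\Phi_{xx}\Phi^*_x$: $-3+3=0$;
\item $u_xw$: $-6+6=0$;
\item $uw_x$: $-6+6=0$.
\end{itemize}
So $4w_t+6uw_x+w_{xxx}+3\pd_y q=0$ holds identically. Applying $\pd_x$ and using $\pd_x\pd_y q=\pd_y^2w$ gives \eqref{eq:adjLKP}, and your first paragraph then gives \eqref{eq:linearKP}.

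The remedy you sketch would not have closed the gap. There is nothing to correct in $B^*$ or in $\pd_y\Phi^*$. A residue that is pointwise nonzero cannot be removed by applying $\pd_x$ and substituting the $y$-equations again, since differentiation only kills $x$-independent terms. The vacuum test $u=0$ cannot detect $u$-dependent errors such as your spurious $6uw_x$, which vanishes there, and in any case it does not establish the general case. ``The general case then follows by the same bookkeeping'' is therefore not an argument; the corrected computation above is the proof.
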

Lemma~\ref{lem:prodJdJ} follows from a direct computation.
The Jost solution and the dual Jost solution of the line soliton $u_{\mY}$
can be written as
\begin{gather}
  \label{eq:Jost-sol}
\Phi^1(k)=\Phi^0(k)
\frac{\tau_\Phi(k)}{\tau}\,,\quad 
  \Phi^{1,*}(k)=\Phi^{0,*}(k)\frac{\tau_{\Phi^*}(k)}{\tau}\,,
\\  \label{def:Phi0}
\Phi^0(k)=e^{ikx-k^2y+ik^3t}\,,\quad
\Phi^{0,*}(k)=e^{-ikx+k^2y-ik^3t}\,.
\end{gather}
where $\tau=\tau_{\mY}$ and  $\tau_\Phi$ and $\tau_{\Phi^*}$
are the shifts of the $\tau$-function:
\begin{gather*}
  \tau_\Phi(k)=\sum_{1\le j\le 3}(ik-\k_j)e^{\theta_j}\,,\quad
\tau_{\Phi^*}(k)=\sum_{1\le j\le 3}\frac{e^{\theta_j}}{ik-\k_j}\,.
\end{gather*}
See \cite{CK,Dickey}.  Let
$\Phi^1_j=\Phi^1(-ik_j)$ and
$\Phi_j^{1,*}=i\operatorname{Res}_{k=-ik_j}\Phi^*(k)=1/\tau_{\mY}$.
Then
\begin{equation}
  \label{def:Phi1s}
\left\{\begin{gathered}
\Phi^1_1=\frac{(\k_1-\k_2)e^{\theta_1+\theta_2}+
  (\k_1-\k_3)e^{\theta_1+\theta_3}}{e^{\theta_1}+e^{\theta_2}+e^{\theta_3}}\,,
\\
\Phi^1_2=\frac{(\k_2-\k_1)e^{\theta_1+\theta_2}+
  (\k_2-\k_3)e^{\theta_2+\theta_3}}{e^{\theta_1}+e^{\theta_2}+e^{\theta_3}}\,,
\\
\Phi^1_3=\frac{(\k_3-\k_1)e^{\theta_1+\theta_3}+
  (\k_3-\k_2)e^{\theta_2+\theta_3}}{e^{\theta_1}+e^{\theta_2}+e^{\theta_3}}\,,
\end{gathered}\right.
\end{equation}
\begin{equation}
  \label{eq:4}
  \Phi^1_1+\Phi^1_2+\Phi^1_3=0\,.
\end{equation}
\par

Now we consider continuous eigenmodes that are associated with
$[j,k]$-soliton.
Let $(X,Y)$ be the moving coordinate as \eqref{eq:b1-b2Y}.
Then $u_{\mY}$ is a stationary solution of
\begin{equation}
  \label{eq:KPII''}
4\pd_tu+\pd_X^3u-4b_1\pd_Xu-4b_2\pd_Yu+3\pd_X(u^2)+3\pd_X^{-1}\pd_Y^2=0\,.  
\end{equation}
The linearized equation of \eqref{eq:KPII} around $u_{\mY}$ can
be written as $\pd_t u=\mL_\mY u$, where
\begin{align*}
\mL_\mY=&
-\frac14\pd_X(\pd_X^2-4b_1+6u_{\mY})
 +b_2\pd_Y-\frac{3}{4}\pd_X^{-1}\pd_Y^2
\\=& -\frac14\pd_{z_{jk}}\left(\pd_{z_{jk}}^2-4c_{jk}+6u_{\mY}\right)
     +(b_2-3a_{jk})\pd_y-\frac{3}{4}\pd_{z_{jk}}^{-1}\pd_y^2
\end{align*}
for $1\le j<k\le 3$. Let
$\gamma_{jk}(\eta)=\sqrt{c_{jk}+i\eta}$, 
$\beta_{jk}^\pm(\eta)=a_{jk}\pm\gamma_{jk}(\eta)$ and
\begin{gather}
\label{def:lambdajk}
\lambda_{jk}^\pm(\eta)=i\eta\{b_2-3a_{jk}\pm\gamma_{jk}(\pm\eta)\}\,,
\quad
\tilde{\lambda}_{jk}^\pm(\eta)=i\eta\{b_2-3a_{jk}\mp\gamma_{jk}(\pm\eta)\}\,,
\end{gather}
\begin{gather*}
g_{jk}^+(X,Y,\eta)=e^{-\lambda_{jk}^+(\eta)t}
\pd_X\left\{\Phi^1(-i\beta_{jk}^-(\eta))\Phi^{1,*}_j\right\}\,,
\\ 
g_{jk}^-(X,Y,\eta)=e^{-\lambda_{jk}^-(\eta)t}
\frac{i\eta}{\k_j-\k_k}
\pd_X\left\{\Phi^1_k\Phi^{1,*}(-i\beta_{jk}^+(-\eta))\right\}\,,
\\
g_{jk}^{*,+}(X,Y,\eta)=
e^{\overline{\lambda_{jk}^+(\eta)}t}\frac{i\bar{\eta}}{\k_j-\k_k}
\Phi^1_j\Phi^{1,*}(-i\beta_{jk}^-(-\bar{\eta}))\,,
\\ 
g_{jk}^{*,-}(X,Y,\eta)=e^{\overline{\lambda_{jk}^-(\eta)}t}
\Phi^1(-i\beta_{jk}^+(\bar{\eta}))\Phi^{1,*}_k\,,
\\ 
\tg_{23}^{*,-}(X,Y,\eta)=e^{\overline{\tilde{\lambda}_{23}^-(\eta)}t}
\Phi^1(-i\beta_{23}^-(\bar{\eta}))\Phi^{1,*}_2\,.  
\end{gather*}
Since
\begin{equation}
  \label{eq:tauXY}
\begin{multlined}
\tau=e^{(\theta_{i_1}+\theta_{i_2})/2}
\biggl\{2\cosh\frac{(\k_{i_2}-\k_{i_1})z_{i_1i_2}}{2}
\\ +\exp\left(\frac{(\k_{i_3}-\k_{i_1})z_{i_1i_3}}{2}
+\frac{(\k_{i_3}-\k_{i_2})z_{i_2i_3}}{2}\right)\biggr\}
\end{multlined}  
\end{equation}
for $i_1$, $i_2$, $i_3$ satisfying $\{i_1,i_2,i_3\}=\{1,2,3\}$ and
\begin{align}
\label{eq:btitj}
  & \beta_{jk}^\pm(\eta)x+\beta_{jk}^\pm(\eta)^2y-\beta_{jk}^\pm(\eta)^3t
    -\frac{1}{2}(\theta_i+\theta_j)
  \\ =& \notag
\pm \gamma_{jk}(\eta)z_{jk} + i\eta Y
+i\eta\{b_2-3a_{jk}\mp\gamma_{jk}(\eta)\}t\,,
\end{align}
 we formally have for $1\le j<k\le 3$,
\begin{equation}
  \label{eq:lmij}
\left\{
  \begin{aligned}
& \mL g_{jk}^\pm(\eta)=\lambda_{jk}^\pm(\eta)g_{jk}^\pm(\eta)\,,
\\ &    
\mL^*g_{jk}^{*,\pm}(\eta)=\overline{\lambda_{jk}^\pm(\eta)}g_{jk}^{*,\pm}(\eta)\,,
\quad
\mL^*\tg_{23}^{*,-}(\eta)=\overline{\tilde{\lambda}_{23}^-(\eta)}\tg_{23}^{*,-}(\eta)\,.
  \end{aligned}
\right.
\end{equation}

\begin{remark}
  \label{rem:wavemotion}
For $\eta\simeq0$ and $Y\gg1$,
  \begin{equation*}
e^{\lambda_{13}^\pm(\eta)t} g_{13}^\pm(X,Y,\eta)
\simeq g_{13}^0(z_{13},\pm\eta)
e^{t\Re\lambda_{13}^\pm(\eta)+i\eta\{Y+t\Im\pd_\eta\lambda_{13}^\pm(0)\}}\,,
  \end{equation*}
 and for $\eta\simeq0$ and $Y\ll-1$,
 \begin{gather*}
e^{\lambda_{12}^\pm(\eta)t} g_{12}^\pm(X,Y,\eta)
\simeq g_{12}^0(z_{12},\pm\eta)
e^{t\Re\lambda_{12}^\pm(\eta)+i\eta\{Y+t\Im\pd_\eta\lambda_{12}^\pm(0)\}}
\text{ near $[1,2]$-soliton,}
\\
e^{\lambda_{23}^\pm(\eta)t}g_{23}^\pm(X,Y,\eta)
\simeq g_{23}^0(z_{23},\pm\eta)
e^{t\Re\lambda_{23}^\pm(\eta)+i\eta\{Y+t\Im\pd_\eta\lambda_{23}^\pm(0)\}}
\text{ near $[2,3]$-soliton.}
\end{gather*}
We infer from the above that wave speeds of modulations along
$[j,k]$-solitons are $-\Im\pd_\eta\lambda_{jk}^\pm(0)$.
Since
\begin{gather*}
-\Im\pd_\eta\lambda_{13}^+(0)=\k_1-\k_2<0\,,\quad
-\Im\pd_\eta\lambda_{13}^-(0)=\k_3-\k_2>0\,,
  \\
-\Im\pd_\eta\lambda_{12}^+(0)=\k_1-\k_3<0\,,\quad
-\Im\pd_\eta\lambda_{12}^-(0)=\k_2-\k_3<0\,,
\\
-\Im\pd_\eta\lambda_{23}^+(0)=\k_2-\k_1>0\,,\quad
-\Im\pd_\eta\lambda_{23}^-(0)=\k_3-\k_1>0\,,
\end{gather*}
the waves are bidirectional along $[1,3]$-soliton,
outgoing to $y=-\infty$ along $[1,2]$-soliton and incoming from $y=-\infty$
along $[2,3]$-soliton.
\end{remark}
\begin{remark}
\label{rem:cef}  
We might expect that continuous eigenmodes
$e^{t\lambda_{jk}^\pm(\eta)}g_{jk}^\pm(X,Y,\eta)$ approximate
modulations of $[j,k]$-soliton as is the case of
$1$-solitons (see \cite{Burtsev,Miz15}).
However, $g_{jk}^\pm$ does not provide a good approximation near the other
solitons.
For example, $g_{13}^\pm(\eta)$ diverges along the crest of $[1,2]$-soliton.
Indeed, if $\eta\ne0$ and $z_{12}=0$,
\begin{align*}
g_{13}^\pm(\eta)=
O\left(e^{(\k_3-\k_2)\{\sqrt{c_{13}}-\gamma_{13}(\pm\eta)\}Y}\right)\to\infty
\quad\text{as $Y\to-\infty$.}
\end{align*}
For this reason, we cannot define spectral projections associated with
these continuous eigenmodes.
\end{remark}
Finally, we will present our main results (see Lemma~\ref{lem:spec1},
Propositions~\ref{prop:noev}, \ref{prop:spec-res} and 
Theorem~\ref{thm:main} for the precise statements).
For the purupose, we will introduce several notations.
Let $\calX$ be a Hilbert space equipped with a norm
\begin{equation}
\label{eq:defcalX1}
  \|u\|_\calX= \biggl(\int_{\R^2_+} e^{2\a z_{13}}|u(X,Y)|^2\,dXdY
+ 
\int_{\R^2_-}e^{2\a z_{12}}|u(X,Y)|^2dXdY\,\biggr)^{1/2}\,,
\end{equation}
where $\R^2_\pm=\{(x,y)\mid x\in\R,\,\pm y\ge0\}$, and let
\begin{equation*}
  \la f, g\ra=
\lim_{R\to\infty}\int_{-R}^R\left(\int_\R f(X,Y)\overline{g(X,Y)}\,dX\right)\,dY\,.
\end{equation*}

Let
$\eta_I^{23}=\a(\k_1-\k_3)$ and
\begin{gather*}
\eta_{1j}^*(\a)=\sup\{\eta\ge0\mid
\lim_{X\to\infty}e^{-\a X}g_{1j}^{*,\pm}(X,Y,\eta)=0\}
\quad\text{for $j=2, 3$,}
\\
\eta_{*,-}(\a)=\sup\{\eta_R\ge0\mid
\lim_{X\to\infty}e^{-\a X}g_{23}^{*,-}(X,Y,\eta_R+i\eta_I^{23})=0\}\,,
\\
\eta_{\#,-}(\a)=\sup\{\eta_R\ge0\mid
\lim_{X\to\infty}e^{-\a X}\tg_{23}^{*,-}(X,Y,\eta_R+i\eta_I^{23})=0\}\,.
\end{gather*}
\par
Let $\lambda(\beta)=\prod_{j=1}^3(\k_j-\beta)$
and $\mathcal{D}_1\subset \C$ be a domain enclosed by four curves
\begin{gather*}
\{\lambda_{1j}^+(\eta_R)\mid -\eta_{1j}^*(\a)\le \eta_R\le \eta_{1j}^*(\a)\}
\quad\text{for $j=2, 3$,}
\\
\{\lambda(\k_1-\a+it)\mid
\sqrt{\a(\a+\k_2-\k_1)}\le \pm t\le \sqrt{\a(\a+\k_3-\k_1)}\}\,,
\end{gather*}
$\mathcal{D}_2\subset \C$ be a domain enclosed by two curves
\begin{gather*}
\{\lambda_{12}^-(\eta_R)\mid -\eta_{12}^*(\a) \le \eta_R\le \eta_{12}^*(\a)\}\,,
\\
\{\tilde{\lambda}_{23}^-(\eta_R+i\eta_I^{23})\mid
-\eta_{\#,-}(\a)\le \eta_R\le \eta_{\#,-}(\a)\}\,,
\end{gather*}
and $\mathcal{D}_3\subset \C$ be a domain enclosed by four curves
\begin{gather*}
\{\lambda_{13}^-(\eta_R)\mid -\eta_{13}^*(\a)\le \eta_R\le \eta_{13}^*(\a)\}\,,
\\
\{\lambda_{23}^-(\eta_R+i\eta_I^{23})\mid
-\eta_{*,-}(\a)\le  \eta_R \le \eta_{*,-}(\a)\}\,,
\\
\{-\lambda(\k_3+\a+it\mid \sqrt{\a(\a+\k_3-\k_1)}\le \pm t \le
\sqrt{\a(\a+2\k_3-\k_1-\k_2)}\}\,.
\end{gather*}
See Remarks~\ref{rem:boundary} and \ref{rem:adjointef}.

Now we introduce our result on spectral stability.
\begin{theorem}
    \label{thm:specstability}
Let $\a$ be a sufficiently small positive number. Let $\mL_0$ and $\mL_\mY$ be
closed operators on $\calX$. Then 
$\sigma(\mL_\mY)\subset \{\lambda\in\C\mid\Re\lambda\le0\}$ and
\begin{equation*}
\sigma(\mL)=\sigma(\mL_0)\cup
\overline{(\mathcal{D}_1\cup\mathcal{D}_2\cup\mathcal{D}_3)}\,.
  \end{equation*}
The operator $\mL_\mY$ has no eigenvalue and 
the set $\mathcal{D}_1\cup\mathcal{D}_2\cup\mathcal{D}_3$ consists
of the eigenvalues of $\mL_\mY^*$.
\end{theorem}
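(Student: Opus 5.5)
The argument splits into four steps: the region away from all asymptotic spectra, the nonexistence of eigenvalues, the identification of the three adjoint eigenvalue domains, and the closure of $\sigma(\mL_\mY)$. Throughout we work in the moving frame $(X,Y)$, in which $u_\mY$ is stationary, and regard the eigenvalue problem $(\lambda-\mL_\mY)u=f$ as an evolution in $Y$.

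\textbf{Step 1: away from the asymptotic spectra.} Fix $\lambda\in\rho(\mL_0)\cap\bigcap_{1\le j<k\le 3}\rho(\mL_{jk})$. We use a partition of unity subordinate to four regions: the zero-background region, a neighborhood of the $[1,3]$-soliton for $Y\gg1$, and neighborhoods of the $[1,2]$- and $[2,3]$-solitons for $Y\ll-1$. On each region $u_\mY$ is exponentially close to $0$ or to $u_{jk}$, by \eqref{eq:u1-asymptotics} and \eqref{eq:tauXY}. We glue the resolvents $(\lambda-\mL_0)^{-1}$ and $(\lambda-\mL_{jk})^{-1}$ on $\calX$, using the spectral stability of the oblique $1$-line solitons from Section~\ref{sec:[i,j]-stability}. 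The resulting parametrix inverts $\lambda-\mL_\mY$ up to an error supported in a compact core region, and this error is compact. Hence $\lambda-\mL_\mY$ is Fredholm of index zero on this set. So such a $\lambda$ lies in $\sigma(\mL_\mY)$ only if it is an eigenvalue of $\mL_\mY$ or of $\mL_\mY^*$. Conversely, each of $\sigma(\mL_0)$ and $\sigma(\mL_{jk})$ lies in $\sigma(\mL_\mY)$, by Weyl sequences built from plane waves or from $g_{jk}^\pm$ cut off far out along the corresponding leg. Here Remark~\ref{rem:cef} tells us that the cutoff must avoid the other solitons.

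\textbf{Step 2: no eigenvalues in $\Re\lambda>-\delta$, and none of $\mL_\mY$ at all.} Here we follow the plan announced for Section~\ref{sec:nonexistence}. We take the Fourier–Laplace transform in the direction along which the problem is a $1+1$-dimensional Boussinesq-type system, with $Y$ playing the role of time. We then apply the Darboux transformation that maps $u_\mY$ to the background $u_{jk}$, and ultimately to $0$; this is the mechanism by which $\Phi^1=\Phi^0\tau_\Phi/\tau$ arises. The eigenvalue problem is intertwined with the linearized mKP-II and then with $\mL_0$, which has no eigenvalues in $\calX$. The intertwining is injective on $\calX$ for $\a$ small, so any eigenfunction of $\mL_\mY$ would produce a nontrivial eigenfunction of $\mL_0$, a contradiction. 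This is a two-dimensional version of the Evans function computation in \cite[Appendix~B]{PW}.

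\textbf{Step 3: the adjoint eigenvalue domains.} Adjoint eigenfunctions must belong to the dual space, so they may grow at most like $e^{\a z_{13}}$ for $Y>0$ and like $e^{\a z_{12}}$ for $Y<0$. Candidate adjoint eigenfunctions are the products $\Phi^1(-i\beta)\Phi^{1,*}_j$ with complex parameter $\beta$, whose eigenvalue is $\pm\lambda(\beta)$. We check the admissibility condition $e^{-\a X}g\to0$, used in the definitions of $\eta^*_{1j}$, $\eta_{*,-}$ and $\eta_{\#,-}$, region by region. A point $\lambda$ is an adjoint eigenvalue exactly when the parameter $\beta$ with $\lambda(\beta)=\lambda$ satisfies these decay constraints simultaneously in every sector. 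The boundaries of the resulting parameter sets are of two kinds. Some come from admissibility at the continuous spectra of the legs; these map to the curves $\lambda^\pm_{1j}(\eta_R)$, $\lambda_{23}^-(\eta_R+i\eta_I^{23})$ and $\tilde\lambda_{23}^-(\eta_R+i\eta_I^{23})$. The others come from the lines $\Re\beta=\k_1-\a$ and $\Re\beta=\k_3+\a$, which map to the curves $\pm\lambda(\cdot+it)$. This yields $\mathcal D_1$, $\mathcal D_2$ and $\mathcal D_3$. Since $\mL_\mY$ has no eigenvalues, $\mathcal D_i\subset\sigma(\mL_\mY)$. Taking closures then gives the stated equality, and the inclusion in $\Re\lambda\le0$ follows for small $\a$.

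\textbf{Main obstacle.} I expect Step 2 to be the hardest. The difficulty is controlling the Darboux and Miura intertwiners in the anisotropic weighted space $\calX$, in particular showing that they have trivial kernel on $\calX$ and do not create spurious solutions near the junction. A second, lesser difficulty is the sector bookkeeping in Step 3, which is needed to show that the domains are exactly $\mathcal D_1$, $\mathcal D_2$ and $\mathcal D_3$ and nothing larger.
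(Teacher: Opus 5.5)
Your four-step outline follows the paper's decomposition (Lemmas~\ref{lem:spec-pre} and~\ref{lem:spec1}, Proposition~\ref{prop:noev}, Lemma~\ref{lem:adjointef}). However, one claim is false and one essential step is missing.

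\textbf{The index-zero claim in Step~1 is false.} You assert that $\lambda-\mL_\mY$ is Fredholm of index zero off the asymptotic spectra, and this contradicts your own Steps~2--3. Pick $\beta\in D_1(0)$ near $\k_1$ such that $\lambda=\lambda(\beta)$ lies off the curves $\sigma(\mL_{jk})\setminus\sigma(\mL_0)$. Since $|\lambda|$ is small, $\lambda$ also avoids $\sigma(\mL_0)$ and $\sigma(\mL_{23})$, so $\lambda\in\rho(\mL_0)\cap\bigcap_{j<k}\rho(\mL_{jk})$. At this $\lambda$, $\mL_\mY$ has no eigenvalue, yet $\widehat V(\beta)\in\calX^*$ is an eigenfunction of $\mL_\mY^*$, so the index is negative. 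The index jumps across the boundaries of the $\mathcal D_i$, which are curves of essential spectrum; this is exactly why residual domains exist. With index zero, your Steps~2 and~3 would contradict each other. What the localization argument actually gives (Lemma~\ref{lem:spec1}) is weaker: either $\lambda\in\sigma_p(\mL_\mY)$, or $\lambda-\mL_\mY$ is injective with closed range. That still reduces everything to eigenvalues of $\mL_\mY$ and $\mL_\mY^*$. But it means both the inclusion $\sigma(\mL_\mY)\subset\sigma(\mL_0)\cup\overline{\mathcal D_1\cup\mathcal D_2\cup\mathcal D_3}$ and the claim that the $\mathcal D_i$ are \emph{all} the adjoint eigenvalues rest on classifying $\ker(\bar\lambda-\mL_\mY^*)$.

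\textbf{That classification is missing, and it is not sector bookkeeping.} Step~3 only tests which explicit products lie in $\calX^*$. This gives $\mathcal D_i\subset\sigma_r(\mL_\mY)$, but it says nothing about adjoint eigenfunctions of other forms. The paper handles this in Proposition~\ref{prop:spec-res}:
\begin{enumerate}
\item If $\mL_\mY^*U^*=\bar\lambda_0U^*$, then $\overline{\pd_xU^*}$ is an eigenfunction of $\mL_\mY$ with eigenvalue $-\lambda_0$ in the oppositely weighted space $\calX^{-\a}$, and the Darboux argument is run there.
\item Producing $\mathbf V$ from $\mathbf{U_2}$ means solving $\mathcal A_2\mathbf V=\mathcal B_2\mathbf{U_2}$ on each slice $Y=\text{const}$. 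This requires the secular conditions $\la\mathbf{U_2},\mathbf{W_j^*}\ra=0$.
\item These conditions follow from the $Y$-independence of the pairing (Lemma~\ref{lem:bfUU*}) together with the decay of $\mathbf{U_2}$ as $Y\to+\infty$ or $Y\to-\infty$. They fail exactly when a root of $\lambda(\beta)=\lambda_0$ lies in $D_1$.
\item In that case one first subtracts the contribution of $c\,\widehat V(\beta_0)$. This uses that its pairing with $\mathbf{W_1^*}(-\lambda_0)$ equals $-\tfrac{2}{3}\lambda'(\beta_0)(\beta_0-\k_1)\ne0$, and a Rouch\'e argument showing that this root is unique in $\Re\beta\le a_{12}$.
\item Then $\mathbf{U_1}$ solves the free problem and vanishes. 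Hence $\mathbf{U_2}\in\spann\{\mathbf{W_j}\}$, and these lie in the dual space only for $\beta\in D_2\cup D_3$.
\end{enumerate}

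\textbf{Step~2 lacks the same mechanism.} Without the secular conditions, the Darboux map is not defined on all of $\calX$. Its slicewise kernel $\spann\{\mathbf{W_j}(\lambda)\}$ is nontrivial. Injectivity on $\calX$ holds only because these kernel elements fail to decay transversally (Lemma~\ref{lem:no-ef}).

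\textbf{Your candidate family is incomplete.} The products $\Phi^1(-i\beta)\Phi^{1,*}_j=\Phi^1(-i\beta)/\tau_\mY$ give only $V(\beta)$, hence only $\mathcal D_2$ and $\mathcal D_3$. The domain $\mathcal D_1$ comes from the reversed products $\widehat V(\beta)=(\beta-\k_1)\Phi^1_1\Phi^{1,*}(-i\beta)$. That is also the source of the line $\Re\beta=\k_1-\a$ you invoke.
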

\begin{remark}
As $X\to\infty$, $g_{12}^{*,+}(\eta_{12}^*(\a))$ behaves like a plane wave solution of
$\mL_0^*u=\overline{\lambda_{12}^+(\pm\eta_{12}^*(\a))}u$
and $\lambda_{12}^+(\eta_{12}^*(\a))\in\sigma(\mL_0)$.
By the same reason, $\lambda_{1j}^\pm(\pm\eta_{12}^*(\a))$
$(j=2, 3)$, $\lambda_{23}(\pm\eta_*(\a)+i\eta_I^{23})$ and
$\tilde{\lambda}(\pm\eta_\#(\a)+i\eta_I^{23})$ belong to $\sigma(\mL_0)$.
\end{remark}
\begin{remark}
Let $\wmL$ be the restriction of $\mL_\mY$ to the subspace $\calX_\mY$
that is orthogonal to adjoint eigenfunctions associated with 
$\mathcal{D}_1$, $\mathcal{D}_2$ and $\mathcal{D}_3$.
We prove that $\sigma(\wmL)\setminus\{0\}\subset
\{\lambda\in\C\mid \Re\lambda\le -\delta\}$ for a $\delta>0$
(Lemma~\ref{lem:non-degeneracy} and Corollary~\ref{cor:non-degeneracy}).
But it remains open whether $\lambda=0$ is a regular point
or an essential singularity of $(\lambda-\wmL)^{-1}$.
This is because $\lambda=0$ is the intersection of four curves of
continuous eigenvalues that are associated with modulations of $[1,2]$-soliton
and $[1,3]$-soliton, and we cannot conclude $\operatorname{Range}(\wmL)=\calX_\mY$
by using the spectral stability results for oblique $1$-line solitons.
\end{remark}
\par
If we impose an extra weight as $Y\to\infty$, then at most two curves
of the continuous eigenvalues can intersect in a small neighborhood of
$\lambda=0$. At an intersection $\lambda$ of the two curves, 
the corresponding continuous eigenfunctions decay exponentially
either as $Y\to\infty$ or as $Y\to-\infty$,
allowing us to reduce the invertibility
of $\lambda-\wmL$ to that of $\lambda-\mL_{jk}$.
\begin{theorem}
\label{thm:linearstability}
Let $Y_\pm=\max(\pm Y,0)$.
Let $\a$ and $\nu_0$ be sufficiently small positive numbers.
If $\nu\in(0,\nu_0)$, there exist positive constants
$K$ and $b$ such that if $e^{\nu Y_+}f\in\calX$ and
\begin{gather*}
\la f, g_{12}^{*,+}(\cdot)\ra=\la f, g_{12}^{*,-}(\cdot)\ra
=\la f,g_{13}^{*,-}(\cdot+i\nu)\ra=0
\text{ on $[-\eta_0,\eta_0]$}
\end{gather*}
for an $\eta_0>0$, then for every $t\ge0$,
\begin{equation*}
\|e^{\nu Y_+}e^{t\mL_\mY}f\|_\calX\le Ke^{-bt}\|e^{\nu Y_+}f\|_\calX\,.
\end{equation*}
\end{theorem}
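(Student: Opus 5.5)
The plan is to obtain exponential decay from the Gearhart--Pr\"uss theorem. We apply it to the restriction $\wmL_\nu$ of $\mL_\mY$ to the closed subspace $\calX_{\mY,\nu}$ of $\calX_\nu=\{f\mid e^{\nu Y_+}f\in\calX\}$ cut out by the three secular conditions in Theorem~\ref{thm:linearstability}. First we check that $\calX_{\mY,\nu}$ is invariant under $e^{t\mL_\mY}$. Formally, by \eqref{eq:lmij},
\[
\frac{d}{dt}\la e^{t\mL_\mY}f,g\ra=\lambda\la e^{t\mL_\mY}f,g\ra
\]
for each adjoint continuous eigenfunction $g$ with eigenvalue $\bar\lambda$, so the pairings vanish for all $t\ge0$ if they vanish at $t=0$. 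Here $g_{12}^{*,\pm}(\eta)$ and $g_{13}^{*,-}(\eta+i\nu)$ have to be pairable with $\calX_\nu$. The shift $\eta\mapsto\eta+i\nu$ is chosen so that $g_{13}^{*,-}$ grows at most like $e^{\nu Y}$ as $Y\to\infty$, which the extra weight $e^{\nu Y_+}$ absorbs. By Lemma~\ref{lem:orth-condition-identity} and Corollary~\ref{cor:orth-condition-identity} (the Lusin--Privalov argument), orthogonality on $[-\eta_0,\eta_0]$ propagates analytically in the spectral parameter. It therefore yields orthogonality to all adjoint eigenfunctions attached to $\mathcal{D}_1\cup\mathcal{D}_2\cup\mathcal{D}_3$, now in the $\nu$-shifted setting. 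Hence the residual spectrum of Theorem~\ref{thm:specstability} is removed on $\calX_{\mY,\nu}$.

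Next we establish the resolvent bound
\[
\sup_{\Re\lambda\ge -b}\|(\lambda-\wmL_\nu)^{-1}\|<\infty .
\]
For $|\lambda|\ge\delta_1$ this follows from Lemma~\ref{lem:non-degeneracy} and Corollary~\ref{cor:non-degeneracy}, transferred to the weight $e^{\nu Y_+}$ by a perturbation argument: conjugating by the weight adds an $O(\nu)$ bounded term $e^{\nu Y_+}[\mL_\mY,e^{-\nu Y_+}]$, and for $\nu$ small this preserves invertibility on compact $\lambda$-sets away from $0$. For large $|\lambda|$ we use the uniform resolvent estimates for $\mL_0$ together with the decay of $u_\mY$ and its derivatives, treating $\mL_\mY-\mL_0$ as relatively small. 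This is the same argument as in Section~\ref{sec:proof-main}. We also use Proposition~\ref{prop:noev}, which says there are no eigenvalues of $\mL_\mY$ or $\mL_\mY^*$ with $\Re\lambda>-\delta$.

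The core step is a neighborhood $|\lambda|<\delta_1$ of $0$. With the extra weight, the curves $\lambda_{13}^\pm(\eta+i\nu)$ split off from $0$. Near any $\lambda$ only two continuous-eigenvalue curves cross, and their eigenfunctions decay either as $Y\to\infty$ or as $Y\to-\infty$. We localize with a partition of unity $\chi_+(Y)+\chi_-(Y)=1$. The equation $(\lambda-\mL_\mY)u=f$ then splits into a piece near the $[1,3]$-soliton, solved with $(\lambda-\mL_{13})^{-1}$ in the weighted space $e^{\alpha z_{13}+\nu Y}L^2$, and a piece in $Y<0$, solved with $(\lambda-\mL_{12})^{-1}$ and $(\lambda-\mL_{23})^{-1}$. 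The oblique one-soliton spectral stability of Section~\ref{sec:[i,j]-stability} gives these resolvents on the subspaces orthogonal to their secular modes. The secular conditions of $\mL_{12}$ hold because $\la f,g_{12}^{*,\pm}\ra=0$. For $\mL_{13}$, only the modes $g_{13}^{*,-}(\eta+i\nu)$ fail to be controlled by the weight, and those are imposed by hypothesis. The modes $g_{13}^{*,+}$ and the $[2,3]$ modes are damped by the weights $e^{\nu Y}$ and $e^{\alpha z_{12}}$ respectively, since the corresponding waves are incoming (Remark~\ref{rem:wavemotion}). The commutator errors $[\mL_\mY,\chi_\pm]$ and $(u_\mY-u_{jk})\chi_\pm$ are supported near the junction, or decay exponentially away from it. We make them small by taking the cut-off scale large, and close by a Neumann series, uniformly for $|\lambda|<\delta_1$, $\Re\lambda\ge-b$.

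The main obstacle is this last step: showing that the junction remainder is genuinely small uniformly near $\lambda=0$. The concern is that the one-soliton resolvents blow up like $|\lambda|^{-1}$ in the modulation directions. If so, I would first try to subtract explicit modulation ansatz terms, built from $g_{jk}^\pm$ truncated near each soliton, and check that their residual pairings vanish by the secular conditions. Given the uniform bound, Gearhart--Pr\"uss yields
\[
\|e^{t\wmL_\nu}\|\le Ke^{-bt},
\]
which is the stated estimate.
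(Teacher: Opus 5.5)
Your overall architecture matches the paper's: Corollary~\ref{cor:orth-condition-identity} to pass from the three families to $\calX_\mY(\nu)$, the splitting of the $[1,3]$-curves away from $0$ when $\nu>0$, reduction to one-soliton operators, then Gearhart--Pr\"uss. The gap is in the step that carries all the weight, invertibility of $\lambda-\wmL$ on the one-soliton curves. The Neumann series you describe cannot be set up there.

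The curves $\lambda_{12}^\pm(\eta_R)$ pass through $\lambda=0$, and $\lambda_{13}^-(\eta_R+i\nu)$ even enters $\Re\lambda>0$. At such $\lambda$, $\lambda-\mL_{jk}$ is invertible only on $(I-P_{jk})\calX_{jk}$ (Proposition~\ref{lem:wmL1-decay}). Off the curves $\|(\lambda-\mL_{jk})^{-1}\|$ grows at least like the reciprocal distance, so nothing is uniform.

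Your claim that the $\mL_{12}$ secular conditions ``hold because $\la f,g_{12}^{*,\pm}\ra=0$'' is exactly what fails. The hypotheses are pairings with eigenfunctions of $\mL_\mY^*$. These agree with $e^{iY\eta}g_{12}^{0,*}(z_{12},\pm\eta)$ only asymptotically as $Y\to-\infty$ (Lemma~\ref{lem:g*-growth}). They differ at order one near the junction, and $g_{12}^{*,+}$ carries an order-one tail along the $[1,3]$-soliton. So $\chi_-f$ and your commutator remainders have $P_{12}$-components of size $O(\|f\|)$, and with a fixed cut-off scale these do not become small. Your fallback of truncated modulation modes runs into Remark~\ref{rem:cef}: projections built from $g_{jk}^\pm$ are not available.

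The paper avoids any parametrix. For each fixed $\lambda_0$ it argues by contradiction (Lemmas~\ref{lem:non-degeneracy} and \ref{lem:continuation-1}).
\begin{itemize}
\item Take $\lambda_n\to\lambda_0$ in the resolvent set and $u_n=(\lambda_n-\mL_\mY)^{-1}f\in\calX_\mY(\nu)$. If $\|u_n\|\to\infty$, normalize and localize at scale $k\to\infty$ as in Lemma~\ref{lem:spec1}.
\item The pieces near the zero background and the $[2,3]$-soliton vanish because $\lambda_0\in\rho(\mL_0)\cap\rho(\mL_{23})$.
\item The surviving pieces $\tu^{12}_k,\tu^{13}_k$ are supported far out, where the adjoint eigenfunctions of $\mL_\mY^*$ are asymptotic to the one-soliton ones. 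The secular conditions therefore force their $P_{jk}$-coefficients to $0$; where two modes of one soliton coalesce this uses the combinations $g^{j,*},\tg^{j,*}$ of Lemma~\ref{lem:g*-growth'}.
\item Invertibility on $(I-P_{jk})$ then gives the contradiction.
\item At a crossing from Lemma~\ref{lem:lambda12cap13}, the two modes decouple because $g_{12}^{*,-}$ decays as $Y\to+\infty$ and $g_{13}^{*,+}(\cdot+i\nu)$ becomes small on $\supp\chi_{12,n(k)}$, see \eqref{eq:16}.
\end{itemize}
Sending the localization to infinity is what removes the junction mismatch that blocks your argument.

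Your transfer from $\nu=0$ for $|\lambda|\ge\delta_1$ is also not valid. The space $e^{\nu Y_+}\calX_\mY(\nu)$ is not $\calX_\mY(0)$: $D_1(\nu)$ and $D_3(\nu)$ grow with $\nu$, and for $\nu>0$ one has $0\in\sigma_r(\mL_\mY)$ with $\ker\mL_\mY^*=\spann\{V(\k_3)\}$ (Proposition~\ref{prop:spec-res}). Moreover, the conjugation error contains $\tfrac32\nu\pd_X^{-1}\mathbf{1}_{Y>0}\pd_Y$ and a term concentrated on $\{Y=0\}$, so it is not a bounded $O(\nu)$ operator. The paper instead works directly on $\calX(\nu)$.

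Finally, $\mL_\mY^*$ does have eigenvalues, namely all of $\mathcal{D}_1\cup\mathcal{D}_2\cup\mathcal{D}_3$. They are removed by the secular conditions, not by Proposition~\ref{prop:noev}.
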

\bigskip

\section{Spectral  stability of  $[j,k]$-solitons}
\label{sec:[i,j]-stability}
Let us recall spectral stability results of $u_{jk}$
in an exponentially weighted space $\calX_{jk}:=L^2(\R^2;e^{2\a z_{jk}}dXdY)$
whose weight function increases on the right of $[j,k]$-soliton.
Let
\begin{gather*}
\mL_0=-\frac{1}{4}\pd_X^3+b_1\pd_X+b_2\pd_Y-\frac{3}{4}\pd_X^{-1}\pd_Y^2\,,
\quad \mL_{jk}=\mL_0-\frac{3}{2}\pd_X(u_{jk}\cdot)\,,
\end{gather*}
\begin{multline*}
  \left(\mL_{jk}(\eta)\right)f(z_{jk})= e^{-iy\eta}\mL_{jk}e^{iy\eta}f(z_{jk})
\\=                                         
\frac{1}{4}\left\{-\pd_{z_{jk}}(\pd_{z_{jk}}^2-4c_{jk})+4i(b_2-3a_{jk})\eta
+3\eta^2\pd_{z_{jk}}^{-1}\right\}
-\frac{3}{2}\pd_{z_{jk}}(u_{jk}\cdot)\,,
\end{multline*}
\begin{gather}
\label{eq:def-gij0}
g_{jk}^0(z,\eta)=\frac{\sqrt{c_{jk}}}{2\gamma_{jk}(\eta)}
\pd_z^2\left\{e^{-\gamma_{jk}(\eta)z}\sech\sqrt{c_{jk}}z\right\}\,
  \\
\label{eq:def-gij*0}
g^{0,*}_{jk}(z,\eta)=\frac12\pd_z\left\{
e^{\gamma_{jk}(-\bar{\eta})z}\sech\sqrt{c_{jk}}z\right\}\,.
\end{gather}
Then we have the following.
\begin{lemma}
  \label{lem:evmL0}
\begin{gather}
\label{eq:evmL0-1}
\left\{
  \begin{aligned}
&  \mL_{jk}(\eta)g_{jk}^0(z_{jk},\pm\eta)
=\lambda_{jk}^\pm(\eta)g_{jk}^0(z_{jk},\pm\eta)\,,
  \\
&  \mL_{jk}^*g^{0,*}_{jk}(z_{jk},\pm\eta)
  =\overline{\lambda_{jk}^\pm(\eta)}g^{0,*}_{jk}(z_{jk},\pm\eta)\,,    
  \end{aligned}\right.
\\
\label{eq:evmL0-1'}
\left\{
  \begin{aligned}
&\mL_{jk}(\eta)g_{jk}^0(-z_{jk},\pm\eta)
=\tilde{\lambda}_{jk}^\pm(\eta)g_{jk}^0(-z_{jk},\pm\eta)\,,
\\ &
  \mL_{jk}^*g^{0,*}_{jk}(-z_{jk},\pm\eta)
  =\overline{\tilde{\lambda}_{jk}^\pm(\eta)}g^{0,*}_{jk}(-z_{jk},\pm\eta)\,.
\end{aligned}\right.
\end{gather}
For any $\eta\in\R$,
\begin{equation}
\label{eq:g-g*-int}
\int_\R g_{jk}^0(z,\eta)\overline{g_{jk}^{0,*}(z,\eta)}\,dz=i\eta\,,
\quad
\int_\R g_{jk}^0(z,\eta)\overline{g_{jk}^{0,*}(z,-\eta)}\,dz=0\,.
\end{equation}
\end{lemma}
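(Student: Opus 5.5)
The plan is to reduce Lemma~\ref{lem:evmL0} to the known one-soliton results of \cite{Burtsev,Miz15}. I would first pass to the soliton frame and rescale, then treat the eigenvalue identities through the KdV-type linearized operator, and finally handle the pairings \eqref{eq:g-g*-int} by integration by parts.

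\emph{Reduction to a normalized problem.} Set $z=z_{jk}$, $c=c_{jk}$, $\gamma=\gamma_{jk}(\pm\eta)$. Acting on functions $e^{iy\eta}f(z)$, the operator $\mL_{jk}(\eta)$ becomes
\[
\tfrac14\bigl(-\pd_z^3+4c\pd_z+3\eta^2\pd_z^{-1}\bigr)-\tfrac32\pd_z(u_{jk}\cdot)+i(b_2-3a_{jk})\eta .
\]
The constant term $i(b_2-3a_{jk})\eta$ produces exactly the shift $i\eta(b_2-3a_{jk})$ in $\lambda_{jk}^\pm$ and $\tilde\lambda_{jk}^\pm$. Hence it suffices to show that the remaining operator $\mL_c(\eta)$ has eigenvalue $\pm i\eta\gamma(\pm\eta)$ on $g^0(z,\pm\eta)$, and eigenvalue $\mp i\eta\gamma(\pm\eta)$ on the reflected function $g^0(-z,\pm\eta)$. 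This is the classical eigenvalue problem for the $1$-line soliton $2c\sech^2\sqrt c z$ of \cite{Burtsev,Miz15}, which I would quote after scaling $z\mapsto\sqrt c z$.

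\emph{Direct verification.} To check this independently, write $g^0=\frac{\sqrt c}{2\gamma}\pd_z^2\psi$ with $\psi=e^{-\gamma z}\sech\sqrt c z$. Then $\pd_z^{-1}g^0=\frac{\sqrt c}{2\gamma}\pd_z\psi$ is well defined, since it decays at both ends for small $\Re\gamma$. The point is that $\psi$ is a product of a Jost function and a dual Jost function of the KdV Lax operator $\pd_z^2+u_{jk}$, in the spirit of Lemma~\ref{lem:prodJdJ}. Consequently $\pd_z\psi$ solves the linearized KdV equation with temporal factor, and $\gamma^2=c+i\eta$ converts the $\pd_z^{-1}$ term into the needed multiple. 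Concretely, it reduces to checking
\[
\bigl(-\pd_z^3+4c\pd_z-6\pd_z(u\,\cdot)\bigr)\pd_z\psi+3\eta^2\psi=\pm4i\eta\gamma\,\pd_z\psi .
\]
Expanding $\sech$ in exponentials makes this an algebraic identity. The reflected statement \eqref{eq:evmL0-1'} follows because $u_{jk}$ is even in $z$: the substitution $z\mapsto-z$ reverses the signs of the odd-order terms, which accounts for $\gamma\mapsto-\gamma$ in the eigenvalue. The adjoint relations follow the same way, since $\mL_{jk}^*=-\frac14(-\pd_z^3+4c\pd_z-3\eta^2\pd_z^{-1})+\frac32u\pd_z+\dots$ acts on $\pd_z$ of products. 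I expect this to be the main obstacle: one has to keep the conjugations and the sign conventions $\gamma(-\bar\eta)$ straight, and state the branch of the square root so that the decay of $g^{0,*}$ at $z\to+\infty$ within the weight holds.

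\emph{Pairings.} Integrate by parts, moving the two derivatives from $g^0$ and the one derivative from $g^{0,*}$:
\[
\int g^0\overline{g^{0,*}(\cdot,\eta)}\,dz=-\frac{\sqrt c}{4\gamma}\int\pd_z\bigl(e^{-\gamma z}\sech\bigr)\,\pd_z^2\bigl(e^{\gamma z}\sech\bigr)\,dz .
\]
Here $\overline{\gamma(-\bar\eta)}=\gamma(\eta)$, so the exponential factors cancel. This leaves integrals of the form $\int\sech^2\cdot\mathrm{poly}(\gamma,\tanh)$. Using $\int\sech^2\sqrt c z\,dz=2/\sqrt c$ and $\int\sech^2\tanh^2=2/(3\sqrt c)$, the result is a polynomial in $\gamma$; with $\gamma^2-c=i\eta$ it should reduce to $i\eta$. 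For the second pairing, $\gamma(\eta)$ is paired against $\gamma(-\eta)$. These have different eigenvalues, $\lambda^+(\eta)\neq\lambda^-(\eta)$ generically, so the pairing vanishes by the eigenvalue relations, and vanishes everywhere by analytic continuation. Alternatively, one can compute directly: the integral becomes $\int\pd(e^{-\gamma z}\sech)\,\pd^2(e^{\gamma z}\sech)$ with $\gamma\to-\gamma$ in one factor, i.e.\ an odd-in-$\gamma$ expression which cancels.
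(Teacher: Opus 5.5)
Your main step (pass to the $z_{jk}$-frame, split off the constant $i(b_2-3a_{jk})\eta$, and quote the normalized one-soliton result of \cite{Miz15}) is exactly the paper's proof. The paper likewise obtains \eqref{eq:g-g*-int} simply by citing \cite[Eq.~(2.6)]{Miz15}. The independent checks you add, however, contain concrete errors.

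It is $\pd_z\psi$, not $\psi$, that is (up to $e^{iy\eta}$ and time factors) a Jost/dual-Jost product, and by Lemma~\ref{lem:prodJdJ} it satisfies the \emph{adjoint} equation. Indeed $\mL_{jk}(\eta)\pd_z=\pd_z\bigl[\tfrac14(-\pd_z^3+4c\pd_z+3\eta^2\pd_z^{-1})-\tfrac32u\pd_z+i(b_2-3a_{jk})\eta\bigr]$. So the identity to verify is $-\pd_z^4\psi+(4c-6u)\pd_z^2\psi+3\eta^2\psi=4i\eta\gamma\,\pd_z\psi$, which does hold once you use $i\eta=\gamma^2-c$. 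The identity you wrote, with $-6\pd_z(u\,\cdot)$ in place of $-6u\pd_z$, carries an extra term $-6(\pd_zu)\pd_z\psi$ and is false.

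Your $\mL_{jk}^*$ also has the wrong sign in front of $\eta^2\pd_z^{-1}$: the formal adjoint of $\frac34\eta^2\pd_z^{-1}$ is $-\frac34\eta^2\pd_z^{-1}$.

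The decay claim is also wrong. For real $\eta$ one has $\Re\gamma_{jk}(\eta)\ge\sqrt{c_{jk}}$, so for $\eta\ne0$ neither $\psi$ nor $\pd_z\psi$ decays as $z\to-\infty$. What makes $\pd_z^{-1}g^0_{jk}=\frac{\sqrt c}{2\gamma}\pd_z\psi$ correct is decay at $z=+\infty$, together with the choice $\pd_z^{-1}=-\int_z^\infty$, the inverse natural on $L^2(e^{2\a z}dz)$.

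Carried out, your pairing computation does not give $i\eta$. With $\phi=e^{\gamma z}\sech\sqrt c z$, the $\tanh^2$ contributions cancel identically and
\[
\int_\R \pd_z^2\psi\,\pd_z\phi\,dz=(\gamma^3-c\gamma)\int_\R\sech^2\sqrt c z\,dz=\frac{2\gamma(\gamma^2-c)}{\sqrt c}=\frac{2i\eta\gamma}{\sqrt c}\,.
\]
The prefactors $\frac{\sqrt c}{2\gamma}$ and $\frac12$ of \eqref{eq:def-gij0}--\eqref{eq:def-gij*0} then give $\int_\R g^0_{jk}(z,\eta)\overline{g^{0,*}_{jk}(z,\eta)}\,dz=i\eta/2$. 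So, with the normalizations as displayed, the first identity of \eqref{eq:g-g*-int} is off by a factor $2$. The value $i\eta$ is inherited from the normalization in \cite{Miz15}, which the paper's bare citation hides. You must either record $i\eta/2$ or rescale $g^0_{jk}$ or $g^{0,*}_{jk}$; asserting that the polynomial ``should reduce to $i\eta$'' is not enough.

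For the second identity, your eigenvalue argument is sound and needs neither ``generically'' nor analytic continuation. Indeed $\lambda^+_{jk}(\eta)-\lambda^-_{jk}(\eta)=2i\eta\,\Re\gamma_{jk}(\eta)\ne0$ for every real $\eta\ne0$. Also $g^0_{jk}(z,\eta)\overline{g^{0,*}_{jk}(z,-\eta)}$ decays like $\sech^2\sqrt c z$, which justifies the integrations by parts. Your alternative direct computation is wrong, though. $\overline{g^{0,*}_{jk}(z,-\eta)}$ carries $e^{\overline{\gamma(\eta)}z}$, so the integrand keeps the oscillating factor $e^{-2i(\Im\gamma)z}$. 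It is therefore not a polynomial in $\tanh$, and nothing cancels by oddness in $\gamma$.
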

\begin{proof}
Let $\varphi_c(z)=2c\sech^2\sqrt{c}z$, $\gamma(\eta)=\sqrt{c+i\eta}$ and
\begin{gather*}
\mL_c=
-\frac14\pd_z\left(\pd_z^2-c+6\varphi_c\right)-\frac34\pd_z^{-1}\pd_y^2\,,
\\
g^0(z,c,\eta)=\pd_z^2\left(e^{-\gamma(\eta)z}\sech \sqrt{c}z\right)\,,
\quad g^{0,*}(z,c,\eta)=
\pd_z\left(e^{\gamma(-\bar{\eta})z}\sech \sqrt{c}z\right)\,.
\end{gather*}
Then it follows from \cite[Lemma~2.1]{Miz15} that
\begin{gather*}
\mL_cg^0(\pm z,c,\eta)=\pm i\eta\gamma(\eta)g^0(\pm z,c,\eta)\,,
  \quad
\mL_c^*g^{0,*}(\pm z,c,\eta)=\mp i\bar{\eta}\gamma(-\bar{\eta})  g^{0,*}(\pm z,c,\eta)\,.
\end{gather*}
Since $u_{jk}=\varphi_{c_{jk}}(z_{jk})$,
$b_1+2a_{jk}b_2-3a_{jk}^2=c_{jk}$
and $\mL_{jk}=\mL_{c_{jk}}+(b_2-3a_{jk})\pd_y$,
we have \eqref{eq:evmL0-1} and \eqref{eq:evmL0-1'}.
Eq.~\eqref{eq:g-g*-int} follows from \cite[Eq.~(2.6)]{Miz15}.
\end{proof}
\begin{remark}
\label{rem:gij0}
By \eqref{eq:def-gij0}, \eqref{eq:def-gij*0}
and the fact that $\varphi_c(x)=2c\sech^2\sqrt{c}x$,
\begin{gather*}
g_{jk}^0(z,0)=-\frac{1}{4\sqrt{c_{jk}}}\pd_z\varphi_{c_{jk}}(z)\,,
\quad
\pd_\eta g_{jk}^0(z,0)=-\frac{i}{4}\pd_c\varphi_{c_{jk}}(z)
+\frac{i}{8c_{jk}^{3/2}}\pd_z\varphi_{c_{jk}}(z)\,,
\\
g_{jk}^{0,*}(z,0)=\frac{1}{4\sqrt{c_{jk}}}\varphi_{c_{jk}}(z)\,,
\quad
\pd_{\bar{\eta}}g_{jk}^{0,*}(z,0)=-\frac{i}{4}
\int_{-\infty}^z \pd_c\varphi_{c_{jk}}(z')\,dz'\,.
\end{gather*}
\end{remark}
Let $\eta_{jk}^*(\a)=2(\sqrt{c_{jk}}+\a)\sqrt{\a(\a+2\sqrt{c_{jk}})}$.
Then $\Re\gamma_{jk}(\eta_{jk}^*(\a))=\sqrt{c_{jk}}+\a$ and
$e^{\alpha z}g_{jk}^0(z,\eta)$ is bounded if
$0<\a<2\sqrt{c_{jk}}$ and $\eta\in(-\eta_{jk}^*(\a),\eta_{jk}^*(\a))$.
For $\eta_0\in(0,\eta_{jk}^*(\a))$, we define a spectral
projection
$$P_{jk}(\eta_0):\calX_{jk}\mapsto
\spann\{g_{jk}^0(z_{jk},\pm\eta)e^{iy\eta}\mid \eta\in[-\eta_0,\eta_0]\}$$
associated with $\mL_{jk}$ as in \cite[Lemma~3.1]{Miz15}.
Let
\begin{equation}
  \label{eq:7}
\left\{
\begin{aligned}
& g_{jk,1}^0(z,\eta)=g_{jk}^0(z,\eta)+g_{jk}^0(z,-\eta)\,,
\enskip
g_{jk,2}^0(z,\eta)=\frac{g_{jk}^0(z,\eta)-g_{jk}^0(z,-\eta)}{i\eta}\,,
\\ &
g_{jk,1}^{0,*}(z,\eta)=\frac{g_{jk}^{0,*}(z,\eta)-g_{jk}^{0,*}(z,-\eta)}
{-2i\bar{\eta}}\,,
\enskip
g_{jk,2}^{0,*}(z,\eta)=\frac{g_{jk}^{0,*}(z,\eta)+g_{jk}^{0,*}(z,-\eta)}{2}\,,
\end{aligned}
\right.  
\end{equation}
\begin{gather*}
P_{jk}(\eta_0)f(x,y)=\frac{1}{2\pi}\sum_{\ell=1,2}
\int_{-\eta_0}^{\eta_0} a_\ell(\eta)g_{jk,\ell}^0(z_{jk},\eta)e^{iy\eta}\,d\eta\,,
\\
a_\ell(\eta)=\int_{\R^2}f(x,y)\overline{g^{0,*}_{jk,\ell}(z_{jk,\ell},\eta)}
e^{-iy\eta}\,
dxdy\,.
\end{gather*}
\begin{proposition}[Proposition~6.3 and Remark~6.1 in \cite{Miz23}]
  \label{lem:wmL1-decay}
  Let $1\le j<k\le3$ and $\a\in(0,2\sqrt{c_{jk}})$.
  Let $\mL_0$ and $\mL_{jk}$ be closed operators on $\calX_{jk}$. Then
  \begin{gather*}
\sigma(\mL_{jk})=\sigma(\mL_0)\cup\{\lambda_{jk}^\pm(\eta)\mid
  \eta\in[-\eta_{jk}^*(\a),\eta_{jk}^*(\a)]\}\,,
\\
\sigma(\mL_0)=\overline{
\{ip(\xi+i\a,\eta+2i\a a_{jk})\mid (\xi,\eta)\in\R^2\}}\,,    
  \end{gather*}
where
$p(\xi,\eta)=\frac{1}{4}\left(\xi^3-3\eta^2/\xi +4b_1\xi+4b_2\eta\right)$.
\par
 Moreover,
 $\lambda_{jk}^\pm(\eta)-\mL_{jk}$ is invertible on
 $\left(I-P_{jk}(\eta_0)\right)\calX_{jk}$ if $\eta_0\in(0,\eta_{jk}^*(\a))$
and $\eta\in(-\eta_0,\eta_0)$.
\end{proposition}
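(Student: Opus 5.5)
The plan is to reduce everything to the known theory for the standard line soliton $\varphi_c(z)=2c\sech^2\sqrt c z$ in \cite{Miz15,Miz23}. In the coordinates $(z_{jk},y)$, the computation in the proof of Lemma~\ref{lem:evmL0} already gives
\[
\mL_{jk}=\mL_{c_{jk}}+(b_2-3a_{jk})\pd_y .
\]
The weight $e^{2\a z_{jk}}$ depends only on $z_{jk}$, and the change $(X,Y)\mapsto(z_{jk},y)$ is a shear with unit Jacobian. So $\calX_{jk}$ is isometric to $L^2(\R^2;e^{2\a z}dz\,dy)$. After a Fourier transform in $y$, $\mL_{jk}$ becomes the direct integral $\int^\oplus \mL_{jk}(\eta)\,d\eta$ of the one-dimensional operators $\mL_{jk}(\eta)$ displayed above, acting on $L^2(\R;e^{2\a z}dz)$. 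I will therefore analyse $\lambda-\mL_{jk}(\eta)$ fibrewise and reassemble the result by Plancherel. The required resolvent bounds must be uniform in $\eta$.

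\textbf{Step 1: the free part.} Conjugating by $e^{\a z}$ sends $\pd_z\mapsto\pd_z-\a$. In the original variables $(X,Y)$ this is conjugation by $e^{\a(X+2a_{jk}Y)}$, so $\mL_0$ becomes a Fourier multiplier with symbol $ip(\xi+i\a,\eta+2i\a a_{jk})$. Hence $\lambda-\mL_0$ is invertible exactly when $\lambda$ stays a positive distance from the closure of the range of this symbol. This gives the stated formula for $\sigma(\mL_0)$.

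\textbf{Step 2: the soliton part.} The perturbation $-\frac32\pd_z(u_{jk}\cdot)$ is exponentially localized in $z$. Hence, for each fixed $\eta$, it is relatively compact with respect to $\mL_0(\eta)$ on the weighted space; this uses $\a<2\sqrt{c_{jk}}$, so the weight does not overwhelm the decay of $u_{jk}$. By analytic Fredholm theory, $\sigma(\mL_{jk}(\eta))\setminus\sigma(\mL_0(\eta))$ consists of isolated eigenvalues of finite multiplicity.

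\textbf{Step 3: identifying the eigenvalues.} Lemma~\ref{lem:evmL0} exhibits the eigenvalues $\lambda^\pm_{jk}(\eta)$ with eigenfunctions $g^0_{jk}(z,\pm\eta)$. For $|\eta|<\eta^*_{jk}(\a)$ these lie in the weighted space, since $\Re\gamma_{jk}(\eta)<\sqrt{c_{jk}}+\a$. The reverse inclusion — that there are no other eigenvalues — is the spectral stability result of \cite{APS,Miz15}, obtained for instance from an Evans function computed with the Darboux/Jost solutions $\Phi^0,\Phi^1$. I will import it. The same argument applied to $\mL_{jk}^*$ gives the adjoint eigenfunctions $g^{0,*}_{jk}$. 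Taking the union over $\eta$ and the closure then yields the formula for $\sigma(\mL_{jk})$.

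\textbf{Step 4: invertibility on the complement of $P_{jk}(\eta_0)$.} Fix $\eta_0<\eta^*_{jk}(\a)$. By \eqref{eq:g-g*-int}, for $0<|\eta|\le\eta_0$ the Riesz projection of $\mL_{jk}(\eta)$ onto the two eigenvalues $\lambda^\pm_{jk}(\eta)$ is the rank-two operator built from $g^0_{jk}(\cdot,\pm\eta)$ and $g^{0,*}_{jk}(\cdot,\pm\eta)/(\pm i\eta)$. Rewritten in the basis \eqref{eq:7}, this projection is regular at $\eta=0$, where the eigenvalues merge into the $2\times2$ Jordan block generated by $\pd_z\varphi$ and $\pd_c\varphi$ (cf. Remark~\ref{rem:gij0}). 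On the range of $I$ minus this projection, the reduced resolvent $(\lambda-\mL_{jk}(\eta))^{-1}$ is analytic near $\lambda=\lambda^\pm_{jk}(\eta')$ for all $|\eta'|<\eta_0$.

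For $|\eta|>\eta_0$, the point $\lambda^\pm_{jk}(\eta')$ is not in $\sigma(\mL_{jk}(\eta))$. This needs a separate check that $\lambda^\pm_{jk}(\eta')\ne\lambda^\pm_{jk}(\eta)$ and that $\lambda^\pm_{jk}(\eta')$ lies off the essential curve of the fibre $\eta$.

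\textbf{Main obstacle.} The hardest step is the uniformity of the resolvent bounds in $\eta$. The bounds must hold uniformly near $\eta=0$, where the Jordan block forms, and as $|\eta|\to\infty$, where one must use the high-frequency dominance of $\pd_z^3$ and $\eta^2\pd_z^{-1}$ over the potential. For $|\eta|\to\infty$ I would follow the energy estimates of \cite[Section~3]{Miz15}. Near $\eta=0$, I would use the expansion of Remark~\ref{rem:gij0} to write the reduced resolvent as a Laurent series whose singular part is killed exactly by $I-P_{jk}(\eta_0)$.
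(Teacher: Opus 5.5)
Your outline is correct as an argument, but it takes a different route from the one behind this proposition. The paper gives no proof here and quotes the result from \cite{Miz23}. The remark before Lemma~\ref{lem:specL23} indicates that in \cite{Miz23} the fibre operators are handled with the Darboux transformations $\mM_\pm(\eta)$, the linearized Miura maps. These intertwine $\mL_{jk}(\eta)$ with the linearization around $0$, which becomes a Fourier multiplier after conjugation by $e^{\a z}$. In that approach the part of $\sigma(\mL_{jk}(\eta))$ off $\sigma(\mL_0(\eta))$ comes from explicit finite-dimensional kernel and cokernel computations for $\mM_\pm(\eta)$, and these produce exactly the modes $g^0_{jk}(\cdot,\pm\eta)$ and $g^{0,*}_{jk}(\cdot,\pm\eta)$. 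The resolvent bounds are then inherited from the symbol bound \eqref{eq:Imp}, uniformly in $\eta$ and with explicit dependence on $\a$.

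You instead use soft spectral theory:
\begin{itemize}
\item analytic Fredholm theory in each fibre;
\item the Evans-function result of \cite{APS} for the absence of further eigenvalues;
\item Kato's perturbation theory for the total projection near the Jordan block at $\eta=0$;
\item continuity and compactness for bounded $\eta$;
\item a perturbative argument as $|\eta|\to\infty$.
\end{itemize}
This is standard and would also work for non-integrable perturbations once the spectral input is available. However, it outsources the decisive facts to \cite{APS}: absence of other eigenvalues, and algebraic simplicity of $\lambda^\pm_{jk}(\eta)$ for $0<|\eta|<\eta^*_{jk}(\a)$, which your rank-two Riesz projection needs. You must check that \cite{APS} covers the whole range $\a\in(0,2\sqrt{c_{jk}})$, and your constants are not explicit. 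The Darboux route is self-contained and quantitative. It is also the mechanism the paper reuses where no ready-made Evans function exists: for $\mL_{23}$ on $\calX_{12}$ with complex fibre parameter $\eta_R+i\eta_I^{23}$ (Lemma~\ref{lem:specL23}), and for $u_\mY$ itself in Section~\ref{sec:nonexistence}.

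Three small corrections.
\begin{itemize}
\item The condition $\a<2\sqrt{c_{jk}}$ plays no role in relative compactness. Conjugation by $e^{\a z}$ turns $\pd_z(u_{jk}\,\cdot)$ into $(\pd_z-\a)(u_{jk}\,\cdot)$ and leaves the decay of $u_{jk}$ untouched. What the condition does is give $\sup\Re\sigma(\mL_0)\le-\a(c_{jk}-\a^2/4)<0$ via \eqref{eq:Imp}, and keep $g^0_{jk}(\cdot,\pm\eta)$ in the weighted space as $z\to+\infty$.
\item In the analytic Fredholm step, you should say why each $\lambda\notin\sigma(\mL_0)$ lies, for every $\eta$, in the component of $\rho(\mL_0(\eta))$ that contains large positive reals.
\item Large $|\eta|$ needs no energy estimates. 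For the conjugated operators, \eqref{eq:Imp} gives $\|(\lambda-\mL_0(\eta))^{-1}(\pd_z-\a)\|=O(|\eta|^{-1/2})$ uniformly for $\Re\lambda\ge-b$ with $b<\a(c_{jk}-\a^2/4)$. So the potential term is a small perturbation there.
\end{itemize}
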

\par
Next, we will investigate spectral stability of $\mL_{23}$ in
$\calX_{12}$. Since modulations along $[2,3]$-soliton are
incoming (Remark~\ref{rem:wavemotion}),
we can expect exponential stabity of $\mL_{23}$ on $\calX_{12}$.

\begin{lemma}
  \label{lem:mL-12inX1}
Assume that $\mL_{23}$ is a closed operator on $\calX_{12}$ and that
\begin{equation}
  \label{ass:alpha23}
0<\a<\min\left\{\frac{c_{23}}{\k_3-\k_1},\,\sqrt{c_{12}}\right\}\,.
\end{equation}
Then there exists a positive number $\delta_1$ such that
\begin{equation*}
\sup_{\Re\lambda\ge -\delta_1\a}\|(\lambda-\mL_{23})^{-1}\|_{B(\calX_{12})}<\infty\,.
\end{equation*}
\end{lemma}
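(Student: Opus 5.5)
The plan is to reduce the estimate to a family of one-dimensional problems for the $[2,3]$-soliton in its own coordinate $z_{23}$. The key observation is that the weight of $\calX_{12}$ is tilted with respect to $u_{23}$. Since $2a_{12}-2a_{23}=\k_1-\k_3$, we have
\begin{equation*}
z_{12}=z_{23}+(\k_1-\k_3)y\,,\qquad e^{\a z_{12}}=e^{\a z_{23}}e^{\eta_I^{23}y}\,,\qquad \eta_I^{23}=\a(\k_1-\k_3)<0\,.
\end{equation*}
Setting $f=e^{\a z_{12}}u$, the bound on $\calX_{12}$ becomes an $L^2(\R^2)$ bound for the conjugated operator $e^{\a z_{12}}\mL_{23}e^{-\a z_{12}}$. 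I would work in the coordinates $(z_{23},y)$ and take the Fourier transform in $y$, which the translation invariance of $u_{23}$ in the direction $z_{23}=\text{const}$ allows. The factor $e^{\eta_I^{23}y}$ then shifts the Fourier variable $\eta$ to $\eta+i\eta_I^{23}$. By Plancherel, the claim reduces to
\begin{equation*}
\sup_{\eta\in\R,\ \Re\lambda\ge-\delta_1\a}\bigl\|(\lambda-\mL_{23}(\eta+i\eta_I^{23}))^{-1}\bigr\|_{B(L^2(\R;e^{2\a z}dz))}<\infty\,,
\end{equation*}
where $\mL_{23}(\cdot)$ is the one-dimensional operator introduced before Lemma~\ref{lem:evmL0}, continued analytically in $\eta$. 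This continuation is harmless because $\pd_z^{-1}$ acts on the weighted space with $\a>0$.

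\textbf{Step 1: the free part.} The symbol of $\mL_0$ after both conjugations is $ip(\xi+i\a,\eta+i\eta_I^{23}+2i\a a_{23})$, that is, $ip(\xi+i\a,\eta+2i\a a_{12})$, consistent with Proposition~\ref{lem:wmL1-decay} for $\calX_{12}$. A direct computation of its real part shows it is at most $-c\a(\xi^2+\a^2+\eta^2/(\xi^2+\a^2))$ for some $c>0$ and small $\a$. This gives uniform invertibility of $\lambda-\mL_0$ for $\Re\lambda\ge-\delta_1\a$, with gain of derivatives. It also shows that the potential term $-\frac32\pd_z(u_{23}\cdot)$ is relatively compact and small at high frequencies. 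Consequently the resolvent is uniformly bounded when $|\eta|+|\lambda|$ is large, by a Neumann series. It remains to treat $(\eta,\lambda)$ in a compact set, where the resolvent exists and is bounded exactly when $\lambda$ is not an eigenvalue of $\mL_{23}(\eta+i\eta_I^{23})$, by the Fredholm alternative.

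\textbf{Step 2: the eigenvalues.} By Lemma~\ref{lem:evmL0}, the candidates are $\lambda_{23}^\pm(\eta+i\eta_I^{23})$ and $\tilde\lambda_{23}^\pm(\eta+i\eta_I^{23})$. Here the second half of \eqref{ass:alpha23} gives $|\eta_I^{23}|<c_{23}$, so that $\gamma_{23}(\pm(\eta+i\eta_I^{23}))$ stays in the right half plane. Following the Evans function/Darboux argument of \cite{Miz15,Miz23}, I would show that these are the only possible eigenvalues, among them only those whose eigenfunction $g_{23}^0(\pm z,\cdot)$ lies in $L^2(e^{2\a z}dz)$. Then I would check their location. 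At $\eta=0$ we have $b_2-3a_{23}=\k_1-\frac{\k_2+\k_3}{2}$ and $\sqrt{c_{23}}=\frac{\k_3-\k_2}{2}$, so
\begin{equation*}
\lambda_{23}^\pm(i\eta_I^{23})=-\eta_I^{23}\Bigl(\k_1-\tfrac{\k_2+\k_3}{2}\pm\sqrt{c_{23}\mp\eta_I^{23}}\Bigr)\,.
\end{equation*}
The bracket is close to $\k_1-\k_2$, respectively $\k_1-\k_3$, both negative, while $-\eta_I^{23}>0$. Hence $\Re\lambda_{23}^\pm\approx-\a(\k_3-\k_1)\times(\text{positive})$. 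This is the incoming-wave computation of Remark~\ref{rem:wavemotion} made quantitative.

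For general real $\eta$, I would expand $\Re\lambda_{23}^\pm(\eta+i\eta_I^{23})$ and use $\Re\gamma_{23}\ge\sqrt{c_{23}}$-type monotonicity to show it is at most $-2\delta_1\a$ uniformly in $\eta$. For the remaining $\eta$, the eigenfunction fails to be in the weighted space, and the threshold there lies on the free spectrum, which is already handled. The $\tilde\lambda$ branches would be treated the same way, or excluded because $g^0_{23}(-z,\cdot)$ grows as $z\to\infty$ against the weight $e^{\a z}$.

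\textbf{Main obstacle.} The hard point is uniformity near the thresholds $\eta\approx\pm\eta_{23}^*$-type values, where an eigenvalue curve meets $\sigma(\mL_0)$ and the Fredholm argument degenerates. There I would use the explicit Jost/Darboux representation of the resolvent kernel, as in \cite[Section~6]{Miz23}, to get bounds continuous up to the threshold. Together with compactness in $(\eta,\lambda)$, this gives the uniform bound with some $\delta_1>0$.
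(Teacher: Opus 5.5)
Your overall route is the paper's. The Fourier reduction is exactly \eqref{eq:X12-decomp}. The claim that off $\sigma(\mL_0)$ only continuous eigenvalues attached to $g^0_{23}$ can occur is Lemma~\ref{lem:specL23}, proved with Darboux transformations as in \cite{Miz23}. The conclusion then comes from combining this with Lemma~\ref{lem:lambda2-'} and the free bound. Two small corrections:
\begin{itemize}
\item The gap of size $\a$ for the free part comes from the constant term in $\Re\, ip(\xi+i\a,\eta+2i\a a_{12})=-\frac{\a}{4}\{3\xi^2+4c_{12}-\a^2+\cdots\}$. The bound $-c\a(\xi^2+\a^2+\cdots)$ you wrote would only give a gap of order $\a^3$.
\item The condition $\a(\k_3-\k_1)<c_{23}$ is the \emph{first} entry of \eqref{ass:alpha23}, not the second.
\end{itemize}

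The genuine problem is in Step~2: which branches you keep, and how you bound them.

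\textbf{The $+$ branches never occur.} For $\eta=\eta_R+i\eta_I^{23}$ one has $\Re\gamma_{23}(\eta)\ge\sqrt{c_{23}+\a(\k_3-\k_1)}>\sqrt{c_{23}}+\a$. Hence $g^0_{23}(\pm z,\eta)\notin L^2(\R;e^{2\a z}dz)$, and your check of $\lambda_{23}^+(i\eta_I^{23})$ does not bear on the spectrum.

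\textbf{The $\tilde\lambda$ branch cannot be excluded.} We have $\gamma_{23}(-\eta)^2=c_{23}-\a(\k_3-\k_1)-i\eta_R$ and $c_{23}-\a(\k_3-\k_1)<(\sqrt{c_{23}}-\a)^2$. So $g^0_{23}(-z,-\eta)$ does lie in $L^2(\R;e^{2\a z}dz)$ for $|\eta_R|<\eta_{\#,-}(\a)$. Thus $\tilde\lambda_{23}^-(\eta_R+i\eta_I^{23})$ is genuinely part of $\sigma(\mL_{23})$, and your proposed exclusion ``because $g^0_{23}(-z,\cdot)$ grows against $e^{\a z}$'' is false.

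\textbf{Its real part has the opposite monotonicity.} Treating this branch ``the same way'' as $\lambda_{23}^-$, by checking $\eta_R=0$ and using monotonicity, also fails. Write $\gamma_{23}(-\eta)=\gamma_R+i\gamma_I$, so that $2\gamma_R\gamma_I=-\eta_R$. Then
\begin{equation*}
\Re\tilde\lambda_{23}^-(\eta_R+i\eta_I^{23})=\a(\k_3-\k_1)\bigl(b_2-3a_{23}+\gamma_R\bigr)+\frac{\eta_R^2}{2\gamma_R}\,,
\end{equation*}
and both terms increase in $|\eta_R|$. This is the reverse of $\lambda_{23}^-$, whose real part decreases away from $\eta_R=0$. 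So the supremum is attained at the threshold $\eta_R=\pm\eta_{\#,-}(\a)$, where $\gamma_R=\sqrt{c_{23}}-\a$. There one finds
\begin{equation*}
\Re\tilde\lambda_{23}^-=-\a(\a+\k_2-\k_1)(\k_2-\k_1+2\a)\le-C\a\,.
\end{equation*}
This is the third estimate of Lemma~\ref{lem:lambda2-'}, and it is the step your proposal is missing. Equivalently: once you know the maximum sits at the threshold, you can use that the threshold value lies in $\sigma(\mL_0)$ and apply the free bound. Without identifying that, your Step~2 does not give a uniform gap for this branch.
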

We remark that Lemma~\ref{lem:mL-12inX1} is the same  with \cite[Lemma~7.19]{Miz23} if
we let $\k_2=\k_3$ in \cite[Lemma~7.19]{Miz23}.
\par
To begin with, we investigate continuous eigenfunctions which decay exponentially
as $z_{12}\to\infty$.
By the definitions,
\begin{equation}
  \label{eq:X12-decomp}
  \calX_{12}= e^{-y\eta_I^{23}}\calX_{23}=\bigoplus_{\eta=\eta_R+i\eta_I^{23}}
  e^{iy\eta}L^2(\R;e^{2\a z_{23}}dz_{23})\,.
\end{equation}
Let $\eta=\eta_R+i\eta_I^{23}$ and $\eta_R\in\R$. By
Lemma~\ref{lem:evmL0} and \eqref{eq:X12-decomp},
$e^{iy\eta}g_{23}^0(z_{23},\pm\eta)$ and
$e^{iy\eta}g_{23}^0(-z_{23},\pm\eta)$
are continuous eigenfunctions if
$g_{23}^0(z_{23},\pm\eta)$ and $g_{23}^0(-z_{23},\pm\eta)$
belong to $L^2(\R;e^{2\a z_{23}}dX)$, respectively.

Let $\a$ be a constant satisfying \eqref{ass:alpha23} and
\begin{gather*}
    \label{def:e*-'}
\eta_{*,-}(\a)=2(\sqrt{c_{23}}+\a)\sqrt{\a(\a+2\k_3-2a_{12})}\,,
\\ \label{def:esh-'}
  \eta_{\#,-}(\a)=2(\sqrt{c_{23}}-\a)\sqrt{\a(\a+\k_2-\k_1)}  \,.
\end{gather*}
Then $c_{23}+\eta_I>0$,
$\Re\gamma_{23}(\eta_{*,-}(\a)-i\eta_I^{23})=\sqrt{c_{23}}+\a$,
$\Re\gamma_{23}(\eta_{\#,-}(\a)-i\eta_I^{23})=\sqrt{c_{23}}-\a>0$
and
\begin{gather*}
\Re\gamma_{23}(-\eta)<\sqrt{c_{23}}+\a \quad
\text{if $\eta=\eta_R+i\eta_I^{23}$ and $|\eta_R|<\eta_{*,-}(\a)$,}
\\
\Re\gamma_{23}(-\eta)<\sqrt{c_{23}}-\a\quad
\text{if $\eta=\eta_R+i\eta_I^{23}$ and $|\eta_R|<\eta_{\#,-}(\a)$.}
  \end{gather*}
Thus for $\eta=\eta_R+i\eta_I^{23}$,
$e^{iy\eta}g_{23}^0(z,-\eta)$ is a continuous eigenfunction of
$\mL_{23}$ if $|\eta_R|<\eta_{*,-}(\a)$ and
$e^{iy\eta}g_{23}^0(-z,-\eta)$ is a continuous eigenfunction of
$\mL_{23}$ if $|\eta_R|<\eta_{\#,-}(\a)$.
\par
On the other hand,  $e^{iy\eta}g_{23}^0(\pm z,\eta)$
with $\eta=\eta_R+i\eta_I^{23}$ is not a continuous eigenfunction of $\mL_{23}$
since $\Re\gamma_{23}(\eta)\ge \gamma_{23}(i\eta_I^{23})>\sqrt{c_{23}}+\a$
and $e^{\a z_{23}}g_{23}^0(\pm z_{23},\eta)\not\in L^2(\R)$.
\par
Using Darboux transformations
$$\mM_\pm(\eta):=-\pd_{z_{23}}+i\eta\pd_{z_{23}}^{-1}
-2\sqrt{c_{23}}\tanh\sqrt{c_{23}}z_{23}\,,$$
we can prove Lemma~\ref{lem:specL23} below in the same way as
\cite[Propositions~6.3 and 6.6]{Miz23}.
\begin{lemma}
  \label{lem:specL23}
Let $\mL_0$ and $\mL_{23}$ be closed operators on $\calX_{12}$.
Assume \eqref{ass:alpha23}. Then
  \begin{multline*}
\sigma(\mL_{23})=\sigma(\mL_0)\cup \{\lambda_{23}^-(\eta_R+i\eta_I^{23})
    \mid |\eta_R|\le \eta_{*,-}(\a)\}
    \\  \cup\{\tilde{\lambda}_{23}^-(\eta_R+i\eta_I^{23})
    \mid |\eta_R|\le \eta_{\#,-}(\a)\}\,.
  \end{multline*}
\end{lemma}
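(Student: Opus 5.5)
The plan is to reduce the spectral problem for $\mL_{23}$ on $\calX_{12}$ to a family of one-dimensional problems, and then handle each fibre in the two standard steps: compute the essential spectrum, and show there are no eigenvalues except the explicit continuous eigenvalues. By the decomposition \eqref{eq:X12-decomp} and the translation invariance of $\mL_{23}$ in the direction of the $[2,3]$-crest, conjugating by the Fourier transform in $y$ along the line $\Im\eta=\eta_I^{23}$ gives
\[
\mL_{23}\simeq\int^\oplus_{\eta_R\in\R}\mL_{23}(\eta_R+i\eta_I^{23})\,d\eta_R
\quad\text{on}\quad \int^\oplus L^2(\R;e^{2\a z_{23}}dz_{23})\,d\eta_R .
\]
So $\lambda\in\rho(\mL_{23})$ exactly when $\lambda-\mL_{23}(\eta)$ is invertible for every $\eta=\eta_R+i\eta_I^{23}$, with a resolvent bound uniform in $\eta_R$. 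Conjugating further by $e^{\a z_{23}}$ puts each fibre operator on unweighted $L^2(\R)$.

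\emph{Step 1: essential spectrum.} Replacing $u_{23}$ by $0$, the symbol of the constant-coefficient part, evaluated at $\xi+i\a$ and $\eta_R+i\eta_I^{23}$, is exactly $ip(\xi+i\a,\eta+2i\a a_{12})$. This is because $\eta_I^{23}+2\a a_{23}=2\a a_{12}$, which is precisely why $\calX_{12}$ is the right space. Hence $\sigma(\mL_0)$ on $\calX_{12}$ is as in Proposition~\ref{lem:wmL1-decay} with $a_{12}$. Since $u_{23}$ decays exponentially in $z_{23}$, Weyl's theorem applied fibrewise gives $\sigma_{ess}(\mL_{23}(\eta))=\sigma_{ess}(\mL_0(\eta))$. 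Taking the union over $\eta_R$ and the closure produces $\sigma(\mL_0)$.

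\emph{Step 2: point spectrum of the fibres.} The Darboux transformation $\mM_\pm(\eta)$ intertwines the two problems: $\mM(\eta)\mL_{23}(\eta)=\mL_0(\eta)\mM(\eta)$, and similarly for the adjoint, as in \cite[Propositions~6.3 and 6.6]{Miz23}.
\begin{itemize}
\item Suppose $\lambda\notin\sigma(\mL_0(\eta))$ and $(\lambda-\mL_{23}(\eta))f=0$ with $f$ in the weighted space. Then $\mM f$ solves a constant-coefficient equation and lies in the weighted space, so $\mM f=0$.
\item The kernel of $\mM(\eta)$ consists of functions of the form $\pd_z^2(e^{\mp\gamma_{23}(\pm\eta)z}\sech\sqrt{c_{23}}z)$, that is, $g_{23}^0(\pm z,\pm\eta)$. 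By Lemma~\ref{lem:evmL0} these correspond to the eigenvalues $\lambda_{23}^\pm(\eta)$ and $\tilde\lambda_{23}^\pm(\eta)$.
\item Which of these lie in $L^2(e^{2\a z}dz)$ is decided by the inequalities on $\Re\gamma_{23}$ listed just before the lemma. Only $g_{23}^0(z,-\eta)$ with $|\eta_R|<\eta_{*,-}(\a)$ and $g_{23}^0(-z,-\eta)$ with $|\eta_R|<\eta_{\#,-}(\a)$ qualify; the $+\eta$ modes are excluded because $\Re\gamma_{23}(\eta)>\sqrt{c_{23}}+\a$.
\end{itemize}
The same argument for $\mL_{23}(\eta)^*$ on the dual weight $e^{-2\a z}$ shows that in the remaining region the range is closed and dense, so $\lambda-\mL_{23}(\eta)$ is invertible. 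This gives the inclusion "$\subset$". For "$\supset$", the continuous eigenvalues belong to the spectrum because the fibre eigenfunctions form approximate eigenfunctions of $\mL_{23}$ after cutting off in $\eta_R$. The endpoints $|\eta_R|=\eta_{*,-},\eta_{\#,-}$ are included by closedness of the spectrum; at these endpoints the curves meet $\sigma(\mL_0)$, as noted in the remark after Theorem~\ref{thm:specstability}.

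\emph{Main obstacle.} The hardest part is the uniformity in $\eta_R$, together with the inversion of $\mM(\eta)$ when $\eta$ is complex with $\Im\eta=\eta_I^{23}\neq0$. In that case $\pd_z^{-1}$ and $i\eta\pd_z^{-1}$ must be realized on the weighted space using the correct direction of integration. I would first fix this direction by the sign of the weight, reading off $\Re$ of the roots of the Darboux symbol $-\zeta+i\eta/\zeta-2\sqrt{c_{23}}$ at $\zeta=\xi+i\a$. I would then check that condition \eqref{ass:alpha23} keeps these roots off the line $\Im\zeta=\a$, so that $\mM(\eta)^{-1}$ is bounded modulo its explicit kernel. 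This reproduces \cite[Lemma~7.19]{Miz23} in the degenerate case $\k_2=\k_3$.
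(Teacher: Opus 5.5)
Your overall architecture is the paper's: fibre decomposition along $\Im\eta=\eta_I^{23}$ via \eqref{eq:X12-decomp}, essential spectrum from $\mL_0$, Darboux transformations for the rest, selection of the admissible modes by the $\Re\gamma_{23}$ inequalities, and Weyl sequences for ``$\supset$''. The paper itself defers the details to \cite[Propositions~6.3 and 6.6]{Miz23}. However, Step~2 rests on a false identity.

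Call the displayed operator $\mM_-(\eta)=-\pd_z+i\eta\pd_z^{-1}-2\sqrt{c_{23}}\tanh\sqrt{c_{23}}z$ and put $h_\mu=e^{\mu z}\sech\sqrt{c_{23}}z$. A direct computation gives $\mM_-(\eta)\pd_zh_\mu=(c_{23}+i\eta-\mu^2)h_\mu$. Hence $\ker\mM_-(\eta)=\spann\{\pd_zh_{\pm\gamma_{23}(\eta)}\}$. This kernel is two-dimensional and consists of adjoint-type functions (e.g.\ $\frac12\pd_zh_{\gamma_{23}(\eta)}=g^{0,*}_{23}(z,-\bar\eta)$), not of the functions $g^0_{23}(\pm z,\pm\eta)$. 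Moreover $\mM_-(\eta)\pd_z=\pd_z\mM_-(\eta)+2c_{23}\sech^2\sqrt{c_{23}}z$, so
$\mM_-(\eta)g^0_{23}(z,\eta)=\frac{c_{23}^{3/2}}{\gamma_{23}(\eta)}\sech^2(\sqrt{c_{23}}z)\,\pd_zh_{-\gamma_{23}(\eta)}\neq0$.
After multiplication by $e^{\gamma_{23}(\eta)z}$ this decays at both ends. So it is not an exponential polynomial and cannot be an eigenfunction of the constant-coefficient operator $\mL_0(\eta)$, even though $g^0_{23}(z,\eta)$ is an eigenfunction of $\mL_{23}(\eta)$. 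Thus $\mM_-(\eta)\mL_{23}(\eta)\neq\mL_0(\eta)\mM_-(\eta)$, and the partner with $+\pd_z$ fails the same test. Your step ``$\mM f$ solves a constant-coefficient equation, hence $\mM f=0$'' therefore has no basis.

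What the Darboux maps actually do (cf.\ \eqref{eq:LMiura-1} and Lemma~\ref{cl:kerM-P}) is link both problems to an intermediate linearized modified KP problem. The operators $\mM_-(\eta)$ and $\mM_+(\eta)=\pd_z+i\eta\pd_z^{-1}-2\sqrt{c_{23}}\tanh\sqrt{c_{23}}z$, the fibres of $\nabla M_-$ and $\nabla M_+$, send a solution $V$ of that problem to solutions around $0$ and around $u_{23}$ respectively. The correct argument then runs as follows.

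\begin{enumerate}
\item To exclude an eigenfunction $U$, you must first produce $V$ in the weighted space with $U=\mM_+(\eta)V$. This is the fibrewise analogue of solving $\mathcal{A}_2(\lambda)\mathbf{V}=\mathcal{B}_2\mathbf{U_2}$ in Lemmas~\ref{lem:A4-est} and \ref{lem:ImcalA2}. It requires orthogonality conditions against adjoint continuous eigenfunctions, which must be verified for $U$.
\item Only then is $\mM_-(\eta)V$ a weighted eigenfunction of $\mL_0(\eta)$, hence zero.
\item So $V\in\ker\mM_-(\eta)$ and $U=2\pd_zV\in\spann\{g^0_{23}(\pm z,\eta)\}$, as in Lemma~\ref{lem:kermapU1-U2}.
\item The modes $g^0_{23}(\pm z,-\eta)$ come from running the argument with $-\eta$. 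This works because the fibre operator depends on $\eta$ only through $\eta^2$, up to the scalar $i(b_2-3a_{23})\eta$.
\end{enumerate}

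The solvability step is the heart of the matter, not a technicality. Formally $\mM_+(\eta)^*=\mM_-(\bar\eta)$, so $g^{0,*}_{23}(z,-\eta)=\frac12\pd_zh_{\gamma_{23}(\bar\eta)}$ lies in $\ker\mM_+(\eta)^*$. By \eqref{eq:g-g*-int} it pairs nontrivially with the genuine eigenfunction $g^0_{23}(z,-\eta)$, which is therefore not in the range of $\mM_+(\eta)$ on the weighted space. This step is also where \eqref{ass:alpha23} and the uniformity in $\eta_R$ really enter, rather than in locating roots of a frozen Darboux symbol. As written, your proposal skips it and does not establish the inclusion ``$\subset$''.
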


We can prove that $\{\lambda_{23}^-(\eta_R+i\eta_I^{23})\}$ and
$\{\tilde{\lambda}_{23}^-(\eta_R+i\eta_I^{23})\}$
lie in the stable half plane in the same way as \cite[Lemma~7.20]{Miz23}.
\begin{lemma}
  \label{lem:lambda2-'}
Assume \eqref{ass:alpha23}.
Let $\eta=\eta_R+i\eta_I^{23}$ and $\eta_R\in\R$. Then
\begin{gather*}
  \eta_R\pd_{\eta_R}\Re \lambda_{23}^-(\eta)
=-\eta_R\pd_{\eta_R}\Re \tilde{\lambda}_{23}^-(\eta) <0
\quad\text{for any $\eta_R\in\R\setminus\{0\}$,}
\\
\Re\lambda_{23}^-(\eta_R+i\eta_I^{23})\le  \lambda_{23}^-(i\eta_I^{23})
\le -C\a
\quad\text{for any $\eta_R\in\R$,}
\\
\Re\tilde{\lambda}_{23}^-(\eta_R+i\eta_I^{23})\le
\Re \tilde{\lambda}_{23}^-(\eta_{\#,-}(\a)+i\eta_I^{23})
\le -C\a\quad\text{if $|\eta_R|\le\eta_{\#,-}(\a)$,}
\end{gather*}
where $C$ is a positive constant that does not depend on $\a$.
\end{lemma}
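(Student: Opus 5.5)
Here $\lambda_{23}^-(\eta)=i\eta\{b_2-3a_{23}-\gamma_{23}(-\eta)\}$ and $\tilde\lambda_{23}^-(\eta)=i\eta\{b_2-3a_{23}+\gamma_{23}(-\eta)\}$. Their sum $2i\eta(b_2-3a_{23})$ is linear in $\eta$, so
\[
\Re\lambda_{23}^-(\eta)+\Re\tilde\lambda_{23}^-(\eta)=-2\eta_I^{23}(b_2-3a_{23})
\]
is independent of $\eta_R$. This gives the first identity $\eta_R\pd_{\eta_R}\Re\lambda_{23}^-=-\eta_R\pd_{\eta_R}\Re\tilde\lambda_{23}^-$ at once. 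It therefore suffices to study $F(\eta_R):=\Re\{-i\eta\gamma_{23}(-\eta)\}$ with $\eta=\eta_R+i\eta_I^{23}$. Note that $b_2-3a_{23}=\k_1-\frac{\k_2+\k_3}{2}$ and $\eta_I^{23}=\a(\k_1-\k_3)<0$.

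\emph{Monotonicity.} Write $\gamma_{23}(-\eta)=p+iq$ with $p>0$. Then
\[
(p+iq)^2=c_{23}-i\eta=c_{23}+\eta_I^{23}-i\eta_R ,
\]
so $p^2-q^2=c_{23}+\eta_I^{23}=:c'$ and $2pq=-\eta_R$. Assumption \eqref{ass:alpha23} gives $c'>c_{23}-\a(\k_3-\k_1)>0$. Next,
\[
-i\eta(p+iq)=-i\eta_Rp+\eta_Rq+\eta_I^{23}p-i\eta_I^{23}q ,
\]
so $F=\eta_Rq+\eta_I^{23}p$. Since $p=\sqrt{(\sqrt{c'^2+\eta_R^2}+c')/2}$ is even in $\eta_R$ and increasing in $|\eta_R|$, and $q=-\eta_R/(2p)$, we get $\eta_Rq=-\eta_R^2/(2p)$. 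From $p^2=(\sqrt{c'^2+\eta_R^2}+c')/2$ one has $\eta_R^2=4p^4-4c'p^2$, hence $\eta_Rq=-2p^3+2c'p$ and
\[
F=-2p^3+(2c'+\eta_I^{23})p=:h(p).
\]
Because $p\ge\sqrt{c'}$, we have $h'(p)=-6p^2+2c'+\eta_I^{23}\le-4c'+\eta_I^{23}<0$. As $p$ increases in $|\eta_R|$, $F$ strictly decreases in $|\eta_R|$, i.e. $\eta_R\pd_{\eta_R}F<0$ for $\eta_R\neq0$. This yields the first line of the lemma: $\Re\lambda_{23}^-$ is decreasing in $|\eta_R|$ and $\Re\tilde\lambda_{23}^-$ is increasing. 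Consequently $\Re\lambda_{23}^-$ is maximal at $\eta_R=0$, and $\Re\tilde\lambda_{23}^-$ on $|\eta_R|\le\eta_{\#,-}(\a)$ is maximal at the endpoints.

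\emph{Bounds at the extremal points.} At $\eta_R=0$ the spectral parameter is real:
\[
\lambda_{23}^-(i\eta_I^{23})=-\eta_I^{23}\bigl\{b_2-3a_{23}-\sqrt{c'}\bigr\}.
\]
Expanding in $\a$ with $\sqrt{c'}=\sqrt{c_{23}}+O(\a)$ gives
\[
b_2-3a_{23}-\sqrt{c_{23}}=\k_1-\k_2<0\quad\text{and}\quad-\eta_I^{23}=\a(\k_3-\k_1),
\]
so $\lambda_{23}^-(i\eta_I^{23})=-\a(\k_3-\k_1)(\k_2-\k_1)+O(\a^2)\le-C\a$ for small $\a$.

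For $\tilde\lambda_{23}^-$ at $\eta_R=\eta_{\#,-}(\a)$ we use $p=\sqrt{c_{23}}-\a$, as stated before the lemma. Then
\[
\Re\tilde\lambda_{23}^-=-\eta_I^{23}(b_2-3a_{23})-h(p)\quad\text{with}\quad h(p)=p(2c'+\eta_I^{23}-2p^2).
\]
Expanding $c'=c_{23}+\eta_I^{23}$ and $p=\sqrt{c_{23}}-\a$ to first order in $\a$ makes the leading terms cancel. The plan is to compute the $O(\a)$ coefficient explicitly in terms of $\k_j$ and check that it is strictly negative. If the expansion is correct, it should come out like $-\a(\k_3-\k_1)(\k_3-\k_2)$ or a similar product of gaps. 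Uniformity of $C$ in $\a$ then follows since the remainders are $O(\a^2)$.

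The main obstacle is this last endpoint computation: the sign must emerge after cancellation at order $\a$. To keep the bookkeeping straight I would use the exact formula for $\eta_{\#,-}(\a)$ rather than expanding twice. As a consistency check, the sign should be negative because the endpoint lies in $\sigma(\mL_0)$, which by Proposition~\ref{lem:wmL1-decay} lies in $\Re\lambda<0$ for small $\a$ in the weighted space.
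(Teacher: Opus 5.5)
Your reduction is sound. For the monotonicity part it is also neater than the direct differentiation of $\Re\gamma$ and $\Im\gamma$ used in the analogous Lemma~\ref{lem:lambda13'}; for this lemma the paper gives no proof of its own and defers to Lemma~7.20 of its reference Miz23. The sum identity gives the first equality. Moreover $F=h(p)$ with $h'(p)\le -4c'+\eta_I^{23}<0$ for $p\ge\sqrt{c'}$, and $\eta_R\pd_{\eta_R}p=\eta_R^2/(4p\sqrt{c'^2+\eta_R^2})>0$ for $\eta_R\ne0$. Together these give the strict sign and both maximality statements.

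\textbf{The gap is the third line.} Its whole content is the sign of $\Re\tilde\lambda_{23}^-(\eta_{\#,-}(\a)+i\eta_I^{23})$, and you leave this as a plan with a guessed coefficient that is in fact wrong. The appeal to $\sigma(\mL_0)$ does not replace the computation, because you never show that this endpoint belongs to $\sigma(\mL_0)$.

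\textbf{There is also a slip at $\eta_R=0$.} One has $b_2-3a_{23}-\sqrt{c_{23}}=\k_1-\k_3$, not $\k_1-\k_2$; what you computed is $b_2-3a_{23}+\sqrt{c_{23}}$. The sign survives. However, expansions with $O(\a^2)$ remainders only cover small $\a$, whereas the lemma assumes only \eqref{ass:alpha23}.

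\textbf{Both bounds are exact and need no expansion.} Since $b_2-3a_{23}=\k_1-\frac{\k_2+\k_3}{2}<0<\sqrt{c'}$,
\[
\lambda_{23}^-(i\eta_I^{23})=\a(\k_3-\k_1)\bigl(b_2-3a_{23}-\sqrt{c'}\bigr)<-\frac{\a}{2}(\k_3-\k_1)(\k_2+\k_3-2\k_1)\,.
\]
For the endpoint, put $s=\sqrt{c_{23}}$ and $e=\k_2-\k_1$. Then $\k_3-\k_1=2s+e$, $b_2-3a_{23}=-(s+e)$, $\eta_I^{23}=-\a(2s+e)$ and $c'=s^2-\a(2s+e)$. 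At $\eta_R=\pm\eta_{\#,-}(\a)$ we have $p=s-\a$. Your own formula for $h$ gives
\[
h(s-\a)=(s-\a)\bigl\{2c'+\eta_I^{23}-2(s-\a)^2\bigr\}=-\a(s-\a)(2s+3e+2\a)\,,
\]
hence
\begin{align*}
\Re\tilde\lambda_{23}^-(\pm\eta_{\#,-}(\a)+i\eta_I^{23})
&=-\eta_I^{23}(b_2-3a_{23})-h(s-\a)
\\ &=-\a(2s+e)(s+e)+\a(s-\a)(2s+3e+2\a)=-\a(e+\a)(e+2\a)\,.
\end{align*}
So the endpoint value is exactly $-\a(\k_2-\k_1+\a)(\k_2-\k_1+2\a)\le-(\k_2-\k_1)^2\a$ for every admissible $\a$. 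Its leading coefficient is $-(\k_2-\k_1)^2$, not the product you guessed, and no delicate cancellation is involved. With these two exact evaluations inserted, your argument proves the lemma with $C=\min\{(\k_2-\k_1)^2,\tfrac12(\k_3-\k_1)(\k_2+\k_3-2\k_1)\}$.
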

\begin{proof}[Proof of Lemma~\ref{lem:mL-12inX1}]
As in \cite[Lemma~6.1]{Miz23}, we have
$\sup\{\Re\lambda\mid \lambda\in\sigma(\mL_0)\}=\a(c_{12}-\a^2/4)<0$.
Combining this fact with Lemmas~\ref{lem:specL23} and \ref{lem:lambda2-'},
we have Lemma~\ref{lem:mL-12inX1}.
\end{proof}

The two curves of continuous eigenvalues
$\lambda_{13}^-(\eta)$ and $\lambda_{13}^-(\eta)$
meet at $\lambda=0$. To separate these two curves from $\lambda=0$,
we introduce
\begin{equation*}
\calX_{13}(\nu):=L^2(\R^2;e^{2(\a z_{13}+\nu y)}dXdY)\,.
  \end{equation*}
Let $\eta_{13}^*(\a,\nu)=2(\sqrt{c_{13}}+\a)\sqrt{\a(\a+2\sqrt{c_{13}})+\nu}$.
Then we have the following.
\begin{lemma}
  \label{lem:X13nu}
Let $\mL_{13}$ and $\mL_0$ be closed operators on $\calX_{13}(\nu)$.
Assume
\begin{equation}
  \label{ass:X13nu}
0\le\nu<\min\{\sqrt{c_{12}}\a\,,\, c_{13}-(|\k_2-a_{13}|+\a)^2\,,\,
(\k_3-\beta_{*,-})(\beta_{*,-}-\k_1)\}\,.
\end{equation}
Then
 \begin{multline*}
\sigma(\mL_{13})=\sigma(\mL_0)\cup \{\lambda_{13}^+(\eta_R+i\nu)
    \mid |\eta_R|\le \eta_{13}^*(\a,\nu)\}
    \\  \cup\{\lambda_{13}^-(\eta_R+i\nu)
    \mid |\eta_R|\le \eta_{13}^*(\a,-\nu)\}\,,
  \end{multline*}
  \begin{equation*}
\sigma(\mL_0)=\overline{
\left\{ip\left(\xi+i\a,\eta+i(2\a a_{13}+\nu)\right)
\mid (\xi,\eta)\in\R^2\right\}}\,.
  \end{equation*}
\end{lemma}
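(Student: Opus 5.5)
The idea is to conjugate the weight $e^{\nu y}$ away and reduce to the already known case $\nu=0$, i.e.\ Proposition~\ref{lem:wmL1-decay} for $\mL_{13}$ on $\calX_{13}$. The map $f\mapsto e^{\nu y}f$ is an isometry $\calX_{13}(\nu)\to\calX_{13}$. Under it $\mL_{13}$ becomes $e^{\nu y}\mL_{13}e^{-\nu y}$, which is $\mL_{13}$ with $\pd_y$ replaced by $\pd_y-\nu$. After Fourier transforming in $y$ (the variable transverse to $z_{13}$), the fibre operators are therefore $\mL_{13}(\eta)$ evaluated at the complex frequency $\eta+i\nu$, acting on $L^2(\R;e^{2\a z}dz)$. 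The spectrum of $\mL_{13}$ on $\calX_{13}(\nu)$ is then the closure of the union over $\eta_R\in\R$ of $\sigma(\mL_{13}(\eta_R+i\nu))$ on $L^2(e^{2\a z}dz)$.

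\textbf{The $\sigma(\mL_0)$ part.} For the free operator this gives the symbol directly. Conjugation by $e^{\a z_{13}+\nu y}$ shifts $(\xi,\eta)\mapsto(\xi+i\a,\eta+i(2\a a_{13}+\nu))$, since $z_{13}=X+2a_{13}Y$. This yields the stated formula for $\sigma(\mL_0)$.

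\textbf{Continuous eigenvalues.} Fix $\eta=\eta_R+i\nu$.
\begin{itemize}
\item By Lemma~\ref{lem:evmL0}, which is an algebraic identity valid for complex $\eta$, the functions $g_{13}^0(\pm z,\pm\eta)$ solve the eigenvalue equations.
\item $e^{\a z}g_{13}^0(z,\eta)$ lies in $L^2$ exactly when $\sqrt{c_{13}}<\Re\gamma_{13}(\eta)<\sqrt{c_{13}}+\a$; the lower bound handles $z\to-\infty$, using $\a$ small.
\item $\Re\gamma_{13}(\eta_R+i\nu)=\sqrt{c_{13}}+\a$ gives $\eta_R^2=4(\sqrt{c_{13}}+\a)^2\{(\sqrt{c_{13}}+\a)^2-c_{13}+\nu\}$, which is $\eta_{13}^*(\a,\nu)^2$.
\item For $g^0(z,-\eta)$ the sign of $\nu$ flips, giving $\eta_{13}^*(\a,-\nu)$.
\item The reflected modes $g^0(-z,\pm\eta)$ are excluded: they would need $\Re\gamma<\sqrt{c_{13}}-\a$, which fails for small $\nu$ (or, if admissible, they sit in $\sigma(\mL_0)$).
\end{itemize}
This gives the two arcs in the statement.

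\textbf{Invertibility elsewhere.} To show there is no other spectrum, I would follow \cite[Propositions~6.3, 6.6]{Miz23} as in Lemma~\ref{lem:specL23}. The Darboux transformation $\mM_\pm(\eta)=-\pd_z+i\eta\pd_z^{-1}-2\sqrt{c_{13}}\tanh\sqrt{c_{13}}z$, with $\eta$ complex, intertwines $\mL_{13}(\eta)$ with the free fibre operator $\mL_0(\eta)$. Invertibility of $\lambda-\mL_{13}(\eta)$ off $\sigma(\mL_0(\eta))$ and off the eigenvalues $\lambda_{13}^\pm(\eta)$ then reduces to invertibility of the free operator, together with control of the kernel and cokernel of $\mM_\pm$. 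That kernel and cokernel are spanned by $e^{\pm\gamma(\pm\eta)z}\sech\sqrt{c_{13}}z$-type functions, and these generate exactly the eigenfunctions above and the adjoint ones $g^{0,*}_{13}$. Resolvent bounds uniform in $\eta_R$ follow as in \cite{Miz23}, because for $|\eta_R|$ large the free symbol dominates.

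\textbf{Main obstacle.} I expect the hard step to be checking that, under the smallness condition \eqref{ass:X13nu}, the complex shift $i\nu$ creates no new admissible modes, for instance no spurious zero of the Evans-type secular function. The constraints $\nu<\sqrt{c_{12}}\a$, $\nu<c_{13}-(|\k_2-a_{13}|+\a)^2$ and $\nu<(\k_3-\beta_{*,-})(\beta_{*,-}-\k_1)$ must be used to keep $\Re\gamma_{13}(\pm\eta)$ and $c_{13}\pm i\eta$ in the right sheet and away from branch points. I would verify this by explicit computation of $\Re\gamma_{13}(\eta_R\pm i\nu)$ as a function of $\eta_R$, which is monotone in $|\eta_R|$.
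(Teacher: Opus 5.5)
Your outline is the paper's proof. The paper only checks which of $g_{13}^0(\pm z,\pm\eta)$, $\eta=\eta_R+i\nu$, lie in $L^2(e^{2\a z}dz)$ and then cites \cite[Lemma~6.1, Propositions~6.3 and 6.6]{Miz23}; your conjugation by $e^{\nu y}$ and the fibre operators $\mL_{13}(\eta_R+i\nu)$ are the implicit setup.

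\textbf{The decay criterion for the forward modes is wrong.} As written, it does not give the arc you claim. Since $e^{\a z}e^{-\gamma z}\sech\sqrt{c_{13}}z$ behaves like $e^{(\a-\sqrt{c_{13}}-\gamma)z}$ as $z\to+\infty$ and like $e^{(\a+\sqrt{c_{13}}-\gamma)z}$ as $z\to-\infty$, the correct condition is $|\a-\Re\gamma_{13}(\pm\eta)|<\sqrt{c_{13}}$.
\begin{itemize}
\item The end $z\to-\infty$ gives the upper bound $\Re\gamma_{13}<\sqrt{c_{13}}+\a$.
\item The lower bound is $\Re\gamma_{13}>\a-\sqrt{c_{13}}$, which is automatic.
\end{itemize}
Your lower bound $\Re\gamma_{13}>\sqrt{c_{13}}$ is not harmless. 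For $\nu>0$, $\Re\gamma_{13}(\eta_R+i\nu)=\Re\sqrt{c_{13}-\nu+i\eta_R}$ tends to $\sqrt{c_{13}-\nu}<\sqrt{c_{13}}$ as $\eta_R\to0$. Your criterion would therefore remove a neighbourhood of $\eta_R=0$ from the $\lambda_{13}^+$ arc, including $\lambda_{13}^+(i\nu)$, contradicting the conclusion you state. With the corrected criterion, the monotonicity of $\Re\gamma_{13}(\eta_R\pm i\nu)$ in $|\eta_R|$ and your (correct) endpoint computations give exactly $|\eta_R|<\eta_{13}^*(\a,\pm\nu)$.

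\textbf{Reflected modes.} Drop the hedge that admissible ones would ``sit in $\sigma(\mL_0)$''. If $e^{\a z}g_{13}^0(-z,\pm\eta)$ were in $L^2$, they would add new curves of spectrum, as $\tilde{\lambda}_{23}^-$ does in Lemma~\ref{lem:specL23}. Here they are excluded because $\Re\gamma_{13}(\pm\eta)\ge\sqrt{c_{13}-\nu}>\sqrt{c_{13}}-\a$. This is equivalent to $\nu<2\sqrt{c_{13}}\a-\a^2$, which follows from $\nu<\sqrt{c_{12}}\a$ for small $\a$.

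That is the only part of \eqref{ass:X13nu} this lemma uses. The other two constraints are needed elsewhere, e.g.\ in Lemma~\ref{lem:lambda13'}, so the ``main obstacle'' you anticipate (spurious modes, branch points) does not arise.
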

\begin{proof}
If $\eta=\eta_R+i\nu$ and $|\eta_R|<\eta_{13}^*(\a,\pm\nu)$,
then $|\a-\Re\gamma_{13}(\pm\eta)|<\sqrt{c_{13}}$ and
$e^{\a x}g_{13}(x,\pm\eta)\in L^2(\R)$.
On the other hand, we have $e^{\a x}g_{13}(-x,\pm\eta)\not\in L^2(\R)$ because
$\Re \gamma_{13}(\pm\eta)\ge \sqrt{c_{13}-\nu}> \sqrt{c_{13}}-\a$
by \eqref{ass:X13nu}.
Thus we can prove  Lemma~\ref{lem:X13nu} in the same way as
\cite[Lemma~6.1, Propositions~6.3 and 6.6]{Miz23}.
  \end{proof}

Following the proof of \cite[Lemma~7.20]{Miz23},
we can prove that $\{\lambda_{13}^+(\eta_R+i\nu)\}$ is
in the stable half plane but 
$\lambda_{13}^-(\eta_R+i\nu)$ is in the unstable half plane
if $\eta_R$ is close to $0$.
\begin{lemma}
  \label{lem:lambda13'}
Assume \eqref{ass:X13nu} and that $\nu>0$.
Then for $\eta=\eta_R+i\nu $ with $\eta_R\in\R$,
\begin{gather*}
 \eta_R\pd_{\eta_R}\Re \lambda_{13}^\pm(\eta)<0
\quad\text{for any $\eta_R\in\R\setminus\{0\}$,}
\\
\Re\lambda_{13}^+(\eta)\le  \lambda_{13}^+(i\nu)<0\,,
\\
\lambda_{13}^-(i\nu)>0\,,\quad
\Re \lambda_{13}^-(\pm\eta_{13}^*(\a,-\nu)+i\nu)<0\,.
\end{gather*}
Moreover, $|\lambda_{13}^\pm(\eta_R+i\nu)|$ are even and monotonically increasing for $\eta_R\ge0$.
\end{lemma}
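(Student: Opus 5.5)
The plan is to prove everything by direct computation, since $\lambda_{13}^\pm(\eta_R+i\nu)$ is an explicit function. Set $c=c_{13}$ and $d=b_2-3a_{13}=\k_2-a_{13}$. Because $\k_1<\k_2<\k_3$ we have $|d|<\sqrt{c}$, and assumption \eqref{ass:X13nu} gives the stronger bound $(|d|+\a)^2<c-\nu$. Write $\eta=\eta_R+i\nu$, so that $i\eta=-\nu+i\eta_R$, and use the elementary identity
\[
\Re\bigl((-\nu+i\eta_R)(A+iB)\bigr)=-\nu A-\eta_R B .
\]

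\emph{The branch $\lambda_{13}^+$.} Write $\gamma_{13}(\eta)=p+iq$ with $p>0$. From $\gamma_{13}(\eta)^2=c-\nu+i\eta_R$ we get $p^2-q^2=c-\nu$ and $2pq=\eta_R$. Hence
\[
\Re\lambda_{13}^+(\eta)=-\nu(d+p)-\frac{\eta_R^2}{2p}.
\]
Explicitly $p^2=\tfrac12\bigl(c-\nu+\sqrt{(c-\nu)^2+\eta_R^2}\bigr)$, so $p$ is even in $\eta_R$ and increasing in $|\eta_R|$. Likewise $|q|$ is increasing in $|\eta_R|$. The quantity $\eta_R^2/(2p)=|\eta_R||q|$ is therefore increasing in $|\eta_R|$. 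Since $d+p\ge d+\sqrt{c-\nu}>0$, both terms are decreasing in $|\eta_R|$. This gives $\eta_R\pd_{\eta_R}\Re\lambda_{13}^+<0$ and $\Re\lambda_{13}^+(\eta)\le\lambda_{13}^+(i\nu)=-\nu(d+\sqrt{c-\nu})<0$.

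\emph{The branch $\lambda_{13}^-$.} Write $\gamma_{13}(-\eta)=\tilde p+i\tilde q$. Then $\tilde p^2-\tilde q^2=c+\nu$ and $2\tilde p\tilde q=-\eta_R$, which give
\[
\Re\lambda_{13}^-(\eta)=\nu(\tilde p-d)-\frac{\eta_R^2}{2\tilde p}.
\]
In particular $\lambda_{13}^-(i\nu)=\nu(\sqrt{c+\nu}-d)>0$. The two terms now compete, and this is the main point of the proof. Differentiating $\tilde p^2-\tilde q^2$ and $\tilde p\tilde q$ yields $\tilde p'=-\tilde q/\bigl(2(\tilde p^2+\tilde q^2)\bigr)$. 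Therefore
\[
\nu\,\eta_R\tilde p'=\frac{\nu\eta_R^2}{4\tilde p(\tilde p^2+\tilde q^2)}\le\frac{\nu}{4(c+\nu)}\cdot\frac{\eta_R^2}{\tilde p}.
\]
On the other hand, $\eta_R^2/(2\tilde p)=|\eta_R||\tilde q|$ is a product of two increasing functions of $|\eta_R|$. Hence $\eta_R\pd_{\eta_R}(\eta_R^2/2\tilde p)\ge|\eta_R||\tilde q|=\eta_R^2/(2\tilde p)$. Since $\nu/(4(c+\nu))<1/2$, we conclude $\eta_R\pd_{\eta_R}\Re\lambda_{13}^-<0$ for $\eta_R\ne0$.

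\emph{The endpoint of the $\lambda_{13}^-$ curve.} At $\eta_R=\pm\eta_{13}^*(\a,-\nu)$ one checks that $\tilde p=\sqrt c+\a$, exactly as $\Re\gamma_{13}(\eta_{13}^*(\a))=\sqrt{c_{13}}+\a$ in the unshifted case. Consequently $\tilde q^2=\a(\a+2\sqrt c)-\nu$, and
\[
\Re\lambda_{13}^-=\nu(\sqrt c+\a-d)-2(\sqrt c+\a)\bigl(\a(\a+2\sqrt c)-\nu\bigr).
\]
Using $\sqrt c+\a-d<2(\sqrt c+\a)$, this is bounded by $2(\sqrt c+\a)\bigl(2\nu-\a(\a+2\sqrt c)\bigr)$. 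It is therefore negative provided $\nu<\tfrac12\a(\a+2\sqrt c)$, and this follows from $\nu<\sqrt{c_{12}}\,\a\le\sqrt{c}\,\a$ in \eqref{ass:X13nu}.

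\emph{Evenness and monotonicity of the moduli.} Replacing $\eta_R$ by $-\eta_R$ sends $(p,q)\mapsto(p,-q)$ and $(\tilde p,\tilde q)\mapsto(\tilde p,-\tilde q)$. Hence the formulas for the moduli below are even in $\eta_R$. We have $|\lambda_{13}^+|^2=(\eta_R^2+\nu^2)\bigl((d+p)^2+q^2\bigr)$ and $|\lambda_{13}^-|^2=(\eta_R^2+\nu^2)\bigl((\tilde p-d)^2+\tilde q^2\bigr)$. Each factor is increasing in $\eta_R\ge0$, because $d+p>0$, $\tilde p-d>0$, and $p,\tilde p,|q|,|\tilde q|$ are all increasing in $|\eta_R|$. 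This proves the last assertion of the lemma.
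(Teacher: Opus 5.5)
Your proposal is correct and takes essentially the same route as the paper. You compute $\Re\lambda_{13}^\pm(\eta_R+i\nu)$ explicitly through the real and imaginary parts of $\sqrt{c_{13}\mp\nu\pm i\eta_R}$ and use their monotonicity in $|\eta_R|$. You evaluate the endpoint using $\Re\gamma_{13}=\sqrt{c_{13}}+\a$, and you handle the moduli by factoring $|\lambda_{13}^\pm|^2$. The differences are only cosmetic: your comparison via $\nu/(4(c_{13}+\nu))<1/2$ replaces the paper's explicit derivative formula with the positive factor $1-\nu/\bigl(2\sqrt{(c_{13}+\nu)^2+\eta_R^2}\bigr)$, and your bound $\sqrt{c_{13}}+\a-d<2(\sqrt{c_{13}}+\a)$ at the endpoint replaces the paper's full expansion.
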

\begin{proof}
By \eqref{eq:b1-b2Y} and \eqref{def:lambdajk},
\begin{gather*}
\lambda_{13}^+(i\nu)=-\nu(\k_2-\k_1+\sqrt{c_{13}-\nu}-\sqrt{c_{13}})\,,
\\
\lambda_{13}^-(i\nu)=\nu(\k_3-\k_2+\sqrt{c_{13}+\nu}-\sqrt{c_{13}})\,.
\end{gather*}
Since $\k_1<\k_2<\k_3$ and
$\sqrt{c_{13}}-\sqrt{c_{13}-\nu}<\sqrt{c_{13}}-|\k_2-a_{13}|\le \k_2-\k_1$,
we have
$\lambda_{13}^+(i\nu)<0$ and $\lambda_{13}^-(i\nu)>0$.
\par
Let
\begin{equation*}
\gamma_R^\nu(\eta_R)=\Re\sqrt{c_{13}-\nu+i\eta_R}\,,\quad
\gamma_I^\nu(\eta_R)=\Im\sqrt{c_{13}-\nu+i\eta_R}\,.
\end{equation*}
We have $\gamma_R^{\pm\nu}(\eta_R)>0$ for $\eta_R\in\R$
and $\eta_R\gamma_I^{\pm\nu}(\eta_R)>0$ for $\eta_R\in\R\setminus\{0\}$. Since
\begin{gather*}
\pd_{\eta_R}\gamma_R^\nu=
\frac{\gamma_I^\nu}{2\sqrt{(c_{13}-\nu)^2+\eta_R^2}}\,,
\quad
\pd_{\eta_R}\gamma_I^\nu=
\frac{\gamma_R^\nu}{2\sqrt{(c_{13}-\nu)^2+\eta_R^2}}>0\,,
\\
\pd_{\eta_R}\Re\lambda_{13}^\pm(\eta)=
\mp\left(1\pm\frac{\nu}{2\sqrt{(c_{13}\mp\nu)^2+\eta_R^2}}\right)\gamma_I^{\pm\nu}(\pm\eta_R)
-\eta_R\pd_{\eta_R}\gamma_I^{\pm\nu}(\pm\eta_R)\,,
\end{gather*}
we have $\eta_R\pd_{\eta_R}\Re \lambda_{13}^\pm(\eta)<0$ for $\eta_R\in\R\setminus\{0\}$.
\par
Since 
$\gamma_R^{-\nu}(\eta_{13}^*(\a,-\nu))
=\sqrt{c_{13}}+\a$ and
$\eta_{13}^*(\a,-\nu)=
2(\sqrt{c_{13}}+\a)\gamma_I^{-\nu}(\eta_{13}^*(\a,-\nu))$,
\begin{multline*}
\Re\lambda_{13}^-(\pm\eta_{13}^*(\a,-\nu)+i\nu)  
= \nu(\k_3-\k_2+\a)
-2(\sqrt{c_{13}}+\a)
\left(\a^2+2\a\sqrt{c_{13}}-\nu\right)
\\=
-2\a^3-3\a(2\sqrt{c_{13}}\a-\nu)-4\sqrt{c_{13}}
\left(\sqrt{c_{13}}\a
-\frac{\k_3-a_{12}}{\k_3-\k_1}\nu\right)<0\,.
\end{multline*}

In view of \eqref{def:lambdajk}, it is clear that $|\lambda_{13}^\pm(\eta_R+i\nu)|$ are even.
Moreover, $|\lambda_{13}^\pm(\eta_R+i\nu)|$ are monotonically increasing for $\eta_R\ge0$
because $\pm(b_2-3a_{13}\pm\gamma_R^{\pm \nu}(0))>0$, and
$\Re\gamma_R^{\pm\nu}(\eta_R)-\gamma_R^{\pm\nu}(0)$ and $\gamma_I^{\pm\nu}(\eta_R)$
are non-negative and monotonically increasing for $\eta_R\ge0$.
Thus we complete the proof.
\end{proof}

\bigskip

\section{Spectral stability of $u_\mY$}
\label{sec:spectral}
Let $\calX^\a(\nu)$ be a Hilbert space with an inner product
\begin{multline}
  \label{eq:defcalX}
  (u,v)_{\calX^\a(\nu)}= \biggl(\int_{\R^2_+} e^{2\a(X+2a_{13}Y)+2\nu Y}u(X,Y)\overline{v(X,Y)}\,dXdY
\\ + 
\int_{\R^2_-}e^{2\a(X+2a_{12}Y)}u(X,Y)\overline{v(X,Y)}\,dXdY\biggr)^{1/2}\,,
\end{multline}
and $\| u\|_{\calX^\a(\nu)}=(u,u)_{\calX^\a(\nu)}^{1/2}$. 
Let $\calX^\a=\calX^\a(0)$.
Note that $\calX^\a(\nu)=\calX_{12}\cap\calX_{13}(\nu)$, that
and $\calX^{-\a}(-\nu)=(\calX^\a(\nu))^*=\calX_{12}^*+\calX_{13}(\nu)^*$
provided $\a>0$ and $\nu\ge \a(\k_2-\k_3)$.
We have $\calX(\nu_2)\subset \calX(\nu_1)$ if $\nu_2\ge\nu_1\ge0$.
We will omit the superscript $\a$ if there is no confusion.

\par
Let $\mL_{\mY}$ be a closed operator on $\calX(\nu)$.
In this section, we will determine the continuous spectrum of $\mL_\mY$,
find eigenvalues of  $\mL_{\mY}^*$ and
prove that the $f$ is orthogonal to the eigenspaces of $\mL_\mY^*$
if and only if $f$ is orthogonal to the continuous eigenspaces of $\mL_\mY^*$
corresponding to $\sigma(\mL_{jk})\setminus\sigma(\mL_0)$ $(1\le j<k\le3)$.
\par

To begin with,
we will show that $\lambda-\mL_0$ is invertible on $\calX(\nu)$ if
$\lambda$ is in the right half plane of $\C$.
\begin{lemma}
  \label{lem:intersection}
Let $\a>0$, $(\k_2-\k_3)\a\le \nu\le (\k_2-\k_1)\a$ and
$0<\delta_0<\min\{c_{12},c_{23}\}-\frac{\a^2}{4}$. Let
$\mL_0$ be a closed operator on $\calX$ and
\begin{gather*}
D_0=\overline{\left\{ip(\xi+i\a,\eta+2i\a a)\mid
(\xi,\eta)\in\R^2\,,\; a_{12}\le a\le a_{13}+\nu/(2\a)\right\}}\,.
\end{gather*}
Then
$\sigma(\mL_0)=D_0\subset \{\lambda\in\C\mid \Re\lambda< -\delta_0\a\}$.
There exists a $C>0$ such that for every $f\in \calX$ and 
$\lambda\in\C$ satisfying $\Re \lambda\ge -\delta_0\a$,
\begin{equation*}
\|(\lambda-\mL_0)^{-1}f\|_{\calX(\nu)} \le C\|f\|_{\calX(\nu)}\,.
  \end{equation*}
\end{lemma}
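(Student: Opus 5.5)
The plan is to treat $\mL_0$, which has constant coefficients, by conjugating with the weight and using the Fourier transform. The space $\calX(\nu)$ is glued from two half-plane pieces carrying different exponential weights. So the resolvent will be built by localization in $Y$, patching together resolvents on the two whole-plane weighted spaces
\[
\calX_{12}=L^2(\R^2;e^{2\a z_{12}}dXdY),\qquad \calX_{13}(\nu)=L^2(\R^2;e^{2\a z_{13}+2\nu Y}dXdY).
\]

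\textbf{Step 1: symbols on whole-plane weighted spaces.} For any weight $e^{\a X+2\a aY}$, conjugation gives $e^{\a X+2\a aY}\mL_0e^{-\a X-2\a aY}$. This is a Fourier multiplier with symbol $ip(\xi+i\a,\eta+2i\a a)$. Hence its spectrum on $L^2(\R^2;e^{2\a(X+2aY)}dXdY)$ is the closure of the range of that symbol; this is the formula already recorded in Proposition~\ref{lem:wmL1-decay} and Lemma~\ref{lem:X13nu}. Writing $\nu=2\a(a-a_{13})$ interpolates between the $[1,2]$ weight ($a=a_{12}$) and the $[1,3]$ weight with the extra factor $e^{\nu Y}$.

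\textbf{Step 2: the real part of the symbol.} For each $a\in[a_{12},a_{13}+\nu/(2\a)]$ I compute $\Re\, ip(\xi+i\a,\eta+2i\a a)$ explicitly. The term $-3\eta^2/\xi$ becomes $-3(\eta+2i\a a)^2/(\xi+i\a)$. A direct computation, as in \cite[Lemma~6.1]{Miz23}, should show that the supremum over $(\xi,\eta)$ is $-\a(c(a)-\a^2/4)$, where $c(a)=b_1+2ab_2-3a^2$. The function $c(a)$ is concave in $a$. By the identity $b_1+2a_{jk}b_2-3a_{jk}^2=c_{jk}$, it takes the value $c_{12}$ at $a_{12}$ and $c_{23}$ at $a_{23}$. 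The hypothesis $(\k_2-\k_3)\a\le\nu\le(\k_2-\k_1)\a$ translates exactly to $a_{12}\le a\le a_{23}$, since $a_{13}+\nu/(2\a)$ then ranges up to $a_{23}$. Concavity therefore gives $c(a)\ge\min\{c_{12},c_{23}\}$ on the whole range. Together with the choice of $\delta_0$, this yields $D_0\subset\{\Re\lambda<-\delta_0\a\}$ with uniform resolvent bounds on each whole-plane weighted space. Here I expect to need the symbol to stay uniformly away from $\lambda$, including the large-$|\xi|$ regime and the singular regime $\xi\to0$. Near $\xi\to0$ with $\eta\neq0$ the real part tends to $-\infty$, which helps.

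\textbf{Step 3: patching in $Y$.} This is the main obstacle, since $\calX(\nu)$ is not a single weighted $L^2$. I take a smooth partition $\chi_++\chi_-=1$ in $Y$, with $\chi_\pm$ supported in $\pm Y\ge -1$ and the gradient region of width $R\gg1$. I then set the parametrix
\[
R(\lambda)f=\tilde\chi_+(\lambda-\mL_0)^{-1}_{\calX_{13}(\nu)}\chi_+f+\tilde\chi_-(\lambda-\mL_0)^{-1}_{\calX_{12}}\chi_-f,
\]
where $\tilde\chi_\pm$ are slightly wider cutoffs. The commutator $[\mL_0,\tilde\chi_\pm]$ involves $\pd_X^{-1}\pd_Y$ and $\pd_X^{-1}$ times derivatives of $\tilde\chi$, and so is $O(1/R)$. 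A difficulty is that $\pd_X^{-1}$ is nonlocal in $X$ but local in $Y$, so the cutoffs in $Y$ commute harmlessly with it.

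It remains to control the weights on the overlap. On the strip where both weights appear, they are comparable up to $e^{C\a R}$. To avoid losing this factor, I would instead note that $D_0$ is defined as a union over all intermediate $a$. This suggests using a single global weight $e^{\a X+\psi(Y)}$, with $\psi$ smooth, convex and piecewise linear slopes between $2\a a_{12}$ and $2\a a_{13}+\nu$, and $\psi'$ slowly varying. This weight is equivalent to the $\calX(\nu)$ weight. The conjugated operator is then a pseudo-differential operator whose symbol at each point is $ip(\xi+i\a,\eta+i\psi'(Y))$ plus errors of size $\|\psi''\|_\infty$, which can be made small. An energy (Gårding-type) estimate using Step 2 then gives invertibility with $\|(\lambda-\mL_0)^{-1}\|\le C$ for $\Re\lambda\ge-\delta_0\a$, possibly after shrinking $\delta_0$ slightly.

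Finally, the fact that $\sigma(\mL_0)=D_0$ follows in two parts. The inclusion $\sigma(\mL_0)\subset D_0$ comes from this construction. The reverse inclusion comes from Weyl sequences localized far out in $Y$ in the region where $\psi'$ is constant $=2\a a$, which give every point of $D_0$.
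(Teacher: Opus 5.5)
Your resolvent estimate takes a different route from the paper, and it works, more cleanly than you expect. The paper does no localization at all. The $\calX(\nu)$ norm is exactly the $L^2$ norm with weight $\max\{e^{\a z_{12}},e^{\a z_{13}+\nu Y}\}^2$; this uses $\nu\ge(\k_2-\k_3)\a$. Hence $\calX(\nu)=\calX_{12}\cap\calX_{13}(\nu)$ with equivalent norms, and your ``main obstacle'' never arises. The paper solves $(\lambda-\mL_0)u=f$ by Fourier transform in each of the two whole-plane spaces. It then uses that $(\lambda-ip(\xi+i\a,\eta+2i\a a))^{-1}$ is analytic and bounded for all $a\in[a_{12},a_{13}+\nu/(2\a)]$ (Paley--Wiener) to conclude that the two solutions coincide.

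Your energy method is a genuine alternative, and it needs no G{\aa}rding inequality, no smoothing of the weight, and no shrinking of $\delta_0$. Let $\tilde{\mL}=e^{\a X+\psi}\mL_0e^{-\a X-\psi}$, and let $v=v(\xi,Y)$ be the partial Fourier transform in $X$. The $\psi''$ terms cancel in the real part, and
\[
\Re(\tilde{\mL}v,v)_{L^2(\R_Y)}=\int_\R\Bigl\{\Bigl(\frac{\a^3}{4}-\frac{3\a\xi^2}{4}-b_1\a-b_2\psi'+\frac{3\a(\psi')^2}{4(\xi^2+\a^2)}\Bigr)|v|^2-\frac{3\a\,|\pd_Yv|^2}{4(\xi^2+\a^2)}+\frac{3\xi\psi'\,\Im(\pd_Yv\,\bar{v})}{2(\xi^2+\a^2)}\Bigr\}\,dY .
\]
Completing the square in $\pd_Yv$ bounds the integrand by $(\frac{\a^3}{4}-\frac{3\a\xi^2}{4}-\a c(a(Y)))|v|^2$, where $a(Y)=\psi'(Y)/(2\a)$ and $c(a)=b_1+2b_2a-3a^2$. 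This is exactly your frozen-symbol bound, pointwise in $Y$.

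Only $\psi'\in L^\infty$ enters, so the piecewise linear weight $\psi_0(Y)=\max\{2\a a_{12}Y,(2\a a_{13}+\nu)Y\}$ defining $\calX(\nu)$ already works. To get from this a priori bound to invertibility, note that $\tilde{\mL}^*$ has the same symmetric part, so the same bound holds for the adjoint. Your route gives the explicit bound $(\Re\lambda+\a(\min\{c_{12},c_{23}\}-\a^2/4))^{-1}$ and applies to any convex weight of this kind. The paper's route is shorter once the intersection structure is noticed. A minor point: after the shift $\xi\mapsto\xi+i\a$ the symbol has no singular regime at $\xi=0$.

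The genuine gap is the inclusion $D_0\subset\sigma(\mL_0)$. Far out in $Y$, the slope $\psi'$ takes only the two endpoint values. So your quasimodes only produce $S_a:=\overline{\{ip(\xi+i\a,\eta+2i\a a)\mid (\xi,\eta)\in\R^2\}}$ for $a\in\{a_{12},a_{13}+\nu/(2\a)\}$; there is no region where $\psi'\equiv 2\a a$ for intermediate $a$.

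Moreover, for $\lambda\in D_0\setminus(S_{a_{12}}\cup S_{a_{13}+\nu/(2\a)})$ no Weyl sequence for $\mL_0$ exists at all. For such $\lambda$, $\lambda-\mL_0$ is invertible on $\calX_{12}$ and on $\calX_{13}(\nu)$, hence bounded below on their intersection $\calX(\nu)$. Concretely, for strictly intermediate $a$ the function $e^{i(\xi+i\a)x+i(\eta+2i\a a)y}$ grows relative to the $\calX(\nu)$ weight in both $Y$-directions. Such points can only be residual spectrum, and they must be detected through the adjoint.

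This is what the paper does. The function $u_0^R=\chi_{0,R}e^{ix(\xi-i\a)+iy(\eta-2i\a a)}$ is a cut-off exact solution of $(\bar\lambda-\mL_0^*)w=0$. Since $2\a aY\le\max\{2\a a_{12}Y,(2\a a_{13}+\nu)Y\}$ for every $a$ in the interval, its modulus times the $\calX(\nu)^*$ weight is bounded, and it decays in $|Y|$ for intermediate $a$. It is therefore an approximate eigenvector of $\mL_0^*$ concentrated near the corner $Y=0$ of the weight, not far out.

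You need this adjoint construction, or else a separate proof that every intermediate $S_a$ lies in $S_{a_{12}}\cup S_{a_{13}+\nu/(2\a)}$. As written, your last step does not reach all of $D_0$.
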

\begin{proof}[Proof of Lemma~\ref{lem:intersection}]
We may assume that $f\in C_0^\infty(\R^2)$ since
$C_0^\infty(\R^2)$ is dense in $\calX$.
Let $\mL_{0,\a}(a)=e^{\a(x+2ay)}\mL_0e^{-\a(x+2ay)}$.
We have $\widehat{\mL_{0,\a}(a)}=ip(\xi+i\a,\eta+2ia\a)$
and
\begin{align}
\label{eq:Imp}
  \Im p(\xi+i\a,\eta+2ia\a)=&\frac{\a}{4}
  \left\{3\xi^2-\a^2+3\frac{(\eta-2a\xi)^2}{\xi^2+\a^2}+
4(b_1+2b_2a-3a^2)\right\}
 \\ \notag
\ge & \a(b_1+2b_2a-3a^2)-\frac{\a^3}{4}\,.
\end{align}
Since $b_1+2a_{jk}b_2-3a_{jk}^2=c_{jk}$ and $a_{12}<b_2/3<a_{23}$,
\begin{equation}
 \label{eq:Imp-bound}
 \Im p(\xi+i\a,\eta+2ia\a)\ge
\a\left(\min\{c_{12},c_{23}\}-\frac{\a^2}{4}\right)
>\delta_0\a\quad
\text{for $a\in[a_{12},a_{23}]$.}
\end{equation}
Since $a_{12}\le a_{13}+\nu/(2\a)\le a_{23}$, we have
$\lambda\not\in D_0$ provided $\Re\lambda\ge -\delta_0\a$.
\par
Let $a_{12}\le a_1\le a\le a_2\le a_{23}$, $F_a(x,y)=e^{\a(x+2ay)}f(x,y)$
and let $\lambda\not\in D_0$.
If $u$ is a solution of $(\lambda-\mL_0)u=f$ in $e^{-\a(x+2ay)} L^2(\R^2)$,
\begin{equation}
  \label{eq:2}
\left(\lambda-\mL_{0,\a}(a)\right)U_a=F_a\,,  
\end{equation}
where $U_a=e^{\a(x+2ay)}u$ and
\begin{equation*}
  \widehat{U}_a=
  \frac{\widehat{F_a}(\xi,\eta)}{\lambda-ip(\xi+i\a,\eta+2ia\a)}\,,
  \quad \widehat{F_a}=\hat{f}(\xi+i\a,\eta+2ia\a)\,.
\end{equation*}
By \eqref{eq:Imp-bound}, $(\lambda-ip(\xi+i\a,\eta+2ia\a))^{-1}$
is analytic and bounded for $\eta\in\C$ with
$2\a(a_1-a)\le \Im\eta\le 2\a(a_2-a)$.
Hence it follows from \cite[Theorem~IX.13]{RS2} 
that $U_a(x,y)=e^{2\a(a-a_1)y}U_{a_1}(x,y)\in L^2(\R^2)$ for $a\in[a_1,a_2]$.
By the Plancherel theorem, 
\begin{multline*}
\|e^{\a(x+2ay)}u\|_{L^2(\R^2)}= \|\widehat{U_a}\|_{L^2(\R^2)}
\\ \le                                    
 \sup_{(\xi,\eta)\in\R^2}\frac{1}{|\lambda-i p(\xi+i\a,\eta+2i\a a)|}
  \|\widehat{F_a}\|_{L^2(\R^2)}\,.
\end{multline*}
Using the above for $a=a_{12}=:a_1$ and $a=a_{13}+\nu/(2\a)=:a_2$, we have
\begin{align*}
\|u\|_{\calX(\nu)}\le & \|U_{a_1}\|_{L^2(\R^2)}+\|U_{a_2}\|_{L^2(\R^2)}
\\ \le & C\a^{-1}(\|F_{a_1}\|_{L^2(\R^2)}+\|F_{a_2}\|_{L^2(\R^2)})
\le 2C\a^{-1}\|F\|_{\calX(\nu)}\,,
\end{align*}
where $C$ is a constant depending only of $\delta_0$.
Thus we prove $\sigma(\mL_0)\subset D_0$.
\par
Finally, we will prove that $D_0=\sigma(\mL_0)$.
For the purpose, it suffices to show that
$\bar{\lambda}-\mL_0^*$ is not invertible on
$\calX^\a(\nu)^*=\calX_{13}(\nu)^*+\calX_{12}^*$ for $\lambda\in\operatorname{int}(D_0)$.
\par
Suppose that $\lambda\in\operatorname{int}(D_0)$.
Then there exist $(\xi,\eta)\in R^2$ and $a\in[a_{12},a_{13}+\nu/(2\a)]$
such that $\lambda=ip(\xi+i\a,\eta+2i\a a)$.
Let  $\chi_0(y)$  be a smooth non-negative function such that
\begin{equation}
  \label{eq:chi0}
\chi_0(y)=
\begin{cases}
  & 1\quad\text{if $ y\le 1$,}
\\ & 0\quad\text{if $y\ge2$.}
\end{cases}
\end{equation}
Let $\chi_{0,R}(x,y)=\chi_0\left(\sqrt{x^2+y^2}/(16R)\right)$ and
$u_0^R=\chi_{0,R}(x,y)e^{ix(\xi-i\a)+iy(\eta-2i\a a)}$.
Since $x+2ay\le \max\{z_{12},z_{13}+\nu Y\}$ for $a\in[a_{12},a_{13}+\nu/(2\a)]$,
\begin{equation}
\label{eq:9}
\|u_0^R\|_{\calX(\nu)^*}=O(R)\,.
\end{equation}
On the other hand,
    \begin{equation}
      \label{eq:10}
\|(\bar{\lambda}-\mL_0^*)u_0^R\|_{\calX(\nu)^*}=O(1)      
    \end{equation}
because $(\bar{\lambda}-\mL_0^*)e^{ix(\xi-i\a)+iy(\eta-2i\a a)}=0$ and
$[\mL_0,\chi_{0,R}]=O(1/R)$ by \cite[(C.6)]{Miz23}.
Combining \eqref{eq:9} and \eqref{eq:10}, we have $\bar{\lambda}\in \sigma(\mL_0)$.
Thus we complete the proof.
\end{proof}
\par

Using \eqref{eq:u1-asymptotics}, 
we will show that continuous eigenvalues for $\mL_{13}$ in
$\calX_{13}(\nu)$ and continuous eigenvalues for $\mL_{12}$ and
$\mL_{23}$ in $\calX_{12}$ belong to $\sigma(\mL_{\mY})$.
\begin{lemma}
  \label{lem:spec-pre}
Assume \eqref{ass:alpha23} and \eqref{ass:X13nu}.
Let $\mL_0$ and $\mL_{\mY}$ be closed operators on $\calX(\nu)$,
$\mL_{13}$ be a closed operator on $\calX_{13}(\nu)$ and
$\mL_{12}$ and $\mL_{23}$ be closed operators on $\calX_{12}$.
Then $\sigma(\mL_0)\cup\sigma(\mL_{13})\cup\sigma(\mL_{12})\cup\sigma(\mL_{23})\subset
\sigma(\mL_{\mY})$.
\end{lemma}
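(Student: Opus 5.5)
The plan is a Weyl-sequence argument. For each $\lambda$ in one of the four spectra we build approximate eigenfunctions of $\mL_\mY$, or of $\mL_\mY^*$, localized far out along the relevant soliton, where $u_\mY$ is exponentially close to $u_{jk}$ or to $0$. The set $\sigma(\mL_0)=D_0$ is already handled by the proof of Lemma~\ref{lem:intersection}. There the cut-off plane waves $u_0^R$ were used for $\mL_0^*$. For $\mL_\mY^*$ we translate the cut-off to a disc of radius $R$ centered at a point $p_R$ at distance $\gg R$ from all three soliton crests. The region is chosen to lie in the sector where the exponential weight of $\calX(\nu)^*$ matches $e^{-\a(x+2ay)}$ for the given $a$: for $a=a_{12}$ in $\R^2_-$, for $a=a_{13}+\nu/(2\a)$ in $\R^2_+$, and for intermediate $a$ near the half-line $Y=0$, using $x+2ay\le\max\{z_{12},z_{13}+\nu Y\}$. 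On this disc $|u_\mY|$ is exponentially small, so $(\bar\lambda-\mL_\mY^*)u^R=(\bar\lambda-\mL_0^*)u^R+O(\|u_\mY\|_{L^\infty(\supp u^R)})$. After normalizing, the residual tends to $0$, and hence $\bar\lambda\in\sigma(\mL_\mY^*)$, i.e. $\lambda\in\sigma(\mL_\mY)$. Since the spectrum is closed, interior points suffice.

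Next take $\lambda=\lambda_{13}^\pm(\eta)$ with $\eta=\eta_R+i\nu$ in the range of Lemma~\ref{lem:X13nu}. Let
\[
v_R=\chi\bigl((Y-R^2)/R\bigr)\,\chi(z_{13}/R)\,e^{iy\eta}g_{13}^0(z_{13},\pm\eta),
\]
with $\chi$ a smooth bump supported in $[-2,2]$ and equal to $1$ on $[-1,1]$, so that $v_R$ is supported in the strip $Y\in[R^2-2R,R^2+2R]$. By Lemma~\ref{lem:evmL0}, $(\lambda-\mL_{13})e^{iy\eta}g_{13}^0=0$. The error $(\lambda-\mL_\mY)v_R$ therefore splits into two parts. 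The first is the commutator of $\mL_{13}$ with the cutoffs, which is $O(R^{-1})$ relative to $\|v_R\|$, using the bounds for $[\mL_0,\chi]$ from \cite[(C.6)]{Miz23}; the nonlocal term $\pd_X^{-1}$ is handled because $\pd_X^{-1}$ of an exponentially localized mean-zero function is fine, and a small corrector can be added if necessary. The second is the potential difference $\tfrac32\pd_X((u_\mY-u_{13})v_R)$, which is exponentially small in $R$ on the support, by \eqref{eq:u1-asymptotics} and \eqref{eq:tauXY}. Two points need checking here. First, in $\calX(\nu)$ the weight on $\R^2_+$ is exactly that of $\calX_{13}(\nu)$, and $e^{\a z_{13}}g_{13}^0(z_{13},\pm\eta)\in L^2$ precisely for $|\eta_R|$ below the thresholds. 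Second, the factor $e^{iy\eta}$ has modulus $e^{-\nu y}$, which cancels the $e^{\nu Y}$ weight. Together these give $\|v_R\|\sim R$. For the endpoint values of $\eta_R$ we use closedness. The same construction with $Y\to-\infty$ and the weight $e^{\a z_{12}}$ covers $\sigma(\mL_{12})$ in $\calX_{12}$, using Proposition~\ref{lem:wmL1-decay}. It also covers $\sigma(\mL_{23})$, using the eigenfunctions $e^{iy\eta}g_{23}^0(\pm z_{23},-\eta)$ with $\eta=\eta_R+i\eta_I^{23}$ from Lemma~\ref{lem:specL23} and the decomposition \eqref{eq:X12-decomp}. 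Here the $e^{-\eta_I^{23}y}$ growth is exactly what converts the $\calX_{12}$ weight into the $\calX_{23}$ weight near the $[2,3]$-crest.

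The main obstacle is making the cut-off continuous eigenfunctions genuine approximate eigenfunctions despite the exponential factors. The profile $g_{jk}^0(\pm z,\eta)$ is only borderline integrable against $e^{2\a z}$, and its $y$-dependence is exponential, so the cutoff must be taken in variables aligned with the soliton. One also has to check that the exponential weights on the two sides of the region where the other soliton lives do not swamp the estimate. Concretely, near the $[2,3]$-soliton at $Y\ll-1$ we must verify that the $\calX_{12}$ weight $e^{\a z_{12}}$, restricted to a window of width $R$ around $z_{23}=0$, differs from $e^{\a z_{23}-\eta_I^{23}y}$ only by a constant factor. This follows from $z_{12}-z_{23}=2(a_{12}-a_{23})y$ and $\eta_I^{23}=\a(\k_1-\k_3)$. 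If the ratio $\|(\lambda-\mL_\mY)v_R\|/\|v_R\|\to0$ fails for some $\lambda$ on a boundary curve, we fall back on the dual construction for $\mL_\mY^*$ with $g_{jk}^{0,*}$ in $\calX(\nu)^*$, exactly as in Lemma~\ref{lem:intersection}.
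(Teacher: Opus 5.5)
Your proof is correct. For $\sigma(\mL_{13})$, $\sigma(\mL_{12})$ and $\sigma(\mL_{23})$ it follows the paper: you cut off the continuous eigenfunctions $e^{iy\eta}g_{jk}^0$ from Lemma~\ref{lem:evmL0} and Lemma~\ref{lem:specL23} far out along the $[j,k]$-soliton and use \eqref{eq:Y-13}--\eqref{eq:Y-12}. The paper cuts off only in $y$, with $\tchi_{0,R}(y)$ supported in $R\le y\le 6R$. It gets $\|g_{13}^{\pm,R}\|_{\calX(\nu)}=O(R^{1/2})$ against a residual $O(R^{-1/2})$. Your extra cut-off $\chi(z_{jk}/R)$ costs only an exponentially small commutator, including for $\pd_X^{-1}$, which is bounded on the weighted space, so no corrector is needed. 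In exchange it keeps the support visibly away from the other crest in $\R^2_-$.

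Two small points on that step. First, $\|v_R\|_{\calX(\nu)}\sim R^{1/2}$, not $R$, because the weighted profile has width $O(1)$ in $z_{13}$. This is harmless, since the commutator error scales the same way. Second, you should say explicitly that the $\sigma(\mL_0)$-components of $\sigma(\mL_{jk})$, computed on $\calX_{12}$ or $\calX_{13}(\nu)$, are the slices $a=a_{12}$ and $a=a_{13}+\nu/(2\a)$ of $D_0$. They are therefore covered by your first step.

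The genuine difference is in the first step. The paper does not perturb plane waves of $\mL_0^*$. It picks $k,\ell$ with $k-\ell=\xi-i\a$, $\ell^2-k^2=i\eta+2a\a$ and $k^3-\ell^3=i\bar\lambda$. By Lemma~\ref{lem:prodJdJ}, $\Phi^1(k)\Phi^{1,*}(\ell)$ is then an exact eigenfunction of $\mL_\mY^*$, and its modulus is that of the plane wave times a bounded factor. The paper cuts this off on a disc centred at the origin, so only the commutator contributes, even though the disc contains the junction.

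You instead translate the disc along $Y=0$ away from all crests and absorb $\frac32 u_\mY\pd_x u^R$, which is exponentially small there. Your route avoids the Jost-function machinery. It needs only two facts: $u_\mY$ is small off the crests, and a shift in $X$ multiplies the weight of $\calX(\nu)^*$ by a constant. The paper's route needs no geometric placement of the cut-off. You also correctly note the point that makes your version work: for $a_{12}<a<a_{13}+\nu/(2\a)$ the weighted plane wave is concentrated in $|Y|=O(1)$, so the disc must straddle $Y=0$.
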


\begin{proof}[Proof of Lemma~\ref{lem:spec-pre}]
First, we will prove that $\sigma(\mL_0)\subset \sigma(\mL)$.
Let $a_{12}<a<a_{13}+\nu/(2\a)$, $\lambda=ip(\xi+i\a,\eta+2ia \a)$ and
\begin{equation*}
k=\frac12\left(\xi-i\a-\frac{i\eta+2a\a}{\xi-i\a}\right)\,,
\quad
\ell=\frac12\left(-\xi+i\a-\frac{i\eta+2a\a}{\xi-i\a}\right)\,.
\end{equation*}
Then $k-\ell=\xi-i\a$, $\ell^2-k^2=i\eta+2a\a$, $k^3-\ell^3=i\bar{\lambda}$
and it follows from Lemma~\ref{lem:prodJdJ} that
\begin{equation*}
\mL_\mY^*\left(\Phi^1(k)\Phi^{1,*}(\ell)\right)=\bar{\lambda}
\left(\Phi^1(k)\Phi^{1,*}(\ell)\right)\,.
\end{equation*}
Since $u^R=\chi_{0,R}(x,y)\Phi^1(k)\Phi^{1,*}(\ell)$ satisfies
\begin{equation*}
\|u^R\|_{\calX(\nu)^*}=O(R)\quad\text{and}\quad \|(\bar{\lambda}-\mL_\mY^*)u^R\|_{\calX(\nu)^*}=O(1)\,,
\end{equation*}
we have $\lambda\in \sigma(\mL_\mY)$. 
Thus we prove $\sigma(\mL_0)\subset \sigma(\mL_\mY)$.
\par

By the definitions,
\begin{align*}
u_{\mY}-u_{13}=&
2\pd_x^2\log\left(1+\frac{e^{\theta_2}}{e^{\theta_1}+e^{\theta_3}}\right)             
= O\left(\min\{e^{\theta_2-\theta_1},e^{\theta_2-\theta_3}\}\right)\,.
\end{align*}
Since
\begin{equation}
  \label{eq:2/tau}
\left\{  
\begin{aligned}
& \theta_2-\theta_1=2\sqrt{c_{12}}z_{13}-(\k_2-\k_1)(\k_3-\k_2)Y\,,
\\ &
\theta_3-\theta_2=2\sqrt{c_{23}}z_{13}+(\k_2-\k_1)(\k_3-\k_2)Y\,,
\end{aligned}\right.
\end{equation}
\begin{equation}
  \label{eq:Y-13}
  |u_{\mY}-u_{13}|\lesssim \left\{
    \begin{aligned}
& e^{-2\sqrt{c_{23}}z_{13}-(\k_2-\k_1)(\k_3-\k_2)Y}\quad\text{if $Y>0$ and $z_{13}\ge0$,}
 \\
& e^{2\sqrt{c_{12}}z_{13}-(\k_2-\k_1)(\k_3-\k_2)Y}\quad\text{if $Y>0$ and $z_{13}\le0$.}
    \end{aligned}\right.
\end{equation}
Similarly,
\begin{gather}
  \label{eq:Y-23}
u_{\mY}-u_{23}=O\left(e^{-2\sqrt{c_{12}}z_{13}+(\k_2-\k_1)(\k_3-\k_2)Y}\right)
  \quad\text{if $Y<0$ and $z_{13}\ge0$,}
  \\ \label{eq:Y-12}
u_{\mY}-u_{12}=O\left(e^{2\sqrt{c_{23}}z_{13}+(\k_2-\k_1)(\k_3-\k_2)Y}\right)
  \quad\text{if $Y<0$ and $z_{13}\le0$.}
\end{gather}
Let $\eta=\eta_R+i\nu$, $|\eta_R|<\eta_{13}^*(\a,\pm\nu)$,
$\tchi_{0,R}(y)=\chi_0(y/(3R))-\chi_0(y/R)$ and
\begin{equation*}
  g_{13}^{\pm,R}(\eta)=\tchi_{0,R}(y)g_{13}^0(z_{13},\pm\eta)e^{iy\eta}\,.
\end{equation*}
Then $\|g_{13}^{\pm,R}(\eta)\|_{\calX(\nu)}=O(R^{1/2})$.  On the other hand, it
follows from Lemma~\ref{lem:evmL0} and \eqref{eq:Y-13} that
\begin{equation*}
  \|(\lambda_{13}^\pm(\eta)-\mL_{\mY}) g_{13}^{\pm,R}(\eta)\|_{\calX(\nu)}
  =O(R^{-1/2})\,.
\end{equation*}
Thus we have $\lambda_{13}^\pm(\eta)\in\sigma(\mL_{\mY})$
provided $|\eta_R|\le \eta_{13}^*(\a,\pm\nu)$.
Combining the above with Proposition~\ref{lem:wmL1-decay}
and Lemma~\ref{lem:X13nu}, we have $\sigma(\mL_{13})\subset \sigma(\mL_{\mY})$.
\par
Similarly, we can prove
$\sigma(\mL_{12})\cup \sigma(\mL_{23})\subset \sigma(\mL_{\mY})$ by 
using \eqref{eq:Y-23}, \eqref{eq:Y-12},  Proposition~\ref{lem:wmL1-decay} and
Lemma~\ref{lem:specL23}.
\end{proof}

If $\lambda\in \rho(\mL_0)\cap\rho(\mL_{12})\cap\rho(\mL_{13})\cap\rho(\mL_{23})$, then
$\lambda\in\rho(\mL_{\mY})$ unless $\lambda$ is an eigenvalue of $\mL_{\mY}$ or $\mL_\mY^*$.
More precisely, we have the following.
\begin{lemma}
  \label{lem:spec1}
Assume \eqref{ass:alpha23} and \eqref{ass:X13nu}.
Let $\mL_{13}$ be a closed operator on $\calX_{13}(\nu)$,
$\mL_{12}$ and $\mL_{23}$ be closed operators on $\calX_{12}$,
and $\mL_0$ and $\mL_{\mY}$ be closed operators on $\calX(\nu)$.
If $\lambda\in \rho(\mL_0)\cap \rho(\mL_{12})\cap \rho(\mL_{13})\cap \rho(\mL_{23})$,
then $\lambda\in\sigma_p(\mL_\mY)$ or
$\lambda-\mL_\mY$ has a bounded inverse on
${}^\perp\!\ker(\lambda-\mL_\mY)^*$.
\end{lemma}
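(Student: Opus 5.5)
The plan is to build an approximate inverse (parametrix) of $\lambda-\mL_\mY$ by gluing the resolvents of the model operators $\mL_0$, $\mL_{12}$, $\mL_{13}$, $\mL_{23}$ with a partition of unity adapted to the geometry of $u_\mY$. We then show that $\lambda-\mL_\mY$ is Fredholm of index zero on $\calX(\nu)$. Once that is known, the alternative in the statement follows: either $\ker(\lambda-\mL_\mY)\neq\{0\}$, or the range is closed and equals ${}^\perp\!\ker(\lambda-\mL_\mY)^*$, and the inverse is bounded there by the closed graph theorem.

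\emph{Partition of unity.} Fix a large $R$ and choose smooth cut-offs $\chi_{13},\chi_{12},\chi_{23},\chi_0$ with $\sum\chi_\bullet=1$ outside the ball $B_R=\{|X|+|Y|\le R\}$. The support of $\chi_{13}$ is a sector $\{Y\ge R\}$ around the $[1,3]$-crest. The supports of $\chi_{12}$ and $\chi_{23}$ are sectors in $\{Y\le -R\}$ around the $[1,2]$- and $[2,3]$-crests. The support of $\chi_0$ is the complement of thin neighbourhoods of the three crests, where $u_\mY$ is exponentially small. Take also $\chi_c$ supported near $B_{2R}$, and arrange that all cut-offs have gradients $O(1/R)$ (sectors of opening angle independent of $R$ and radial scaling). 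By \eqref{eq:Y-13}, \eqref{eq:Y-23}, \eqref{eq:Y-12}, the differences $u_\mY-u_{jk}$ on $\supp\chi_{jk}$ and $u_\mY$ on $\supp\chi_0$ are $O(e^{-cR})$.

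\emph{Parametrix.} Define
\[
Q(\lambda)f=\sum_{\bullet}\tchi_\bullet(\lambda-\mL_\bullet)^{-1}\chi_\bullet f,
\]
where $\tchi_\bullet$ is a slightly enlarged cut-off equal to $1$ on $\supp\chi_\bullet$. Each $(\lambda-\mL_\bullet)^{-1}$ acts on the appropriate space: $\calX_{13}(\nu)$ for $\bullet=13$ and $\calX_{12}$ for $\bullet=12,23$. On the supports of the cut-offs these norms are equivalent to $\|\cdot\|_{\calX(\nu)}$ thanks to \eqref{eq:defcalX}. For $\bullet=0$ we use Lemma~\ref{lem:intersection}.

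Then
\[
(\lambda-\mL_\mY)Q(\lambda)=I+K_1(\lambda)+E(\lambda).
\]
Here $E$ collects the terms $\tchi_\bullet(\mL_\bullet-\mL_\mY)(\lambda-\mL_\bullet)^{-1}\chi_\bullet$ restricted away from $B_{2R}$, so $\|E\|=O(e^{-cR})$. The commutator terms $[\mL_0,\tchi_\bullet](\lambda-\mL_\bullet)^{-1}\chi_\bullet$ are small because $\nabla\tchi_\bullet=O(1/R)$, using the commutator estimate \cite[(C.6)]{Miz23} already invoked in Lemma~\ref{lem:intersection}. Finally, $K_1$ contains the localized piece involving $u_\mY$ on $B_{2R}$, which is compact: it is a multiplication by a compactly supported smooth function composed with a resolvent, and that resolvent gains regularity in $X$ and, through $\pd_X^{-1}\pd_Y^2$, enough in $Y$ on bounded sets.

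Choosing $R$ large, $I+E$ is invertible, so $\lambda-\mL_\mY$ has a right Fredholm inverse modulo compacts. A symmetric construction $Q'(\lambda)=\sum\chi_\bullet(\lambda-\mL_\bullet)^{-1}\tchi_\bullet$ gives a left parametrix. Hence $\lambda-\mL_\mY$ is Fredholm. The index is constant on the connected set $\rho(\mL_0)\cap\bigcap\rho(\mL_{jk})$ containing a far right half-plane, where $\lambda-\mL_\mY$ is invertible by a standard energy/large-$\Re\lambda$ bound. So the index is zero, which gives the claim.

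\emph{Main obstacle.} The difficulty is the nonlocal operator $\pd_X^{-1}$. Commutators $[\pd_X^{-1}\pd_Y^2,\chi]$ are not local and need not be small, and the weights differ between $\calX_{13}(\nu)$ and $\calX_{12}$ across $Y=0$. I would handle this by cutting only in $Y$ for the $\pd_X^{-1}$ structure, i.e.\ choosing cut-offs of the form $\chi(Y/R)$ combined with $X$-cut-offs applied after conjugating by $\pd_X$. This follows \cite[Appendix~C]{Miz23}, where the same issue is treated for $2$-line solitons. If this fails to give compactness directly, I would instead prove the weaker statement: closed range plus a priori bound $\|u\|\lesssim\|(\lambda-\mL_\mY)u\|+\|\chi_c u\|$. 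This suffices for the stated alternative, since a Rellich-type argument then yields finite-dimensional kernel and closed range.
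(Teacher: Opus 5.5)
Your overall route works, but one step is false and has to go. The claim that the Fredholm index is constant, hence zero, on $\Omega:=\rho(\mL_0)\cap\rho(\mL_{12})\cap\rho(\mL_{13})\cap\rho(\mL_{23})$ fails because $\Omega$ is not connected.

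\begin{itemize}
\item The continuous-spectrum curves $\lambda_{12}^\pm(\eta_R)$, $\lambda_{13}^\pm(\eta_R+i\nu)$, $\lambda_{23}^-(\eta_R+i\eta_I^{23})$ and $\tilde{\lambda}_{23}^-(\eta_R+i\eta_I^{23})$, together with arcs of $\sigma(\mL_0)$, bound the domains $\mathcal{D}_1,\mathcal{D}_2,\mathcal{D}_3$ (Remark~\ref{rem:adjointef}).
\item Points of these domains do lie in $\Omega$; there are open sets of them, e.g.\ in $\mathcal{D}_1$ near $\lambda=0$. At such points Lemma~\ref{lem:adjointef} puts $\lambda$ in $\sigma_r(\mL_\mY)$, while Proposition~\ref{prop:noev} says $\mL_\mY$ has no eigenvalues.
\item So there $\lambda-\mL_\mY$ is injective with nonzero cokernel, and its index is negative.
\end{itemize}

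If the index were zero on all of $\Omega$, every non-eigenvalue in $\Omega$ would lie in $\rho(\mL_\mY)$. The lemma is phrased with ${}^\perp\ker(\lambda-\mL_\mY)^*$ precisely because this is false.

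The fix is to drop the index argument and the right parametrix. Your left parametrix, or your fallback estimate $\|u\|\lesssim\|(\lambda-\mL_\mY)u\|+\|\chi_c u\|$ with $u\mapsto\chi_c u$ compact from the graph norm, already gives a finite-dimensional kernel and closed range. If the kernel is trivial, the inverse is then bounded on the closed range $\operatorname{Range}(\lambda-\mL_\mY)={}^\perp\ker(\lambda-\mL_\mY)^*$, which is exactly the statement.

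With this correction, your approach is a parametrix form of the paper's proof; the paper's argument is the sequential version of your fallback.

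\begin{itemize}
\item The paper takes $\|u_n\|_{\calX(\nu)}=1$ with $(\lambda-\mL_\mY)u_n\to0$ and bounds $u_n$ in $\mathcal{E}$ by \cite[Proposition~C.1]{Miz23}.
\item It uses the compact embedding $\mathcal{E}\subset L^2_{loc}$ to extract a weak limit, which lies in $\ker(\lambda-\mL_\mY)$.
\item If that limit is zero, the localized pieces $\chi_{1,k}u_{n(k)}$ and $\chi_{ij,k}u_{n(k)}$ satisfy the model equations \eqref{eq:tu-1} and \eqref{eq:tu-ij} with errors tending to zero. The model resolvents then force $\|u_{n(k)}\|_{\calX(\nu)}\to0$, a contradiction.
\end{itemize}

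This avoids building two-sided parametrices and verifying compactness of remainder operators. The price is that it yields only the lower bound, which is all the lemma needs.

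Your worry about $\pd_X^{-1}$ is also handled there without restricting to $Y$-only cut-offs. On the exponentially weighted spaces $\pd_X^{-1}$ is bounded and $[\pd_X^{-1},\chi]=-\pd_X^{-1}(\pd_X\chi)\pd_X^{-1}$. Hence commutators of $\mL_0$ with cut-offs in both $z_{ij}$ and $Y$ are $O(R^{-1})$ relative to the $\mathcal{E}$-norm, which is the content of \cite[(C.6)]{Miz23}.
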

\begin{proof}[Proof of Lemma~\ref{lem:spec1}]
Let $\chi_0$ be as \eqref{eq:chi0}.
For $(i,j)=(1,2)$, $(1,3)$, $(2,3)$ and $R>0$, let
\begin{gather*}
  \chi_{ij,R}(x,y)=\left\{
  \begin{aligned}
&\chi_0\left(\tfrac{|z_{ij}|}{R}\right)
\left(1-\chi_{0,R}(x,y)\right) & \text{ if $(-1)^{i+j}y>0$,}
\\ & 0  & \text{ if $(-1)^{i+j}y<0$,}
  \end{aligned}\right.
\\
\chi_{1,R}(x,y)=1-\chi_{0,R}(x,y)-\sum_{1\le i<j\le 3}\chi_{ij,R}(x,y)\,.
\end{gather*}
\par
Suppose that $\lambda-\mL_{\mY}$ does not have a bounded inverse.
Then there exist $\{u_n\}$ and $\{f_n\}$ such that
\begin{equation}
\label{eq:ass-un}
\left(\lambda-\mL_\mY\right)u_n=f_n\,,\quad
\|u_n\|_{\calX(\nu)}=1\,,\quad \lim_{n\to\infty}\|f_n\|_{\calX(\nu)}=0\,.
\end{equation}
By \cite[Proposition~C.1, (C.5)]{Miz23}, $\{u_n\}$ is bounded in a Hilbert
space $\mathcal{E}$ equipped with a norm
$\|u\|_{\mathcal{E}}:=\left(\|u\|_{\calX(\nu)}^2+\|\pd_x^2u\|_{\calX(\nu)}^2
+\|(\pd_x^{-1}\pd_y)^2u\|_{\calX(\nu)}^2\right)^{1/2}$.
Since $\mathcal{E}$ is compactly embedded into $L^2_{loc}(\R^2)$,
there exist a subsequence $\{u_n'\}$ and $u_\infty\in\calX(\nu)$ such that
$u_n'\to u_\infty$ weakly in $\mathcal{E}$ and locally in $\calX(\nu)$, and 
$(\lambda-\mL_{\mY})u_\infty=0$.
If $u_\infty\ne0$, then $\lambda$ is an eigenvalue of $\mL_{\mY}$.
\par
Suppose on the contrary, that every convergent subsequence of $\{u_n\}$ converges to
$0$ weakly in $\mathcal{E}$.  Then $u_n\to0$ locally in $\calX(\nu)$ and
there exists a subsequence $\{u_{n(k)}\}$ such that
\begin{equation}
  \label{eq:unk-0}
\lim_{k\to\infty}\|\chi_{0,k}u_{n(k)}\|_{\calX(\nu)}=0\,.
\end{equation}
Let
$\tu^1_k=\chi_{1,k}u_{n(k)}$ and $\tu^{ij}_k=\chi_{ij,k}u_{n(k)}$.
Then
\begin{gather}
\label{eq:tu-1}  
(\lambda-\mL_0)\tu^1_k=
  \chi_{1,k}f_{n(k)}-[\mL_0,\chi_{1,k}]u_{n(k)}
-\frac{3}{2}\chi_{1,k}\pd_x(u_{\mY}u_{n(k)})=:\tilde{f}^1_k\,,
\\ \label{eq:tu-ij}
\begin{aligned}
  (\lambda-\mL_{ij})\tu^{ij}_k=&
  \chi_{ij,k}f_{n(k)}-[\mL_{ij},\chi_{ij,k}]u_{n(k)}
  \\ &-\frac{3}{2}\chi_{ij,k}\pd_x\{(u_{\mY}-u_{ij})u_{n(k)}\}
  =:\tilde{f}^{ij}_k\,,  
\end{aligned}
\end{gather}
and
\begin{equation}
  \label{eq:tfk-lim}
\lim_{k\to\infty}(\|\tilde{f}^1_k\|_{\calX(\nu)}
+\sum_{1\le i<j\le3}\|\tilde{f}^{ij}_k\|_{\calX(\nu)})=0\,.  
\end{equation}
Indeed, by \cite[(C.6)]{Miz23}, 
\begin{align*}
& \left\|[\mL_0,\chi_{1,k}]u_{n(k)}\right\|_{\calX(\nu)}
+\left\|[\mL_{ij},\chi_{ij,k}]u_{n(k)}\right\|_{\calX(\nu)}
 \lesssim k^{-1}\|u_{n(k)}\|_{\mathcal{E}}\lesssim k^{-1}\,,
\end{align*}
and by \eqref{eq:Y-13}--\eqref{eq:Y-12}, there exists a $c>0$ such that
\begin{align*}
& \left\|\chi_{1,k}\pd_x(u_{\mY}u_{n(k)})\right\|_{\calX(\nu)}
+\sum_{1\le i<j\le 3}\left\|\chi_{ij,k}\pd_x\{(u_{\mY}-u_{ij})u_{n(k)}\}\right\|_{\calX(\nu)}
\\ \lesssim &
e^{-ck}\|u_{n(k)}\|_{\mathcal{E}}\lesssim e^{-ck}\,.
\end{align*}
Since
$\lambda\in\cap_{1\le i<j\le 3}\rho(\mL_{ij})\cap \rho(\mL_0)$,
it follows from \eqref{eq:tu-1}--\eqref{eq:tfk-lim} that as $k\to\infty$,
\begin{equation*}
  \|\tu^1_k\|_{\calX(\nu)}+\|\tu^{12}_k\|_{\calX(\nu)}+\|\tu^{13}_k\|_{\calX(\nu)}+\|\tu^{23}_k\|_{\calX(\nu)}\to0\,.
\end{equation*}
Combining the above with \eqref{eq:unk-0}, we have
$ \lim_{k\to\infty}\|u_{n(k)}\|_{\calX(\nu)}=0$, which contradicts to \eqref{eq:ass-un}.
Thus we prove that either $\lambda$ is an eigenvalue of $\mL_\mY$ or 
$\lambda-\mL_{\mY}$ has a bounded inverse on
$\overline{\operatorname{Range}(\lambda-\mL_{\mY})}$.
This completes the proof of Lemma~\ref{lem:spec1}.
\end{proof}

To investigate existence of eigenvalues in Section~\ref{sec:nonexistence},
we will prove that unstable eigenfunctions of $\mL_\mY$ and $\mL_\mY^*$
should be strongly localized.
\begin{lemma}
  \label{lem:imev}
  Let $\a$, $\nu$, $\mL_{23}$ and $\mL_{\mY}$ be as in Lemma~\ref{lem:spec1} and
  $\mL_0$ be a closed operator on $\calX(\nu)$.
Assume that $\lambda\in\rho(\mL_{23})\cap \rho(\mL_0)$.
\begin{itemize}
\item[\rm{(i)}]
If $\mL_{\mY} u=\lambda u$ for $u\in\calX(\nu)$,
then there exists an $\a'>0$ such that
$e^{\a'|z_{13}|}u\in \calX_{13}(\nu)$ and $e^{\a'|z_{12}|}u\in\calX_{12}$.
\item[\rm{(ii)}]
If $\mL_{\mY}^*u=\lambda u$ for $u\in\calX(\nu)^*$,
then there exists an $\a'>0$ such that
$e^{\a'|z_{13}|}u\in \calX_{13}(\nu)^*$ and $e^{\a'|z_{12}|}u\in\calX_{12}^*$.
\end{itemize}
\end{lemma}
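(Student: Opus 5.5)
The plan is to show that the weight $e^{\a z_{13}+\nu Y}$ (resp. $e^{\a z_{12}}$) can be raised slightly, to $e^{(\a+\a')z}$, and also reversed to $e^{(\a-\a')z}$ with $\a'$ small, by a perturbation argument in the weight parameter. The spectral sets in Lemma~\ref{lem:intersection}, Lemma~\ref{lem:specL23} and Lemma~\ref{lem:X13nu} depend continuously on $\a$. Since $\lambda\in\rho(\mL_{23})\cap\rho(\mL_0)$ is an open condition, $\lambda$ stays in the resolvent sets of $\mL_0$ and $\mL_{23}$ on the spaces with weights $\a\pm\a'$, provided $\a'$ is small. The region near the $[1,3]$- and $[1,2]$-solitons needs separate care, because $\lambda$ is not assumed to lie in $\rho(\mL_{12})\cap\rho(\mL_{13})$. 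There I would use the structure of the $x$-direction equation directly: an eigenfunction decays to the right of the crest at the prescribed rate and is controlled to the left by the weighted estimate.

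\emph{Step 1: localize as in Lemma~\ref{lem:spec1}.} Let $u$ satisfy $(\lambda-\mL_\mY)u=0$. Apply the cutoffs $\chi_{1,R}$, $\chi_{ij,R}$ and $\chi_{0,R}$ from the proof of Lemma~\ref{lem:spec1}. The pieces $\chi_{1,R}u$ and $\chi_{23,R}u$ then satisfy \eqref{eq:tu-1} and \eqref{eq:tu-ij} with right-hand sides consisting of commutator terms, supported where $|z_{ij}|\sim R$ or $|(x,y)|\sim R$, plus terms of size $e^{-cR}$ from \eqref{eq:Y-13}--\eqref{eq:Y-12}.

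\emph{Step 2: gain exponential decay away from the crests.} Consider the conjugated operators $e^{\a' \psi}\mL_0e^{-\a'\psi}$ and $e^{\a'\psi}\mL_{23}e^{-\a'\psi}$, where $\psi$ is a smooth bounded-gradient approximation of $|z_{13}|$ on $Y>0$ and of $|z_{12}|$ on $Y<0$. These are small relatively bounded perturbations of $\mL_0$ and $\mL_{23}$, with commutator bounds as in \cite[(C.6)]{Miz23}. Hence $\lambda-e^{\a'\psi}\mL e^{-\a'\psi}$ remains invertible for $\mL=\mL_0,\mL_{23}$ if $\a'$ is small. Applying these inverses gives $e^{\a'\psi}\chi_{1,R}u\in\calX(\nu)$ and $e^{\a'\psi}\chi_{23,R}u\in\calX(\nu)$. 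A bootstrap is needed here, since the source terms involve $u$ itself near the transition regions. I would first truncate the weight, $\psi_M=\min(\psi,M)$, derive bounds uniform in $M$, and then let $M\to\infty$.

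\emph{Step 3: handle the $[1,2]$ and $[1,3]$ regions, the expected main obstacle.} In the strips $|z_{13}|\le R$ with $Y>0$ and $|z_{12}|\le R$ with $Y<0$, the weight $e^{\a'|z|}$ is bounded, so the claim there reduces to showing that $u$ is in $\calX_{13}(\nu)$ and $\calX_{12}$. This already holds. The delicate point is that Step 2's conjugation needs the resolvent only of $\mL_0$ and $\mL_{23}$, the operators governing the regions where $\psi$ grows. I therefore need $\psi$ constant inside the strips, so that the $[1,2]$- and $[1,3]$-cutoff pieces never see the extra weight except through commutators supported at $|z|\sim R$. Those commutator pieces are in turn controlled by Step 2.

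For (ii) I would run the same argument with $\mL_\mY^*$ on $\calX(\nu)^*$, using $\rho(\mL_{23}^*)=\overline{\rho(\mL_{23})}$ and $\rho(\mL_0^*)=\overline{\rho(\mL_0)}$ on the dual weighted spaces.
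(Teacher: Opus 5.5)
Your plan works, but it is organized differently from the paper's proof. The paper does not reuse the five-piece partition of Lemma~\ref{lem:spec1}. It cuts only in $y$, pairing $\chi_{2,R}(y)$ (supported in $y\ge-2R$) with $(\lambda-\mL_0)^{-1}$ and $\chi_{3,R}(y)$ (supported in $y\le-R$) with $(\lambda-\mL_{23})^{-1}$. This gives the identity $W_1(\lambda)(\lambda-\mL_\mY)=I+B_1+G_1$. The commutator part $B_1$ is $O(R^{-1})$ on every $\calX^{\a+\a'}(\nu)$ with $|\a'|\le\a_1$. The whole potential is kept in $G_1$: $u_\mY$ on the upper piece and $u_\mY-u_{23}$ on the lower one. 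These are exponentially localized in $z_{13}$ (resp.\ $z_{12}$), so $G_1$ maps $\calX(\nu)$ boundedly into $\calX^{\a\pm\a'}(\nu)$. Hence $u=-(I+B_1)^{-1}G_1u\in\calX^{\a+\a'}(\nu)\cap\calX^{\a-\a'}(\nu)$ in one step, which is the claim because $e^{\a'|z|}\le e^{\a'z}+e^{-\a'z}$.

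The fact you exploit by excising the $[1,2]$- and $[1,3]$-strips is that the extra weight is bounded there, so $\mL_{12}$ and $\mL_{13}$ never enter. In the paper this same fact appears as the weight gain produced by the localized potential.

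The paper's arrangement buys economy. Only the two linear weights $\a\pm\a'$ occur, so the resolvent bounds are just \eqref{eq:mL0-inv} and \cite[Lemma~C.4]{Miz23}. There is no nonlinear weight $\psi$, no truncation $\psi_M$, and no bootstrap.

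Your arrangement must additionally supply two things:
\begin{enumerate}
\item Resolvent bounds for $e^{\a'\psi_M}\mL_0e^{-\a'\psi_M}$ and $e^{\a'\psi_M}\mL_{23}e^{-\a'\psi_M}$, uniform in $M$. This is a real perturbation estimate: $\pd_x^{-1}\pd_y^2$ is not relatively bounded with respect to $\lambda-\mL_0$, so its conjugation error must be controlled through the $(\pd_x^{-1}\pd_y)^2$- and $\pd_x^2$-smoothing in \eqref{eq:mL0-inv}. You also need $\a'<\a$ to keep $\pd_x^{-1}$ bounded to the left of the crests.
\item A weighted $\mathcal{E}$-regularity bound for $u$, to absorb the commutator terms on $R\le|z_{23}|\le 2R$, where $\psi$ is unbounded.
\end{enumerate}
Note also that the $\mL_{23}$ step naturally lives on $\calX_{12}$ with weight $e^{(\a+\a')z_{12}}$, not on $\calX(\nu)$. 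In exchange, your version shows very transparently why only $\rho(\mL_0)$ and $\rho(\mL_{23})$ are needed.
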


\begin{proof}[Proof of Lemma~\ref{lem:imev}]
Let $\chi_{2,R}(y)=\chi_0(-y/R)$, $\chi_{3,R}(y)=1-\chi_{2,R}(y)$,
$\tchi_{2,R}(y)=\chi_{2,R}(y+R)$ and
$\tchi_{3,R}(y)=\chi_{3,R}(y-R)$. Then
$\chi_{2,R}(y)=1$ if $y\ge -R$, $\chi_{2,R}(y)=0$ if $y\le -2R$,
and $\chi_{2,R}\tchi_{2,R}=\chi_{2,R}$,
$\chi_{3,R}\tchi_{3,R}=\chi_{3,R}$.
Let
  \begin{align*}
W_1(\lambda)=& \tchi_{2,R}(y)(\lambda-\mL_0)^{-1}\chi_{2,R}(y)
+\tchi_{3,R}(y)(\lambda-\mL_{23})^{-1}\chi_{3,R}(y)\,.
  \end{align*}
Then
\begin{equation}
  \label{eq:imev-6}
W_1(\lambda)(\lambda-\mL)=I+B_1+G_1\,,
\end{equation}
where $B_1=B_{11}+B_{12}$, $G_1=G_{11}+G_{12}$ and
\begin{align*}
  B_{11}=& \tchi_{2,R}(y)(\lambda-\mL_0)^{-1}[\mL,\chi_{2,R}(y)]\,,
  \\
B_{12}=& \tchi_{3,R}(y) (\lambda-\mL_{23})^{-1}[\mL, \chi_{3,R}(y)]\,,
\end{align*}
\begin{align*}
  G_{11}=&
\frac32\tchi_{2,R}(y)(\lambda-\mL_0)^{-1}\pd_x(u_{\mY}\chi_{2,R}(y)\cdot)
  \\
G_{12}=  & 
\frac32\tchi_{3,R}(y)(\lambda-\mL_{23})^{-1}
\pd_x\{(u_{\mY}-u_{23})\chi_{3,R}(y)\cdot\} \,.
\end{align*}
In view of the proof of  Lemma~\ref{lem:intersection}
and the proof of  \cite[Proposition~C.1]{Miz23},
there exists an $\a_1>0$ such that
if $i$, $j\ge0$ and $i+j\le 2$ or $(i,j)=(-2,2)$,
\begin{equation}
\label{eq:mL0-inv}  
\sup_{\a'\in[-\a_1,\a_1]}
\left\|\pd_x^i\pd_y^j(\lambda-\mL_0)^{-1}\right\|_{B(\calX^{\a+\a'}(\nu))}<\infty\,,
\end{equation}
 and
$\|B_{11}\|_{B(\calX^{\a+\a'}(\nu))}=O(R^{-1})$.
Since $\chi_{3,R}(y)=\tchi_{3,R}(y)=0$ for $y\ge0$,
we have as in \cite[Appendix~C]{Miz23},
\begin{gather*}
\|B_{12}\|_{B(\calX^{\a+\a'})(\nu)}
=\|e^{\a' z_{12}}B_{22}e^{-\a' z_{12}}\|_{\calX_{12}}=O(R^{-1})\,.
\end{gather*}
By \eqref{eq:Y-13}--\eqref{eq:Y-12},
\begin{gather*}
u_{\mY}\lesssim
\begin{cases}
& \min\{e^{2\sqrt{c_{12}}z_{13}},e^{-2\sqrt{c_{23}}z_{13}}\}\text{ for $Y\ge0$,}
\\ &  e^{-\sqrt{c_{12}}|z_{12}|}+e^{-\sqrt{c_{23}}|z_{23}|}\text{ for $Y\le0$.} 
\end{cases}
\end{gather*}
Combining the above with \eqref{eq:mL0-inv} and the fact that
$\supp\chi_{2,R}\subset[-2R,\infty)$, we have
 $\|G_{11}u\|_{\calX^{\a+\a'}}(\nu)\lesssim \|u\|_{\calX(\nu)}$.
\par
By \cite[Lemma~C.4]{Miz23},
$\lambda-\mL_{23}$ is invertible on $e^{-\a'z_{12}}\calX_{12}$ provided $\a'$
is sufficiently close to $0$.
Since $\supp\chi_{3,R}\subset\supp\tchi_{3,R}\subset  \R_-$ and
\begin{gather*}
  u_\mY-u_{23}= O(e^{-2\sqrt{c_{12}}z_{12}}) \quad\text{if $z_{12}\ge0$,}
\\
|u_\mY-u_{23}|\le  u_\mY+u_{23} \lesssim e^{2\min\{\sqrt{c_{12}},
\sqrt{c_{23}}\}z_{12}}
\quad\text{if $z_{12}\le 0$ and $y\le0$,}
\end{gather*}
we have 
$\|G_{12}u\|_{\calX^{\a+\a'}(\nu)}
=\|e^{\a'z_{12}}G_{12}u\|_{\calX_{12}}\lesssim \|u\|_\calX$.
\par
Suppose that $\mL_{\mY} u=\lambda u$ for $u\in\calX$ and
$\lambda\in\rho(\mL_0)\cap\rho(\mL_{23})$. Then
$$ (I+B_1)u+G_1u=0\,.$$
If $R>0$ is sufficiently large and $\a'$ is sufficiently close to $0$,
then $I+B_1$ is invertible and
$$u=-(I+B_1)^{-1}G_1u\in \calX^{\a-\a'}(\nu)\cap \calX^{\a+\a'}(\nu)\,.$$
\par
Since $\rho(\mL_0)=\rho(\mL_0^*)$ and $\rho(\mL_{23})=\rho(\mL_{23}^*)$,
we can prove the part (2) in the same way.
This completes the proof of Lemma~\ref{lem:imev}.
\end{proof}
\par
Next, we will find eigenfunctions of  $\mL_\mY^*$
that can be written as $\Phi^1(-i\beta)/\tau_\mY$ and
$\Phi^1_1\Phi^{1,*}(-i\beta)$.
We remark that $\calX(\nu)^*=\calX_{13}(\nu)^*+\calX_{12}^*$ 
if $\a>0$ and $\nu\ge0$ and that the weight function
of $\calX(\nu)^*$ localizes perturbations around $[2,3]$-soliton.
\par
Let $D_1(\nu)$ be a domain of $\C$ enclosed by a line $\Re\beta=\k_1-\a$ and
two curves
\begin{gather*}
\mathcal{C}_{1\ell}=
\{\beta_{12}^-(\eta_R)\mid  -\eta_{12}^*(\a)\le \eta_R\le \eta_{12}^*(\a)\}\,,
\\
\mathcal{C}_{1r}(\nu)=
\{\beta_{13}^-(\eta_R+i\nu)\mid  -\eta_{13}^*(\a,\nu)\le \eta_R\le \eta_{13}^*(\a,\nu)\}\,.
\end{gather*}

Let $D_2$ be a domain enclosed by two curves
\begin{gather*}
\mathcal{C}_{2\ell}=
\{\beta_{12}^+(\eta_R)\mid  -\eta_{12}^*(\a)\le \eta_R\le \eta_{12}^*(\a)\}\,,
\\
\mathcal{C}_{2r}=
\{\beta_{23}^-(\eta_R-i\eta_I^{23})\mid  -\eta_{\#,-}(\a)\le \eta_R\le
 \eta_{\#,-}(\a)\}\,,
\end{gather*}
where $\eta_I^{23}=(\k_1-\k_3)\a$, and  $D_3(\nu)$ be a domain enclosed by
a line $\Re \beta=\k_3+\a$ and two curves
\begin{gather*}
\mathcal{C}_{3\ell}=
\{\beta_{23}^+(\eta_R-i\eta_I^{23})\mid 
 -\eta_{*,-}(\a)\le \eta_R\le  \eta_{*,-}(\a)\}\,,
\\
\mathcal{C}_{3r}(\nu)=
\{\beta_{13}^+(\eta_R-i\nu)\mid  -\eta_{13}^*(\a,-\nu)\le \eta_R\le \eta_{13}^*(\a,-\nu)\}\,.
\end{gather*}
\begin{remark}
\label{rem:boundary}
We have
\begin{gather}
\label{eq:curve-endpoints1}
\left\{
  \begin{aligned}
& \beta_{12}^{-}(\pm\eta_{12}^*(\a))=\k_1-\a\mp i\sqrt{\a(\a+\k_2-\k_1)}\,,
\\ &
\beta_{13}^{-}(\pm\eta_{13}^*(\a,\nu)+i\nu)
=\k_1-\a\mp i\sqrt{\a(\a+\k_3-\k_1)+\nu}\,,
\end{aligned}\right.
\\ \label{eq:curve-endpoints2}
\beta_{12}^+(\pm\eta_{12}^*(\a))=
\beta_{23}^-(\mp\eta_{\#,-}(\a)-i\eta_I^{23})=\k_2+\a
\pm i\sqrt{\a(\a+\k_2-\k_1)}\,,
\\
\label{eq:curve-endpoints3}
\left\{
  \begin{aligned}
& \beta_{13}^+(\pm\eta_{13}^*(\a,-\nu))
=\k_3+\a\pm i\sqrt{\a(\a+\k_3-\k_1)-\nu}\,,
\\
& \beta_{23}^+(\pm\eta_{*,-}(\a)-i\eta_I^{23})=\k_3+\a
\pm i\sqrt{\a(\a+2\k_3-2a_{12})}\,.
  \end{aligned}\right.
\end{gather}  
\par
If $\nu\ge0$, then the curve $\beta_{12}^-(\eta)$ $(\eta\in\R)$ is to
the left of the curve $\beta_{13}^-(\eta)$ $(\eta\in\R)$. The two curves
meet only at $\beta=\k_1$ if $\nu=0$.  If $\nu>0$, then
$\mathcal{C}_{1\ell}\cap\mathcal{C}_{1r}(\nu)=\emptyset$.  The two
curves $\mathcal{C}_{1\ell}$ and $\mathcal{C}_{1r}(\nu)$ are to the
right of the line $\Re\beta=\k_1-\a$.  The curve $\mathcal{C}_{2\ell}$
is in a half plane $\Re \beta\le \k_2+\a$, and the curve
$\mathcal{C}_{2r}$ is in a half plane $\Re \beta\ge \k_2+\a$. The two
curves meet at $\k_2+\a\pm i\sqrt{\a(\a+\k_2-\k_1)}$.
\par
Suppose that $\nu$ satisfies \eqref{ass:X13nu}. 
Then $\beta_{13}^-(i\nu)<\k_2$ and
$D_1(\nu)\cap D_2=\emptyset$
because $\beta_{13}^-(i\nu)$ is the rightmost point of $D_1(\nu)$ and
$\k_2$ is the leftmost point of $D_2$.
Moreover, we have $\eta_{13}^*(\a,-\nu)>0$ and $\mathcal{C}_{3r}(\nu)\ne\emptyset$.
\par
The curve $\mathcal{C}_{3r}(\nu)$ is to the right of the curve $\mathcal{C}_{3\ell}$,
and $\mathcal{C}_{3r}$ and $\mathcal{C}_{3\ell}$ do not intersect.
We have $D_2\cap D_3(\nu)=\emptyset$ because 
$\beta_{23}^-(-i\eta_I^{23})<\beta_{23}^+(-i\eta_I^{23})$ and 
the rightmost point of $D_2$ is $\beta_{23}^-(-i\eta_I^{23})$
and the leftmost point of $D_3(\nu)$ is $\beta_{23}^+(-i\eta_I^{23})$.
The two curves $\mathcal{C}_{3r}(\nu)$ and $\mathcal{C}_{3\ell}$ are to the left
of the line $\Re\beta=\k_3+\a$.
The sets  $D_1(\nu)$ and $D_3(\nu)$ are monotone increasing in $\nu\ge0$.
\end{remark}

Let
\begin{gather*}
V(\beta)=e^{-\lambda(\beta)t} \Phi^1(-i\beta)\Phi^{1,*}_1\,,\quad
\widehat{V}(\beta)=e^{\lambda(\beta)t}(\beta-\k_1)
\Phi^1_1\Phi^{1,*}(-i\beta)
\end{gather*}
If $\lambda\in \lambda(D_1(\nu))\cup -\lambda(D_2)\cup-\lambda(D_3(\nu))$,
then $\lambda$ is an eigenvalue of $\mL_\mY^*$.
\begin{lemma}
\label{lem:adjointef}
 Assume \eqref{ass:alpha23} and \eqref{ass:X13nu}.
Then $V(\beta)\in \calX(\nu)^*$ if and only if $\beta\in D_2\cup D_3(\nu)$, and
$\widehat{V}(\beta)\in \calX(\nu)^*$ if and only if $\beta\in D_1(\nu)$, and
\begin{equation*}
\{\lambda(\beta)\mid \beta\in D_1(\nu)\}\cup  
\{-\lambda(\beta)\mid \beta\in D_2\cup D_3(\nu)\}\subset \sigma_r(\mL_\mY)\,.
\end{equation*}
\end{lemma}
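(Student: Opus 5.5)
The proof has three parts. First, $V(\beta)$ and $\widehat V(\beta)$ are eigenfunctions of $\mL_\mY^*$. Second, we find exactly when they lie in $\calX(\nu)^*$, by asymptotics in each sector. Third, we conclude that the corresponding $\lambda$ lie in $\sigma_r(\mL_\mY)$.

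\emph{Eigenfunction property.} $\Phi^1(-i\beta)$ is a Jost solution and $\Phi^{1,*}_1=1/\tau_\mY$ is a residue of dual Jost solutions, hence a dual Jost solution. By Lemma~\ref{lem:prodJdJ}, their product solves the adjoint linearized equation \eqref{eq:adjLKP}. In the moving frame $(X,Y)$, the time factor of $\Phi^1(-i\beta)\Phi^{1,*}_1$ is $e^{-i(-i\beta)^3t}$ times the frame correction, which equals $e^{\lambda(\beta)t}$ up to normalisation. Hence $V(\beta)$ is stationary and $\mL_\mY^*V(\beta)=-\lambda(\beta)V(\beta)$, with the sign fixed by $\lambda(\beta)=\prod_j(\k_j-\beta)$. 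The same argument applied to $\Phi^1_1\Phi^{1,*}(-i\beta)$ gives $\mL_\mY^*\widehat V(\beta)=\lambda(\beta)\widehat V(\beta)$. This matches the factors $e^{\mp\lambda(\beta)t}$ in the definitions, and the conjugation convention must be tracked so that the eigenvalue of $\mL_\mY^*$ is $\overline{\mu}$ for $\mu\in\sigma_r(\mL_\mY)$.

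\emph{Membership in $\calX(\nu)^*$.} Recall $\calX(\nu)^*=\calX_{13}(\nu)^*+\calX_{12}^*$, with weights $e^{-2\a z_{13}-2\nu Y}$ on $Y>0$ and $e^{-2\a z_{12}}$ on $Y<0$. Write
\[
V(\beta)=e^{\beta x+\beta^2y-\beta^3t}\,\frac{\sum_j(\beta-\k_j)e^{\theta_j}}{\tau_\mY^2}
\]
(up to constants). In each of the three sectors where one $e^{\theta_i}$ dominates, $V$ is exponential with rate $(\beta-\k_i)$ in $x$ and $(\beta^2-\k_i^2)$ in $y$. Square integrability against the weight in that sector is then a condition on $\Re\beta$ and $\Im\beta$.

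Along the soliton rays the localized profile has the form \eqref{eq:btitj}. For instance, near the $[2,3]$-soliton, $\beta=\beta_{23}^\pm(\eta)$ with $\eta=\eta_R\mp i\eta_I^{23}$, and $V$ behaves like $e^{\pm\gamma_{23}z_{23}+i\eta Y}\sech$. Integrability in $Y$ against $e^{-2\a z_{12}}$ on $Y<0$ is then exactly the condition that $\beta$ lie on the correct side of the curves $\mathcal C_{2r},\mathcal C_{3\ell}$. The analogous computation along the $[1,3]$-soliton with weight $e^{-2\nu Y}$ gives the curve $\mathcal C_{3r}(\nu)$. In the sector where $e^{\theta_3}$ dominates for $x\to+\infty$, we get the constraint $\Re\beta<\k_3+\a$, and near $[1,2]$ we get $\mathcal C_{2\ell}$.

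For $\widehat V(\beta)\sim e^{-\beta x-\beta^2 y}(\beta-\k_1)\Phi^1_1/\tau_\mY$, the same analysis gives the boundary pieces $\Re\beta=\k_1-\a$, $\mathcal C_{1\ell}$ and $\mathcal C_{1r}(\nu)$. The factor $(\beta-\k_1)$ cancels the growth in the sector where $e^{\theta_1}$ dominates. The endpoint identities in Remark~\ref{rem:boundary} confirm that these constraints carve out precisely $D_1(\nu)$, $D_2$ and $D_3(\nu)$. The non-overlap statements there give that $V\in\calX(\nu)^*$ iff $\beta\in D_2\cup D_3(\nu)$; outside these regions some sector produces non-integrable exponential growth. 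I expect this sector-by-sector bookkeeping to be the main obstacle. The delicate points are the transition regions between sectors, where two exponentials compete, and showing that the boundary curves are exactly the sets where $\Re(\pm\gamma_{jk})$ equals $\sqrt{c_{jk}}\pm\a$. I would organize it using \eqref{eq:tauXY} with each choice of $(i_1,i_2,i_3)$, together with \eqref{eq:2/tau}.

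\emph{Residual spectrum.} For such $\beta$, $\ker(\bar\mu-\mL_\mY^*)\neq\{0\}$ with $\mu$ the corresponding value, so $\operatorname{Range}(\mu-\mL_\mY)$ is not dense. It remains to show that $\mu$ is not an eigenvalue of $\mL_\mY$. On the open domains, $\mu$ lies off $\sigma(\mL_0)$ and off all $\sigma(\mL_{jk})$: the boundary curves map onto those continuous spectra, and for $\Re\mu\ge0$ we use Lemma~\ref{lem:intersection}. This places us in the setting of Lemma~\ref{lem:spec1}. Absence of eigenvalues of $\mL_\mY$ is shown in Section~\ref{sec:nonexistence}, and for the inclusion here I would invoke that result, or argue directly via Lemma~\ref{lem:imev} that a genuine eigenfunction would be strongly localized, which contradicts the Darboux transformation analysis. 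This gives $\mu\in\sigma_r(\mL_\mY)$.
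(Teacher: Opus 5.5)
Your route is the paper's. The eigen-relations come from Lemma~\ref{lem:prodJdJ} as in Remark~\ref{rem:adjointef}, and membership in $\calX(\nu)^*$ is decided by the behaviour as $X\to\pm\infty$ and along the three crests. (What turns a crest condition into a side of a curve $\mathcal C$ is \eqref{eq:judge1}: $\{\beta_{jk}^\pm(\eta_R+i\eta_I)\}_{\eta_R\in\R}$ lies on the level set $\Re(\beta-\k_j)(\beta-\k_k)=-\eta_I$. The condition on $\Re\gamma_{jk}$ only fixes where the curves end.) However, two steps fail as written.

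First, your formula for $\widehat V(\beta)$ is wrong. You have $\Phi^{1,*}(-i\beta)=\Phi^{0,*}(-i\beta)\tau_{\Phi^*}(-i\beta)/\tau_\mY$ with $\tau_{\Phi^*}(-i\beta)=\sum_j e^{\theta_j}/(\beta-\k_j)$, and you dropped this numerator. With your formula, $\widehat V\sim e^{(\k_1-\k_3-\beta)X}$ as $X\to+\infty$, which gives the line $\Re\beta=\k_1-\k_3-\a$ instead of $\Re\beta=\k_1-\a$.

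With the correct formula, $\widehat V$ is not governed by the dominant exponential. The numerator of $\Phi^1_1$ carries the factor $e^{\theta_1}$ but has no $e^{2\theta_1}$ term. So to leading order $\widehat V\simeq e^{-\beta X-\beta^2Y}e^{\theta_1+\theta_m}/(e^{\theta_1}+e^{\theta_m})$ with $\theta_m=\max(\theta_2,\theta_3)$. Thus each half plane must be split by the order of $\theta_2$ and $\theta_3$, as in \eqref{eq:10-1}--\eqref{eq:10-4}, and this yields \eqref{eq:1hat}, i.e.\ $D_1(\nu)$. The factor $\beta-\k_1$ only removes the pole of $\Phi^{1,*}(-i\beta)$ at $\beta=\k_1$; it cancels no exponential growth. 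Your ``only if'' direction for $\widehat V$ needs these same sub-sector asymptotics.

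Second, in the injectivity step you claim the open domains avoid $\sigma(\mL_0)$ and every $\sigma(\mL_{jk})$ because their boundary curves are those continuous spectra. This is false, since the domains overlap and a curve bounding one can run through another. For $\nu>0$, $\k_3\in D_3(\nu)$, so $0=-\lambda(\k_3)$ is interior to $-\lambda(D_3(\nu))$, while $0=\lambda_{12}^\pm(0)\in\sigma(\mL_{12})$. Lemma~\ref{lem:lambda12cap13}(iii) likewise shows boundary curves of different domains crossing.

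Lemma~\ref{lem:spec1} is not the relevant tool. Excluding eigenvalues uses Proposition~\ref{prop:noev}, which needs only $\lambda\in\rho(\mL_0)\cap\rho(\mL_{23})$ and applies since $\calX(\nu)\subset\calX$. That condition must be checked directly, not read off the boundary description. The paper's proof does not attempt this step: it checks only the membership statements and takes the inclusion in $\sigma_r(\mL_\mY)$ from $\overline{\mu}$ being an eigenvalue of $\mL_\mY^*$. Absence of eigenvalues of $\mL_\mY$ is left to Section~\ref{sec:nonexistence}.
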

\begin{remark}
\label{rem:adjointef}
By the definitions,
\begin{gather*}
\lambda_{ij}^+(\eta)=\lambda(\beta_{ij}^-(\eta))\,,\quad
\lambda_{ij}^-(\eta)=-\lambda(\beta_{ij}^+(-\eta))\,,\quad
\tilde{\lambda}_{ij}^-(\eta)=-\lambda(\beta_{ij}^-(-\eta))\,,  
\\
\{\lambda_{12}^+(\eta_R)\}_{|\eta_R|\le \eta_{12}^*(\a)}
\cup \{\lambda_{13}^+(\eta_R+i\nu)\}_{|\eta_R|\le \eta_{13}^*(\a,\nu)}
\subset \pd\left(\lambda(D_1(\nu))\right)\,,
\\
\{\lambda_{12}^-(\eta_R)\}_{|\eta_R|\le \eta_{13}^*(\a)} \cup
\{\tilde{\lambda}_{23}^-(\eta_R+i\eta_I^{23})\}_{|\eta_R|\le \eta_{\#,-}(\a)}
\subset \pd\left(-\lambda(D_2)\right)\,,
\\
\{\lambda_{13}^-(\eta_R+i\nu)\}_{|\eta_R|\le \eta_{13}^*(\a,-\nu)}
\cup \{\lambda_{23}^-(\eta_R+i\eta_I^{23})\}_{|\eta_R|\le \eta_{*,-}(\a)}
\subset \pd\left(-\lambda(D_3(\nu))\right)\,,
\end{gather*}
where $\pd(A)$ denotes the boundary of a set $A\subset \C$. 
\par

Since
\begin{equation}
  \label{eq:6}
\left\{
  \begin{aligned}
& \theta_j-\theta_i=\sqrt{c_{ij}}(X+2a_{ij}Y)=\sqrt{c_{ij}}z_{ij}\,,
\\ &
 \beta x+\beta^2y-\beta^3t-\frac12(\theta_i+\theta_j)-\lambda(\beta) t
= (\beta-a_{ij})X+\left(\beta^2-\frac{\k_i^2+\k_j^2}{2}\right)Y\,,
  \end{aligned}\right.
\end{equation}
it follows from \eqref{eq:tauXY} that $V(\beta)$ and $\widehat{V}(\beta)$
depend only on $X$ and $Y$. Thus by Lemma~\ref{lem:prodJdJ},
$\mL_\mY^*V(\beta)=-\lambda(\beta)V(\beta)$
and $\mL_\mY^*\widehat{V}(\beta)=\lambda(\beta)\widehat{V}(\beta)$.

Moreover,
\begin{gather}
\label{eq:Vbeta=g*}
\frac{\k_1-\k_j}{\beta_{1j}^+(-\bar{\eta})-\k_1}\widehat{V}(\beta_{1j}^-(-\bar{\eta}))=g_{1j}^{*,+}(\eta)
\,,\enskip
V(\beta_{1j}^+(\bar{\eta}))=g_{1j}^{*,-}(\eta)
\quad\text{for $j=2$, $3$,}
\\  
V(\beta_{23}^+(\bar{\eta}))=g_{23}^{*,-}(\eta)\,,
\quad
V(\beta_{23}^-(\bar{\eta}))=\tg_{23}^{*,-}(\eta)\,.
\end{gather}
\end{remark}
\begin{proof}[Proof of Lemma~\ref{lem:adjointef}]
Suppose that
\begin{gather}
  \label{eq:1}
\Re\beta\in (\k_1+\a,\k_3+\a)\,,\quad\Re(\beta-\k_1)(\beta-\k_3)<\nu\,,
\\   \label{eq:1'}
\left\{
  \begin{aligned}
&  \Re(\beta-\k_1)(\beta-\k_2)>0\quad\text{if
$\Re\beta\le \k_2+\a$,}
\\ &
\Re \{(\beta-\k_2)(\beta-\k_3)\}>\eta_I^{23}
\quad\text{if $\Re\beta\ge\k_2+\a$.}
  \end{aligned}\right.
\end{gather}
By \eqref{eq:tauXY} and \eqref{eq:6},
\begin{gather}
\label{eq:1-1}
V(\beta)=
O\left(e^{(\beta-a_{13})z_{13}}\sech\sqrt{c_{13}}z_{13}
e^{(\beta-\k_1)(\beta-\k_3)Y}\right)\,,
\\ \label{eq:1-2}
V(\beta)=
\left\{
  \begin{aligned}
& O\left(e^{(\beta-a_{12})z_{12}}\sech\sqrt{c_{12}}z_{12} e^{(\beta-\k_1)(\beta-\k_2)Y}\right)\,,
\\  &
O\left(e^{(\beta-a_{23})z_{23}}\sech\sqrt{c_{23}}z_{23} 
e^{(\beta-\k_2)(\beta-\k_3)Y}\right)\,.
  \end{aligned}\right.
\end{gather}
Combining \eqref{eq:1} with \eqref{eq:1-1}
and \eqref{eq:1'} with \eqref{eq:1-2}, we have
$V(\beta)\in\calX(\nu)^*$ for $\beta$ satisfying \eqref{eq:1} and \eqref{eq:1'}.
\par
Since
\begin{gather}
  \label{eq:judge1}
\Re(\beta_{jk}^\pm(\eta)-\k_j)(\beta_{jk}^\pm(\eta)-\k_k)=-\Im\eta\,,
\\   \label{eq:judge2}
\lim_{\beta\to\pm\infty}(\beta-\k_j)(\beta-\k_k)=\infty\,, 
\end{gather}
we have
\begin{equation*}
\Re(\beta-\k_1)(\beta-\k_3)<\nu  
\end{equation*}
when $\beta$ is to the left of the curve $\beta_{13}^+(\eta_R-i\nu)$
and to the right of the curve $\beta_{13}^-(\eta_R-i\nu)$,
\begin{equation*}
  \Re(\beta-\k_1)(\beta-\k_2)>0
\end{equation*}
when $\beta$ is to the right of the curve $\beta_{12}^+(\eta_R)$
or to the left of the curve $\beta_{12}^-(\eta_R)$, and
\begin{align*}
  \Re(\beta-\k_2)(\beta-\k_3)>\eta_I^{23}
\end{align*}
when $\beta$ is to the right of the curve
$\beta_{23}^+(\eta_R-i\eta_I^{23})$
or to the left of the curve $\beta_{23}^-(\eta_R-i\eta_I^{23})$.
Combining these with \eqref{eq:curve-endpoints2} and
\eqref{eq:curve-endpoints3}, we have $\beta\in D_2\cup D_3(\nu)$ if and only if
$\beta$ satisfies \eqref{eq:1} and \eqref{eq:1'}.
\par
Next, we will prove that $\beta$ satisfies \eqref{eq:1} and \eqref{eq:1'} 
if $V(\beta)\in \calX(\nu)^*$. 
By \eqref{eq:6} and \eqref{eq:prod-PhiPhi*j}, we have
$V(\beta)\sim e^{(\beta-\k_1)X}$ as $X\to -\infty$,
$V(\beta)\sim e^{(\beta-\k_3)X}$ as $X\to \infty$,
\begin{gather}
 \label{eq:11-1}
V(\beta)\sim e^{(\beta-\k_1)(\beta-\k_3)Y}
\quad\text{if $Y\ge0$ and $z_{13}=0$,}
\\  \label{eq:11-2}
V(\beta)\sim e^{(\beta-\k_1)(\beta-\k_2)Y}
\quad\text{if $Y\le0$ and $z_{12}=0$,}
\\ \notag
e^{-\a z_{12}}V(\beta)\sim e^{\{(\beta-\k_2)(\beta-\k_3)-\eta_I^{23}\}Y}
\quad\text{if $Y\le0$ and $z_{23}=0$.}
\end{gather}
Thus we have \eqref{eq:1} and
\begin{gather}
\label{eq:1-4}
\Re(\beta-\k_1)(\beta-\k_2)>0\,,\quad
\Re \{(\beta-\k_2)(\beta-\k_3)\}>\eta_I^{23}\,,
\end{gather}
provided $V(\beta)\in \calX(\nu)^*$.
Since 
\begin{align*}
&\Re\{(\beta-\k_2)(\beta-\k_3)-\eta_I^{23}\}
-\Re(\beta-\k_1)(\beta-\k_2)
\\=& (\k_3-\k_1)(\k_2+\a-\Re\beta) \lesseqgtr 0
\end{align*}
for $\beta$ satisfying $\Re\beta \gtreqless\k_2+\a$, we have
\eqref{eq:1'} from \eqref{eq:1-4}.
\par
By \eqref{eq:judge1} and \eqref{eq:judge2},
we have $\beta\in D_1(\nu)$ if and only if
\begin{equation}
\label{eq:1hat} 
\left\{\begin{gathered}
\Re\beta\in (\k_1-\a,a_{13}-\sqrt{c_{13}-\nu}]\,,\\
\Re(\beta-\k_1)(\beta-\k_3)+\nu>0\,,\quad 
\Re(\beta-\k_1)(\beta-\k_2)<0\,.
\end{gathered}\right.  
\end{equation}
We will prove that $\widehat{V}(\beta)\in\calX(\nu)^*$
for $\beta$ satisfying \eqref{eq:1hat}.
If $Y\ge 0$ and $\theta_3\ge \theta_2$,
\begin{align}
\widehat{V}(\beta)\simeq &
e^{-\beta X-\beta^2Y}\frac{e^{\theta_1+\theta_3}}{e^{\theta_1}+e^{\theta_3}}
\notag 
\\ \label{eq:10-1} \sim &
e^{-(\beta-a_{13})z_{13}}\sech\sqrt{c_{13}}z_{13} e^{-(\beta-\k_1)(\beta-\k_3)Y}\,.
\end{align}
If $Y\ge 0$ and $\theta_3\le \theta_2$, then $\theta_1\ge\theta_2\ge\theta_3$,
$z_{13}\le0$ and
\begin{align*}
\widehat{V}(\beta)\simeq &
e^{(\k_2-\beta) X+(\k_2^2-\beta^2)Y}
\\ = &
e^{(\k_2-\beta)z_{13}} e^{-\{(\beta-\k_1)(\beta-\k_3)+(\k_2-\k_1)(\k_3-\k_2)\}Y}\,.
\end{align*}
If $Y\le 0$ and $\theta_2\ge \theta_3$,
\begin{equation}
\label{eq:10-3}
\widehat{V}(\beta)\simeq 
e^{-(\beta-a_{12})z_{12}}\sech\sqrt{c_{12}}z_{12} e^{-(\beta-\k_1)(\beta-\k_2)Y}\,.
\end{equation}
If $Y\le 0$ and $\theta_2\le \theta_3$, then $\theta_3\ge\theta_2\ge\theta_1$,
$z_{12}\ge0$ and
\begin{equation}
\label{eq:10-4}
\widehat{V}(\beta)\simeq 
e^{(\k_1-\beta)z_{12}-(\beta-\k_1)(\beta-\k_2)Y}\,.
\end{equation}
Combining the above with \eqref{eq:1hat}, 
we have $\widehat{V}(\beta)\in \calX(\nu)^*$.
Moreover, we have $\beta\in D_1(\nu)$ if $\widehat{V}(\beta)\in \calX(\nu)^*$.
This completes the proof of Lemma~\ref{lem:adjointef}.
\end{proof}
By Lemma~\ref{lem:prodJdJ},
$\pd_XV(\beta)$ and $\pd_X\widehat{V}(\beta)$ are formally eigenfunctions of $\mL_\mY$
but they do not belong to $\calX(\nu)$.
\begin{lemma}
  \label{lem:no-ef}
Assume \eqref{ass:alpha23} and $\nu\ge0$.
Then $\pd_XV(\beta)\not\in \calX(\nu)$ and 
$\pd_X\widehat{V}(\beta)\not\in \calX(\nu)$ for every $\beta\in\C$.
\end{lemma}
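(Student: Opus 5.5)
The plan is to compare growth exponents. I will read off the asymptotic behaviour of $\pd_XV(\beta)$ and $\pd_X\widehat V(\beta)$ in the regions of the plane where a single exponential $e^{\theta_i}$ of $\tau_\mY$ dominates. Then I will show that, for every $\beta\in\C$, the weight $e^{\a z_{13}}$ (for $Y>0$) or $e^{\a z_{12}}$ (for $Y<0$) makes at least one of these asymptotic pieces non-square-integrable.

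First I compute $V(\beta)$ and $\widehat V(\beta)$ explicitly. They are $\Phi^1(-i\beta)/\tau_\mY$ and $\Phi^1_1\Phi^{1,*}(-i\beta)$, times the factors $e^{\mp\lambda(\beta)t}$. By \eqref{eq:Jost-sol}, $\Phi^1(-i\beta)\Phi^{1,*}_1=e^{\beta x+\beta^2y-\beta^3t}\sum_j(\beta-\k_j)e^{\theta_j}/\tau^2$. In the region where $e^{\theta_i}$ dominates, this behaves like $(\beta-\k_i)e^{(\beta-\k_i)X+(\beta^2-\k_i^2)Y}$, a pure exponential. The same holds for $\widehat V(\beta)$, up to the nonzero factors coming from \eqref{def:Phi1s}. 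Applying $\pd_X$ multiplies the leading term by $(\beta-\k_i)$, with exponentially small errors. So $\pd_XV(\beta)$ is asymptotic to a nonzero multiple of $e^{(\beta-\k_i)X+(\beta^2-\k_i^2)Y}$ in the sector where $\theta_i$ dominates, unless $\beta=\k_i$. In that exceptional case $\pd_XV$ decays in that sector and I look at a neighbouring sector instead.

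Next I test square-integrability. The sector where $\theta_1$ dominates is the left region $X\to-\infty$. There the weight in $\calX(\nu)$ is $e^{\a X}$ (times a $Y$-factor), so integrability as $X\to-\infty$ requires $\Re\beta-\k_1+\a>0$. The sector where $\theta_3$ dominates is the right region $X\to+\infty$, and there integrability requires $\Re\beta-\k_3+\a<0$. The sector where $\theta_2$ dominates is a wedge in $Y<0$ between the $[1,2]$- and $[2,3]$-solitons. It is unbounded and contains rays in directions where both $X$ and $Y$ vary, and along these rays I get a further condition. The key point is that the exponent of $\pd_XV(\beta)$ restricted to a ray in a dominance sector is linear in the distance along the ray. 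In the sector where $\theta_1$ dominates the relevant rays include $X\to-\infty$ at fixed $Y$, and in the sector where $\theta_3$ dominates they include $X\to+\infty$ at fixed $Y$. Since the sectors are open cones, I can tilt the ray slightly and obtain strict inequalities with an open set of directions. Integrating over a cone where the exponent grows linearly then gives an infinite integral, so $\pd_XV(\beta)\notin\calX(\nu)$.

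The main obstacle is that the two conditions from the $\theta_1$- and $\theta_3$-sectors, $\k_1-\a<\Re\beta<\k_3-\a$, are compatible. Unlike the adjoint case, they do not by themselves exclude $\beta$, so I must exploit the full cones and not just the axes. In the $\theta_1$-cone the weight is $e^{\a(X+2a_{12}Y)}$ for $Y<0$ and $e^{\a(X+2a_{13}Y)+\nu Y}$ for $Y>0$. I will use rays along the edges of the cone, namely the directions of the $[1,2]$- and $[1,3]$-solitons, where $z_{12}$ or $z_{13}$ is bounded. Along the edge $z_{13}=\text{const}$ with $Y\to+\infty$, the exponent becomes $\Re(\beta-\k_1)(\beta-\k_3)Y+\nu Y$, computed as in \eqref{eq:6}. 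Along the edge $z_{12}=\text{const}$ with $Y\to-\infty$, it becomes $\Re(\beta-\k_1)(\beta-\k_2)Y$. Integrability would force $\Re(\beta-\k_1)(\beta-\k_3)+\nu<0$, $\Re(\beta-\k_1)(\beta-\k_2)>0$, and analogous conditions from the $\theta_2$- and $\theta_3$-cones along the $[2,3]$-soliton. I would then show these inequalities are jointly incompatible, using the linear identity in the proof of Lemma~\ref{lem:adjointef} that relates the differences of these quadratic forms, together with $\a$ small as in \eqref{ass:alpha23}. I expect this case analysis in $\beta$ to be the delicate step, with $\beta\in\{\k_1,\k_2,\k_3\}$ handled separately. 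For $\widehat V$, the same argument runs with $\beta\mapsto-\beta$ in the exponents, giving the conditions of \eqref{eq:10-1}--\eqref{eq:10-4} with reversed inequalities.
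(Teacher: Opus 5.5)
Your treatment of $\pd_XV(\beta)$ follows the paper's argument. The conditions you extract from $X\to\pm\infty$ and from the $[1,3]$-, $[1,2]$- and $[2,3]$-crests are \eqref{eq:1-3}--\eqref{eq:1-3''}; the paper's conditional form is equivalent to yours via the linear identity you mention. The step you call delicate is short:
\begin{itemize}
\item The identity $\Re(\beta-\k_1)(\beta-\k_3)-\Re(\beta-\k_1)(\beta-\k_2)=-(\k_3-\k_2)(\Re\beta-\k_1)$, combined with your $[1,3]$- and $[1,2]$-conditions, forces $\Re\beta>\k_1$.
\item Then $(\Re\beta-\k_1)(\Re\beta-\k_2)>(\Im\beta)^2$ forces $\Re\beta>\k_2$.
\item Together with $\Re\beta<\k_3-\a$ this gives $\Re(\beta-\k_2)(\beta-\k_3)<0$. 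That contradicts the $[2,3]$-condition, because $\eta_I^{23}<0$.
\end{itemize}
The values $\beta=\k_j$ need no separate case, since each already violates a crest condition.

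The genuine gap is your last sentence, on $\widehat V$. It is false that $\widehat V(\beta)$ is, in each sector, a pure exponential governed by the dominant phase ``up to nonzero factors from \eqref{def:Phi1s}''. Its exponents are also not those of $V$ with $\beta\mapsto-\beta$ or with reversed sign.

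By \eqref{eq:prod-PhijPhi*}, $\widehat V(\beta)$ is the $x$-derivative of $e^{\lambda(\beta)t}\Phi^{0,*}(-i\beta)\,\tau_\mY^{-1}\sum_{k=2,3}\frac{\k_k-\k_1}{\beta-\k_k}e^{\theta_1+\theta_k}$.
\begin{itemize}
\item Where $\theta_1$ dominates, $e^{\theta_1}$ cancels against $\tau_\mY$, and the larger of $e^{\theta_2},e^{\theta_3}$ survives.
\item Where $\theta_2$ or $\theta_3$ dominates, $e^{\theta_1}$ survives.
\end{itemize}
Hence, at fixed $Y$, $\widehat V\sim e^{(\k_2-\beta)X}$ as $X\to-\infty$ and $\widehat V\sim e^{(\k_1-\beta)X}$ as $X\to+\infty$. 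The axis conditions are therefore $\k_1+\a<\Re\beta<\k_2+\a$, a nonempty strip; this is \eqref{eq:10-5}. A sign reversal of $V$'s exponents would instead give $\Re\beta<\k_1+\a$ and $\Re\beta>\k_3+\a$, a spurious contradiction from the axes alone. Likewise, $\widehat V$ has no localized profile along the $[2,3]$-soliton, so there is no $[2,3]$-condition to ``reverse''.

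What actually excludes the strip is the $[1,3]$-crest behaviour \eqref{eq:10-1}: $\widehat V\sim e^{-(\beta-\k_1)(\beta-\k_3)Y}$ times a profile in $z_{13}$. Integrability against $e^{2(\a z_{13}+\nu Y)}$ as $Y\to+\infty$ forces $\Re(\beta-\k_1)(\beta-\k_3)>\nu\ge0$, i.e.\ $(\Re\beta-a_{13})^2>c_{13}+(\Im\beta)^2$. So $\Re\beta<\k_1$ or $\Re\beta>\k_3$, which is disjoint from the strip since $\k_2+\a<\k_3$ under \eqref{ass:alpha23}. You need to derive the $\widehat V$ asymptotics from the formula above, as the paper does in \eqref{eq:10-1}--\eqref{eq:10-4}, rather than by analogy with $V$.
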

\begin{proof}
Since $\calX(\nu)\subset \calX$ for $\nu\ge0$, it suffices to prove
Lemma~\ref{lem:no-ef} for $\nu=0$.
The proof follows the line of that of Lemma~\ref{lem:adjointef}.
If $\pd_XV(\beta)\in \calX$, then
\begin{gather}
  \label{eq:1-3}
\Re\beta\in (\k_1-\a,\k_3-\a)\,,\quad\Re(\beta-\k_1)(\beta-\k_3)<0\,,
\\   \label{eq:1-3'}
\Re(\beta-\k_1)(\beta-\k_2)>0\quad\text{if
$\Re\beta\le\k_2-\a$,}
\\   \label{eq:1-3''}
\Re \{(\beta-\k_2)(\beta-\k_3)+\eta_I^{23}\}>0
\quad\text{if $\Re\beta\ge\k_2-\a$.}
\end{gather}
By \eqref{eq:1-3}, $\beta$ is to the right of $\beta_{13}^-(\eta_R)$.
If $\beta$ satisfies \eqref{eq:1-3'}, then
$\beta$ is to the left of $\beta_{12}^-(\eta_R)$
because $\Re\beta_{12}^+(\eta_R)\ge\k_2$ for any $\eta_R$.
The curve $\beta_{13}^-(\eta_R)$ lies to the right of the curve
$\beta_{12}^-(\eta_R)$, and the two curves meet only at $\kappa_1$.
Thus there is no $\beta$ satisfying both \eqref{eq:1-3} and \eqref{eq:1-3'}.
\par
If $\beta$ satisfies \eqref{eq:1-3''}, then $\beta$ lies to the left of
$\beta_{23}^-(\eta_R+i\eta_I^{23})$ or to the right of
$\beta_{23}^+(\eta_R+i\eta_I^{23})$ 
and either $\Re \beta<\k_2-\a$ or $\Re\beta>\k_3+\a$ because 
$\Re\gamma_{23}(\eta_R+i\eta_I^{23})>\sqrt{c_{23}}+\a$ for every $\eta_R$.
Thus there is no $\beta$ satisfying both \eqref{eq:1-3} and \eqref{eq:1-3''}.
\par
If $\pd_X\widehat{V}(\beta)\in \calX$, 
then by \eqref{eq:10-1}--\eqref{eq:10-4}, 
\begin{gather}
\label{eq:10-5}
\Re\beta\in (\k_1+\a,\k_2+\a)\,,
\\ \label{eq:10-6}
\Re(\beta-\k_1)(\beta-\k_3)>0\,,\quad
\Re(\beta-\k_1)(\beta-\k_2)<0\,.
\end{gather}
It follows from \eqref{eq:10-6} that $\beta$ is to the left of the curve
$\beta_{13}^-(\eta_R)$ or to the right of the curve $\beta_{13}^+(\eta_R)$,
and either $\Re\beta\le\beta_{13}^-(0)=\k_1<\k_1+\a$ or
$\Re\beta\ge\beta_{13}^+(0)=\k_3>\k_2+\a$. Thus
there is no $\beta\in\C$ satisfying both \eqref{eq:10-5} and \eqref{eq:10-6}.
This completes the proof of Lemma~\ref{lem:no-ef}.
\end{proof}

Let $\calX_\mY(\nu)$ be a subspace of $\calX(\nu)$ satisfying secular conditions:
\begin{gather*}
\calX_\mY(\nu)=\left\{f\in\calX(\nu)\,\Bigm|\,
  \begin{aligned}
&  \la f,\widehat{V}(\beta)\ra=0\text{ for
$\beta\in D_1(\nu)$},
\\ &
\la f,V(\beta)\ra=0
\text{ for $\beta\in D_2\cup D_3(\nu)$}
  \end{aligned}\right\}\,.
\end{gather*}
We will prove that $f\in\calX_\mY(\nu)$ if $f\in\calX(\nu)$ is orthogonal to
six families of adjoint continuous eigenfunctions.
For the purpose, we will show that $\widehat{V}(\beta)$ and
$V(\beta)$ satisfy the Hardy condition (see \cite[p.~22]{Bell}).
\begin{lemma}
  \label{lem:orth-analytic}
Assume \eqref{ass:alpha23} and \eqref{ass:X13nu}. Let $f\in\calX(\nu)$,
$h_1(\beta)=\la\widehat{V}(\beta),\bar{f}\ra$
and $h_2(\beta)= \la V(\beta),\bar{f}\ra$.
Then $h_1$ is analytic on $D_1(\nu)$ and $h_2$  is analytic on $D_2\cup D_3(\nu)$.
Moreover, for $\eta_1\in(0,\eta_{12}^*(\a))$,
$\eta_2\in(0,\eta_{13}^*(\a,\nu))$ and $\eta_3\in(0,\eta_{13}^*(\a,-\nu))$,
\begin{gather}
\lim_{\eta_I\to+0}\left\|h_1(\beta_{12}^-(\eta_R+i\eta_I))
-\la f,g_{12}^{*,+}(\eta_R)\ra\right\|_{L^2(-\eta_1,\eta_1)}=0\,,
\\
\lim_{\eta_I\to\nu-0}\left\|h_1(\beta_{13}^-(\eta_R+i\eta_I))
-\la f,g_{13}^{*,+}(\eta_R)\ra\right\|_{L^2(-\eta_2,\eta_2)}=0\,,
\\ \label{eq:12+convergence}
\lim_{\eta_I\to+0}\left\|h_2(\beta_{12}^+(-\eta_R-i\eta_I))
-\la f,g_{12}^{*,+}(\eta_R)\ra
\right\|_{L^2(-\eta_1,\eta_1)}=0\,,
\\
\lim_{\eta_I\to\nu-0}\left\|h_2(\beta_{13}^+(-\eta_R-i\eta_I))
-\la f,g_{13}^{*,+}(\eta_R)\ra
\right\|_{L^2(-\eta_3,\eta_3)}\,,
\end{gather}
and for $\eta_4\in(0,\eta_{\#,-}(\a))$,
and $\eta_5\in(0,\eta_{*,-}(\a))$,
\begin{gather*}
\lim_{\eta_I\to\eta_I^{23}+0}\left\|h_2(\beta_{23}^-(-\eta_R-i\eta_I))
-\la f,\tg_{23}^{*,-}(\eta_R+i\eta_I^{23})\ra\right\|_{L^2(-\eta_4,\eta_4)}=0\,,
\\
\lim_{\eta_I\to\eta_I^{23}+0}\left\|h_2(\beta_{23}^+(-\eta_R-i\eta_I))
-\la f,g_{23}^{*,-}(\eta_R+i\eta_I^{23})\ra\right\|_{L^2(-\eta_5,\eta_5)}=0\,.
\end{gather*}
\end{lemma}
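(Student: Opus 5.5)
The plan has two parts. First we prove analyticity on the interiors, and then the $L^2$ convergence of the boundary traces.

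\emph{Analyticity.} For $h_1$ and $h_2$ we use the explicit representations of $V(\beta)$ and $\widehat V(\beta)$ given by \eqref{eq:Jost-sol}, \eqref{def:Phi1s} and \eqref{eq:6}. On the region where $\tau_\mY$ does not vanish, both are entire functions of $\beta$ for each fixed $(X,Y)$; $\tau_\mY>0$, and $\tau_\Phi$, $\tau_{\Phi^*}$ are polynomial, resp. rational with poles only at $\beta=\k_j$. Fix a compact set $K\subset D_2\cup D_3(\nu)$, resp. $K\subset D_1(\nu)$. The estimates \eqref{eq:1-1}, \eqref{eq:1-2} and \eqref{eq:10-1}--\eqref{eq:10-4} in the proof of Lemma~\ref{lem:adjointef} hold uniformly for $\beta\in K$. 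Because the conditions \eqref{eq:1}, \eqref{eq:1'} and \eqref{eq:1hat} are strict on $K$, we obtain a uniform majorant
\[
\sup_{\beta\in K}|V(\beta)|\le M_K\in\calX(\nu)^*,
\]
and likewise for $\widehat V$. Since $f\in\calX(\nu)$, dominated convergence then gives continuity of $h_j$ on $K$. Fubini together with Morera's theorem then gives analyticity. Equivalently, one can differentiate under the integral, since $\pd_\beta V$ only picks up polynomial factors in $(X,Y)$, which are absorbed by the strict exponential margin.

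\emph{Boundary values.} Using Remark~\ref{rem:adjointef}, namely \eqref{eq:Vbeta=g*} and its analogues, we write, for instance,
\[
h_2(\beta_{12}^+(-\eta_R-i\eta_I))=\la f,g_{12}^{*,-}(\eta_R+i\eta_I)\ra
\]
up to conjugation conventions; the same works for the other five families. The task is then to show that
\[
\eta_R\mapsto\la f,g^{*}(\eta_R+i\eta_I)\ra\to\la f,g^*(\eta_R+i\eta_I^0)\ra \quad\text{in }L^2(-\eta_j,\eta_j),
\]
where $\eta_I^0\in\{0,\nu,\eta_I^{23}\}$. We split $\R^2$ into the regions $\{Y\ge0\}$ and $\{Y\le0\}$, and each region into a neighbourhood of the relevant line soliton and its complement.
\begin{itemize}
\item Away from the soliton $[j,k]$ along which $g^*$ is a continuous eigenfunction, $g^*$ decays exponentially uniformly in $\eta_I$ near $\eta_I^0$ and $|\eta_R|\le\eta_j$. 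This follows from \eqref{eq:tauXY} and the strict inequalities, which persist on the closed range $|\eta_R|\le\eta_j<\eta^*$. On this part dominated convergence gives even uniform convergence in $\eta_R$.
\item Near the $[j,k]$-soliton, \eqref{eq:Y-13}--\eqref{eq:Y-12} show that $g^*$ equals $g_{jk}^{0,*}(z_{jk},\pm\eta)e^{i\bar\eta y}$ plus an exponentially decaying error, and the error is again handled by dominated convergence.
\item For the main term we use the partial Fourier transform in $y$. With
\[
F(z,\eta):=\int f\,e^{-i\eta y}\,dy,
\]
the pairing becomes $\int F(z,\eta_R+i\eta_I)\overline{g_{jk}^{0,*}(z,\ldots)}\,dz$. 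Since $e^{\a z_{jk}}f\in L^2$ on the half plane, and the weight in $y$ in $\calX(\nu)$ restricted to $\pm Y\ge0$ is exactly the one allowing $\Im\eta$ to range up to $\eta_I^0$, $F$ is a Hardy-space function in $\eta$ on the strip (Paley--Wiener, \cite[Theorem~IX.13]{RS2}). Its $L^2$ boundary limits are therefore the ordinary Fourier transform of $e^{\eta_I^0y}f$.
\end{itemize}
The $z$-integral against $g^{0,*}_{jk}$ is bounded in $L^2(e^{-2\a z}dz)$, uniformly and continuously in $\eta$ on the compact parameter set. Combining this with Plancherel yields the stated $L^2(-\eta_j,\eta_j)$ convergence.

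The main obstacle is the cutoff near $Y=0$ and the interplay of the two weights. The half-line restriction in $Y$ is what makes only one-sided (Hardy) limits available, which explains the one-sided limits $\eta_I\to+0$ and $\eta_I\to\nu-0$ in the statement. We also have to check that the decomposition above is uniform up to the boundary curve, that is, that the strict margins from Lemma~\ref{lem:adjointef} degenerate only in the single direction along the $[j,k]$-soliton as $\eta_I\to\eta_I^0$. We plan to verify this first using \eqref{eq:judge1}, which identifies the degenerate exponent $\Re(\beta-\k_j)(\beta-\k_k)=-\Im\eta$ exactly.
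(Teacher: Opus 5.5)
Your plan is correct and has the same architecture as the paper's proof. Analyticity comes from a locally uniform majorant plus dominated convergence. For the boundary traces you split into $\R^2_+$ and $\R^2_-$. On the half plane where the family decays you use uniform integrability; for \eqref{eq:12+convergence} this is $\R^2_+$, where $\Re(\beta-\k_1)(\beta-\k_3)\le\eta_I-(\k_3-\k_2)(\k_2-\k_1)<0$. On the other half plane you use a partial Fourier transform in $Y$, Plancherel, and $\|e^{Y(\eta_I-\eta_I^0)}f-f\|\to0$. As you say, the half-line support in $Y$ is what makes the limits one-sided.

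The difference is how the non-decaying half plane is put into product form. The paper does it exactly. By \eqref{eq:6} and \eqref{eq:prod-PhiPhi*j}, on all of $\R^2_-$ one has, up to normalisation, $V(\beta_{12}^+(-\eta))=e^{-iY\eta}\pd_x\{W_{12}\,\pd_x^{-1}\overline{g_{12}^{0,*}(z_{12},-\eta)}\}$ with $W_{12}=(e^{\theta_1}+e^{\theta_2})/\tau_\mY$. Expanding $\pd_x$ by the product rule, the bounded factors $W_{12}$ and $\pd_xW_{12}$ are absorbed into $f$, giving $f_1,f_2\in\calX_{12}$. So no remainder appears, and the region near the $[2,3]$-soliton needs no separate treatment.

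You instead subtract $e^{-iY\eta}g_{jk}^{0,*}$ and control a remainder. That works, but three points need fixing.

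\emph{Source of the remainder bound.} \eqref{eq:Y-13}--\eqref{eq:Y-12} estimate $u_\mY-u_{jk}$, not the eigenfunctions. The bound you need is the one in Lemma~\ref{lem:g*-growth}: the difference lies in $\calX^*_{\eps,\pm}$ uniformly in $\eta$. It follows from the explicit formulas and \eqref{eq:8}.

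\emph{Choice of neighbourhood.} The ``neighbourhood of the soliton'' cannot be a fixed-width strip. Outside such a strip, $e^{-\a z_{jk}}|g^*|$ is still comparable to $e^{-c|z_{jk}|}e^{-|\eta_I-\eta_I^0|\,|Y|}$. Its $L^2$ norm grows like $|\eta_I-\eta_I^0|^{-1/2}$, so dominated convergence fails there. Either subtract the product on the whole half plane, or use a cone $|z_{jk}|\le\eps|Y|$. The cone indicator can be absorbed into $f$, just as the paper absorbs $W_{12}$.

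\emph{Degeneracy at $\nu=0$.} Your planned check that the margins degenerate in only one direction fails for $\nu=0$ near $\eta_R=0$ in the two $h_1$ families. The common boundary point there is $\beta=\k_1$, and $\widehat V(\k_1)=V(\k_1)$. By Lemma~\ref{lem:Vjk-profile}, this is a $\sech^2$ profile along both the $[1,3]$- and the $[1,2]$-soliton. So near $\eta_R=0$ the contribution of the second half plane is not uniformly dominated either. It needs a Hardy-space argument rather than dominated convergence. The paper's ``in the same way'' passes over this too, and the issue disappears for $\nu>0$.
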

\begin{proof}
If $\beta\in D_2\cup D_3(\nu)$, then $\beta$ satisfies \eqref{eq:1} and
\eqref{eq:1-4}. 
Combining \eqref{eq:1} and \eqref{eq:1-4} with
\eqref{eq:1-1} and \eqref{eq:1-2}
and using Lebesgue's dominated convergence theorem, we see
that $h_2(\beta)$ is analytic on $D_2\cup D_3(\nu)$.
\par
Next, we will prove \eqref{eq:12+convergence}.
By \eqref{eq:Vbeta=g*},
\begin{align*}
h_2(\beta_{12}^+(-\eta))=&
\int_{\R^2} V(\beta_{12}^+(-\eta))f(X,Y)\,dXdY
= \la \bar{f}, g_{12}^{*,-}(\bar{\eta})\ra\,.
\end{align*}
Let $\eta=\eta_R+i\eta_I$ and $\eta_R\in [-\eta_1,\eta_1]$. 
If $\eta_1>0$ is sufficiently small, then $V(\beta_{12}^+(-\eta))f$ is
uniformly integrable on $\R^2_+$ and
$$\int_{\R^2_+}V(\beta_{12}^+(-\eta))f\,dXdY$$
is smooth in $\eta_I$.
Indeed, we have from \eqref{eq:1-1}
\begin{align*}
& e^{-\a z_{13}-\nu Y}|V(\beta_{12}^+(-\eta)|
 \\ & \lesssim 
e^{(\Re\beta_{12}^+(-\eta)-a_{13}-\a)z_{13}}\sech\sqrt{c_{13}}z_{13} 
e^{\Re\{(\beta_{12}^+(-\eta)-\k_1)(\beta_{12}^+(-\eta)-\k_3)-\nu\}Y}\quad\text{on $\R^2_+$,}
\end{align*}
and 
$|\Re\beta_{12}^+(\bar{\eta})-a_{13}-\a|< \sqrt{c_{13}}$,
\begin{align*}
\Re (\beta_{12}^+(-\eta)-\k_1)(\beta_{12}^+(-\eta)-\k_3)
=&  \eta_I-(\k_3-\k_2)(\Re \beta_{12}^+(-\eta)-\k_1)
\\ \le & \eta_I-(\k_3-\k_2)(\k_2-\k_1)<0\,.
\end{align*}
\par
On the other hand, using \eqref{eq:6} and \eqref{eq:prod-PhiPhi*j},
we have for $\eta=\eta_R+i\eta_I$,
\begin{align*}
&\int_{\R^2_-} V(\beta_{12}^+(-\eta))f(X,Y)\,dXdY
\\ =&
\int_{\R^2_-}
f(X,Y)e^{-iY\eta}\pd_x\left\{
e^{\gamma_{12}(-\eta)z_{12}}\frac{e^{(\theta_1+\theta_2)/2}}{\tau_\mY}
\right\}\,dXdY\,,
\\=&
\int_{\R^2_-} f(X,Y)\pd_x\{W_{12}\pd_x^{-1}\overline{g_{12}^{0,*}(z_{12},-\eta)}\}
e^{-iY\eta}\,dz_{12}dY
\\=&
\int_\R\left\{ (\mathcal{F}_yf_1)(z_{12},\eta)
\overline{g_{12}^{0,*}(z_{12},-\eta)}
+
\int_\R (\mathcal{F}_yf_2)(z_{12},\eta)
\pd_x^{-1}\overline{g_{12}^{0,*}(z_{12},-\eta)}\right\}
\,dz_{12}\,,
\end{align*}
where  $W_{12}=(e^{\theta_1}+e^{\theta_2})/\tau_\mY$,
$\mathbf{1}_{(-\infty,0]}$ is the characteristic function of $(-\infty,0]$
and
\begin{gather*}
f_1(z_{12},Y)=\sqrt{2\pi}\mathbf{1}_{(-\infty,0]}(Y)f(X,Y)W_{12}\,,
\\
f_2(z_{12},Y)=\sqrt{2\pi}\mathbf{1}_{(-\infty,0]}(Y)f(X,dYy)\pd_xW_{12}\,.
\end{gather*}
Since  $W_{12}$ and its derivatives are uniformly bounded,
we have $f_1$, $f_2\in\calX_{12}$ for $f\in\calX$.
By the Plancherel theorem,
\begin{equation*}
\|e^{\a z_{12}}\mF_yf_j(z_{12},\eta_R+i\eta_I)\|_{L^2(\R_{z_{12}\times R_{\eta_R}})}
=\|e^{y\eta_I}f_j\|_{\calX_{12}}\quad
\text{for $j=1$, $2$.}
\end{equation*}
Since $f_1=f_2=0$ for $y>0$, it follows from
Lebesgue's dominated convergence theorem that
\begin{equation*}
\lim_{\eta_I\to+0}\left(\|e^{y\eta_I}f_1-f_1\|_{\calX_{12}}
+\|e^{y\eta_I}f_2-f_2\|_{\calX_{12}}\right)=0\,.
\end{equation*}
\par

Combining the above with
\begin{equation*}
\sum_{j=0,\,1}\left\|e^{-\a z_{12}}\pd_x^{-j}
\{g_{12}^{0,*}(\cdot,-\eta)-g_{12}^{0,*}(\cdot,-\eta_R)\}
\right\|_{L^2(\R)}=O(\eta_I)\,,
\end{equation*}
we have
\begin{equation*}
\lim_{\eta_I\to+0}\left\|
\int_{\R^2_-}\left\{V(\beta_{12}^+(-\eta))-V(\beta_{12}^+(-\eta_R))\right\}
f(X,Y)\,dXdY\right\|_{L^2(-\eta_1,\eta_1)}=0\,.
\end{equation*}
Thus we prove \eqref{eq:12+convergence}.
Using \eqref{eq:1-1}, \eqref{eq:1-2} and \eqref{eq:10-1}--\eqref{eq:10-4},
we can prove the other cases in the same way.
\end{proof}

Thanks to the identity theorem and Lusin-Provalov type theorems,
we see that $f$ is orthogonal to every adjoint eigenfunctions of $\mL_\mY$
if and only if $f$ is orthogonal to three families of adjoint continuous eigenfunctions of
$\mL_\mY$. Let $\C_\pm=\{z\in\C\mid \pm\Im z>0\}$.

\begin{lemma}
  \label{lem:orth-condition-identity}
  Assume \eqref{ass:alpha23} and \eqref{ass:X13nu}.  Let
  $f\in\calX(\nu)$ and let $\Gamma_1^\pm\subset\overline{D_1(\nu)}$ be
  nonconstant smooth curves satisfying $\Gamma_1^\pm\cap \C_\pm\ne\emptyset$,
 and let $\Gamma_2\subset \overline{D_2}$ and
  $\Gamma_3 \subset \overline{D_3(\nu)}$ be nonconstant smooth curves.
\begin{itemize}
\item[(i)]
If $h_1=0$ on $\Gamma_1^\pm$, then $h_1=0$ on $\overline{D_1(\nu)}\cap\C_\pm$ and
  \begin{gather*}
\la f,g_{12}^{*,+}(\eta_R)\ra=0
\text{ for $\mp\eta_R\in[0,\eta_{12}^*(\a))$,}
\\
\la f,g_{13}^{*,+}(\eta_R+i\nu)\ra=0
\text{ for $\mp\eta_R\in[0,\eta_{13}^*(\a,\nu))$.}
  \end{gather*}
If $\nu>0$, then $h_1=0$ on $\overline{D_1(\nu)}$.
\item[(ii)]
If $h_2=0$ on $\Gamma_2$, then $h_2=0$ on $\overline{D_2}$ and
\begin{gather*}
\la f,g_{12}^{*,-}(\eta_R)\ra=0
\text{ for $\eta_R\in(-\eta_{12}^*(\a),\eta_{12}^*(\a))$,}
\\
\la f,\tg_{23}^{*,-}(\eta_R+i\eta_I^{23})\ra=0
\text{ for $\eta_R\in(-\eta_{\#,-}(\a),\eta_{\#,-}(\a))$.}
\end{gather*}

\item[(iii)] 
If $h_2=0$ on $\Gamma_3$, then $h_2=0$ on $\overline{D_3(\nu)}$ and
\begin{gather*}
\la f,g_{13}^{*,-}(\eta_R+i\nu)\ra=0
\text{ for $\eta\in(-\eta_{13}^*(\a,-\nu),\eta_{13}^*(\a,-\nu))$,}
\\
\la f,g_{23}^{*,-}(\eta_R+i\eta_I^{23})\ra=0
\text{ for $\eta_R\in(-\eta_{*,-}(\a),\eta_{*,-}(\a))$.}
\end{gather*}
\end{itemize}
\end{lemma}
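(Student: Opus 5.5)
The argument combines the analyticity of $h_1,h_2$ from Lemma~\ref{lem:orth-analytic} with the identity theorem, and then passes to the boundary via the $L^2$ boundary convergence statements of the same lemma.

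\emph{Step 1 (propagation inside the domains).} Fix one of the three cases, for instance (ii). By Lemma~\ref{lem:orth-analytic}, $h_2$ is analytic on the open set $D_2$. First suppose that $\Gamma_2$ meets the interior of $D_2$. Then a nonconstant smooth arc there contains a set with an accumulation point in $D_2$. Since $D_2$ is connected (it is the region enclosed by the two curves $\mathcal C_{2\ell},\mathcal C_{2r}$, see Remark~\ref{rem:boundary}), the identity theorem gives $h_2\equiv0$ on $D_2$.

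Next suppose that $\Gamma_2$ lies on $\partial D_2$. By Lemma~\ref{lem:orth-analytic}, $h_2$ has $L^2$ nontangential boundary values along the boundary arcs $\mathcal C_{2\ell}$ and $\mathcal C_{2r}$. These arcs are parametrized by $\beta_{12}^+(-\eta_R)$ and $\beta_{23}^-(-\eta_R-i\eta_I^{23})$, which are conformal images of segments, so $h_2\circ\beta$ lies in a Hardy-type class on a strip adjacent to the segment. The boundary values vanish on a subarc of positive length. The Lusin--Privalov uniqueness theorem (a function of Hardy/Nevanlinna type whose boundary values vanish on a set of positive measure is identically zero) then yields $h_2\equiv0$ on $D_2$. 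Case (iii) is identical with $D_3(\nu)$, $\mathcal C_{3\ell}$ and $\mathcal C_{3r}(\nu)$.

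For (i) the only subtlety is connectivity. When $\nu=0$ the curves $\mathcal C_{1\ell}$ and $\mathcal C_{1r}(0)$ touch at $\beta=\k_1$ (Remark~\ref{rem:boundary}), so $D_1(0)$ splits into the two components $D_1\cap\C_+$ and $D_1\cap\C_-$. The curve $\Gamma_1^\pm$ is used to kill $h_1$ on the component meeting $\C_\pm$. When $\nu>0$ the curves are disjoint and $D_1(\nu)$ is connected, so either curve gives $h_1\equiv0$ on all of $D_1(\nu)$.

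\emph{Step 2 (boundary orthogonality).} Once $h$ vanishes on the open domain, apply the limits in Lemma~\ref{lem:orth-analytic}. For example, by \eqref{eq:12+convergence}, $h_2(\beta_{12}^+(-\eta_R-i\eta_I))\to\la f,g_{12}^{*,-}(\eta_R)\ra$ in $L^2(-\eta_1,\eta_1)$ as $\eta_I\to+0$. The left-hand side is identically zero, because the approximating points lie in $D_2$ by \eqref{eq:judge1}. Hence the limit vanishes a.e. on $(-\eta_1,\eta_1)$. Since $\la f,g_{12}^{*,-}(\eta_R)\ra$ is continuous in $\eta_R$, it vanishes everywhere there. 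Letting $\eta_1\uparrow\eta_{12}^*(\a)$ gives the stated range. The other boundary families are handled identically, using the corresponding limits. In (i) with $\nu=0$ one only gets the half-ranges $\mp\eta_R\in[0,\cdot)$ coming from the component in $\C_\pm$, with $\eta_R=0$ included by continuity. Finally, $h=0$ on the closure $\overline{D}$ follows by continuity of $h$ up to the boundary portions where the adjoint functions remain in $\calX(\nu)^*$, together with these a.e. statements.

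\emph{Main obstacle.} The delicate step is justifying the Lusin--Privalov application when $\Gamma$ lies on the boundary. This requires showing that $h\circ\beta$ belongs to a class with a uniqueness theorem, e.g.\ with locally uniformly bounded $L^2$ norms on lines approaching the boundary. I would extract this from the Plancherel estimate in the proof of Lemma~\ref{lem:orth-analytic}, namely $\|e^{y\eta_I}f_j\|_{\calX_{12}}$ uniformly bounded for $\eta_I$ in compact ranges, which makes $h$ an $H^2$-type function on a rectangle. Local conformality of $\beta_{jk}^\pm$ near the arcs away from the branch point $\eta=ic_{jk}$ then allows the transfer to $D_j$.
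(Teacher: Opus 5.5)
Your proposal is correct and follows essentially the paper's route: analyticity from Lemma~\ref{lem:orth-analytic}, the identity theorem on each connected piece of the domain, a Lusin--Privalov/$H^p$ boundary-uniqueness theorem for curves lying on the curved boundary arcs, and then the $L^2$ boundary limits. The only technical difference is that the paper applies Rudin's Theorem~17.18 after a Riemann map of a smooth simply connected subdomain $A_1$ touching $\partial D_1(\nu)$ only along compact subarcs, whereas you work in a strip via the local conformal parametrization $\eta\mapsto\beta_{jk}^\pm(\eta)$. One small correction: $\eta_R\mapsto\la f,g_{12}^{*,-}(\eta_R)\ra$ is in general only an $L^2$ function, not a continuous one, because on $\R^2_-$ it is essentially a Fourier transform in $Y$ of an $L^2$ function. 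The boundary orthogonality should therefore be read a.e., which is exactly what the $L^2$ convergence gives and all that is needed.
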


\begin{proof}[Proof of Lemma~\ref{lem:orth-condition-identity}]
Let  $0<\eta_{12}^1<\eta_{12}^2<\eta_{12}^*(\a))$,
$0<\eta_{13}^1<\eta_{13}^2<\eta_{13}^*(\a,\nu)$ and
$C_j=\{\beta_{1j}^-(\eta)\mid \eta_{1j}^1\le \eta\le\eta_{1j}^2\}$ 
for $j=2$, $3$.  Let $A_1$ be a simply connected closed domain with smooth boundary
such that $|\Gamma_1^+\cap A_1|>0$ and $A_1\setminus D_1(\nu)\subset C_2\cup C_3$.
By Lemma~\ref{lem:orth-analytic},
$h_1$ is analytic on $\operatorname{int}(A_1)$ and
converges to its boundary value in $L^2(\partial A_1)$.
Using \cite[Theorem~17.18]{Rudin} and
the Riemann mapping theorem \cite[Theorem~8.2]{Bell} as well as the identity theorem,
we have $h_1=0$ on $A_1$ if $h_1|_{\Gamma_1^+}=0$, and it follows that 
Thus we have $h_1\equiv0$ on $\overline{D_1(\nu)}\cap\overline{\C_+}$.
Hence
it follows from Remark~\ref{rem:adjointef} that
$\la f,g_{12}^{*,+}(\cdot)\ra=0$ on $(-\eta_{12}^*(\a),0])$
and $\la f,g_{13}^{*,+}(\cdot+i\nu)\ra=0$ on $(-\eta_{13}^*(\a,\nu),0]$.
We can prove the rest in the same way.
\end{proof}

Lemma~\ref{lem:orth-condition-identity} and Remark~\ref{rem:wavemotion}
tell us that $f\in\calX_\mY(\nu)$ if $f$ is orthogonal to adjoint continuous
eigenfunctions associated with outgoing (or incoming) 
modulations of line solitons.
\begin{corollary}
  \label{cor:orth-condition-identity}
Assume \eqref{ass:alpha23} and \eqref{ass:X13nu}.
Let $\eta_0$ be a positive number.
\begin{itemize}
\item[\rm{(i)}]
If $f\in\calX(\nu)$ and  for $\eta_R\in[-\eta_0,\eta_0]$,
\begin{equation*}
\la f, g_{12}^{*,+}(\eta_R)\ra= \la f, g_{12}^{*,-}(\eta_R)\ra=
\la f,g_{13}^{*,-}(\eta_R+i\nu)\ra=0\,,
\end{equation*}
then $f\in \calX_\mY(\nu)$.
\item[\rm{(ii)}] 
If $f\in\calX(\nu)$ and for $\eta_R\in[-\eta_0,\eta_0]$,
\begin{equation*}
\la f,g_{13}^{*,+}(\eta_R+i\nu)\ra
=\la f, g_{23}^{*,-}(\eta_R+i\eta_I^{23})\ra
= \la f, \tg_{23}^{*,-}(\eta_R+i\eta_I^{23})\ra=0\,,
\end{equation*}
then $f\in \calX_\mY(\nu)$.
\end{itemize}
\end{corollary}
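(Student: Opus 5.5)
The plan is to derive both parts directly from Lemma~\ref{lem:orth-condition-identity}. We use the boundary identifications in Remark~\ref{rem:adjointef} and the boundary-value convergence in Lemma~\ref{lem:orth-analytic} to turn the hypotheses into vanishing of $h_1$ or $h_2$ on a nonconstant smooth curve inside $\overline{D_j}$. The lemma then gives $h_1\equiv0$ on $\overline{D_1(\nu)}$ and $h_2\equiv0$ on $\overline{D_2}\cup\overline{D_3(\nu)}$. By the definitions of $h_1,h_2$ this is exactly $\la f,\widehat V(\beta)\ra=0$ for $\beta\in D_1(\nu)$ and $\la f,V(\beta)\ra=0$ for $\beta\in D_2\cup D_3(\nu)$, i.e.\ $f\in\calX_\mY(\nu)$. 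Without loss of generality we shrink $\eta_0$ below all the thresholds $\eta_{12}^*(\a)$, $\eta_{13}^*(\a,\pm\nu)$, $\eta_{\#,-}(\a)$, $\eta_{*,-}(\a)$.

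For (i), we treat each domain in turn.
\begin{itemize}
\item[\textbullet] $D_2$: by \eqref{eq:12+convergence}, $h_2(\beta_{12}^+(-\eta_R-i\eta_I))\to\la f,g_{12}^{*,-}(\eta_R)\ra$ in $L^2(-\eta_0,\eta_0)$ as $\eta_I\to+0$. Here we read the target of \eqref{eq:12+convergence} as $g_{12}^{*,-}$, consistent with $V(\beta_{12}^+(\bar\eta))=g_{12}^{*,-}(\eta)$. So the boundary trace of $h_2$ vanishes a.e.\ on the arc $\Gamma_2=\{\beta_{12}^+(\eta_R)\mid|\eta_R|\le\eta_0\}\subset\mathcal C_{2\ell}\subset\overline{D_2}$, and Lemma~\ref{lem:orth-condition-identity}(ii) gives $h_2=0$ on $\overline{D_2}$.
\item[\textbullet] $D_3(\nu)$: the condition $\la f,g_{13}^{*,-}(\cdot+i\nu)\ra=0$ is, via $V(\beta_{13}^+(\bar\eta))=g_{13}^{*,-}(\eta)$, the vanishing of $h_2$ on an arc of $\mathcal C_{3r}(\nu)\subset\overline{D_3(\nu)}$. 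Lemma~\ref{lem:orth-condition-identity}(iii) gives $h_2=0$ on $\overline{D_3(\nu)}$.
\item[\textbullet] $D_1(\nu)$: this is the main obstacle, since $h_1$ is built from $\widehat V$ and is not directly controlled by the hypotheses. By \eqref{eq:Vbeta=g*}, $g_{12}^{*,+}(\eta)$ is a nonvanishing multiple of $\widehat V(\beta_{12}^-(-\bar\eta))$. So $\la f,g_{12}^{*,+}(\eta_R)\ra=0$ for $|\eta_R|\le\eta_0$ says that the boundary values of $h_1$ vanish on the arc $\{\beta_{12}^-(\eta_R)\mid|\eta_R|\le\eta_0\}\subset\mathcal C_{1\ell}$. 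This arc passes through $\beta=\k_1$ and meets both $\C_+$ and $\C_-$. Taking $\Gamma_1^\pm$ to be its parts in $\overline{\C_\pm}$, Lemma~\ref{lem:orth-condition-identity}(i) gives $h_1=0$ on $\overline{D_1(\nu)}\cap\C_\pm$, hence on all of $\overline{D_1(\nu)}$ by continuity, or directly when $\nu>0$.
\end{itemize}

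The delicate point is that these curves lie on $\partial D_j$, not in the interior. We must check that the vanishing of the $L^2$ boundary trace suffices, which is exactly what the Lusin--Privalov argument in the proof of Lemma~\ref{lem:orth-condition-identity} provides, through the local domain $A_1$ and \cite[Theorem~17.18]{Rudin}. A further check is that the multiplier $(\k_1-\k_j)/(\beta_{1j}^+(-\bar\eta)-\k_1)$ is finite and nonzero for small $\eta$; it is, since $\beta_{1j}^+(0)=\k_j\ne\k_1$.

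Part (ii) is symmetric and is organized the same way.
\begin{itemize}
\item[\textbullet] $D_1(\nu)$: $\la f,g_{13}^{*,+}(\cdot+i\nu)\ra=0$ gives vanishing of $h_1$ on an arc of $\mathcal C_{1r}(\nu)$. For $\nu>0$ this arc crosses the real axis at $\beta_{13}^-(i\nu)$, so it meets both half-planes, and (i) of the lemma gives $h_1\equiv0$ on $\overline{D_1(\nu)}$.
\item[\textbullet] $D_3(\nu)$: $\la f,g_{23}^{*,-}(\cdot+i\eta_I^{23})\ra=0$ means $h_2=0$ on an arc of $\mathcal C_{3\ell}$, by the last limit in Lemma~\ref{lem:orth-analytic}. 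Part (iii) of the lemma gives $h_2=0$ on $\overline{D_3(\nu)}$.
\item[\textbullet] $D_2$: $\la f,\tg_{23}^{*,-}(\cdot+i\eta_I^{23})\ra=0$ means $h_2=0$ on an arc of $\mathcal C_{2r}$, and part (ii) of the lemma gives $h_2=0$ on $\overline{D_2}$.
\end{itemize}
If $\nu=0$ in (ii), the arc of $\mathcal C_{1r}(0)$ through $\k_1$ still meets both $\C_\pm$, so the same argument applies. Combining the three domains in each part gives $f\in\calX_\mY(\nu)$.
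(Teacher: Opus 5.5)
Your proposal is correct and is the argument the paper intends. The paper states the corollary as an immediate consequence of Lemma~\ref{lem:orth-condition-identity}: by \eqref{eq:Vbeta=g*} and Lemma~\ref{lem:orth-analytic}, the three hypotheses in (i) say that the boundary values of $h_1$, $h_2$ vanish on arcs of $\mathcal{C}_{1\ell}$, $\mathcal{C}_{2\ell}$, $\mathcal{C}_{3r}(\nu)$, and those in (ii) say the same on arcs of $\mathcal{C}_{1r}(\nu)$, $\mathcal{C}_{3\ell}$, $\mathcal{C}_{2r}$. You also correctly read the misprinted targets in Lemma~\ref{lem:orth-analytic} as $g_{12}^{*,-}$ and $g_{13}^{*,-}$, and your observations that the multiplier in \eqref{eq:Vbeta=g*} is finite and nonzero and that the $D_1(\nu)$-arcs cross the real axis (so they supply both $\Gamma_1^+$ and $\Gamma_1^-$) are the details that needed checking.
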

\bigskip

\section{Nonexistence of eigenvalues}
\label{sec:nonexistence}
Using the Darboux transformations, we will prove that
$\mL_{\mY}$ has no eigenvalue in $\calX$ and that
$\mL_\mY^*$ has no eigenvalue other than $V(\beta)$and $\widehat{V}(\beta)$.
\par
Replacing $\pd_t$ with $\lambda$
in \eqref{eq:Darboux-4a} and \eqref{eq:Darboux-4b},
we obtain the Darboux transformations for the eigenvalue problem.
For $U=U(X,Y)$ and $V=V(X,Y)$,
\begin{align*}
& \widetilde{\mathcal{M}}_{\lambda,+}(U,V)
= \nabla_u \widetilde{M}_+[u_{\mY},v_{\mY}]U
+e^{-\lambda t}\nabla_v \widetilde{M}_+[u_{\mY},v_{\mY}](e^{\lambda t}V)
\\=&         3(-\pd_X+\pd_X^{-1}\pd_Y+2v_{\mY})U
\\ \quad &
+4(\lambda \pd_X^{-1}-b_1-b_2\pd_X^{-1}\pd_Y+\pd_X^2-3v_{\mY}\pd_X+3v_{\mY}^2)V\,,
\end{align*}
\begin{align*}
& \widetilde{\mathcal{M}}_{\lambda,-}(U,V)
  = \nabla_u \widetilde{M}_-[0,v_{\mY}]U
+e^{-\lambda t}\nabla_v \widetilde{M}_-[0,v_{\mY}](e^{\lambda t}V)
  \\=&   3(\pd_X+\pd_X^{-1}\pd_Y+2v_{\mY})U
  \\ \quad &
+ 4\left(\lambda \pd_X^{-1}-b_1-b_2\pd_X^{-1}\pd_Y+\pd_X^2+3v_{\mY}\pd_X+\frac{3}{2}u_{\mY}+3v_{\mY}^2\right)V\,,
\end{align*}
where  let $v_\mY=\pd_x\log\tau_\mY$.
Formally, $\widetilde{\mathcal{M}}_{\lambda,+}^*
=\widetilde{\mathcal{M}}_{-\bar{\lambda},-}$ since $u_{\mY}=2\pd_xv_{\mY}$.
If we substitute $v=v_{\mY}+e^{\lambda t}V(X,Y)$ into \eqref{eq:Miura-transform}
and linearize the resulting equations, we have
\begin{gather}
  \label{eq:LMiura-1}
  U_1=\nabla M_-(v_{\mY})V\,,\quad U_2=\nabla M_+(v_{\mY})V\,,
\\
\label{eq:LMiura-2}
\widetilde{\mathcal{M}}_{\lambda,-}(U_1,V)=0\,,\quad
\widetilde{\mathcal{M}}_{\lambda,+}(U_2,V)=0\,.
\end{gather}
\par

In the moving coordinates \eqref{eq:b1-b2Y}, 
the eigenvalue problem  $\mL_{\mY}U_2=\lambda U_2$ reads
\begin{equation}
  \label{eq:ef-u2}
(3\pd_Y^2-4b_2\pd_X\pd_Y+4\lambda \pd_X-\pd_X^2A)U_2=0\,,
\quad A=-\pd_X^2+4b_1-6u_{\mY}\,.
\end{equation}
The Darboux transformations \eqref{eq:LMiura-1} and \eqref{eq:LMiura-2}
map a solution of \eqref{eq:ef-u2} to that of
\begin{equation}
  \label{eq:ef-u1}
(3\pd_Y^2-4b_2\pd_X\pd_Y+4\lambda \pd_X+\pd_X^4-4b_1\pd_X^2)U_1=0\,.
\end{equation}
\begin{lemma}
  \label{lem:U1-U2}
If $U_1$ and $U_2$ satisfy \eqref{eq:LMiura-1} and \eqref{eq:LMiura-2}
and $U_2$ is a solution of \eqref{eq:ef-u2}, then $U_1$ is a solution of
\eqref{eq:ef-u1}.
\end{lemma}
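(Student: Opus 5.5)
The plan is to obtain the lemma from the nonlinear Miura--Darboux correspondence by linearization, rather than to verify it by brute force. The equations \eqref{eq:Miura-transform} (and the Darboux maps behind \eqref{eq:Darboux-4a}, \eqref{eq:Darboux-4b}) say the following. If $v$ solves the modified KP-II equation, and $u_\pm=M_\pm(v)$ are defined with the auxiliary compatibility relations $\widetilde M_\pm[u_\pm,v]=0$, then $u_+$ and $u_-$ both solve \eqref{eq:KPII}. The background pair is $(u_+,u_-)=(u_\mY,0)$ with $v=v_\mY=\pd_x\log\tau_\mY$; this is consistent with $\widetilde{\mathcal M}_{\lambda,+}$ being taken at $[u_\mY,v_\mY]$ and $\widetilde{\mathcal M}_{\lambda,-}$ at $[0,v_\mY]$.

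I will substitute a one-parameter family $v=v_\mY+\eps e^{\lambda t}V$ and differentiate at $\eps=0$. The perturbations are $u_+=u_\mY+\eps e^{\lambda t}U_2+O(\eps^2)$ and $u_-=\eps e^{\lambda t}U_1+O(\eps^2)$. These are exactly \eqref{eq:LMiura-1}, and the linearized compatibility relations are \eqref{eq:LMiura-2}. Hence, formally, $e^{\lambda t}U_1$ solves the linearized KP-II around $0$, and $e^{\lambda t}U_2$ solves it around $u_\mY$. Passing to the moving frame $(X,Y)$, the first statement is precisely \eqref{eq:ef-u1} and the second is \eqref{eq:ef-u2}.

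The trouble is that this argument goes the wrong way: it produces solutions from $V$. The lemma instead assumes that $U_2$ solves \eqref{eq:ef-u2} and that $V$, $U_1$ are linked to it through \eqref{eq:LMiura-1}--\eqref{eq:LMiura-2}. So I plan a direct algebraic argument that uses the linear relations as an intertwining.

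First I eliminate $V$. From $\widetilde{\mathcal M}_{\lambda,+}(U_2,V)=0$ and $U_2=\nabla M_+(v_\mY)V$, the pair $(U_2,V)$ satisfies a linearized mKP-type equation for $V$. This is the linearization of the fact that, given $u_+$ and $v$ related by $M_+$ and $\widetilde M_+$, $u_+$ solving KP-II forces $v$ to solve mKP-II up to a kernel that the auxiliary relation kills. Concretely, I will write $\pd_X^2$ applied to the left side of \eqref{eq:ef-u2}, substitute $U_2=\nabla M_+(v_\mY)V$, and use $\widetilde{\mathcal M}_{\lambda,+}(U_2,V)=0$ to replace the terms $\lambda\pd_X^{-1}V$ and $\pd_X^{-1}\pd_YV$. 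The aim is to show that \eqref{eq:ef-u2} becomes $\mathcal P V=0$, where $\mathcal P$ is the linearized mKP-II operator around $v_\mY$ (the operator in \eqref{eq:Miura-transform}). Then I run the same manipulation with the minus sign: I apply $\pd_X^2$ to the left side of \eqref{eq:ef-u1}, substitute $U_1=\nabla M_-(v_\mY)V$, and use $\widetilde{\mathcal M}_{\lambda,-}(U_1,V)=0$. This should give the same expression $\mathcal P V$ multiplied by a common factor, so $\mathcal PV=0$ yields \eqref{eq:ef-u1}.

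In both computations the background identities are essential. These are that $v_\mY$ solves the stationary mKP-II in $(X,Y)$, that $u_\mY=2\pd_Xv_\mY$ with $\widetilde M_+[u_\mY,v_\mY]=0$, and that $\widetilde M_-[0,v_\mY]=0$; the last gives $\pd_Yv_\mY$ in terms of $X$-derivatives. These identities cancel all the terms involving derivatives of the coefficients.

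The main obstacle is the bookkeeping with the nonlocal operator $\pd_X^{-1}$. The identities only hold after applying $\pd_X$ enough times, which is why \eqref{eq:ef-u1} and \eqref{eq:ef-u2} are written in differentiated form. I would carry out the computation at the level of $\pd_X$ of the relations, so that every $\pd_X^{-1}\pd_Y$ and $\lambda\pd_X^{-1}$ acts on an exact derivative; commuting $\pd_Y$ past multiplication by $v_\mY$ produces $\pd_Yv_\mY$ terms, which I eliminate with the background identities. A useful consistency check is the formal adjoint relation $\widetilde{\mathcal M}_{\lambda,+}^*=\widetilde{\mathcal M}_{-\bar\lambda,-}$, which should map the plus computation onto the minus one.
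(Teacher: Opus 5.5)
Your route differs from the paper's, and once its logic is straightened out it does prove the lemma. The paper never passes through mKP. It writes one operator identity expressing $e_2(U_2)-e_1(U_1)$, the difference of the left sides of \eqref{eq:ef-u2} and \eqref{eq:ef-u1}, as $\pd_X^3\{\widetilde{\mathcal M}_{\lambda,+}(U_2,V)+\widetilde{\mathcal M}_{\lambda,-}(U_1,V)\}$ plus explicit operators applied to $U_2-U_1-2\pd_XV$ and to $U_1+U_2-2\pd_X^{-1}\pd_YV+4v_\mY V$. All of these vanish under \eqref{eq:LMiura-1}--\eqref{eq:LMiura-2}, so $e_1(U_1)=e_2(U_2)$.

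Your two ingredients are true. Linearize $\pd_x\widetilde{M}_\pm[M_\pm v,v]=\mathrm{mKP}(v)$ and $\mathrm{KP}[M_\pm v]=\nabla M_\pm(v)\,\mathrm{mKP}(v)$ at $v_\mY$, where $\mathrm{mKP}(v_\mY)=0$. This gives $\pd_X\widetilde{\mathcal M}_{\lambda,\pm}(\nabla M_\pm(v_\mY)V,V)=\mathcal PV$, together with $e_1(\nabla M_-(v_\mY)V)=\pd_X\nabla M_-(v_\mY)\mathcal PV$ and $e_2(\nabla M_+(v_\mY)V)=\pd_X\nabla M_+(v_\mY)\mathcal PV$.

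The consequence for your plan is that $\mathcal PV=0$ follows from \eqref{eq:LMiura-1}--\eqref{eq:LMiura-2} alone; your own sentence about the pair $(U_2,V)$ already says so. That is where you should take it from. Substituting the Darboux relations into \eqref{eq:ef-u2} turns it into $0=0$, not into $\mathcal PV=0$. Deducing $\mathcal PV=0$ from $e_2(U_2)=0$ would need injectivity of $\pd_X\nabla M_+(v_\mY)$, which is neither available nor needed. Nor is the prefactor common: $\pd_X\nabla M_\pm(v_\mY)$ differ by $2\pd_X^2$. This is consistent with the paper's identity, which under \eqref{eq:LMiura-1} reads $e_2(U_2)-e_1(U_1)=2\pd_X^2\mathcal PV$.

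The paper's approach buys a single mechanical check and the symmetric equivalence $e_1(U_1)=0\iff e_2(U_2)=0$. Yours explains why the lemma holds and gives more, namely that the hypothesis that $U_2$ solves \eqref{eq:ef-u2} is redundant. The price is two longer computations with the $\pd_X^{-1}$ bookkeeping.
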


Let $A_1=-\pd_X-2v_{\mY}$, $A_3=\pd_X-2v_{\mY}$ and
\begin{gather*}
A_2(\lambda)= -4\left\{\pd_X^2+(3v_{\mY}-b_2)\pd_X-b_1+\lambda\pd_X^{-1}
  +3v_{\mY}^2+\frac{3}{2}u_{\mY}-2b_2v_{\mY}\right\}\,,\\  
A_4(\lambda)= -4\left\{\pd_X^2-(3v_{\mY}-b_2)\pd_X-b_1+\lambda\pd_X^{-1}
  +3v_{\mY}^2-2b_2v_{\mY}\right\}\,,
\\
 \mathcal{A}_1(\lambda)=\begin{pmatrix}
A_1 & I \\ A_2(\lambda) & O
\end{pmatrix}\,,\quad
\mathcal{B}_1=\begin{pmatrix}I & O \\   -3A_1-4b_2  & 3I\end{pmatrix}\,,
\\
 \mathcal{A}_2(\lambda)=\begin{pmatrix}
A_3 & I \\ A_4(\lambda) & O
\end{pmatrix}\,, \quad
\mathcal{B}_2=\begin{pmatrix}I & O \\   -3A_3-4b_2  & 3I\end{pmatrix}\,,
\\
\mathbf{U_i}=\begin{pmatrix}  U_{i1} \\ U_{i2}\end{pmatrix}\,,
\quad \mathbf{V}=\begin{pmatrix}  V_1 \\ V_2\end{pmatrix}\,.
\end{gather*}
Darboux transformations \eqref{eq:LMiura-1} and \eqref{eq:LMiura-2} can be
rewritten as
\begin{gather}
  \label{eq:calA-B}
\mathcal{A}_1(\lambda)\mathbf{V}=\mathcal{B}_1\mathbf{U_1}\,,
\quad
\mathcal{A}_2(\lambda)\mathbf{V}=\mathcal{B}_2\mathbf{U_2}\,,
\end{gather}
where $U_{i1}=U_i$, $V_1=V$ and $U_{i2}=\pd_X^{-1}\pd_YU_i$,
$V_2=\pd_X^{-1}\pd_YV$.
Formally, $A_1^*=A_3$ and $A_2(\lambda)^*=A_4(-\bar{\lambda})$.
For the relationship between $A_2(\lambda)$, $A_4(\lambda)$
and the Lax operator for the KP-II, see Lemma~\ref{rem:Lax-Darboux}.

The following lemma tells us that
Darboux transformations \eqref{eq:LMiura-1} and \eqref{eq:LMiura-2}
are equivalent to \eqref{eq:calA-B} if $U_2=U_{21}$ is a solution of
\eqref{eq:ef-u2}. Note that $\ker(A_4(\lambda))=\{0\}$
by Lemmas~\ref{lem:A40-est}, \ref{lem:A4-est} and \ref{lem:A4*-est}.
\begin{lemma}
\label{lem:compatibilityU1U2}
Suppose that $\mathbf{U_j}=(U_{j1},U_{j2})^T$ $(j=1,\,2)$ and
$\mathbf{V}=(V_1,V_2)^T$ satisfy \eqref{eq:calA-B}.
Then
\begin{gather}
  \label{eq:calA-B-0}
U_{21}-U_{11}=2\pd_XV_1\,,\quad U_{22}-U_{12}=2\pd_XV_2\,,  
\\ \label{eq:calA-B-t0}
\widetilde{\mathcal{M}}_{\lambda,-}(U_{11},V_1)
=3(\pd_X^{-1}\pd_YU_{11}-U_{12})+4b_2(V_2-\pd_X^{-1}\pd_YV_1)\,,
\\ \label{eq:calA-B-t0'}
\widetilde{\mathcal{M}}_{\lambda,+}(U_{21},V_1)
=3(\pd_X^{-1}\pd_YU_{21}-U_{22})+4b_2(V_2-\pd_X^{-1}\pd_YV_1)\,.
\end{gather}
Moreover, if $\pd_YU_{21}=\pd_XU_{22}$, then
\begin{gather}
  \label{eq:calA-B-0'}
\pd_XU_{12}-\pd_YU_{11}=2\pd_X(\pd_YV_1-\pd_XV_2)\,,
\\  \label{eq:calA-B-0''}
A_4(\lambda)(\pd_YV_1-\pd_XV_2)=4(\lambda-\mL_\mY)U_{21}\,.
\end{gather}
\end{lemma}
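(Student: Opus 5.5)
This is a direct algebraic verification: write out the two matrix identities \eqref{eq:calA-B} componentwise and combine them. The first row of $\mathcal{A}_1(\lambda)\mathbf{V}=\mathcal{B}_1\mathbf{U_1}$ reads $A_1V_1+V_2=U_{11}$. The first row of the second system reads $A_3V_1+V_2=U_{21}$. Subtracting gives $U_{21}-U_{11}=(A_3-A_1)V_1=2\pd_XV_1$, which is the first identity of \eqref{eq:calA-B-0}.

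The second rows read
\begin{gather*}
A_2(\lambda)V_1=-(3A_1+4b_2)U_{11}+3U_{12}\,,\\
A_4(\lambda)V_1=-(3A_3+4b_2)U_{21}+3U_{22}\,.
\end{gather*}
I would subtract these, insert $U_{21}=U_{11}+2\pd_XV_1$, and use the explicit difference $A_4-A_2=8(3v_\mY-b_2)\pd_X+6u_\mY$. After using $u_\mY=2\pd_Xv_\mY$ and the first-row relations, this should reduce to $U_{22}-U_{12}=2\pd_XV_2$. This step is bookkeeping with commutators of $v_\mY$ and $\pd_X$.

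For \eqref{eq:calA-B-t0} and \eqref{eq:calA-B-t0'}, I substitute $V=V_1$ and $U=U_{i1}$ into the definitions of $\widetilde{\mathcal{M}}_{\lambda,\mp}$. Wherever $\pd_X^{-1}\pd_Y$ acts on $U$ or $V$, I replace it by $U_{i2}$ (respectively $V_2$) plus the defect terms $\pd_X^{-1}\pd_YU_{i1}-U_{i2}$ and $\pd_X^{-1}\pd_YV_1-V_2$. The remaining expression should be exactly a combination of the first and second rows of the corresponding system, and hence vanish. What is left is the defect terms, with coefficients $3$ and $4b_2$ read off from the definitions. One must check that the operator groupings in $A_2$ and $A_4$, including the $-2b_2v_\mY$ terms, match those in $\widetilde{\mathcal{M}}_{\lambda,\pm}$. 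This is where I expect sign or normalization slips.

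For the last part, assume $\pd_YU_{21}=\pd_XU_{22}$. Apply $\pd_X$ to the second identity of \eqref{eq:calA-B-0} and $\pd_Y$ to the first, then subtract. This gives
\[
\pd_XU_{12}-\pd_YU_{11}=\pd_XU_{22}-\pd_YU_{21}+2\pd_X(\pd_YV_1-\pd_XV_2)\,,
\]
which is \eqref{eq:calA-B-0'}.

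For \eqref{eq:calA-B-0''}, the approach is to eliminate $V_2$ and $U_{22}$. From the first row, $V_2=U_{21}-A_3V_1$. Substituting this into the second row gives $A_4(\lambda)V_1$ in terms of $U_{21}$ and $U_{22}$. Then I compute $\pd_YV_1-\pd_XV_2$ and apply $A_4(\lambda)$. I would commute $\pd_Y$ through $A_4(\lambda)$, using $\pd_Yv_\mY$ and $\pd_X^{-1}\pd_Y$ on stationary coefficients together with the relation $\pd_YU_{21}=\pd_XU_{22}$. This should leave an operator acting on $U_{21}$ alone.

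The main obstacle is identifying that operator with $4(\lambda-\mL_\mY)$. For this I would invoke the stationary Boussinesq-type equation satisfied by $v_\mY$, the potential form of the KP-II equation \eqref{eq:KPII''} with $\pd_t=0$, to cancel the terms involving $\pd_Yv_\mY$ and $\pd_X^{-1}\pd_Y v_\mY$. If a direct cancellation is messy, I would instead verify the identity on the dense class of functions $e^{\lambda t}$ times products $\Phi\Phi^*$, using the Lax structure (compare Lemma~\ref{rem:Lax-Darboux}). Equivalently, one can reorganize $A_4(\lambda)$ in terms of the Lax operator $L$ and use the factorization behind the Miura map, for which $\widetilde{\mathcal{M}}_{\lambda,+}$ linearizes the map $v\mapsto u$.
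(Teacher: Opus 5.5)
Your treatment of \eqref{eq:calA-B-0}, \eqref{eq:calA-B-t0}, \eqref{eq:calA-B-t0'} and \eqref{eq:calA-B-0'} is correct and coincides with the paper's componentwise computation. The gap is in \eqref{eq:calA-B-0''}. Using $\pd_XV_2=\pd_XU_{21}-\pd_XA_3V_1$ and commuting $\pd_Y$ through $A_4(\lambda)$ gives
\[
A_4(\lambda)(\pd_YV_1-\pd_XV_2)=\pd_Y\bigl(A_4(\lambda)V_1\bigr)-A_4(\lambda)\pd_XU_{21}
+\bigl(A_4(\lambda)\pd_XA_3-[\pd_Y,A_4(\lambda)]\bigr)V_1\,.
\]
The last term still contains $V_1$. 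It can be written in terms of $U_{21},U_{22}$ only through the intertwining relation
\[
A_4(\lambda)\pd_XA_3-A_3\pd_XA_4(\lambda)=[\pd_Y,A_4(\lambda)]\,,
\]
which turns it into $A_3\pd_X\{-(3A_3+4b_2)U_{21}+3U_{22}\}$. This is the key step. Your sentence that the computation ``should leave an operator acting on $U_{21}$ alone'' assumes it rather than proves it. The relation is not a formal commutator identity. The paper derives it from $\nabla M_+(v_\mY)\pd_XA_4(\lambda)=A_4(\lambda)\pd_X\nabla M_+(v_\mY)$, which follows from \eqref{eq:Miura-Lax2}, \eqref{eq:A4-Lax}, $[B_0^*,L_0^*]=0$ and $\nabla M_+(v_\mY)=\pd_X^{-1}\pd_Y+A_3$. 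Your closing remark about the Lax/Miura factorization points in this direction, but you never say what it must deliver.

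Second, the tool you propose for the final identification would not work. Once the $V_1$-terms are removed, the operator acting on $U_{21}$ differs from $4(\lambda-\mL_\mY)$ by multiplication by
\[
6\pd_X(M_-[v_\mY])=6\bigl(\pd_Yv_\mY-\pd_X^2v_\mY-2v_\mY\pd_Xv_\mY\bigr).
\]
This vanishes by \eqref{eq:Mpmv2}, i.e.\ because $\tau_\mY$ solves $L_0\tau_\mY=0$. That is a constraint first order in $Y$ and specific to this $v_\mY$. The stationary KP-II (Boussinesq) equation is a second-order evolution in $Y$, so it leaves $\pd_Yv_\mY$ free as Cauchy data and cannot produce this constraint.

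Likewise, a direct check of the intertwining relation is not bookkeeping. Matching the first-order coefficients needs $\pd_Yv_\mY=\pd_X^2v_\mY+2v_\mY\pd_Xv_\mY$. Matching the zeroth-order coefficients needs $(\pd_X^3-b_2\pd_X^2-b_1\pd_X)\tau=\k_1\k_2\k_3\tau$ for $\tau=\sum_je^{\k_jX+\k_j^2Y}$. These are properties of the $\tau$-function: $L_0\tau_\mY=0$ and the stationary form of $B_0\tau_\mY=0$. They are exactly what \eqref{eq:A4-Lax} encodes, and they are not supplied by the nonlinear equation for $u_\mY$.

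Your fallback of testing on products $e^{\lambda t}\Phi\Phi^*$ does not close the gap either. The identity to be tested still involves $V_1$, and you would also have to show that such a family determines the operator.
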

Proof of Lemmas~\ref{lem:U1-U2} and \ref{lem:compatibilityU1U2}
will be given in Appendix~\ref{ap:lem5.1-5.2}.
\par
Now we will find solutions of $A_2(\lambda)V=0$
and $A_4(\lambda)V'=0$.
Let
\begin{gather*}
V'(\beta)=e^{\lambda(\beta)t}\tau_{\mY}\Phi^{0,*}(-i\beta)\,,
\quad
\widehat{V}_j(\beta)=(\beta-\k_2)
e^{\lambda(\beta)t}\Phi^1_j\Phi^{1,*}(-i\beta)\,,
\\
\beta_{*,\pm}=\frac{b_2\pm\sqrt{b_2^2+3b_1}}{3}\,,\quad
\beta_{\#,\pm}=\frac{b_2\pm2\sqrt{b_2^2+3b_1}}{3}\,.
\end{gather*}
Then we have the following.
\begin{lemma}
  \label{lem:kerA-solU}
\begin{itemize}
\item[\rm(i)] For any $\beta\in\C$ and $Y\in\R$,
  \begin{gather}
\label{eq:A2b}
A_2(\lambda(\beta))V(\beta)=0\,,\quad A_4(-\lambda(\beta))V'(\beta)=0\,,
\\
\label{eq:A2b'}
A_2(\lambda(\beta_{*,\pm}))\pd_\beta V(\beta_{*,\pm})=0\,,\quad
A_4(-\lambda(\beta_{*,\pm}))\pd_\beta V'(\beta_{*,\pm})=0\,.
\end{gather}
\item[\rm(ii)] 
For any $Y\in\R$ and $\beta\in\C\setminus\{\k_1,\k_2,\k_3\}$,
$V(\beta)\in L^2(\R;e^{2\a X}dX)$ if and only if $\k_1<\Re\beta+\a<\k_3$.
For any $\a$, $Y\in\R$ and $\beta\in\C$, $V'(\beta)\not\in L^2(\R;e^{2\a X}dX)$.
\item[\rm(iii)]
$A_2(0)V(\k_j)=0$ and $V(\k_j)=\Phi^1_j\Phi^{1,*}_j
\in L^2(\R;e^{2\a X}dX)$ for $j=1, 2, 3$
and any $Y\in\R$ if  $\k_1-\k_2<\a<\k_3-\k_2$.
\item[\rm(iv)]
  If $\lambda\ne\lambda(\beta_{*,\pm})$, then
\begin{equation}
  \label{eq:beta}
  \beta^3-b_2\beta^2-b_1\beta+\lambda-\k_1\k_2\k_3=0
\end{equation}
has $3$ distinctive roots. If $\lambda=\lambda(\beta_{*,\pm})$, then
\eqref{eq:beta} has a double root $\beta=\beta_{*,\pm}$ and
a simple root $\beta=\beta_{\#,\mp}$.
\item[\rm(v)]
If $\k_1-\beta_{*,+}<\a<\k_1-\beta_{\#,-}$, then
$V(\beta_{*,+})$, $\pd_\beta V(\beta_{*,+})\in L^2(\R;e^{2\a X}dX)$
and $V(\beta_{\#,-})\not\in L^2(\R;e^{2\a X}dX)$ for any $Y\in\R$.
If  $\k_3-\beta_{\#,+}<\a<\k_3-\beta_{*,-}$, then
$V(\beta_{*,-})$, $\pd_\beta V(\beta_{*,-})\in L^2(\R;e^{2\a X}dX)$
and $V(\beta_{\#,+})\not\in L^2(\R;e^{2\a X}dX)$ for any $Y\in\R$.
\end{itemize}
\end{lemma}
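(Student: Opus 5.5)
Everything here is explicit: $V(\beta)$, $V'(\beta)$ and $\lambda(\beta)=\prod_{j=1}^3(\k_j-\beta)$ are given in closed form. The plan is therefore to verify each item by direct computation. We use Lemma~\ref{lem:prodJdJ} and the structure of $A_2,A_4$ as linearized Miura/Darboux operators, and we read off asymptotics from \eqref{eq:tauXY} and \eqref{eq:6}.

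For (i), first expand
\begin{equation*}
\lambda(\beta)=\k_1\k_2\k_3+b_1\beta+b_2\beta^2-\beta^3,
\end{equation*}
which shows that \eqref{eq:beta} is exactly $\lambda(\beta)=\lambda$; this also gives (iv). Next, $V(\beta)=e^{-\lambda(\beta)t}\Phi^1(-i\beta)\Phi^{1,*}_1$ with $\Phi^{1,*}_1=1/\tau_\mY$. We would write $V=\tau_{\Phi}e^{\beta x+\beta^2y-\beta^3t-\lambda t}/\tau^2$ and use $v_\mY=\pd_x\log\tau$, $u_\mY=2\pd_xv_\mY$. Applying $A_2(\lambda)$ then reduces to the Lax pair relations $L\Phi^1=0$, $B\Phi^1=0$ in the moving frame. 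The term $\lambda\pd_X^{-1}$ combines with $\pd_X^{-1}$ of the $t$-derivative, and the $-b_1$, $-b_2\pd_X$ terms come from the change to the coordinates $(X,Y)$. Concretely, we would conjugate by $\tau$: setting $V=\tau^{-2}W$, the operator $\tau^2A_2\tau^{-2}$ acting on $W=\tau_\Phi e^{\beta x+\cdots}$ should become a Hirota bilinear identity that holds term by term for each exponential $e^{\theta_j}$. The identity for $V'=\tau\Phi^{0,*}$ and $A_4$ follows by the formal adjoint relation $A_2(\lambda)^*=A_4(-\bar\lambda)$ together with the same bilinear check, $\tau$ now playing the role of a dual Jost numerator.

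The statement \eqref{eq:A2b'} comes from differentiating $A_2(\lambda(\beta))V(\beta)=0$ in $\beta$:
\begin{equation*}
\lambda'(\beta)\,\pd_\lambda A_2\,V+A_2(\lambda(\beta))\,\pd_\beta V=0 .
\end{equation*}
Since $\beta_{*,\pm}$ are the roots of $\lambda'(\beta)=-3\beta^2+2b_2\beta+b_1$, the first term vanishes there. The multiplicity claim in (iv) follows by factoring the cubic: $\lambda(\beta)-\lambda(\beta_{*,\pm})=-(\beta-\beta_{*,\pm})^2(\beta-\beta_{\#,\mp})$. Vieta confirms this, since $2\beta_{*,\pm}+\beta_{\#,\mp}=b_2$.

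For (ii), (iii) and (v), fix $Y$. From \eqref{eq:6} and \eqref{eq:prod-PhiPhi*j} we have $V(\beta)\sim e^{(\beta-\k_1)X}$ as $X\to-\infty$ and $V(\beta)\sim e^{(\beta-\k_3)X}$ as $X\to\infty$, provided the corresponding coefficients $\beta-\k_1$, $\beta-\k_3$ in $\tau_\Phi$ are nonzero; this is why $\beta\ne\k_j$ is assumed. Hence $e^{\a X}V\in L^2$ iff $\Re\beta+\a>\k_1$ and $\Re\beta+\a<\k_3$. For $V'=\tau e^{-\beta x-\beta^2y}$, the leading exponentials are $e^{(\k_1-\beta)X}$ and $e^{(\k_3-\beta)X}$ at $\mp\infty$. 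Integrability would then require $\Re\beta-\a<\k_1$ and $\Re\beta-\a>\k_3$ simultaneously, which is impossible.

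For (iii), at $\beta=\k_j$ the $j$-th term of $\tau_\Phi$ vanishes, and $V(\k_j)=\Phi^1_j/\tau$ as in \eqref{def:Phi1s}. This function decays like $e^{(\k_i-\k_j)X}$ with the neighbouring indices $i$, which gives the stated range of $\a$; for instance $j=2$ gives $\k_1-\k_2<\a<\k_3-\k_2$. Also $\lambda(\k_j)=0$. For (v), apply the criterion of (ii) to $\beta_{*,+}$ and $\beta_{\#,-}$. The extra factor $X$ in $\pd_\beta V$ does not affect exponential integrability. Here one needs $\beta_{*,+}\ne\k_j$, which we would check, or else handle directly via the coefficient structure.

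The main obstacle is the bilinear verification in (i). It requires tracking $\pd_X^{-1}$ and the moving-frame constants $b_1,b_2$ correctly. We would try first to reduce it to the single exponential case $\tau=e^{\theta_j}$ plus cross terms that cancel by antisymmetry in $(\k_i-\k_j)$.
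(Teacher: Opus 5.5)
Your arguments for (ii)--(v) are essentially the paper's. You use the same endpoint asymptotics: $V(\beta)\sim e^{(\beta-\k_1)X}$ and $e^{(\beta-\k_3)X}$ as $X\to\mp\infty$, and $V'(\beta)\sim e^{(\k_1-\beta)X}$ and $e^{(\k_3-\beta)X}$. You also note that the polynomial factor in $\pd_\beta V$ is harmless, and you use the factorization $\lambda(\beta)-\lambda(\beta_{*,\pm})=-(\beta-\beta_{*,\pm})^2(\beta-\beta_{\#,\mp})$. Your $\beta$-differentiation argument for \eqref{eq:A2b'} is also the paper's. Your description in (iii) of the decay rates via ``neighbouring indices'' is inaccurate; for instance $V(\k_1)\sim e^{(\k_1-\k_3)X}$ as $X\to\infty$. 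The conclusion still holds, because the stated range is the intersection of the three ranges and is dictated by $j=2$.

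The gap is in (i), which is the only non-routine part. You leave $A_2(\lambda(\beta))V(\beta)=0$ as an unperformed Hirota computation that you yourself call the main obstacle. Your route to the $A_4$ identity does not work. The formal relation $A_2(\lambda)^*=A_4(-\bar\lambda)$ ties $\ker A_4(-\bar\lambda)$ to the annihilator of the range of $A_2(\lambda)$, not to $\ker A_2(\lambda)$. So it cannot turn $V(\beta)\in\ker A_2$ into $V'(\beta)\in\ker A_4$. In fact $V$ and $V'$ come from different objects: $V$ from the dressed Jost function $\Phi^1$ of $u_\mY$, and $V'$ from the bare dual Jost function $\Phi^{0,*}$ of the zero potential.

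The missing idea is the factorization of Lemma~\ref{rem:Lax-Darboux}, which follows from the Miura--Lax relations \eqref{eq:Miura-Lax1}--\eqref{eq:Miura-Lax4}:
\begin{gather*}
A_2(\lambda)V=-e^{-\lambda t}\pd_x^{-1}\tau_\mY^{-1}(B_\mY-4b_2L_\mY)\tau_\mY(e^{\lambda t}V)\,,\\
A_4(\lambda)V=e^{-\lambda t}\pd_x^{-1}\tau_\mY(B_0^*-4b_2L_0^*)\tau_\mY^{-1}(e^{\lambda t}V)\,.
\end{gather*}
Since $\tau_\mY e^{\lambda(\beta)t}V(\beta)=\Phi^1(-i\beta)$ and $\tau_\mY^{-1}e^{-\lambda(\beta)t}V'(\beta)=\Phi^{0,*}(-i\beta)$, both identities in \eqref{eq:A2b} reduce at once to $L_\mY\Phi^1=B_\mY\Phi^1=0$ and $L_0^*\Phi^{0,*}=B_0^*\Phi^{0,*}=0$. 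No bilinear bookkeeping is needed, and no separate tracking of $\pd_X^{-1}$ or of the moving-frame constants $b_1,b_2$.
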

\begin{remark}
\label{rem:betas} 
Note that $\beta_{*,\pm}\ne \beta_{\#,\mp}$ since
$b_2^2+3b_1=\sum_{1\le i<j\le3}(\k_j-\k_i)^2/2$ and $\k_1<\k_2<\k_3$.
Moreover, 
$\beta_{\#,-}< \k_1<\beta_{*,-}<\k_2<\beta_{*,+}<\k_3<\beta_{\#,+}$,
$\beta_{\#,+}-\k_3<\beta_{*,-}-\k_1$ and $\k_1-\beta_{\#,-}<\k_3-\beta_{*,+}$.
Let $\beta_j(\lambda)$ ($j=1, 2, 3$) be
solutions of \eqref{eq:beta} satisfying
$\Re \beta_1\le \Re \beta_2\le \Re\beta_3$. Then
\begin{gather*}
\beta_1(\lambda)=\beta_2(\lambda)=\beta_{*,-}\,,\quad
\beta_3(\lambda)=\beta_{\#,+}
\quad\text{if $\lambda=\lambda(\beta_{*,-})$,}
  \\
\beta_1(\lambda)=\beta_{\#,-}\,,\quad
\beta_2(\lambda)=\beta_3(\lambda)=\beta_{*,+}
\quad\text{if $\lambda=\lambda(\beta_{*,+})$.}
\end{gather*}
\end{remark}
We may regard $V(\beta)$ and $\widehat{V}_j(\beta)$
as solutions of a $1+1$-dimensional PDE.
\begin{lemma}
 \label{lem:sol-l-ad}
$\mL_{\mY}\pd_XV(\beta)=\lambda(\beta)\pd_XV(\beta)$,
$\mL_{\mY}\pd_X\widehat{V}_j(\beta)=-\lambda(\beta)\pd_X\widehat{V}_j(\beta)$ and
  \begin{gather}
\label{eq:efVb}
(3\pd_Y^2-4b_2\pd_X\pd_Y-\pd_XA\pd_X)V(\beta)+4\lambda(\beta) \pd_XV(\beta)
=0\,,
\\ \label{eq:efhatVb}
(3\pd_Y^2-4b_2\pd_X\pd_Y-\pd_XA\pd_X)\widehat{V}_j(\beta)
-4\lambda(\beta) \pd_X\widehat{V}_j(\beta)=0\,,
\end{gather}
and $(3\pd_Y^2-4b_2\pd_X\pd_Y-\pd_XA\pd_X)V(\k_j)=0$.
In addition,
\begin{gather}
  \label{eq:efVb'}
  (3\pd_Y^2-4b_2\pd_X\pd_Y-\pd_XA\pd_X)\pd_\beta V(\beta_{*,\pm})
+4\lambda(\beta_{*,\pm})\pd_X\pd_\beta V(\beta_{*,\pm})
=0\,,\quad
\\ \label{eq:efhatVb'}
(3\pd_Y^2-4b_2\pd_X\pd_Y-\pd_XA\pd_X)\pd_\beta\widehat{V}_j(\beta_{*,\pm})
-4\lambda(\beta_{*,\pm}) \pd_X\pd_\beta\widehat{V}_j(\beta_{*,\pm})=0\,.
\end{gather}
\end{lemma}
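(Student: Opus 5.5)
The plan is to derive everything from the squared eigenfunction identity of Lemma~\ref{lem:prodJdJ}, specialized to $t$-independent profiles in the moving frame. By \eqref{eq:6} and \eqref{eq:tauXY}, $V(\beta)=e^{-\lambda(\beta)t}\Phi^1(-i\beta)\Phi^{1,*}_1$ depends only on $(X,Y)$. Likewise $\widehat{V}_j(\beta)=(\beta-\k_2)e^{\lambda(\beta)t}\Phi^1_j\Phi^{1,*}(-i\beta)$ is a function of $(X,Y)$ alone: the phase of $\Phi^1_j$ combines with that of $\Phi^{1,*}(-i\beta)$ exactly as in \eqref{eq:6}, with the sign of $\lambda(\beta)$ reversed.

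First I verify the first two claims. Lemma~\ref{lem:prodJdJ} says $w=\Phi\Phi^*$ solves the adjoint equation \eqref{eq:adjLKP}. It also says $\pd_x w$ solves \eqref{eq:linearKP}, which in the moving frame reads $\pd_t v=\mL_\mY v$. The time dependence of $\Phi^1(-i\beta)\Phi^{1,*}_1$ is $e^{\lambda(\beta)t}$, so $\pd_t$ acts as multiplication by $\lambda(\beta)$. Hence $\mL_\mY\pd_XV(\beta)=\lambda(\beta)\pd_XV(\beta)$, and similarly $\mL_\mY\pd_X\widehat{V}_j(\beta)=-\lambda(\beta)\pd_X\widehat{V}_j(\beta)$.

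Next I rewrite this as \eqref{eq:efVb}. Since $\mL_\mY v=\lambda v$ in the form $(3\pd_Y^2-4b_2\pd_X\pd_Y+4\lambda\pd_X-\pd_X^2A)v=0$ (see \eqref{eq:ef-u2}), substituting $v=\pd_XV$ and using $\pd_X^2A\pd_X\cdot\pd_X^{-1}$-type commutation gives the claim. More precisely, $\pd_X^{-1}$ of that operator applied to $\pd_X V$ is
\[
(3\pd_Y^2-4b_2\pd_X\pd_Y-\pd_XA\pd_X)V+4\lambda\pd_XV .
\]
Because $V$ itself, not just $\pd_XV$, is a product of Jost functions, the undifferentiated identity \eqref{eq:adjLKP} holds directly with no constant of integration. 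I will therefore derive \eqref{eq:efVb} from the adjoint equation. Writing \eqref{eq:adjLKP} in moving coordinates with $\pd_t\mapsto\lambda$, the operator is $\pd_X(\pd_X^2-4b_1+6u_\mY)\pd_X=-\pd_XA\pd_X$. One has to check that the first-order terms assemble with the correct sign of $\lambda$: $V$ solves the adjoint problem with eigenvalue $-\lambda(\beta)$, while $\pd_XV$ solves the direct problem with $\lambda(\beta)$. This sign bookkeeping, including the relative sign of the $b_2$ term under the change $t\mapsto$ moving frame, is the only delicate point, and I will check it on $u=0$ with $V=e^{(\beta-\k_1)X+(\beta^2-\k_1^2)Y}$ first. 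The statement for $\widehat{V}_j$ follows identically with $\lambda$ replaced by $-\lambda$. The case $V(\k_j)$ is $\beta=\k_j$, where $\lambda(\k_j)=0$.

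Finally, the derivative identities \eqref{eq:efVb'} and \eqref{eq:efhatVb'} come from differentiating \eqref{eq:efVb} in $\beta$. $V(\beta)$ is analytic in $\beta$ (it is entire, with the explicit exponential–rational form in $X,Y$), so differentiation gives
\[
(3\pd_Y^2-4b_2\pd_X\pd_Y-\pd_XA\pd_X)\pd_\beta V+4\lambda(\beta)\pd_X\pd_\beta V+4\lambda'(\beta)\pd_XV=0 .
\]
At $\beta=\beta_{*,\pm}$, which are the critical points of $\lambda(\beta)$ (the double roots in Lemma~\ref{lem:kerA-solU}(iv)), the last term vanishes. The same argument applies to $\widehat{V}_j$.
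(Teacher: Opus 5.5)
Your proposal is correct and follows essentially the same route as the paper. You apply Lemma~\ref{lem:prodJdJ} to $\Phi^1(-i\beta)\Phi^{1,*}_1$ and $\Phi^1_j\Phi^{1,*}(-i\beta)$, use that these products depend only on $(X,Y)$ once the factor $e^{\pm\lambda(\beta)t}$ is removed, read off \eqref{eq:efVb} and \eqref{eq:efhatVb} from the adjoint equation in the moving frame, and then differentiate in $\beta$ using $\lambda'(\beta_{*,\pm})=0$. One point to tighten: $\widehat{V}_j(\beta)$ is only meromorphic in $\beta$, not entire like $V(\beta)$, so the $\beta$-differentiation for it relies on analyticity near $\beta_{*,\pm}$; this holds because $\beta_{*,\pm}\notin\{\k_1,\k_2,\k_3\}$ by Remark~\ref{rem:betas}.
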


\begin{proof}[Proof of Lemmas~\ref{lem:kerA-solU} and \ref{lem:sol-l-ad}]
First, we remark that $V(\beta)$, $\widehat{V}_j(\beta)$
and $V'(\beta)$ depend only on $X$ and $Y$.
Since $L_{\mY}\Phi^1=B_{\mY}\Phi^1=0$
and $L_0^*\Phi^{0,*}=B_0^*\Phi^{0,*}=0$,
it follows from \eqref{eq:A2-Lax} and \eqref{eq:A4-Lax} that
$A_2(\lambda(\beta))V(\beta)=0$ and $A_4(-\lambda(\beta))V'(\beta)=0$.
By \eqref{eq:b1-b2Y}, we have $\beta_j(0)=\k_j$
and $V(\k_j)=\Phi^1_j\Phi^{1,*}_j$ for $j=1$, $2$, $3$.
\par
Suppose that $\lambda=\lambda(\beta_{*,\pm})$.
Then $\beta=\beta_{*,\pm}$ is a double
root of \eqref{eq:beta} since $\lambda'(\beta_{*,\pm})=0$,
and the other root of \eqref{eq:beta} is $\beta=\beta_{\#,\mp}$.
Differentiating \eqref{eq:A2b}
with respect to $\beta$ and substituting $\beta=\beta_{*,\pm}$ into
the resulting equation, we have \eqref{eq:A2b'}.
\par
By Lemma~\ref{lem:prodJdJ},
$e^{\lambda(\beta)t}V(\beta)=\Phi^1(-i\beta)\Phi^{1,*}_1$ is
is a solution of $\pd_tw+\mL_{\mY}^*w=0$ and 
$e^{\lambda(\beta)t}\pd_XV(\beta)$ is a solution of $\pd_tv=\mathcal{L}_Yv$.
Since $V(\beta)$ depends only on $X=x-b_1t$ and $Y=y-b_2t$, we have \eqref{eq:efVb}.
Differentiating \eqref{eq:efVb} with respect to $\beta$ and substituting
$\beta=\beta_{*,\pm}$ into the resulting equation, 
we have  \eqref{eq:efVb'} because $\lambda'(\beta_{*,\pm})=0$.
Similarly, we have \eqref{eq:efhatVb} and \eqref{eq:efhatVb'}.
\par
Suppose that $\beta\in\C\setminus\{\k_1,\k_2,\k_3\}$. Then
$\Phi^1(-i\beta)=O(e^{\beta x})$ and
$\Phi^{0,*}(-i\beta)=O(e^{-\beta x})$ as $x\to\pm\infty$.
Since $\tau_{\mY}=O(e^{\k_1x})$ as $x\to-\infty$ and
$\tau_{\mY}=O(e^{\k_3x})$ as $x\to\infty$,
we have 
$e^{\a X}V(\beta)\in L^2(\R_X)$ if and only if $\k_1<\a+\Re \beta<\k_30$,
and $e^{\a X}V'(\beta)\not\in L^2(\R_X)$ for any $\a$.
Thus we prove (ii). 
We have $e^{\a X}\pd_\beta V(\beta_{*,\pm})\in L^2(\R_X)$
if and only if $\a+\beta_{*,\pm}\in (\k_1,\k_3)$ and (v)
follows from Remark~\ref{rem:betas}.
\par
Since $V(\k_j)=\Phi^1_j\Phi^{1,*}_j$,
it follows from \eqref{def:Phi1s} that for each $Y\in\R$,
\begin{equation}
  \label{eq:13}
\left\{
  \begin{gathered}
V(\k_1)=O(e^{\theta_1-\theta_3})\,,\enskip
|V(\k_2)|+|V(\k_3)|=O(e^{\theta_2-\theta_3})  \text{ as $X\to\infty$,}
\\
|V(\k_1)|+|V(\k_2)|=O(e^{\theta_2-\theta_1})\,,\enskip
V(\k_3)=O(e^{\theta_3-\theta_1})\text{ as $X\to-\infty$.}      
  \end{gathered}\right.
\end{equation}
Thus we have (iii). This completes the proof of
Lemmas~\ref{lem:kerA-solU} and \ref{lem:sol-l-ad}.
\end{proof}

We have a conserved quantity for $U$ and $U^*$ satisfying
$\mL_{\mY}U=\lambda U$ and $\mL_{\mY}^*U^*=\lambda U^*$.

\begin{lemma}
  \label{lem:bfUU*}
Let $U$ be a solution of \eqref{eq:ef-u2} and $U^*$ be a solution
of the equation
$(3\pd_Y^2-4b_2\pd_X\pd_Y-4\lambda\pd_X-\pd_XA\pd_X)U^*=0$.
Suppose that for an $\a>0$,
 \begin{gather*}
\mathbf{U}=\!(U,\pd_X^{-1}\pd_YU)^T
\in C^1(\R_Y;L^2(\R;e^{2\a X}dX))\,,
\\
\mathbf{U}^*=(-\frac{4}{3}b_2U^*+\pd_X^{-1}\pd_YU^*,U^*)^T
\in C^1(\R_Y;L^2(\R;e^{-2\a X}dX))\,.
\end{gather*}
Then
$$\la \mathbf{U},\mathbf{U^*}\ra:=\int_\R\left\{
U\left(-\frac{4}{3}b_2U^*+\pd_X^{-1}\pd_YU^*\right)
+(\pd_X^{-1}\pd_YU)U^*\right\}\,dX$$
does not depend on $Y$.
\end{lemma}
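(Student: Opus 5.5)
The plan is to rewrite both eigenvalue equations as first-order evolution systems in $Y$ for $\mathbf{U}$ and for $(U^*,W)$, where $W:=\pd_X^{-1}\pd_YU^*$, and then to check directly that $\frac{d}{dY}\la\mathbf{U},\mathbf{U}^*\ra$ is the integral of an exact $X$-derivative plus a term that vanishes by symmetry of $A$. The weights $e^{2\a X}$ and $e^{-2\a X}$ cancel in every product we meet, so all integrals are well defined.

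\emph{Step 1: the two systems.} Set $U_2=\pd_X^{-1}\pd_YU$, so that $\pd_YU=\pd_XU_2$. Applying $\pd_X^{-1}$ to \eqref{eq:ef-u2} gives
\begin{equation*}
\pd_YU_2=\frac{4b_2}{3}\pd_XU_2-\frac{4\lambda}{3}U+\frac13\pd_X(AU)\,.
\end{equation*}
For the adjoint equation we have $\pd_YU^*=\pd_XW$, and applying $\pd_X^{-1}$ gives $3\pd_YW-4b_2\pd_YU^*-4\lambda U^*-A\pd_XU^*=0$. Hence
\begin{equation*}
\pd_YW=\frac{4b_2}{3}\pd_XW+\frac{4\lambda}{3}U^*+\frac13A\pd_XU^*\,.
\end{equation*}

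\emph{Step 2: differentiate the pairing.} The pairing is $\int\{U(-\frac43b_2U^*+W)+U_2U^*\}\,dX$. Differentiating under the integral sign, which is justified by the $C^1(\R_Y;\cdot)$ assumption, we substitute the four relations from Step~1. The $\lambda$-terms cancel: $U\cdot\frac{4\lambda}{3}U^*$ against $-\frac{4\lambda}{3}UU^*$. The $b_2$-terms also cancel: $-\frac43b_2(\pd_XU_2)U^*+\frac43b_2(\pd_XU_2)U^*$, together with $-\frac43b_2U\pd_XW+\frac43b_2U\pd_XW$. What remains is
\begin{equation*}
\frac{d}{dY}\la\mathbf{U},\mathbf{U}^*\ra=\int_\R\Bigl\{\pd_X(U_2W)+\frac13\bigl(U\,A\pd_XU^*+\pd_X(AU)\,U^*\bigr)\Bigr\}\,dX\,.
\end{equation*}
Integrating by parts, $\int\pd_X(AU)U^*\,dX=-\int(AU)\pd_XU^*\,dX=-\int U\,A\pd_XU^*\,dX$. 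This uses that $A=-\pd_X^2+4b_1-6u_\mY$ is formally symmetric. The right-hand side therefore reduces to boundary terms.

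\emph{Step 3: boundary terms (the main obstacle).} We must show that $U_2W$, $(AU)U^*$ and the terms coming from $\pd_X^2$ in $A$, namely $\pd_XU\,\pd_XU^*$ and $U\pd_X^2U^*$, vanish as $X\to\pm\infty$. Their integrability on $\R$ also has to be checked. Each is a product of an $e^{\a X}$-weighted factor and an $e^{-\a X}$-weighted factor, so it is enough to know that $X$-derivatives up to order two of $U$ and $U^*$ lie in the respective weighted $L^2$ spaces. The hypotheses only give $\mathbf{U}\in C^1(\R_Y;L^2(e^{2\a X}dX))$. I would recover the missing $X$-regularity from the equations themselves. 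In the equation for $\pd_YU_2$, the quantity $\pd_X(AU)=\pd_X(3\pd_YU_2-4b_2\pd_XU_2+4\lambda U)$ is controlled in a weak sense, and elliptic regularity in $X$ for $-\pd_X^3$ then bounds $\pd_X^kU$ in $L^2_{loc}$ with the same weight; the adjoint side is handled the same way.

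To avoid delicate pointwise limits, I would insert a cutoff $\chi_0(|X|/R)$ into the pairing and redo Step~2. The error terms are then commutators supported in $R\le|X|\le2R$, each bounded by weighted $L^2$ norms over that annulus. These tend to $0$ as $R\to\infty$, so after letting $R\to\infty$ we conclude that $\la\mathbf{U},\mathbf{U}^*\ra$ is independent of $Y$.
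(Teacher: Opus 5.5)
Your proposal is correct and is the paper's argument written out in components: the paper sets $\mathbf{A}(\lambda)=\begin{pmatrix}0&\pd_X\\ \frac13\pd_XA-\frac43\lambda&\frac43b_2\pd_X\end{pmatrix}$, notes $\pd_Y\mathbf{U}=\mathbf{A}(\lambda)\mathbf{U}$ and $\pd_Y\mathbf{U}^*=-\mathbf{A}(\lambda)^*\mathbf{U}^*$, and concludes by formal adjointness, which is exactly your Step~2 cancellation, while leaving the boundary terms of your Step~3 implicit. In Step~3, the regularity you need is global rather than merely local: $\pd_X(AU)=3\pd_YU_2-4b_2\pd_XU_2+4\lambda U$ (there should be no extra $\pd_X$ on the right) and $A\pd_XU^*=3\pd_YW-4b_2\pd_XW-4\lambda U^*$ lie in $L^2(\R;e^{2\a X}dX)$ and $L^2(\R;e^{-2\a X}dX)$ respectively, directly from the $C^1$ hypotheses, which gives $U,U^*\in H^3$ in the corresponding weighted spaces and makes your annulus error terms tend to $0$.
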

\begin{proof}
  Let $\mathbf{A}(\lambda)=
  \begin{pmatrix}
    0 & \pd_X \\ \frac{1}{3}\pd_XA-\frac{4}{3}\lambda  & \frac{4}{3}b_2\pd_X
  \end{pmatrix}$.
Since $\pd_Y\mathbf{U}=\mathbf{A}(\lambda)\mathbf{U}$ and
$\pd_Y\mathbf{U^*}=-\mathbf{A}(\lambda)^*\mathbf{U^*}$,
\begin{align*}
  \frac{d}{dY}\la \mathbf{U},\mathbf{U^*}\ra
  =& \la \pd_Y\mathbf{U},\mathbf{U^*}\ra
     +\la \mathbf{U},\pd_Y\mathbf{U^*}\ra
     =0\,.
\end{align*}
\end{proof}

We will investigate the correspondence between $\mathbf{U_1}$ and $\mathbf{U_2}$
satisfying \eqref{eq:calA-B}. To begin with,  we investigate solvability of
$A_4(0)U=F$.
\begin{lemma}
 \label{lem:A40-est}
Let $0<|\a|<\min\{\k_2-\k_1,\k_3-\k_2\}$ and $A_2(0)$ and $A_4(0)$ be closed
operators on $L^2(\R;e^{2\a X}dX)$.
Then for every $Y\in\R$,
  $\ker(A_4(0))=\ker(A_2^*(0))=\{0\}$ and
  $\ker(A_2(0))=\ker(A_4^*(0))=\spann\{V(\k_1),V(\k_2)\}$.
\\
  Moreover, if $F\in L^2(\R;e^{2\a X}dX)$ and
  \begin{equation}
    \label{eq:orthF0}
    \int_\R FV(\k_j)\,dX=0\quad\text{for $j=1$, $2$,}
  \end{equation}
then $A_4(0)U=F$ has a solution satisfying
\begin{equation}
  \label{eq:U-Fest0}
\sum_{0\le i\le2}\|\pd_X^iU(\cdot,Y)\|_{L^2(\R;e^{2\a X}dX)}\le C
 \|F\|_{L^2(\R;e^{2\a X}dX)}\,,
    \end{equation}
where $C$ is a constant that is independent of $Y$ and $F$.
\end{lemma}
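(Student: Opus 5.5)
We treat $Y$ as a fixed parameter and regard $A_2(0)$, $A_4(0)$ as ordinary differential operators in $X$. Since $\lambda=0$, both are third order once multiplied by $\pd_X$: $\pd_XA_2(0)$ and $\pd_XA_4(0)$ are third-order ODE operators. Their coefficients $v_\mY$ and $u_\mY$ tend exponentially fast to constants as $X\to\pm\infty$. Indeed $v_\mY=\pd_X\log\tau_\mY\to\k_1$ as $X\to-\infty$ and $v_\mY\to\k_3$ as $X\to\infty$, while $u_\mY\to0$ at both ends.

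\emph{Step 1: the kernels.} By Lemma~\ref{lem:kerA-solU}, $A_2(\lambda(\beta))V(\beta)=0$, where $\beta$ ranges over the three roots $\beta_j(0)=\k_j$ of \eqref{eq:beta} with $\lambda=0$. Hence $V(\k_1),V(\k_2),V(\k_3)$ span the solution space of $\pd_XA_2(0)V=0$. One must also check that every solution of $A_2(0)V=0$ in $L^2(e^{2\a X}dX)$ lies in this span; the $\pd_X^{-1}$ ambiguity is absorbed by requiring decay on one side. By \eqref{eq:13}, $V(\k_1)$ and $V(\k_2)$ decay like $e^{(\k_2-\k_1)X}$ as $X\to-\infty$ and like $e^{-(\k_3-\k_2)X}$ (or faster) as $X\to\infty$. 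So they lie in $L^2(e^{2\a X}dX)$ for $|\a|<\min\{\k_2-\k_1,\k_3-\k_2\}$. On the other hand, $V(\k_3)\sim e^{(\k_3-\k_1)X}$ grows as $X\to-\infty$. A combination $c_3V(\k_3)+\dots$ with $c_3\ne0$ is therefore not in the space, which gives $\ker A_2(0)=\spann\{V(\k_1),V(\k_2)\}$.

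For $A_4(0)$ we use $A_4(-\lambda(\beta))V'(\beta)=0$, with $V'(\beta)=\tau_\mY\Phi^{0,*}(-i\beta)$, at the roots $\beta=\k_j$. These give three independent solutions, none of them in the weighted space, by Lemma~\ref{lem:kerA-solU}(ii). A linear combination $\tau_\mY\sum c_je^{-\k_jX-\k_j^2Y}$ behaves like $e^{(\k_1-\k_{j_{\min}})X}$ at $-\infty$ and like $e^{(\k_3-\k_{j_{\max}})X}$ at $+\infty$. Such a combination cannot decay at both ends, so $\ker A_4(0)=\{0\}$. The statements for the adjoints follow from the formal identities $A_2(0)^*=A_4(0)$ and $A_4(0)^*=A_2(0)$, using the duality between $L^2(e^{2\a X})$ and $L^2(e^{-2\a X})$ together with $|\a|$ small.

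\emph{Step 2: solvability and the estimate.} The operator $A_4(0)$ is Fredholm on $L^2(e^{2\a X}dX)$, since the limiting constant-coefficient symbols are nonvanishing on the line $\Im\xi=-\a$; this is where the restriction on $|\a|$ enters. Its range is closed and equals the annihilator of $\ker A_4(0)^*=\spann\{V(\k_1),V(\k_2)\}$, which is exactly \eqref{eq:orthF0}. Concretely, one can build $U$ by variation of constants from the fundamental system $V'(\k_j)$, choosing the integration limits so that the $\k_1$-mode is integrated from $-\infty$ and the $\k_3$-mode from $+\infty$. The orthogonality conditions then cancel the remaining growing contributions.

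The main obstacle is the \emph{uniformity in $Y$} of \eqref{eq:U-Fest0}. As $Y$ varies, the transition points $\theta_i=\theta_j$ shift, so the coefficients are translates of profiles that split apart for $|Y|\gg1$. We handle this by noting that $e^{\k_j^2Y}$ factors renormalize the fundamental solutions, so the Green kernel is bounded by $Ce^{-\epsilon|X-X'|}$ with $C$ independent of $Y$. Equivalently, one can use a cut-off/compactness argument: in each region, where $\tau_\mY$ is dominated by one or two exponentials, compare with the explicitly invertible operators built from $1$-solitons or constants, as in the proof of Lemma~\ref{lem:spec1}. The derivative bounds for $i=1,2$ then follow from the equation itself by elliptic-type estimates.
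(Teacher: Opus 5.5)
You should treat Step 2's uniformity in $Y$ as the real gap; Step 1 also contains false claims, though its conclusions are right. Your fixed-$Y$ Fredholm framework is a legitimate alternative to the paper's explicit construction. The limiting characteristic exponents of $A_4(0)$ are $\kappa_1-\kappa_2,\kappa_1-\kappa_3$ at $-\infty$ and $\kappa_3-\kappa_1,\kappa_3-\kappa_2$ at $+\infty$. Hence $A_4(0)$ is Fredholm of index $-2$ on $L^2(\R;e^{2\alpha X}dX)$, its range is the annihilator of $\spann\{V(\kappa_1),V(\kappa_2)\}$, and the derivative bounds follow from the equation.

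\textbf{Step 1.} Your kernel argument for $A_2(0)$ rests on false statements.
\begin{itemize}
\item At $\lambda=0$ the $\lambda\pd_X^{-1}$ term is absent, so $A_2(0)$ is a second-order ODE operator. Moreover $V(\kappa_1),V(\kappa_2),V(\kappa_3)$ are not independent: $V(\kappa_1)+V(\kappa_2)+V(\kappa_3)=0$ (Remark~\ref{rem:A40-est}).
\item $V(\kappa_3)$ does not grow at $-\infty$. By \eqref{eq:13} it is $O(e^{\theta_3-\theta_1})=O(e^{(\kappa_3-\kappa_1)X})$ there, which decays, and it lies in $L^2(\R;e^{2\alpha X}dX)$ (Lemma~\ref{lem:kerA-solU}(iii)).
\item The conclusion $\ker A_2(0)=\spann\{V(\kappa_1),V(\kappa_2)\}$ is correct. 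The reason is that $\{V(\kappa_1),V(\kappa_2)\}$ is a fundamental system of this second-order equation and both members decay at both ends.
\item Likewise, the $V'(\kappa_j)$ cannot be three independent solutions of the second-order equation $A_4(0)U=0$. They span $\ker\pd_XA_4(0)$; in fact $A_4(0)V'(\kappa_j)$ is a nonzero constant. Your conclusion $\ker A_4(0)=\{0\}$ still holds, since $\ker A_4(0)\subset\ker\pd_XA_4(0)$.
\end{itemize}

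\textbf{Step 2.} The $Y$-uniform estimate \eqref{eq:U-Fest0}, which you rightly call the main obstacle, is not proved. Fredholm theory only gives a $Y$-dependent constant.

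There is no Green kernel to bound. $A_4(0)$ has a two-dimensional cokernel, and every nonzero solution of $A_4(0)U=0$ grows at both ends. So no single variation-of-constants formula has a kernel bounded by $Ce^{-\epsilon|X-X'|}$, and a ``$\kappa_1$-mode from $-\infty$, $\kappa_3$-mode from $+\infty$'' prescription does not fit this structure. For example, take $k_j^*=V(\kappa_j)$ and $k_1\propto\tau_{\mY}\{(\kappa_1-\kappa_2)e^{-\theta_3}+(\kappa_1-\kappa_3)e^{-\theta_2}\}$. The formula $U_1=k_1\int_X^\infty k_2^*F-k_2\int_X^\infty k_1^*F$ has kernel $k_1(X)k_2^*(X')$. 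For $X<X'$ in $E_3=\{\theta_1\ge\max(\theta_2,\theta_3)\}$ this kernel is at least of size $e^{(\kappa_2-\kappa_1)(X'-X)}$.

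The missing idea is the paper's. Under \eqref{eq:orthF0}, the three formulas coincide: $U_1$ (both integrals from $+\infty$), $U_2$ (mixed) and $U_3$ (both from $-\infty$). One then estimates
\begin{itemize}
\item $U_1$ on $E_1=\{\theta_3\ge\max(\theta_1,\theta_2)\}$,
\item $U_2$ on $E_2=\{\theta_2\ge\max(\theta_1,\theta_3)\}$,
\item $U_3$ on $E_3$.
\end{itemize}
On each region every kernel is bounded by a ratio such as $e^{(\theta_3-\theta_1)-(\theta_3'-\theta_1')}=e^{(\kappa_3-\kappa_1)(X-X')}$ for $X'>X$. In these ratios $Y$ cancels, even though the regions themselves move with $Y$.

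Your alternative compactness argument ``as in Lemma~\ref{lem:spec1}'' fails as stated. The one-soliton comparison operators are not invertible on $L^2(\R;e^{2\alpha X}dX)$: the $[1,2]$- and $[2,3]$-type limits arising as $Y\to-\infty$ have index $-1$, and the $[1,3]$-type limit as $Y\to+\infty$ has index $-2$. The argument could be repaired. Since $\ker A_4(0)=\{0\}$, the estimate is just a uniform lower bound $\|A_4(0)U\|\ge c\|U\|$, which needs only injectivity and closed range of all limiting operators. However, that localization argument, with translations following the moving transition layers, still has to be carried out.
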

\begin{remark}
  \label{rem:A40-est}
By \eqref{eq:4},  we have $V(\k_1)+V(\k_2)+V(\k_3)=0$.
\end{remark}
\begin{proof}[Proof of Lemma~\ref{lem:A40-est}]
First, we remark that $A_4(0)$ is an ordinary differential
operator of the second order.
Since $\lambda(\k_1)=\lambda(\k_2)=0$ and $A_2(0)=A_4(0)^*$ (formally),
it follows from Lemma~\ref{lem:kerA-solU} that
$V'(\k_1)$ and $V'(\k_2)$ are linearly independent solutions of $A_4(0)U=0$
and that $\ker(A_4(0))=\{0\}=\ker(A_2(0)^*)=\{0\}$.
\par
Let $k_j^*(X,Y)=V(\k_j)(X,Y)=\Phi^1_j\Phi^{1,*}_j$.
By Lemma~\ref{lem:kerA-solU},
we have $k_j^*(\cdot,Y)\in L^2(\R;e^{\pm2\a X}dX)$ and
$\ker(A_2(0))=\ker(A_4(0)^*)=\spann\{k_1^*,\,k_2^*\}$
and $\ker(A_4(0)^*)=\spann\{k_1^*,\,k_2^*\}$ since $A_2(0)=A_4(0)^*$ formally.
We remark that $\sum_{1\le j\le 3}k_j^*=0$ and that
$\operatorname{Wr}(k_1^*,k_2^*)=\operatorname{Wr}(k_2^*,k_3^*)
=\operatorname{Wr}(k_3^*,k_1^*)$ thanks to \eqref{eq:4}.
\par
Let
\begin{align*}
k_j(X,Y)=&-\frac{k_j^*(X,Y)}{4\operatorname{Wr}(k_1^*,k_2^*)(X,Y)}
=-\frac14 
\frac{e^{-(\theta_1+\theta_2+\theta_3)}\tau_{\mY}^2\Phi^1_j}{(\k_1-\k_2)(\k_2-\k_3)(\k_3-\k_1)}\,,
\end{align*}
where $\operatorname{Wr}(k_1^*,k_2^*)=k_1^*\pd_Xk_2^*-k_2^*\pd_Xk_1^*$.
Note that
\begin{equation*}
\frac{d}{dX}\operatorname{Wr}(k_1^*,k_2^*)
=(b_2-3v_\mY)\operatorname{Wr}(k_1^*,k_2^*)\,.  
\end{equation*}
Since $A_4^*(0)k_1^*=A_4^*(0)k_2^*=A_4^*(0)k_3^*=0$,
we formally have $A_4(0)k_1=A_4(0)k_2=A_4(0)k_3=0$.
\par

By the variation of constants formula,
\begin{align*}
U_1(X,Y)=& k_1(X,Y)\int^\infty_X k_2^*(X',Y)F(X',Y)\,dX'
\\ & -k_2(X,Y)\int^\infty_X k_1^*(X',Y)F(X',Y)\,dX'\,,
\end{align*}
\begin{align*}
U_2(X,Y)=& k_1(X,Y)\int_{-\infty}^X k_3^*(X',Y)F(X',Y)\,dX'
\\ & +k_3(X,Y)\int^\infty_X k_1^*(X',Y)F(X',Y)\,dX'\,,
\end{align*}
\begin{align*}
U_3(X,Y) =& k_3(X,Y)\int_{-\infty}^X k_2^*(X',Y)F(X',Y)\,dX'
\\ & -k_2(X,Y)\int_{-\infty}^X k_3^*(X',Y)F(X',Y)\,dX'
\end{align*}
are solutions of  $A_4(0)U(\cdot,Y)=F(\cdot,Y)$.
Indeed,
\begin{align}
\label{eq:k1*-est}
k_1^*(X',Y)=& O\left(\min\left(\frac{e^{\theta_1'}}{\tau_{\mY}'},
\frac{e^{\theta_2'}+e^{\theta_3'}}{\tau_{\mY}'}\right)\right)
  \\=& O\left(\min(e^{\theta_1'-\theta_2'},e^{\theta_1'-\theta_3'},
e^{\theta_2'-\theta_1'}+e^{\theta_3'-\theta_1'}) \right)\,,
\notag
\end{align}
\begin{align}
\label{eq:k2*-est}  
  k_2^*(X',Y)=& O\left(\min\left(\frac{e^{\theta_2'}}{\tau_{\mY}'},
\frac{e^{\theta_1'}+e^{\theta_3'}}{\tau_{\mY}'}\right)\right)
  \\=& O\left(\min(e^{\theta_2'-\theta_1'},e^{\theta_2'-\theta_3'},
e^{\theta_1'-\theta_2'}+e^{\theta_3'-\theta_2'} )\right)\,, \notag
\end{align}
\begin{align}
\label{eq:k3*-est}
k_3^*(X',Y)=& O\left(\min\left(\frac{e^{\theta_3'}}{\tau_{\mY}'},
\frac{e^{\theta_1'}+e^{\theta_2'}}{\tau_{\mY}'}\right)\right)
  \\=& O\left(\min(e^{\theta_3'-\theta_1'},e^{\theta_3'-\theta_2'},
e^{\theta_1'-\theta_3'}+e^{\theta_2'-\theta_3'}) \right)\,,\notag
\end{align}
where $\theta_j'=\k_jx'+\k_j^2y-\k_j^3t$, $X'=x'-b_1t$
and $z_{ij}'=X'+2a_{ij}Y$, and $U_1$, $U_2$ and $U_3$ are finite
provided $F(\cdot,Y)\in L^2(\R;e^{2\a X}dX)$.
\par
Since $k_1^*+k_2^*+k_3^*=0$, $k_1+k_2+k_3=0$ and
\begin{equation*}
\int_{-\infty}^X k_j^*(X',Y)F(X',Y)\,dX'=-\int^\infty_X k_j^*(X',Y)F(X',Y)\,dX'
\end{equation*}
for $F$ satisfying \eqref{eq:orthF0}, we have
$U_1=U_2=U_3$.

\par
First, we will estimate $U_1(X,Y)$ on
$E_1=\{(X,Y)\mid \theta_3\ge \max(\theta_1,\theta_2)\}$.
If $(X,Y)\in E_1$,
\begin{align*}
& k_1=O\left(\tau_{\mY}(e^{-\theta_2}+e^{-\theta_3})\right)
 =O(e^{\theta_3-\theta_2})\,,
\quad
k_2=O\left(\tau_{\mY}(e^{-\theta_1}+e^{-\theta_3})\right)
     =O(e^{\theta_3-\theta_1})\,.
\end{align*}
Since $k_1^*= O(e^{\theta_1'-\theta_3'})$ and $k_2^*= O(e^{\theta_2'-\theta_3'})$
by \eqref{eq:k1*-est} and \eqref{eq:k2*-est},
\begin{align*}
\left|k_2(X,Y)k_1^*(X',Y)\right| \lesssim &
e^{\theta_3-\theta_1-(\theta_3'-\theta_1')}
 \le   C_3e^{2\sqrt{c_{13}}(X-X')}\,,
\\
\left|k_1(X,Y)k_2^*(X',Y)\right| \lesssim &
e^{\theta_3-\theta_2-(\theta_3'-\theta_2')}\le  C_3e^{2\sqrt{c_{23}}(X-X')}\,,
\end{align*}
and
\begin{equation*}
  \|U_1(\cdot,Y)\|_{L^2(E_{1Y};e^{2\a X}dX)}\le C_4
  \|F(\cdot,Y)\|_{L^2(\R;e^{2\a X}dX)}\,,
\end{equation*}
where $E_{1Y}=\{X\mid (X,Y)\in E_1\}$ and
$C_3$ and $C_4$ are constants that do not depend on $X$, $Y$ and $F$.
\par
Next, we will estimate $U_2(X,Y)$ on
$E_2=\{(X,Y)\mid \theta_2\ge \max(\theta_1,\theta_3)\}$.
Suppose that $(X,Y)\in E_2$. Then
\begin{equation}
  \label{eq:k1,k3-est2}
k_1(X,Y)=O(e^{\theta_2-\theta_3})\,,\quad k_3(X,Y)=O(e^{\theta_2-\theta_1})\,.
\end{equation}
Since $k_1^*(X',Y)=O(e^{\theta_1'-\theta_2'})$ and $k_3^*(X',Y)=O(e^{\theta_3'-\theta_2'})$
by \eqref{eq:k1*-est} and \eqref{eq:k3*-est},
\begin{align*}
  |k_3(X,Y)k_1^*(X',Y)|=&O(e^{(\theta_2-\theta_1)-(\theta_2'-\theta_1')})
 \le   C_3'e^{2\sqrt{c_{12}}(X-X')}\,,
\\
  |k_1(X,Y)k_3^*(X',Y)|=&O(e^{(\theta_2-\theta_3)-(\theta_2'-\theta_3')})
  \le   C_3'e^{-2\sqrt{c_{23}}(X-X')}\,,
\end{align*}
and
\begin{equation*}
  \|U_2(\cdot,Y)\|_{L^2(E_{2Y};e^{2\a X}dX)}\le C_4'
  \|F(\cdot,Y)\|_{L^2(\R;e^{2\a X}dX)}\,,
\end{equation*}
where $E_{2Y}=\{X\mid (X,Y)\in E_2\}$ and $C_3'$ and $C_4'$ are constants
that do not depend on $X$, $Y$ and $F$.
\par
Finally, we will estimate $U_3(X,Y)$ on
$E_3=\{(X,Y)\mid \theta_1\ge\max(\theta_2,\theta_3)\}$.
Suppose that $(X,Y)\in E_3$. Then
\begin{gather}
  \label{eq:k2,k3-est2}
  k_2(X,Y)=O(e^{\theta_1-\theta_3})\,,\quad
  k_3(X,Y)=O(e^{\theta_1-\theta_2})\,.
\end{gather}
Since $k_2^*(X',Y)=O(e^{\theta_2'-\theta_1'})$ and
$k_3^*(X',Y)=O(e^{\theta_3'-\theta_1'})$ by \eqref{eq:k2*-est} and \eqref{eq:k3*-est},
we have
\begin{gather*}
\left|k_2(X,Y)k_3^*(X',Y)\right| \lesssim 
e^{(\theta_1-\theta_3)-(\theta_1'-\theta_3')}
 \le   C_3''e^{2\sqrt{c_{13}}(X'-X)}\,,
\\
\left|k_3(X,Y)k_2^*(X',Y)\right| \lesssim 
e^{(\theta_1-\theta_2)-(\theta_1'-\theta_2')}
\le  C_3''e^{2\sqrt{c_{12}}(X'-X)}\,,
\end{gather*}
and
\begin{equation*}
  \|U_3(\cdot,Y)\|_{L^2(E_{3Y};e^{2\a X}dX)}\le C_4''
  \|F(\cdot,Y)\|_{L^2(\R;e^{2\a X}dX)}\,,
\end{equation*}
where $E_{3Y}=\{X\mid (X,Y)\in E_3\}$ and $C_3''$ and $C_4''$ are constants
that do not depend on $X$, $Y$ and $F$.
Since $U_1=U_2=U_3$ and $\R=\cup_{1\le j\le 3}E_{jY}$ for every $Y\in \R$,
we have \eqref{eq:U-Fest0} with $i=0$. We can estimate $\pd_XU_j(X,Y)$
in the same way. Combining these with the fact that $U_j$ are solutions of
$A_4(0)U_j=F$, we have \eqref{eq:U-Fest0}.
Thus we complete the proof.
\end{proof}

Next, we will investigate the correspondence between
$\mathbf{U_1}$ and $\mathbf{U_2}$ satisfying \eqref{eq:calA-B}
when $\lambda=0$.
Let $V_1=V(\k_1)$, $V_3=V(\k_3)$ and
\begin{gather*}
\mathbf{W_1}(0)=\left(\pd_XV_1,\pd_YV_1\right)^T\,,\quad
\mathbf{W_1^*}(0)=
\left(-\frac{4}{3}b_2V_1+\pd_X^{-1}\pd_YV_1,V_1\right)^T\,,
\\
\mathbf{W_2}(0)=\left(\pd_XV_3,\pd_YV_3\right)^T\,,\quad
\mathbf{W_2^*}(0)
=\left(-\frac{4}{3}b_2V_3+\pd_X^{-1}\pd_YV_3,V_3\right)^T\,.
\end{gather*}

\begin{lemma}
\label{lem:calA20-est}
Assume that $0<|\a|<\min\{\k_2-\k_1,\k_3-\k_2\}$. Let $\mathcal{A}_1(0)$,
$\mathcal{A}_2(0)$, $\mathcal{B}_1$ and $\mathcal{B}_2$ be closed operators on
$L^2(\R;e^{2\a X}dX)$.
Then $\ker(\mathcal{A}_2(0))=\ker(\mathcal{B}_1)=\ker(\mathcal{B}_2)=\{0\}$ and
$\ker(\mathcal{A}_1(0))=\spann\{\pd_X^{-1}\mathbf{W_1}(0), \pd_X^{-1}\mathbf{W_2}(0)\}$.
\par
Moreover, if $\la \mathbf{U_2},\mathbf{W_1^*(0)}\ra=\la \mathbf{U_2},\mathbf{W_2^*(0)}\ra=0$,
there exists a unique solution $\mathbf{V}$ of
$\mathcal{A}_2(0)\mathbf{V}=\mathcal{B}_2\mathbf{U_2}$ satisfying
\begin{equation}
\label{eq:calA12-est}
  \|\mathbf{V}(\cdot,Y)\|_{(H^2\times H^1)(\R;e^{2\a X}dX)}
  \le C\|\mathbf{U_2}(\cdot,Y)\|_{(H^1\times L^2)(\R;e^{2\a X}dX)}\,,
\end{equation}
where $C$ is a constant that does not depend on $Y$ and $\mathbf{U_2}$.
\end{lemma}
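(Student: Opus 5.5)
The proof reduces the $2\times2$ systems to the scalar operators $A_2(0)$, $A_4(0)$ treated in Lemma~\ref{lem:A40-est}. Writing $\mathbf{V}=(V_1,V_2)^T$, the system $\mathcal{A}_2(0)\mathbf{V}=\mathcal{B}_2\mathbf{U_2}$ reads
\begin{equation*}
A_3V_1+V_2=U_{21}\,,\qquad A_4(0)V_1=(-3A_3-4b_2)U_{21}+3U_{22}\,.
\end{equation*}
The second equation determines $V_1$ and the first then gives $V_2=U_{21}-A_3V_1$. In the same way, $\mathcal{A}_1(0)\mathbf{V}=0$ means $V_2=-A_1V_1$ and $A_2(0)V_1=0$.

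\textbf{Kernels.} For $\mathcal{B}_1$ and $\mathcal{B}_2$ the matrices are lower triangular with invertible diagonal $(I,3I)$, so their kernels are trivial. Since $\ker A_4(0)=\{0\}$ by Lemma~\ref{lem:A40-est}, we get $\ker\mathcal{A}_2(0)=\{0\}$. For $\mathcal{A}_1(0)$, Lemma~\ref{lem:A40-est} gives $\ker A_2(0)=\spann\{V(\k_1),V(\k_2)\}$, which equals $\spann\{V(\k_1),V(\k_3)\}$ by Remark~\ref{rem:A40-est}. It remains to check that $(V(\k_j),-A_1V(\k_j))^T=(V(\k_j),\pd_X^{-1}\pd_YV(\k_j))^T$, i.e.\ that $-A_1V(\k_j)=\pd_X^{-1}\pd_Y V(\k_j)$, so that the kernel vectors equal $\pd_X^{-1}\mathbf{W_1}(0)$ and $\pd_X^{-1}\mathbf{W_2}(0)$. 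Since $V(\k_j)=\Phi^1_j/\tau_\mY$, this identity follows from $L\Phi^1_j=0$ and $L^*(1/\tau_\mY)=0$, combined with $v_\mY=\pd_X\log\tau_\mY$. Concretely, $\pd_Y(\Phi\Phi^*)=\pd_X(\Phi_X\Phi^*-\Phi\Phi^*_X)$, and with $\Phi^*=1/\tau$ the right-hand side is $\pd_X\{(\pd_X+v_\mY)V\cdot\ldots\}$. A short computation gives $\pd_X^{-1}\pd_YV=(\pd_X+2v_\mY)V=-A_1V$, and I expect this computation to work out.

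\textbf{Solvability and estimate.} Let $F:=(-3A_3-4b_2)U_{21}+3U_{22}$. To invert $A_4(0)$ via Lemma~\ref{lem:A40-est}, we need $\int_\R F\,V(\k_j)\,dX=0$ for $j=1,2$, equivalently for $j=1,3$. I would integrate by parts, using $A_3^*=-\pd_X-2v_\mY=A_1$ formally:
\begin{equation*}
\int F V_j\,dX=\int U_{21}(-3A_1-4b_2)V_j+3U_{22}V_j\,dX
=\int U_{21}\bigl(3\pd_X^{-1}\pd_YV_j-4b_2V_j\bigr)+3U_{22}V_j\,dX\,,
\end{equation*}
where the identity $-A_1V_j=\pd_X^{-1}\pd_YV_j$ from the previous step was used. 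The last expression is $3\la\mathbf{U_2},\mathbf{W^*_j}(0)\ra$, which vanishes by hypothesis. Hence Lemma~\ref{lem:A40-est} yields $V_1$ with $\|V_1\|_{H^2(e^{2\a X}dX)}\le C\|F\|_{L^2}\le C'(\|U_{21}\|_{H^1}+\|U_{22}\|_{L^2})$. Here we use that $v_\mY$ and its derivatives are bounded uniformly in $Y$. Then $V_2=U_{21}-A_3V_1\in H^1$ with the same bound, which gives \eqref{eq:calA12-est}, and uniqueness follows from $\ker\mathcal{A}_2(0)=\{0\}$.

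\textbf{Main obstacle.} The delicate points are justifying the integrations by parts in the weighted space and making sense of $\pd_X^{-1}\pd_YV_j$. For the first, $V(\k_j)$ decays exponentially in both directions with rates exceeding $|\a|$, by \eqref{eq:13} and the assumption $|\a|<\min\{\k_2-\k_1,\k_3-\k_2\}$, so the pairing of $e^{2\a X}$-weighted functions with the $V_j$ is well defined and boundary terms vanish. For the second, it is defined via the identity $(\pd_X+2v_\mY)V_j$, which also decays.
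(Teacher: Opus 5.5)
Your proposal is correct and is essentially the paper's proof. You reduce $\mathcal{A}_1(0)$ and $\mathcal{A}_2(0)$ to the scalar operators $A_2(0)$ and $A_4(0)$ of Lemma~\ref{lem:A40-est}, use $(A_1+\pd_X^{-1}\pd_Y)V(\k_j)=0$ to identify the kernel and to recognize the solvability condition as $\mathcal{B}_2^*(0,V(\k_j))^T=3\mathbf{W^*}$, and let $V(\k_2)=-V(\k_1)-V(\k_3)$ supply the remaining orthogonality. The only difference is that the paper quotes that identity as $\nabla M_-(v_\mY)V(\k_j)=0$ from Lemma~\ref{cl:kerM-P}, whereas you derive it from $L_\mY\Phi^1_j=0$ and $L_\mY^*\tau_\mY^{-1}=0$; your computation does close, since $\Phi_X\Phi^*-\Phi\Phi^*_X=(\pd_X+2v_\mY)V$ when $\Phi^*=1/\tau_\mY$ and $V=\Phi/\tau_\mY$.
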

 \begin{lemma}
\label{lem:kermap0U1-U2}
Let $\a$, $\mathcal{A}_1(0)$, $\mathcal{A}_2(0)$, $\mathcal{B}_1$ and
$\mathcal{B}_2$ be  as in Lemma~\ref{lem:calA20-est}. Suppose that
$\mathcal{A}_1(0)\mathbf{V}=\mathcal{B}_1\mathbf{U_1}$,
$\mathcal{A}_2(0)\mathbf{V}=\mathcal{B}_2\mathbf{U_2}$.
Then $\mathbf{U_1}=0$ if $\mathbf{U_2}=0$ and
$\mathbf{U_2}\in \spann\{\mathbf{W_1}(0),\mathbf{W_2}(0)\}$ if $\mathbf{U_1}=0$.
\end{lemma}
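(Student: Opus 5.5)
The plan is to exploit the block structure of the two Darboux systems at $\lambda=0$,
\begin{equation*}
\mathcal{A}_1(0)\mathbf{V}=\mathcal{B}_1\mathbf{U_1}\,,\quad
\mathcal{A}_2(0)\mathbf{V}=\mathcal{B}_2\mathbf{U_2}\,,
\end{equation*}
together with the kernel description in Lemma~\ref{lem:calA20-est}. The second row of $\mathcal{A}_i(0)\mathbf{V}$ involves only $A_2(0)V_1$ or $A_4(0)V_1$, and the first row gives $V_2$ in terms of $V_1$ and $U_{i1}$. Hence each direction of the statement reduces to a statement about the kernels of $A_2(0)$, $A_4(0)$ and of the triangular, invertible operators $\mathcal{B}_1$, $\mathcal{B}_2$.

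\emph{Case $\mathbf{U_2}=0$.} Then $\mathcal{A}_2(0)\mathbf{V}=0$, and since $\ker(\mathcal{A}_2(0))=\{0\}$ by Lemma~\ref{lem:calA20-est}, we get $\mathbf{V}=0$. Concretely, the second row gives $A_4(0)V_1=0$, so $V_1=0$ by Lemma~\ref{lem:A40-est}, and the first row then gives $V_2=0$. Consequently $\mathcal{B}_1\mathbf{U_1}=\mathcal{A}_1(0)\mathbf{V}=0$. Because $\mathcal{B}_1$ is lower triangular with invertible diagonal blocks $I$ and $3I$, $\ker\mathcal{B}_1=\{0\}$, and therefore $\mathbf{U_1}=0$.

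\emph{Case $\mathbf{U_1}=0$.} Now $\mathcal{A}_1(0)\mathbf{V}=0$, so Lemma~\ref{lem:calA20-est} gives $\mathbf{V}=c_1\pd_X^{-1}\mathbf{W_1}(0)+c_2\pd_X^{-1}\mathbf{W_2}(0)$. That is, $V_1=c_1V(\k_1)+c_2V(\k_3)$ and $V_2=\pd_X^{-1}\pd_YV_1$, with coefficients $c_j$ that may a priori depend on $Y$. Next I would use \eqref{eq:calA-B-0}, which states $\mathbf{U_2}-\mathbf{U_1}=2\pd_X\mathbf{V}$ componentwise. This gives $\mathbf{U_2}=2\pd_X\mathbf{V}$, whose components are $2\pd_XV_1$ and $2\pd_XV_2=2\pd_YV_1$ when the $c_j$ are constant, so $\mathbf{U_2}=2c_1\mathbf{W_1}(0)+2c_2\mathbf{W_2}(0)$.

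The main obstacle is justifying that $c_1,c_2$ are independent of $Y$, i.e. making sure that the relation $V_2=\pd_X^{-1}\pd_YV_1$ is compatible with the kernel elements. I would argue via the compatibility condition from Lemma~\ref{lem:compatibilityU1U2}. By \eqref{eq:calA-B-0'} with $\mathbf{U_1}=0$,
\begin{equation*}
\pd_X(\pd_YV_1-\pd_XV_2)=0\,,
\end{equation*}
and decay in the weighted space forces $\pd_YV_1=\pd_XV_2$. Since $V(\k_j)$ themselves satisfy this (by construction of $\mathbf{W_j}(0)$), substitution yields $c_1'V(\k_1)+c_2'V(\k_3)=0$. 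Linear independence of $V(\k_1)$ and $V(\k_3)$ (Remark~\ref{rem:A40-est}, with $V(\k_2)=-V(\k_1)-V(\k_3)$, and their distinct asymptotics \eqref{eq:13}) then gives $c_1'=c_2'=0$. If the hypothesis $\pd_YU_{21}=\pd_XU_{22}$ is not directly available, I would instead read it off from the definitions $U_{i2}=\pd_X^{-1}\pd_YU_i$ and $V_2=\pd_X^{-1}\pd_YV$ used in \eqref{eq:calA-B}, which build the compatibility in. This concludes $\mathbf{U_2}\in\spann\{\mathbf{W_1}(0),\mathbf{W_2}(0)\}$.
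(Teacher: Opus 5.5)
Your argument is correct and is essentially the paper's. For the first claim the paper also uses $\ker\mathcal{A}_2(0)=\ker\mathcal{B}_1=\{0\}$; for the second it computes $(\mathcal{A}_2(0)-\mathcal{A}_1(0))\mathbf{V}=2\mathcal{B}_2\pd_X\mathbf{V}$ and invokes $\ker\mathcal{B}_2=\{0\}$, which gives the same relation $\mathbf{U_2}-\mathbf{U_1}=2\pd_X\mathbf{V}$ that you take from \eqref{eq:calA-B-0}.

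One correction: the kernel of $\mathcal{A}_1(0)$ gives $V_2=-A_1V_1=c_1\pd_X^{-1}\pd_YV(\k_1)+c_2\pd_X^{-1}\pd_YV(\k_3)$ by \eqref{eq:A1-Vk}, not $\pd_X^{-1}\pd_YV_1$. Hence $\pd_X\mathbf{V}=c_1\mathbf{W_1}(0)+c_2\mathbf{W_2}(0)$ holds at each fixed $Y$ with no constancy needed, and your compatibility argument, which the paper omits, is only required to make $c_1,c_2$ independent of $Y$, as is implicitly used later in Proposition~\ref{prop:noev}.
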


\begin{proof}[Proof of Lemma~\ref{lem:calA20-est}]
By the definitions, it is clear that
$\ker(\mathcal{B}_1)=\ker(\mathcal{B}_2)=\{0\}$.
If $\mathbf{V}=(V_1,V_2)^T\in\ker(\mathcal{A}_2(0))$,
then $V_2=-A_3V_1$ and $A_4(0)V_1=0$.
Since $\ker(A_4(0))=\{0\}$ by Lemma~\ref{lem:A40-est},
we have $V_1=V_2=0$.
\par
If $\mathbf{V}=(V_1,V_2)^T\in\ker(\mathcal{A}_1(0))$,
then $V_2=-A_1V_1$ and $A_2(0)V_1=0$.
Since $\ker(A_2(0))=\spann\{V(\k_1),V(\k_2)\}$ by Lemma~\ref{lem:A40-est}
and
\begin{equation}
  \label{eq:A1-Vk}
(A_1+\pd_X^{-1}\pd_Y)V(\k_j)=\nabla M_-(v_{\mY})V(\k_j)=0  
\end{equation}
by Lemma~\ref{cl:kerM-P}, we have
$\ker(\mathcal{A}_1(0))
=\spann\{\pd_X^{-1}\mathbf{W_1}(0),\pd_X^{-1}\mathbf{W_2}(0)\}$.
\par
Suppose that $\mathbf{U_2}=(U_{21},U_{22})^T$ and $\mathbf{V}=(V_1,V_2)^T$
satisfy $\mathcal{A}_2(0)\mathbf{V}=\mathcal{B}_2\mathbf{U_2}$
and that $\la\mathbf{U_2},\mathbf{W_j^*}(0)\ra
=0$ for $j=1$, $2$. Then
\begin{equation}
  \label{eq:pfcalA12-est2}
V_2=U_{21}-A_3V_1\,,\quad A_4(0)V_1=-(3A_3+4b_2)U_{21}+3U_{22}\,.  
\end{equation}
For $j=1$ and $3$,  we have $\mathcal{B}_2^*(0, V(\k_j))^T=3\mathbf{W_{(j+1)/2}}^*$ from
\eqref{eq:A1-Vk} and
\begin{align}
    \label{eq:pfcalA12-est3}
\la -(3A_3+4b_2)U_{21}+3U_{22},V(\k_j)\ra=&
3\la \mathbf{U_2}, \mathbf{W_{(j+1)/2}^*}\ra=0\,.
\end{align}
Combining \eqref{eq:pfcalA12-est2} and \eqref{eq:pfcalA12-est3} with
Lemma~\ref{lem:A40-est} and Remark~\ref{rem:A40-est}, we have \eqref{eq:calA12-est}.
Thus we complete the proof.
\end{proof}

\begin{proof}[Proof of Lemma~\ref{lem:kermap0U1-U2}]
Since $\ker(\mathcal{A}_2(0))=\{0\}$
and $\ker(\mathcal{B}_1)=\{0\}$ by Lemma~\ref{lem:calA20-est},
we have $\mathbf{V}=0$ and $\mathbf{U_1}=0$ if $\mathbf{U_2}=0$.
\par
  Suppose that $\mathbf{U_1}=0$. Then
  $\mathcal{A}_1(0)\mathbf{V}=0$ and it follows from
  Lemma~\ref{lem:calA20-est} that there exist $c_1$, $c_2\in\C$
such that
$\mathbf{V}=c_1\pd_X^{-1}\mathbf{W_1}(0)+c_2\pd_X^{-1}\mathbf{W_2}(0)$.
By \eqref{eq:A1-Vk} and the fact that $\mathcal{A}_1(0)\mathbf{V}=0$,
\begin{align*}
  \mathcal{A}_2(0)\mathbf{V}
  =& \sum_{j=1,2}c_j   \begin{pmatrix}
      (A_3-A_1)V(\k_j) \\ (A_4(0)-A_2(0))V(\k_j) \end{pmatrix}
  \\ =&
\sum_{j=1,2} c_j
\begin{pmatrix}
  2\pd_xV(\k_{2j-1}) \\
\{8(3v_{\mY}-b_2)\pd_X+6u_{\mY}\}V(\k_{2j-1})
\end{pmatrix}
\\=& 2\mathcal{B}_2(c_1\mathbf{W_1}+c_2\mathbf{W_2})\,.
\end{align*}
Since $\ker(\mathcal{B}_2)=\{0\}$,
$\mathbf{U_2}=2(c_1\mathbf{W_1}+c_2\mathbf{W_2})$.
This completes the proof of Lemma~\ref{lem:kermap0U1-U2}.
\end{proof}

Next, we will investigate solvability of $A_4(\lambda)U=F$ when $\lambda\ne0$.
\begin{lemma}
  \label{lem:A4-est}
Let $\a\in\R\setminus\{0\}$ and $A_4(\lambda)$ be a closed operator on $L^2(\R;e^{2\a X}dX)$. 
Assume that $\lambda\in\C\setminus\{0,-\lambda(\beta_{*,\pm})\}$ and that
$\Re\beta_j(-\lambda)-\a\ne \k_1$, $\k_2$, $\k_3$ for $j=1$, $2$, $3$.
Then $\ker(A_4(\lambda))=\{0\}$. Moreover, if
  \begin{equation}
\label{eq:secular-A4}
    \int_\R FV(\beta_j(-\lambda))\,dX=0\quad
    \text{when $\beta_j(-\lambda)\in(\k_1+\a,\k_3+\a)$,}
  \end{equation}
then $A_4(\lambda)U=F$ has a unique solution satisfying 
\begin{equation*}
\sum_{0\le i\le2}\|\pd_X^iU(\cdot,Y)\|_{L^2(\R;e^{2\a X}dX)}\le C
 \|F\|_{L^2(\R;e^{2\a X}dX)}\,,
    \end{equation*}
where $C$ is a constant that is independent of $Y$ and $F$.
\end{lemma}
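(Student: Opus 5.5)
Since $A_4(\lambda)$ contains $\lambda\pd_X^{-1}$, I would multiply by $\pd_X$ and treat $A_4(\lambda)U=F$ as the third order ODE $\pd_XA_4(\lambda)U=\pd_XF$ in $X$, with $Y$ a fixed parameter. The first step is to identify three solutions of the homogeneous equation. By Lemma~\ref{lem:kerA-solU}(i), $A_4(-\lambda(\beta))V'(\beta)=0$, so the roots $\beta_j=\beta_j(-\lambda)$ of \eqref{eq:beta} with $\lambda$ replaced by $-\lambda$ give $A_4(\lambda)V'(\beta_j)=0$. Because $\lambda\ne-\lambda(\beta_{*,\pm})$, Lemma~\ref{lem:kerA-solU}(iv) says the three roots are distinct. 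Since $\lambda\ne0$, none of them equals a $\k_i$, so $V'(\beta_1),V'(\beta_2),V'(\beta_3)$ form a fundamental system. The formally adjoint equation $A_2(-\bar\lambda)W=0$ has the dual solutions $V(\beta_j)$. Lemma~\ref{lem:kerA-solU}(ii) shows $V'(\beta_j)\notin L^2(\R;e^{2\a X}dX)$. Each $V'(\beta_j)$ behaves like $e^{(\k_1-\beta_j)X}$ at $-\infty$ and $e^{(\k_3-\beta_j)X}$ at $+\infty$. No linear combination can decay at both ends, provided no exponent sits exactly on the weight threshold, and the assumption $\Re\beta_j-\a\ne\k_i$ excludes exactly that degenerate case. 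This gives $\ker(A_4(\lambda))=\{0\}$.

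For solvability I would write an explicit Green kernel by variation of constants, as in the proof of Lemma~\ref{lem:A40-est}. The solution takes the form $U=\sum_j k_j(X)\int_{\pm\infty}^X k_j^*F\,dX'$, where the $k_j^*$ are proportional to $V(\beta_j)$ and the $k_j$ are $V'(\beta_j)$ divided by a Wronskian-type factor. For each $j$ the lower limit $\pm\infty$ is chosen so that $k_j(X)k_j^*(X')$ decays in $|X-X'|$ on the side $X'$ ranges over. I would first split the roots according to the sign of $\Re\beta_j-\a-\k_3$ and of $\Re\beta_j-\a-\k_1$. For roots with $\beta_j\notin(\k_1+\a,\k_3+\a)$, $V'(\beta_j)$ decays, in the weighted sense, at one end, which fixes the limit. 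Roots with $\Re\beta_j\in(\k_1+\a,\k_3+\a)$ are those for which $V(\beta_j)\in L^2(e^{-2\a X})$, by Lemma~\ref{lem:kerA-solU}(ii) applied with weight $-\a$. Their $V'(\beta_j)$ grows at both ends, so no single limit works. This is exactly why the secular condition \eqref{eq:secular-A4} is needed: it makes $\int_{-\infty}^X=-\int_X^\infty$ for those $j$, so both choices of limit agree, as in the $U_1=U_2=U_3$ argument of Lemma~\ref{lem:A40-est}.

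The estimates would follow that proof. I would decompose $\R_X$ into the regions $E_{iY}$ where $\theta_i$ dominates, and use \eqref{eq:tauXY} and \eqref{eq:6} to get pointwise bounds $|k_j(X)k_j^*(X')|\lesssim e^{-c|X-X'|}$ with $c>0$ uniform in $Y$. Young's inequality then gives the $L^2(e^{2\a X})$ bound. Bounds on $\pd_XU$ come the same way, and $\pd_X^2U$ comes from the equation itself. Uniqueness follows from the trivial kernel.

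The main obstacle is verifying the uniform exponential decay of the kernel in every region and for each root configuration. The exponents are $\Re(\k_i-\beta_j)\pm\a$, which must be nonzero with the correct sign. The weighted-boundary exclusion gives nonvanishing but not uniformity in $Y$, and the transition regions between the $E_i$ need care, since $\tau_\mY$ switches dominant exponentials there. I would handle this by writing $V(\beta_j)=\Phi^1(-i\beta_j)\Phi^{1,*}_1$ explicitly through $\tau_\Phi/\tau_\mY$, bounding ratios of exponentials as in \eqref{eq:k1*-est}--\eqref{eq:k3*-est}, and checking that the $Y$-dependence cancels in each product $k_j(X)k_j^*(X')$.
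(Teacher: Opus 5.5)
Your plan is correct and is essentially the paper's proof. The paper's solution operator has exactly your kernel $a_j(\lambda)V'(\tb_j(\lambda))(X,Y)\,V(\tb_j(\lambda))(X',Y)$, fixes the integration direction of each root by comparing $\Re\tb_j(\lambda)$ with $\k_i+\a$, and uses \eqref{eq:secular-A4} to replace $\int_{-\infty}^X$ by $-\int_X^\infty$ for the intermediate roots region by region (splitting at $z_{13}=0$ for $Y\ge0$, and at $z_{12}=0$ or $z_{23}=0$ for $Y\le0$).

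The only difference is how the kernel is obtained. The paper writes the Green kernel in closed form from the Lax conjugation \eqref{eq:A4-Lax}, namely $K=(\tau_{Y,0}/\tau_{Y,0}')K_0(X-X',\lambda)$ with constant weights $a_j(\lambda)$. So your Wronskian-type factor is an explicit constant, and the $Y$-uniformity you flag reduces to elementary bounds on $\tau_{Y,0}/\tau_{Y,0}'$ such as \eqref{eq:tau/tau'-2}.
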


\begin{lemma}
  \label{lem:A4*-est}
  \begin{itemize}
  \item[\rm{(i)}]
Let $\a\in(\beta_{\#,-}-\k_1,\beta_{*,+}-\k_2)\setminus\{0\}$ and
$A_4(-\lambda(\beta_{*,+}))$ be a closed operator on $L^2(\R;e^{2\a X}dX)$.
Then $\ker(A_4(-\lambda(\beta_{*,+})))=\{0\}$. Moreover, if
  \begin{equation}
\label{eq:ass-orth1}
\int_\R FV(\beta_{*,+})\,dX=\int_\R F\pd_\beta V(\beta_{*,+})\,dX=0\,,
  \end{equation}
then $A_4(-\lambda(\beta_{*,+}))U=F$ has a unique solution satisfying 
\begin{equation*}
\sum_{0\le i\le2}\|\pd_X^iU(\cdot,Y)\|_{L^2(\R;e^{2\a X}dX)}\le C
 \|F\|_{L^2(\R;e^{2\a X}dX)}\,,
    \end{equation*}
where $C$ is a constant that is independent of $Y$ and $F$.
\item[\rm{(ii)}]
Let $\a\in(\beta_{*,-}-\k_2,\beta_{\#,+}-\k_3)\setminus\{0\}$ and 
$A_4(-\lambda(\beta_{*,-}))$ be a closed operator on $L^2(\R;e^{2\a X}dX)$.
Then $\ker(A_4(-\lambda(\beta_{*,-})))=\{0\}$. Moreover, if
  \begin{equation*}
\int_R FV(\beta_{*,-})\,dX=\int_R F\pd_\beta V(\beta_{*,-})\,dX=0\,,
  \end{equation*}
then $A_4(-\lambda(\beta_{*,-}))U=F$ has a unique solution satisfying 
\begin{equation*}
\sum_{0\le i\le2}\|\pd_X^iU(\cdot,Y)\|_{L^2(\R;e^{2\a X}dX)}\le  C
 \|F\|_{L^2(\R;e^{2\a X}dX)}\,,
    \end{equation*}
where $C$ is a constant that is independent of $Y$ and $F$.
  \end{itemize}
\end{lemma}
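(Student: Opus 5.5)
We prove both parts with the variation of constants argument used for Lemma~\ref{lem:A40-est}, now for the degenerate value $\lambda=-\lambda(\beta_{*,\pm})$. We treat (i); (ii) is symmetric under $X\mapsto -X$ together with $\k_1\leftrightarrow\k_3$. Formally $A_4(\lambda)^*=A_2(-\bar\lambda)$, and $A_4(\lambda)$ is, after applying $\pd_X$, a third order ODE in $X$ with $Y$ as a parameter. Hence a fundamental system of $A_4(-\lambda(\beta_{*,+}))$ should be built from the functions $V'(\beta)$ at the roots of \eqref{eq:beta} with $\lambda=\lambda(\beta_{*,+})$. By Lemma~\ref{lem:kerA-solU}(iv) these roots are the double root $\beta_{*,+}$ and the simple root $\beta_{\#,-}$. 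The fundamental system is therefore $V'(\beta_{*,+})$, $\pd_\beta V'(\beta_{*,+})$ and $V'(\beta_{\#,-})$, which solve the homogeneous equation by \eqref{eq:A2b} and \eqref{eq:A2b'}. The adjoint system is $V(\beta_{*,+})$, $\pd_\beta V(\beta_{*,+})$ and $V(\beta_{\#,-})$.

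\textbf{Kernel is trivial.} By Lemma~\ref{lem:kerA-solU}(ii), $V'(\beta)\notin L^2(\R;e^{2\a X}dX)$ for every $\beta$. The same growth computation applies to $\pd_\beta V'(\beta_{*,+})$: as $X\to\pm\infty$ it is $V'(\beta_{*,+})$ multiplied by a polynomial factor in $X$, since $\tau_\mY\Phi^{0,*}$ has exponential rates $\k_1-\beta$ and $\k_3-\beta$ at the two ends. We then check that no nontrivial linear combination lies in the weighted space. This comes down to comparing the rates $\k_1-\beta_{*,+}$ and $\k_1-\beta_{\#,-}$ at $X=-\infty$, and $\k_3-\beta_{*,+}$ and $\k_3-\beta_{\#,-}$ at $X=+\infty$, against $\a$. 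The window $\a\in(\beta_{\#,-}-\k_1,\beta_{*,+}-\k_2)\setminus\{0\}$ combined with Remark~\ref{rem:betas} excludes every combination. The three solutions have distinct asymptotic exponential types or polynomial prefactors, so cancellation between them is impossible. This gives $\ker A_4(-\lambda(\beta_{*,+}))=\{0\}$.

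\textbf{Solvability.} We write $U$ by variation of constants with Green kernels of the form $k(X)k^*(X')$. Here $k$ ranges over the three homogeneous solutions divided by the Wronskian-type factor, as in the definition of $k_j$ in the proof of Lemma~\ref{lem:A40-est}, and $k^*$ ranges over the adjoint solutions. For each term we integrate from $+\infty$ or from $-\infty$ according to which choice makes the kernel decay, so that the kernel is $O(e^{-c|X-X'|})$ in the weighted norm.

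Lemma~\ref{lem:kerA-solU}(v) shows that $V(\beta_{*,+})$ and $\pd_\beta V(\beta_{*,+})$ lie in $L^2(\R;e^{2\a X}dX)$, possibly in the shifted form matching the window, while $V(\beta_{\#,-})$ does not. The two adjoint solutions that are integrable over all of $\R$ are the ones for which the integration direction can be reversed. By \eqref{eq:ass-orth1}, $\int_\R FV(\beta_{*,+})\,dX=\int_\R F\pd_\beta V(\beta_{*,+})\,dX=0$, so the integrals over $(-\infty,X)$ and $(X,\infty)$ agree up to sign. As in the identity $U_1=U_2=U_3$ of Lemma~\ref{lem:A40-est}, this lets us switch representations in each asymptotic region $E_j=\{\theta_j\ge\max_{i\ne j}\theta_i\}$ and obtain exponential kernel bounds uniform in $Y$.

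Schur's test then gives the $L^2(e^{2\a X}dX)$ bound on $U$. The bounds on $\pd_XU$ follow in the same way, and the bound on $\pd_X^2U$ follows from the equation itself. Uniqueness follows from the trivial kernel.

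\textbf{Main obstacle.} The hard step is the uniform-in-$Y$ kernel estimate. The Jordan-block solution $\pd_\beta V'$ contributes factors linear in $X$ and $Y$, for example through $\pd_\beta(\beta X+\beta^2Y)$. In each region $E_j$ these must be absorbed by the strict exponential gaps, and the gaps must be independent of $Y$. We will first try pairing $\pd_\beta V'$ with $V(\beta_{*,+})$ and $V'$ with $\pd_\beta V$, and subtracting the linear-in-$(X-X')$ part, so that the growing prefactor becomes $(X-X')$ times a decaying exponential. That product is harmless.
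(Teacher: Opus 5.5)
Your proposal is correct and follows the paper's route. The paper proves this lemma exactly as it proves Lemma~\ref{lem:A4-est}: variation of constants for the third-order problem, with the two conditions \eqref{eq:ass-orth1} used to reverse the direction of integration for the double mode ($\beta_{*,+}-\a\in(\k_2,\k_3)$ is a ``middle'' mode, while $\beta_{\#,-}<\k_1+\a$ is always integrated over $(X,\infty)$), region-wise exponential kernel bounds uniform in $Y$, and $\pd_X^2U$ recovered from the equation. The one simplification you miss is that \eqref{eq:A4-Lax} conjugates $\pd_XA_4(\lambda)$ by $\tau_{Y,0}$ to a constant-coefficient ODE in $X$, so the Green kernel is $\frac{\tau_{Y,0}}{\tau_{Y,0}'}K_0(X-X',\lambda)$ with $K_0$ containing $(X-X')e^{-\beta_{*,+}(X-X')}$; this makes automatic the cancellation of the $Y$-linear terms that you planned to arrange by hand.
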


\begin{proof}[Proof of Lemmas~\ref{lem:A4-est} and \ref{lem:A4*-est}]
Let $\tb_j(\lambda)=\beta_j(-\lambda)$.
Since $\pd_XA_4(\lambda)$ is an ordinary differential operator of the third order,
it follows from Lemma~\ref{lem:kerA-solU} that $\{V'(\tb_j(\lambda))\}_{1\le j\le 3}$
is a fundamental system of $\pd_XA_4(\lambda)u=0$.
We have $\ker(A_4(\lambda))=\{0\}$ and $\ker(\mathcal{A}_2(\lambda))=\{0\}$
for every $\lambda\in\C$ since any linear combinations of $V'(\beta_j(-\lambda))$
do not belong to $L^2(\R;e^{2\a X}dX)$.
\par

Let us construct a fundamental solution of $A_4(\lambda)$. Let
\begin{gather*}
a_j(\lambda)=-\frac14\prod_{1\le k\le 3,\, k\ne j}
\{\tb_k(\lambda)-\tb_j(\lambda)\}^{-1}\,,
\\   
K_0(x,\lambda)=
\begin{cases}
\sum_{\Re\tb_j(\lambda)>\k_1+\a}a_j(\lambda)e^{-\tb_j(\lambda)x}
  &\quad\text{if $x>0$,}  
\\
-\sum_{\Re\tb_j(\lambda)<\k_1+\a}a_j(\lambda)e^{-\tb_j(\lambda)x}
 & \quad\text{if $x<0$.}
\end{cases}
\end{gather*}
Then $K_0(\cdot,\lambda)$ is $C^1(\R)\cap C^\infty(\R\setminus\{0\})$ and
\begin{gather}
\notag
\pd_x^2K_0(+0,\lambda)-\pd_x^2K_0(-0,\lambda)=-\frac14\,,
\\ \label{eq:K0delta}
4(-\pd_x^3-b_2\pd_x^2+b_1\pd_x-\k_1\k_2\k_3-\lambda) K_0(x,\lambda)=\delta\,.
\end{gather}
Let $X=x-b_1t$, $X'=x'-b_1t$, $Y=y-b_2t$ and
\begin{equation*}
\tau_{Y,0}=e^{\k_1\k_2\k_3t}\tau_{\mY}=\sum_{1\le i \le 3}e^{\k_iX+\k_i^2Y}\,,\quad
\tau_{Y,0}'=e^{\k_1\k_2\k_3t}\tau_{\mY}'=\sum_{1\le i \le 3}e^{\k_iX'+\k_i^2Y}\,,
\end{equation*}
where $\tau_{\mY}'=\sum_{1\le i \le3}e^{\theta_i'}$. It follows from
\eqref{eq:A4-Lax} and \eqref{eq:K0delta} that
\begin{gather*}
K(X,X',Y,\lambda)=\frac{\tau_{Y,0}}{\tau_{Y,0}'}K_0(X-X',\lambda)
\end{gather*}
is a fundamental solution of $\pd_XA_4(\lambda)$. That is,
\begin{equation}
\label{eq:K-delta}
  A_4(\lambda)\int_\R K(X, X',Y,\lambda)\pd_{X'}f(X')\,dX'=f(X)\,.
\end{equation}
\par
Now we will estimate
\begin{multline*}
U(X,Y)=-\int_\R \pd_{X'}K(X,X',Y,\lambda)f(X')\,dX'
\\=
\begin{cases}
-\sum_{\Re\tb_j(\lambda)>\k_1+\a}a_j(\lambda)V'(\tb_j(\lambda))(X,Y)V(\tb_j(\lambda))(X',Y)
  &\quad\text{if $X>X'$,}  
\\
\sum_{\Re\tb_j(\lambda)<\k_1+\a}a_j(\lambda)V'(\tb_j(\lambda))(X,Y)V(\tb_j(\lambda))(X',Y)
 & \quad\text{if $X<X'$.}
\end{cases}
\end{multline*}
We remark that by \eqref{eq:prod-PhiPhi*j}, 
\begin{equation}
\label{eq:prod-VV'}
  \pd_{X'}\left(\frac{e^{\tb_j(\lambda)X'+\tb_j(\lambda)^2Y}}{\tau_{Y,0}'}\right)
  =V(\tb_j(\lambda))(X',Y)\,.
\end{equation}
Suppose that $Y\ge0$. Since $\theta_2\le \max(\theta_1,\theta_3)$,
\begin{align*}
0<\frac{\tau_{Y,0}}{\tau_{Y,0}'} \le &
2\frac{e^{\theta_1}+e^{\theta_3}}{e^{\theta_1'}+e^{\theta_3'}}
\\  =&
2e^{a_{13}(X-X')}
\cosh\sqrt{c_{13}}(X+2a_{13}Y)\sech\sqrt{c_{13}}(X'+2a_{13}Y)
\\ \le & 4e^{a_{13}(X-X')+\sqrt{c_{13}}|X-X'|}=
         \begin{cases}
            4e^{\k_3(X-X')} & \text{ if $X\ge X'$,}
\\          4e^{\k_1(X-X')} & \text{ if $X\le X'$.}           
         \end{cases}
\end{align*}
If $\Re\tb_j(\lambda)<\k_1+\a$,
\begin{align*}
& \sup_{Y\ge0} \left\|e^{\a X}\int_X^\infty
\pd_{X'}\left(e^{-\tb_j(\lambda)(X-X')}
  \frac{\tau_{Y,0}}{\tau_{Y,0}'}\right)f(X')\,dX'\right\|_{L^2(\R)}
\\  \lesssim &
\sup_{Y\ge 0}\left\|\int_X^\infty
e^{\{\a+\k_1-\Re\tb_j(\lambda)\}(X-X')}
e^{\a X'}|f(X')| \,dX'\right\|_{L^2(\R)}
\\ \lesssim &
\frac{1}{\a+\k_1-\Re\tb_j(\lambda)}\|e^{\a X}f\|_{L^2(\R)}\,.
\end{align*}
If $\Re\tb_j(\lambda)>\k_3+\a$,
\begin{align*}
& \sup_{Y\ge 0}\left\|e^{\a X} \int^X_{-\infty}\pd_{X'}\left(
e^{\{-\tb_j(\lambda)\}(X-X')}\frac{\tau_{Y,0}}{\tau_{Y,0}'}\right)f(X')\,dX'
\right\|_{L^2(\R)}
  \\ \lesssim &
\sup_{Y\ge 0}\left\|\int^X_{-\infty}
e^{\{\a+\k_3-\Re\tb_j(\lambda)\}(X-X')}e^{\a X'}|f(X')|\,dX'
\right\|_{L^2(\R)}
\\ \lesssim & 
 \frac{1}{\Re\tb_j(\lambda)-\k_3-\a}\|e^{\a X}f\|_{L^2(\R)}\,.
\end{align*}
Next, we consider the case where $\tb_j(\lambda)\in (\k_1+\a,\k_3+\a)$.
If $z_{13}=X+2a_{13}Y$ and $z_{13}'=X'+2a_{13}Y$ have the same sign and $|z_{13}'|\ge |z_{13}|$,
then 
$$\cosh\sqrt{c_{13}}z_{13}\sech\sqrt{c_{13}}z_{13}'\le 2e^{-\sqrt{c_{13}}|z_{13}-z_{13}'|}\,,$$
and 
\begin{equation}
  \label{eq:tau/tau'-2}
  \frac{\tau_{Y,0}}{\tau_{Y,0}'}\le \begin{cases}
  4e^{\k_3(X-X')} & \text{ if $z_{13}'\ge z_{13}\ge0$,}
\\  4e^{\k_1(X-X')} & \text{ if $z_{13}'\le z_{13}\le 0$.}
\end{cases}
\end{equation}
Combining \eqref{eq:prod-VV'} with \eqref{eq:ass-orth1} and \eqref{eq:tau/tau'-2}, we have
\begin{multline*}
\int^{X}_{-\infty}\pd_{X'}\left(e^{-\tb_j(\lambda)(X-X')}
    \frac{\tau_{Y,0}}{\tau_{Y,0}'}\right)f(X')\,dX'
\\  =-\int_X^{\infty}\pd_{X'}\left(e^{-\tb_j(\lambda)(X-X')}
    \frac{\tau_{Y,0}}{\tau_{Y,0}'}\right)f(X')\,dX'\,,
\end{multline*}
and
\begin{align}
  &  \sup_{Y\ge 0}\left\|e^{\a X}\int_X^{\infty}\pd_{X'}\left(
    e^{-\tb_j(\lambda)(X-X')}\frac{\tau_{Y,0}}{\tau_{Y,0}'}\right)
    f(X')\,dX'\right\|_{L^2(\R)}
    \notag
\\ \lesssim & \label{eq:est-compl}
\left(\frac{1}{\Re\tb_j(\lambda)-\k_1-\a}
+\frac{1}{\k_3+\a-\Re\tb_j(\lambda)}\right)\|e^{\a X}f\|_{L^2(\R)}\,.
\end{align}
\par
Next, we consider the case where $Y\le0$.
Suppose that $z_{13}\ge0$. Then $\theta_1\le \theta_3$ and
\begin{align}
0<\frac{\tau_{Y,0}}{\tau_{Y,0}'} \le &
 2\frac{e^{\theta_2}+e^{\theta_3}}{e^{\theta_2'}+e^{\theta_3'}}
\le  4e^{a_{23}(X-X')+\sqrt{c_{23}}|X-X'|}
\notag
  \\= & \label{eq:tau/tau'-23}
\begin{cases}
 4e^{\k_3(X-X')} & \text{ if $X\ge X'$,}
\\ 4e^{\k_2(X-X')} & \text{ if $X\le X'$.}  \end{cases}
\end{align}
If $\Re\tb_j(\lambda)<\k_2+\a$, then it follows from \eqref{eq:tau/tau'-23}
that
\begin{align*}
& \sup_{Y\le 0}\left\|e^{\a X} \int_X^\infty
\pd_{X'}\left(e^{\{-\tb_j(\lambda)\}(X-X')}\frac{\tau_{Y,0}}{\tau_{Y,0}'}\right)
f(X')\,dX'\right\|_{L^2(z_{13}\ge0)}
  \\ \lesssim &
\frac{1}{\k_2+\a-\Re\tb_j(\lambda)}\|e^{\a X}f\|_{L^2(\R)}\,.
\end{align*}
On the other hand, if $\Re\tb_j(\lambda)>\k_3+\a$,
\begin{align*}
& \sup_{Y\le 0}\left\|e^{\a X} \int^X_{-\infty}
\pd_{X'}\left(e^{\{-\tb_j(\lambda)\}(X-X')}\frac{\tau_{Y,0}}{\tau_{Y,0}'}\right)
f(X')\,dX'\right\|_{L^2(z_{13}\ge0)}
  \\ \lesssim &
\frac{1}{\Re\tb_j(\lambda)-\k_3-\a}\|e^{\a X}f\|_{L^2(\R)}\,.
\end{align*}
Now we consider the case where  $\Re\tb_j(\lambda)-\a\in (\k_2,\k_3)$.
Using \eqref{eq:ass-orth1} and the fact that
\begin{equation*}
  \frac{\tau_{Y,0}}{\tau_{Y,0}'}\le \begin{cases}
  4e^{\k_3(X-X')} & \text{ if $z_{23}'\ge z_{23}\ge0$ and $z_{13}\ge0$,}
\\  4e^{\k_2(X-X')} & \text{ if $z_{23}'\le z_{23}\le 0$ and $z_{13}\ge0$,}
\end{cases}
\end{equation*}
we can prove
\begin{align*}
& \sup_{Y\le 0}\left\|e^{\a X} \int^X_{-\infty}
\pd_{X'}\left(e^{\{-\tb_j(\lambda)\}(X-X')}\frac{\tau_{Y,0}}{\tau_{Y,0}'}\right)
f(X')\,dX'\right\|_{L^2(z_{13}\ge0)}
  \\ \lesssim &
\left(\frac{1}{\Re\tb_j(\lambda)-\k_2-\a}+\frac{1}{\k_3+\a-\Re\tb_j(\lambda)}\right)
\|e^{\a X}f\|_{L^2(\R)}
\end{align*}                
in the same way as \eqref{eq:est-compl}.
\par
Suppose that $z_{13}\le0$. Then $\theta_3\le \theta_1$ and
\begin{align}
0<\frac{\tau_{Y,0}}{\tau_{Y,0}'} \le &
2\frac{e^{\theta_1}+e^{\theta_2}}{e^{\theta_1'}+e^{\theta_2'}}
\le  4e^{a_{12}(X-X')+\sqrt{c_{12}}|X-X'|}
\notag
\\  = &
\label{eq:tau/tau'-12}
\begin{cases}
 4e^{\k_2(X-X')} & \text{ if $X\ge X'$,}
 \\ 4e^{\k_1(X-X')} & \text{ if $X\le X'$.}
\end{cases}
\end{align}
By \eqref{eq:tau/tau'-12}, we have for $\Re\tb_j(\lambda)<\k_1+\a$,
\begin{align*}
& \sup_{Y\le 0}\left\|e^{\a X} \int_X^\infty
\pd_{X'}\left(e^{\{-\tb_j(\lambda)\}(X-X')}\frac{\tau_{Y,0}}{\tau_{Y,0}'}\right)
f(X')\,dX'\right\|_{L^2(z_{13}\le0)}
  \\ \lesssim &
\frac{1}{\k_1+\a-\Re\tb_j(\lambda)}\|e^{\a X}f\|_{L^2(\R)}\,,
\end{align*}                
and for $\Re\tb_j(\lambda)>\k_2+\a$,
\begin{align*}
& \sup_{Y\le 0}\left\|e^{\a X} \int^X_{-\infty}
\pd_{X'}\left(e^{\{-\tb_j(\lambda)\}(X-X')}\frac{\tau_{Y,0}}{\tau_{Y,0}'}\right)
f(X')\,dX'\right\|_{L^2(z_{13}\le0)}
  \\ \lesssim &
\frac{1}{\Re\tb_j(\lambda)-\k_2-\a}\|e^{\a X}f\|_{L^2(\R)}\,.
\end{align*}                
If  $\Re\tb_j(\lambda)-\a\in (\k_1,\k_2)$, we can prove
\begin{align*}
& \sup_{Y\le 0}\left\|e^{\a X} \int^X_{-\infty}
\pd_{X'}\left(e^{\{-\tb_j(\lambda)\}(X-X')}\frac{\tau_{Y,0}}{\tau_{Y,0}'}\right)
f(X')\,dX'\right\|_{L^2(z_{13}\le0)}
  \\ \lesssim &
\left(\frac{1}{\Re\tb_j(\lambda)-\k_1-\a}+\frac{1}{\k_2+\a-\Re\tb_j(\lambda)}\right)
\|e^{\a X}f\|_{L^2(\R)}\,,  
\end{align*}
in the same way as \eqref{eq:est-compl}.
\par
Combining the above, we have
\begin{equation*}
  \sup_{Y\in\R}\|U(\cdot,Y)\|_{L^2(\R;e^{2\a X}dX)}\lesssim
  \|F\|_{L^2(\R;e^{2\a X}dX)}\,.
\end{equation*}
We can estimate
$\pd_XU(\cdot,Y)$ in the same way. Since $A_4(\lambda)U=F$ by
\eqref{eq:K-delta} and $\pd_X^{-1}$ is bounded on $L^2(\R;e^{2\a X}dX)$,
we have the bound for $\pd_X^2U(\cdot,Y)$.
Thus we prove Lemma~\ref{lem:A4-est}.
\par
By the assumptions and Remark~\ref{rem:betas},
\begin{equation*}
  \beta_{\#,-}<\k_1+\a\,,\enskip\beta_{*,+}-\a\in(\k_2,\k_3)\,,
  \enskip \beta_{\#,+}>\k_3+\a\,,\enskip \beta_{*,-}-\a\in(\k_1,\k_2)\,,
\end{equation*}
and we can prove Lemma~\ref{lem:A4*-est} in the same way as
Lemma~\ref{lem:A4-est}.
This completes the proof of Lemmas~\ref{lem:A4-est} and \ref{lem:A4*-est}.
\end{proof}

Now we will investigate the correspondence between
$\mathbf{U_1}$ and $\mathbf{U_2}$ satisfying \eqref{eq:calA-B}
when $\lambda\ne0$.

First, we will investigate $\ker(\mathcal{A}_1(\lambda))$.
For $\lambda\in\C\setminus\{0, \lambda(\beta_{*,\pm})\}$, let
\begin{equation*}
\mathbf{W_j}(\lambda)=
(\pd_XV(\beta_j(\lambda)),\pd_YV(\beta_j(\lambda)))^T\,.
\end{equation*}
For $\lambda=\lambda(\beta_{*,\pm})$, let
\begin{gather*}
\mathbf{W_1}(\lambda)=(\pd_XV(\beta_{*,\pm}),\pd_YV(\beta_{*,\pm}))^T\,, 
\quad
\mathbf{W_2}(\lambda)=
(\pd_X\pd_\beta V(\beta_{*,\pm}),\pd_Y\pd_\beta V(\beta_{*,\pm}))^T\,.
\end{gather*}

\begin{lemma}
  \label{lem:kercalA1}
Let $\mathcal{A}_1(\lambda)$ be closed operators
on $L^2(\R;e^{2\a X}dX)$.
\begin{itemize}
\item[\rm(i)]
Let $\lambda\in\C\setminus\{0,\lambda(\beta_{*,\pm})\}$, $\a\R\setminus\{0\}$ and
$\Re\beta_j(\lambda)+\a\ne \k_j$ for $j=1$, $2$, $3$. Then
  $\ker(\mathcal{A}_1(\lambda))
  =\spann\{\pd_x^{-1}\mathbf{W_j}(\lambda)\mid
  \k_1<\Re\beta_j(\lambda)+\a<\k_3\}$.
\item[\rm(ii)]
  Let $\lambda=\lambda(\beta_{*,+})$ and
  $\a\in(\k_1-\beta_{*,+},\k_1-\beta_{\#,-})\setminus\{0\}$.
  Then $\ker(\mathcal{A}_1(\lambda))
  =\spann\{\pd_x^{-1}\mathbf{W_1}(\lambda),\,
  \pd_x^{-1}\mathbf{W_2}(\lambda)\}$.
\item[\rm(iii)]
  Let $\lambda=\lambda(\beta_{*,-})$ and
$\a\in(\k_3-\beta_{\#,+},\k_3-\beta_{*,-})\setminus\{0\}$.
  Then $\ker(\mathcal{A}_1(\lambda))
  =\spann\{\pd_x^{-1}\mathbf{W_1}(\lambda),\,
  \pd_x^{-1}\mathbf{W_2}(\lambda)\}$.
\end{itemize}
\end{lemma}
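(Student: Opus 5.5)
The strategy is to reduce the kernel of the $2\times2$ system to the kernel of the scalar operator $A_2(\lambda)$, and then to read off which explicit solutions lie in the weighted space.

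\textbf{Step 1: reduction to $A_2(\lambda)$.} Let $\mathbf{V}=(V_1,V_2)^T\in\ker(\mathcal{A}_1(\lambda))$. The first row gives $V_2=-A_1V_1$, and the second row gives $A_2(\lambda)V_1=0$. So $\ker(\mathcal{A}_1(\lambda))$ is in bijection with $\ker A_2(\lambda)$ on $L^2(\R;e^{2\a X}dX)$ via $V_1\mapsto(V_1,-A_1V_1)^T$.

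\textbf{Step 2: the kernel of $A_2(\lambda)$.} The operator $\pd_XA_2(\lambda)$ is a third-order ODE in $X$. By Lemma~\ref{lem:kerA-solU}(i), $A_2(\lambda(\beta))V(\beta)=0$.
\begin{itemize}
\item[(a)] For $\lambda\notin\{0,\lambda(\beta_{*,\pm})\}$, the three roots $\beta_j(\lambda)$ of \eqref{eq:beta} are distinct by Lemma~\ref{lem:kerA-solU}(iv). Then $\{V(\beta_j(\lambda))\}_{j=1}^3$ form a fundamental system of $\pd_XA_2(\lambda)u=0$. Linear independence follows from their distinct exponential behaviours $e^{(\beta_j-\k_1)X}$ as $X\to-\infty$.
\item[(b)] Any element of $\ker A_2(\lambda)$ is a combination $\sum c_jV(\beta_j(\lambda))$, since $\ker A_2\subset\ker\pd_XA_2$.
\item[(c)] The asymptotics differ pairwise, as $e^{(\beta_j-\k_1)X}$ at $-\infty$ and $e^{(\beta_j-\k_3)X}$ at $+\infty$. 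Hence such a combination lies in $L^2(\R;e^{2\a X}dX)$ exactly when each term with $c_j\neq0$ does.
\item[(d)] By Lemma~\ref{lem:kerA-solU}(ii), $V(\beta_j)$ is in the weighted space iff $\k_1<\Re\beta_j+\a<\k_3$. The nondegeneracy hypothesis $\Re\beta_j+\a\ne\k_j$ excludes the boundary cases.
\end{itemize}

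\textbf{Step 3: the double-root cases (ii), (iii).} For $\lambda=\lambda(\beta_{*,\pm})$, the fundamental system is $V(\beta_{*,\pm})$, $\pd_\beta V(\beta_{*,\pm})$, $V(\beta_{\#,\mp})$, using \eqref{eq:A2b'}. The stated ranges of $\a$ are exactly those in Lemma~\ref{lem:kerA-solU}(v). In these ranges the first two functions are admissible and $V(\beta_{\#,\mp})$ is not. The same asymptotic-separation argument applies; for $\pd_\beta V$ the leading behaviour carries an extra factor of $X$.

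\textbf{Step 4: identifying $(V,-A_1V)$ with $\pd_X^{-1}\mathbf{W_j}$.} It remains to check that $-A_1V(\beta)=\pd_X^{-1}\pd_YV(\beta)$, i.e.\ $(A_1+\pd_X^{-1}\pd_Y)V(\beta)=\nabla M_-(v_\mY)V(\beta)=0$. For $\beta=\k_j$ this is \eqref{eq:A1-Vk}. For general $\beta$ I would invoke Lemma~\ref{cl:kerM-P}, or verify it directly from the squared-eigenfunction structure. Concretely, $V(\beta)$ is $\Phi^1(-i\beta)\Phi^{1,*}_1$ up to a $t$-factor, with $\Phi^{1,*}_1=1/\tau_\mY$, so this reduces to $L\Phi^1=0$ together with $v_\mY=\pd_x\log\tau_\mY$. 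Differentiating in $\beta$ handles $\pd_\beta V(\beta_{*,\pm})$. This gives $(V_1,V_2)=\pd_X^{-1}(\pd_XV,\pd_YV)$.

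\textbf{Main obstacle.} The delicate step is the linear-independence and asymptotic-separation argument in Step 2(c). Exponents may share a real part, since $\beta_j$ can be complex conjugates. The fix is to compare the exponents $\beta_j-\k_1$, which are distinct complex numbers. A combination of distinct complex exponentials with equal real part is not square integrable against the weight unless each coefficient vanishes. The subleading corrections are controlled uniformly by \eqref{eq:tauXY}.
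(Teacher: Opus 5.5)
Your proposal is correct and follows essentially the same route as the paper's proof. You reduce $\ker\mathcal{A}_1(\lambda)$ to $\ker A_2(\lambda)$ via $V_2=-A_1V_1$, select the admissible members of the fundamental system of $\pd_XA_2(\lambda)u=0$ ($\{V(\beta_j(\lambda))\}$, or $\{V(\beta_{*,\pm}),\pd_\beta V(\beta_{*,\pm}),V(\beta_{\#,\mp})\}$ at a double root) using Lemma~\ref{lem:kerA-solU}(ii),(v), and identify $-A_1V=\pd_X^{-1}\pd_YV$ through $\nabla M_-[v_\mY]V(\beta)=0$ from Lemma~\ref{cl:kerM-P}; your asymptotic-separation argument in Step 2(c), including the complex-conjugate case, supplies a detail the paper leaves implicit.
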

\begin{proof}
  Suppose $\lambda\in\C\setminus\{0, \lambda(\beta_{*,\pm})\}$.
  By Lemma~\ref{lem:kerA-solU} (ii),
  $\mathbf{W_j}(\lambda)\in L^2(\R;e^{2\a X}dX)$ if and only if
  $\a+\Re\beta_j(\lambda)\in (\k_1,\k_3)$.  Since $\pd_XA_2(\lambda)$
  is an ordinary differential operator of the third order and $\pd_X$
  is invertible on $L^2(\R;e^{2\a X}dX)$, it follows from
  \eqref{eq:A2b} that $\{V(\beta_j(\lambda))\}_{1\le j\le 3}$ is a
  fundamental system of $\pd_XA_2(\lambda)V=0$ and that
  $\ker(A_2(\lambda(\beta_{*,\pm})))= \spann\{V(\beta_j(\lambda))\mid
  \k_1<\beta_j(\lambda)+\a<\k_3\}$.  By Lemma~\ref{cl:kerM-P},
\begin{align*}
A_1V(\beta_j(\lambda))+\pd_X^{-1}\pd_YV(\beta_j(\lambda))=&  
\nabla M_-(v_\mY)\pd_XV(\beta_j(\lambda))=0\,.
\end{align*}
Thus we prove (i).
\par
If $\lambda=\lambda(\beta_{*,\pm})$, it follows from Lemma~\ref{lem:kerA-solU} (i)
that $\{V(\beta_{*,\pm}),\pd_\beta V(\beta_{*,\pm}),V(\beta_{\#,\mp})\}$
is a fundamental system of $\pd_XA_2(\lambda)V=0$.
By Lemma~\ref{lem:kerA-solU} (v) and Remark~\ref{rem:betas}, we have
$V(\beta_{\#,\mp})\not\in L^2(\R;e^{2\a X}dX)$ and 
$\mathbf{W_1}(\lambda)$, $\mathbf{W_2}(\lambda)\in L^2(\R;e^{2\a X}dX)$
if $\lambda=\lambda(\beta_{*,+})$ and $\k_1-\beta_{*,+}<\a<\k_1-\beta_{\#,-}$
or if $\lambda=\lambda(\beta_{*,-})$ and $\k_3-\beta_{\#,-}<\a<\k_3-\beta_{*,-}$.
Thus we can prove (ii) in the same way as (i).
\end{proof}

Next, we will show that $\mathcal{A}_2(\lambda)\mathbf{V}=\mathcal{B}_2\mathbf{U_2}$
is solvable for $\mathbf{V}$ if $\mathbf{U_2}$ satisfies secular conditions.

For $\lambda\in\C\setminus\{0, -\lambda(\beta_{*,\pm})\}$, let $W_j^*(\lambda)=V(\tb_j(\lambda))$
and 
\begin{equation}
 \label{eq:defbfWj}
\mathbf{W_j^*}(\lambda)=
(-\frac{4}{3}b_2W^*_j(\lambda)+\pd_X^{-1}\pd_YW^*_j(\lambda),
W^*_j(\lambda))^T\,.
\end{equation}
For $\lambda=-\lambda(\beta_{*,\pm})$, let $W_1^*(\lambda)=V(\beta_{*,\pm})$,
$W_2^*(\lambda)=\pd_\beta V(\beta_{*,\pm})$ and $\mathbf{W_j^*}(\lambda)$ be as
\eqref{eq:defbfWj}.

\begin{lemma}
\label{lem:ImcalA2}
Assume that $\a\in\R\setminus\{0\}$. Let $\mathcal{A}_2(\lambda)$ and
$\mathcal{B}_2$ be closed operators on $L^2(\R;e^{2\a X}dX)$ and let
$\mathbf{U_2}\in(H^1\times L^2)(\R;e^{2\a X}dX)$.
\begin{itemize}
\item[\rm(i)]
Suppose that $\lambda\in\C\setminus\{0,-\lambda(\beta_{*,\pm})\}$ and
that $\Re\tb_i(\lambda)\ne \a+\k_j$ for $i$, $j=1$, $2$, $3$.
Then $\ker(\mathcal{A}_2(\lambda))=\{0\}$. Moreover, 
if $\la \mathbf{U_2},\mathbf{W_i}^*(\lambda)\ra=0$ for every $i\in\{1,2,3\}$
satisfying $\Re\tb_i(\lambda)-\a\in(\k_1,\k_3)$, then
$\mathcal{A}_2(\lambda)\mathbf{V}=\mathcal{B}_2\mathbf{U_2}$
has a unique solution satisfying
\begin{equation}
\label{eq:calA12-est-3}
  \|\mathbf{V}(\cdot,Y)\|_{(H^2\times H^1)(\R;e^{2\a X}dX)}
  \le C\|\mathbf{U_2}(\cdot,Y)\|_{(H^1\times L^2)(\R;e^{2\a X}dX)}\,,
\end{equation}
where $C$ is a constant that does not depend on $Y$ and $\mathbf{U_2}$.
\item[\rm(ii)]

  Let $\lambda=-\lambda(\beta_{*,+})$ and
$\a\in(\beta_{\#,-}-\k_1,\beta_{*,+}-\k_2)\setminus\{0\}$. 
Then $\ker(\mathcal{A}_2(\lambda))=\{0\}$. Moreover, if
$\la \mathbf{U_2},\mathbf{W_1}^*(\lambda)\ra
=\la \mathbf{U_2},\mathbf{W_2}^*(\lambda)\ra=0$,
$\mathcal{A}_2(\lambda)\mathbf{V}=\mathcal{B}_2\mathbf{U_2}$
has a unique solution satisfying \eqref{eq:calA12-est-3},
where $C$ is a constant that does not depend on $Y$ and $\mathbf{U_2}$.

\item[\rm(iii)]
 Let $\lambda=-\lambda(\beta_{*,-})$ and
$\a\in(\beta_{*,-}-\k_2,\beta_{\#,+}-\k_3)\setminus\{0\}$.
Then $\ker(\mathcal{A}_2(\lambda))=\{0\}$. Moreover, if
$\la \mathbf{U_2},\mathbf{W_1}^*(\lambda)\ra
=\la \mathbf{U_2},\mathbf{W_2}^*(\lambda)\ra=0$,
then $\mathcal{A}_2(\lambda)\mathbf{V}=\mathcal{B}_2\mathbf{U_2}$
has a unique solution satisfying \eqref{eq:calA12-est-3},
where $C$ is a constant that does not depend on $Y$ and $\mathbf{U_2}$.
\end{itemize}
\end{lemma}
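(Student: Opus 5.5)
The plan is to reduce the system $\mathcal{A}_2(\lambda)\mathbf{V}=\mathcal{B}_2\mathbf{U_2}$ to a scalar equation for $V_1$ and then apply Lemmas~\ref{lem:A4-est} and \ref{lem:A4*-est}, exactly as Lemma~\ref{lem:calA20-est} was obtained from Lemma~\ref{lem:A40-est}.

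\textbf{Reduction.} Writing out the two rows of $\mathcal{A}_2(\lambda)\mathbf{V}=\mathcal{B}_2\mathbf{U_2}$ gives
\begin{equation*}
V_2=U_{21}-A_3V_1\,,\qquad A_4(\lambda)V_1=-(3A_3+4b_2)U_{21}+3U_{22}=:F\,.
\end{equation*}
If $\mathbf{V}\in\ker(\mathcal{A}_2(\lambda))$, then $V_2=-A_3V_1$ and $A_4(\lambda)V_1=0$. Hence $\ker(\mathcal{A}_2(\lambda))=\{0\}$ follows from $\ker(A_4(\lambda))=\{0\}$, which is given by Lemma~\ref{lem:A4-est} in case (i) and by Lemma~\ref{lem:A4*-est} in cases (ii) and (iii). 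Indeed, in all cases no nonzero combination of $V'(\tb_j(\lambda))$ lies in $L^2(\R;e^{2\a X}dX)$.

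\textbf{Translating the secular conditions.} Solvability of $A_4(\lambda)V_1=F$ requires $\int_\R F\,W^*_i(\lambda)\,dX=0$ for the relevant $i$. Here $W_i^*=V(\tb_i(\lambda))$, together with $\pd_\beta V(\beta_{*,\pm})$ in the degenerate cases.

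I integrate by parts, using the formal adjoint $A_3^*=A_1$:
\begin{equation*}
\int_\R F\,W^*_i\,dX=\int_\R U_{21}\bigl(-(3A_1+4b_2)W^*_i\bigr)+3U_{22}W^*_i\,dX\,.
\end{equation*}
Next I invoke the identity $(A_1+\pd_X^{-1}\pd_Y)V(\beta)=\nabla M_-(v_\mY)V(\beta)=0$ from Lemma~\ref{cl:kerM-P}, valid for all $\beta$ as in the proof of Lemma~\ref{lem:kercalA1}. Differentiating it in $\beta$ extends it to $\pd_\beta V(\beta_{*,\pm})$. With this, $-3A_1W_i^*=3\pd_X^{-1}\pd_YW_i^*$, so
\begin{equation*}
\int_\R F\,W_i^*\,dX=3\la\mathbf{U_2},\mathbf{W_i^*}(\lambda)\ra\,.
\end{equation*}
Thus the hypothesis $\la\mathbf{U_2},\mathbf{W_i^*}\ra=0$ is precisely \eqref{eq:secular-A4}, respectively \eqref{eq:ass-orth1} and its analogue in (ii) and (iii).

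Two bookkeeping checks are needed. First, the index set $\Re\tb_i(\lambda)-\a\in(\k_1,\k_3)$ must coincide with the set where $V(\tb_i)\in L^2(\R;e^{-2\a X}dX)$, the dual space. This follows from Lemma~\ref{lem:kerA-solU}(ii) with $\a$ replaced by $-\a$. Second, the pairing must be finite, which holds because $W_i^*$ lies in the dual weighted space and $\pd_X^{-1}$ is bounded there.

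\textbf{Estimate.} Lemma~\ref{lem:A4-est} (resp.\ Lemma~\ref{lem:A4*-est}) then yields a unique $V_1$ with
\begin{equation*}
\sum_{i\le2}\|\pd_X^iV_1\|\lesssim\|F\|\lesssim\|\mathbf{U_2}\|_{H^1\times L^2}\,,
\end{equation*}
uniformly in $Y$, using that $v_\mY$ is bounded. Since $V_2=U_{21}-A_3V_1$, we get $\|V_2\|_{H^1}\lesssim\|U_{21}\|_{H^1}+\|V_1\|_{H^2}$, which gives \eqref{eq:calA12-est-3}.

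\textbf{Main obstacle.} The delicate step is the adjoint identity in the degenerate cases (ii) and (iii). One must check that $\pd_\beta$ of $\nabla M_-(v_\mY)V(\beta)=0$ holds, and that the interval conditions on $\a$ match those of Lemma~\ref{lem:A4*-est}. Since $\nabla M_-$ does not depend on $\beta$, differentiating under it is legitimate. Both checks are then routine given Remark~\ref{rem:betas}.
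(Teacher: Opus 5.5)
Your proposal is correct and follows the paper's proof. The paper simply says the lemma follows from Lemmas~\ref{lem:A4-est} and \ref{lem:A4*-est} in the same way that Lemma~\ref{lem:calA20-est} follows from Lemma~\ref{lem:A40-est}, and that is exactly your argument: reduce to $A_4(\lambda)V_1=-(3A_3+4b_2)U_{21}+3U_{22}$ with $V_2=U_{21}-A_3V_1$, then turn the scalar secular conditions into $\la\mathbf{U_2},\mathbf{W_i^*}(\lambda)\ra=0$ via $\mathcal{B}_2^*(0,W_i^*)^T=3\mathbf{W_i^*}$, using $\nabla M_-(v_\mY)V(\beta)=0$ (and its $\beta$-derivative in the degenerate cases).
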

\begin{proof}
Using Lemmas~\ref{lem:A4-est} and \ref{lem:A4*-est}, we can prove
Lemma~\ref{lem:ImcalA2} in the same way as Lemma~\ref{lem:calA20-est}.
\end{proof}

Next, we will investigate the kernel of the map $\mathbf{U_2}\mapsto\mathbf{U_1}$
defined by \eqref{eq:calA-B}.
\begin{lemma}
\label{lem:kermapU1-U2}
Let $\mathcal{A}_1(\lambda)$,
$\mathcal{A}_2(\lambda)$, $\mathcal{B}_1$ and $\mathcal{B}_2$
be as in
Lemma~\ref{lem:calA20-est}.
Assume that $\mathbf{U_1}$, $\mathbf{U_2}$ and $\mathbf{V}$
satisfy \eqref{eq:calA-B}.
\begin{itemize}
\item[\rm{(i)}]
Let $\lambda\in\C\setminus\{0,\lambda(\beta_{*,\pm})\}$, $\a\in\R\setminus\{0\}$ and
$\Re\beta_i(\lambda)+\a\ne\k_j$ for $i$, $j=1$, $2$, $3$.
If $\mathbf{U_1}=0$, then
$\mathbf{U_2}\in \spann\{\mathbf{W_j}(\lambda)\mid
  \k_1<\Re\beta_j(\lambda)+\a<\k_3\}$.
\item[\rm{(ii)}]
Let $\lambda=\lambda(\beta_{*,+})$ and
$\a\in(\k_1-\beta_{*,-},\k_1-\beta_{\#,-})\setminus\{0\}$.
If $\mathbf{U_1}=0$, then $\mathbf{U_2}\in \spann\{\mathbf{W_1}(\lambda),\,
\mathbf{W_2}(\lambda)\}$.
  
\item[\rm{(iii)}]
Let $\lambda=\lambda(\beta_{*,-})$ and
$\a\in(\k_3-\beta_{\#,+},\k_3-\beta_{*,-})\setminus\{0\}$.
If $\mathbf{U_1}=0$, then $\mathbf{U_2}\in \spann\{\mathbf{W_1}(\lambda),\,
\mathbf{W_2}(\lambda)\}$.
\end{itemize}
\end{lemma}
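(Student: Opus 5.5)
The plan is to follow the proof of Lemma~\ref{lem:kermap0U1-U2} step by step, with Lemma~\ref{lem:calA20-est} replaced by Lemmas~\ref{lem:kercalA1} and \ref{lem:ImcalA2}.

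\textbf{Step 1 (structure of $\mathbf{V}$).} If $\mathbf{U_1}=0$, the first relation in \eqref{eq:calA-B} gives $\mathcal{A}_1(\lambda)\mathbf{V}=0$. Lemma~\ref{lem:kercalA1} (i), (ii) or (iii), according to the case, then gives constants $c_j$ with
\[
\mathbf{V}=\sum_j c_j\pd_X^{-1}\mathbf{W_j}(\lambda),
\]
where $j$ runs over the indices with $\k_1<\Re\beta_j(\lambda)+\a<\k_3$ in case (i), and over $j=1,2$ in cases (ii) and (iii). Concretely, $V_1=\sum_j c_jV_j$ and $V_2=-A_1V_1$, where $V_j=V(\beta_j(\lambda))$, or in cases (ii)–(iii) $V(\beta_{*,\pm})$ and $\pd_\beta V(\beta_{*,\pm})$. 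Each of these satisfies $A_2(\lambda)V_j=0$ by \eqref{eq:A2b} and \eqref{eq:A2b'}, and $(A_1+\pd_X^{-1}\pd_Y)V_j=0$ by Lemma~\ref{cl:kerM-P}. For the $\pd_\beta V$ terms, differentiate the identity for $V(\beta)$ in $\beta$; it holds for every $\beta$.

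\textbf{Step 2 (compute $\mathcal{A}_2(\lambda)\mathbf{V}$).} Using $\mathcal{A}_1(\lambda)\mathbf{V}=0$, we write $\mathcal{A}_2(\lambda)\mathbf{V}=(\mathcal{A}_2(\lambda)-\mathcal{A}_1(\lambda))\mathbf{V}$. Its first component is $(A_3-A_1)V_1=2\pd_XV_1$. Its second component is
\[
(A_4(\lambda)-A_2(\lambda))V_1=\{8(3v_\mY-b_2)\pd_X+6u_\mY\}V_1,
\]
because the $\lambda\pd_X^{-1}$ terms cancel. This is exactly the computation in Lemma~\ref{lem:kermap0U1-U2}, and it does not depend on $\lambda$. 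We then check that this equals $2\mathcal{B}_2\pd_X\pd_X^{-1}\mathbf{W}$, i.e.\ $2\mathcal{B}_2(\pd_XV_1,\pd_YV_1)^T$. That amounts to verifying
\[
2(-3A_3-4b_2)\pd_XV_1+6\pd_YV_1=\{8(3v_\mY-b_2)\pd_X+6u_\mY\}V_1,
\]
which reduces, via $\pd_YV_1=-\pd_X A_1V_1$ (from $(A_1+\pd_X^{-1}\pd_Y)V_1=0$) and $u_\mY=2\pd_Xv_\mY$, to an algebraic identity in $v_\mY$. This is the same identity already used at $\lambda=0$.

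\textbf{Step 3 (conclude).} We have $\mathcal{B}_2\mathbf{U_2}=\mathcal{A}_2(\lambda)\mathbf{V}=2\mathcal{B}_2\sum_jc_j\mathbf{W_j}(\lambda)$. Since $\ker\mathcal{B}_2=\{0\}$ (it is lower triangular with invertible diagonal entries $I$ and $3I$), this gives $\mathbf{U_2}=2\sum_jc_j\mathbf{W_j}(\lambda)$, which lies in the claimed span.

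The main obstacle is making sure that Lemma~\ref{lem:kercalA1} applies under the stated ranges of $\a$. In case (ii) the stated interval is $(\k_1-\beta_{*,-},\k_1-\beta_{\#,-})$, while Lemma~\ref{lem:kercalA1} (ii) uses $(\k_1-\beta_{*,+},\k_1-\beta_{\#,-})$. Since $\beta_{*,-}<\beta_{*,+}$, the stated interval is contained in the one required, so Lemma~\ref{lem:kercalA1} (ii) applies. A secondary point is that $\pd_X^{-1}\mathbf{W_j}$ must be understood in $L^2(\R;e^{2\a X}dX)$, where $\pd_X$ is invertible for $\a\ne0$. This justifies identifying $\pd_X(\pd_X^{-1}\mathbf{W_j})=\mathbf{W_j}$ in Step 2.
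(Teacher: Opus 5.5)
Your proposal is correct and follows the paper's proof. From $\mathcal{A}_1(\lambda)\mathbf{V}=0$ and Lemma~\ref{lem:kercalA1} you get $\mathbf{V}=\sum_j c_j\pd_X^{-1}\mathbf{W_j}(\lambda)$; the identity $(A_1+\pd_X^{-1}\pd_Y)V(\beta_j)=0$ from Lemma~\ref{cl:kerM-P} gives $\mathcal{A}_2(\lambda)\mathbf{V}=2\mathcal{B}_2\pd_X\mathbf{V}$; and injectivity of $\mathcal{B}_2$ yields $\mathbf{U_2}=2\pd_X\mathbf{V}$. Your explicit check of that identity (which the paper only asserts) is right, and so is your observation that the $\a$-interval in (ii) lies inside the one required by Lemma~\ref{lem:kercalA1}(ii).
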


\begin{proof}[Proof of Lemma~\ref{lem:kermapU1-U2}]
If $\mathbf{U_1}=0$, we have $\mathcal{A}_1(\lambda)\mathbf{V}=0$.
Suppose that $\lambda\in\C\setminus\{0,\lambda(\beta_{*,\pm})\}$ and that
$\Re\beta_i(\lambda)+\a\ne\k_j$ for $i$, $j=1$, $2$, $3$.
Then by Lemma~\ref{lem:kercalA1},
  \begin{gather*}
\mathbf{V}=\sum_{\k_1<\a+\Re\beta_j(\lambda)<\k_3}
c_j\pd_X^{-1}\mathbf{W_j}(\lambda)\,,
    \\
\mathcal{A}_2(\lambda)\mathbf{V}
=\sum_{\k_1<\a+\Re\beta_j(\lambda)<\k_3}
c_j
\begin{pmatrix}
  2\pd_xV(\beta_j(\lambda)) \\
\{8(3v_{\mY}-b_2)\pd_X+6u_{\mY}\}V(\beta_j(\lambda))
\end{pmatrix}\,,
\end{gather*}
where $c_j$ are constants.
Since $(A_1+\pd_X^{-1}\pd_Y)V(\beta_j(\lambda))=\nabla M_-(v_{\mY})V(\beta_j(\lambda))=0$
by Lemma~\ref{cl:kerM-P}, we have
$\mathcal{A}_2(\lambda)\mathbf{V}=2\mathcal{B}_2\pd_X\mathbf{V}$.
Since $\ker(\mathcal{B}_2)=\{0\}$, it follows that
$\mathbf{U_2}=2\pd_X\mathbf{V}$. Thus we prove (i).
We can prove (ii) and (iii) in the same way.
Thus we complete the proof.
\end{proof}

Now we will prove $\sigma_p(\mL_\mY)\cap\rho(\mL_{0})\cap\rho(\mL_{23})=\emptyset$.
\begin{proposition}
  \label{prop:noev}
Assume that 
\begin{equation}
  \label{eq:ass-alpha-1}
0<\a< \min\{\k_1-\beta_{\#,-},\beta_{\#,+}-\k_3,\beta_{*,+}-\k_2,
\k_2-\k_1\}\,.
\end{equation}
Let $\mL_0$ be a closed operator on $\calX$ and $\mL_{23}$ be a closed
operator on $\calX_{23}$.  If
$\lambda\in \rho(\mL_{0})\cap\rho(\mL_{23})$ and $U_2\in \calX$ is a
solution of \eqref{eq:ef-u2}, then $U_2=0$.
\end{proposition}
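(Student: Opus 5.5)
The plan is to transport the eigenvalue problem \eqref{eq:ef-u2} by the Darboux transformation \eqref{eq:calA-B} to the constant-coefficient problem \eqref{eq:ef-u1}, which has no nontrivial solution in $\calX$ when $\lambda\in\rho(\mL_0)$, and then to pull the conclusion back through the description of the kernel of $\mathbf{U_2}\mapsto\mathbf{U_1}$.

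\emph{Step 1 (localization).} Let $U_2\in\calX$ solve \eqref{eq:ef-u2} with $\lambda\in\rho(\mL_0)\cap\rho(\mL_{23})$. By Lemma~\ref{lem:imev}(i), $U_2$ decays exponentially away from the soliton lines: $e^{\a'|z_{13}|}U_2\in\calX_{13}$ and $e^{\a'|z_{12}|}U_2\in\calX_{12}$ for some $\a'>0$. Using the elliptic-type bounds \cite[Proposition~C.1]{Miz23}, we also get $\mathbf{U_2}=(U_2,\pd_X^{-1}\pd_YU_2)^T\in C^1(\R_Y;(H^1\times L^2)(\R;e^{2\a X}dX))$. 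For each $Y$ we may therefore work in $L^2(\R;e^{2\a X}dX)$ with a range of weights $\a$ near the original one. The freedom to shift $\a$ lets us avoid the finitely many exceptional values where $\Re\beta_i(\pm\lambda)\pm\a\in\{\k_1,\k_2,\k_3\}$, so Lemmas~\ref{lem:A4-est}, \ref{lem:A4*-est}, \ref{lem:ImcalA2} and \ref{lem:kercalA1} apply.

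\emph{Step 2 (secular conditions and construction of $\mathbf{V}$, $\mathbf{U_1}$).} To solve $\mathcal{A}_2(\lambda)\mathbf{V}=\mathcal{B}_2\mathbf{U_2}$ we need $\la\mathbf{U_2},\mathbf{W_i^*}(\lambda)\ra=0$ for the relevant $i$. By Lemma~\ref{lem:bfUU*}, applied with $U^*=W_i^*(\lambda)=V(\tb_i(\lambda))$ (an adjoint solution by Lemma~\ref{lem:sol-l-ad}), these pairings are independent of $Y$. We then let $Y\to+\infty$ or $Y\to-\infty$, choosing the direction in which the product $U_2W_i^*$ decays: $U_2$ decays exponentially in $|Y|$ at fixed distance from the solitons by Step 1, while $V(\beta)$ grows at most like $e^{\Re(\beta-\k_i)(\beta-\k_j)Y}$ by \eqref{eq:1-1}--\eqref{eq:1-2}. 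This gives that the pairings vanish. Here the assumption \eqref{eq:ass-alpha-1} is used to control which roots $\tb_i$ fall in the window $(\k_1+\a,\k_3+\a)$. Lemma~\ref{lem:ImcalA2}, or Lemma~\ref{lem:calA20-est} when $\lambda=0$, then yields $\mathbf{V}$ with the uniform bound \eqref{eq:calA12-est-3}. We set $\mathbf{U_1}=\mathcal{B}_1^{-1}\mathcal{A}_1(\lambda)\mathbf{V}$. By Lemma~\ref{lem:compatibilityU1U2} and Lemma~\ref{lem:U1-U2}, $U_1=U_{11}$ solves \eqref{eq:ef-u1} and lies in $\calX$, since $U_1=U_2-2\pd_XV_1$ by \eqref{eq:calA-B-0}.

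\emph{Step 3 (vanishing of $U_1$ and pull-back).} Equation \eqref{eq:ef-u1} is exactly $(\lambda-\mL_0)U_1=0$ written with $\pd_X$ applied. Since $\lambda\in\rho(\mL_0)$ on $\calX$ by Lemma~\ref{lem:intersection}, we get $U_1=0$. Lemmas~\ref{lem:kermap0U1-U2} and \ref{lem:kermapU1-U2} then give $\mathbf{U_2}\in\spann\{\mathbf{W_j}(\lambda)\}$, that is, $U_2$ is a combination of $\pd_XV(\beta_j(\lambda))$, or of $\pd_X\pd_\beta V(\beta_{*,\pm})$ at the double points. It remains to show such a combination is in $\calX$ only if it is zero. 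Lemma~\ref{lem:no-ef} gives $\pd_XV(\beta)\notin\calX$ for each single $\beta$. For finite combinations with distinct $\beta_j$, we compare growth rates along the three soliton directions, using \eqref{eq:11-1}, \eqref{eq:11-2} and the $Y$-exponents $(\beta-\k_i)(\beta-\k_j)$. Since the $\beta_j$ are distinct roots of \eqref{eq:beta}, these exponents separate the terms, and this reduces the problem to the single-term case; the $\pd_\beta$ terms are handled the same way, with polynomial factors.

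The main obstacle I expect is Step 2. The $Y$-independence of $\la\mathbf{U_2},\mathbf{W_i^*}\ra$ must be combined with a limit $Y\to\pm\infty$ in which $W_i^*$ may grow along one of the three soliton legs. One must check, root by root, that the exponential localization from Lemma~\ref{lem:imev} beats this growth in at least one direction, and that the weight can be shifted uniformly in $Y$. I would first try choosing, for each admissible $\tb_i$, the half-line in $Y$ where \eqref{eq:1-1} or \eqref{eq:1-2} shows $W_i^*$ bounded by $e^{\a|z|}$, and estimate the pairing there by Cauchy--Schwarz.
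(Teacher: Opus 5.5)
Your architecture is the paper's. You transport \eqref{eq:ef-u2} through \eqref{eq:calA-B}, obtain the secular conditions from the $Y$-independence in Lemma~\ref{lem:bfUU*}, solve for $\mathbf{V}$ with Lemma~\ref{lem:ImcalA2} (Lemma~\ref{lem:calA20-est} at $\lambda=0$), kill $U_1$ using $\lambda\in\rho(\mL_0)$, and exclude $\spann\{\mathbf{W_j}(\lambda)\}$ by transverse non-decay.

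\textbf{The gap is in Step~2.} Lemma~\ref{lem:imev} gives extra decay only in the transverse variables. The weights of $\calX^{\a\pm\a'}$ depend only on $z_{13}$ for $Y\ge0$ and only on $z_{12}$ for $Y\le0$. So along a crest, $U_2$ has no decay rate in $Y$ at all. What you actually have is \eqref{eq:U2+infty}--\eqref{eq:U2-infty}: $\|e^{\a z_{13}}\mathbf{U_2}(\cdot,Y)\|_{L^2}\to0$ as $Y\to+\infty$ and $\|e^{\a z_{12}}\mathbf{U_2}(\cdot,Y)\|_{L^2}\to0$ as $Y\to-\infty$, with no rate. The paper uses Lemma~\ref{lem:imev} only for the shift of $\a$.

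The $Y$-exponent of $V(\beta)$ on a leg is exact, e.g.\ $(\beta-\k_1)(\beta-\k_3)$ on the $[1,3]$ leg by \eqref{eq:1-1}. Hence nothing on the side of $U_2$ can absorb growth of $W_j^*$. For every root with $\Re\tb_j-\a\in(\k_1,\k_3)$ you need a half-line on which $\mathbf{W_j^*}(\lambda)$ is bounded in the dual weight. Proving that such a half-line exists is the real content of the paper's argument, and your proposal leaves it open.

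It is a dichotomy:
\begin{itemize}
\item If $\Re(\tb_j-\k_1)(\tb_j-\k_3)\le0$, then \eqref{eq:1-1} bounds $e^{-\a z_{13}}\mathbf{W_j^*}(\lambda)$ uniformly for $Y\ge0$, and you let $Y\to+\infty$.
\item If $\Re(\tb_j-\k_1)(\tb_j-\k_3)>0$, use the identities $\Re(\tb_j-\k_1)(\tb_j-\k_2)=\Re(\tb_j-\k_1)(\tb_j-\k_3)+(\k_3-\k_2)(\Re\tb_j-\k_1)$ and $\Re(\tb_j-\k_2)(\tb_j-\k_3)=\Re(\tb_j-\k_1)(\tb_j-\k_3)+(\k_2-\k_1)(\k_3-\Re\tb_j)$. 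Combined with \eqref{eq:1-2} and $-\a z_{12}=-\a z_{23}+\a(\k_3-\k_1)Y$, they show that $\|e^{-\a z_{12}}V(\tb_j)(\cdot,Y)\|_{L^2}$ decays exponentially as $Y\to-\infty$. The relevant exponent exceeds $\a(\k_3-\k_2)$ if $\Re\tb_j-\a\in(\k_1,\k_2)$. If $\Re\tb_j-\a\in(\k_2,\k_3)$, the $[2,3]$-leg exponent $\a(\k_3-\k_1)+\Re(\tb_j-\k_2)(\tb_j-\k_3)$ exceeds $(\k_2-\k_1)(\a+\k_3-\Re\tb_j)>0$.
\end{itemize}
So no admissible root makes $W_j^*$ grow on both the upper and the lower legs. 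Once this is established, your closing idea (take the good half-line and apply Cauchy--Schwarz) works.

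\textbf{A smaller point in Step~3.} Your claim that the leg exponents separate distinct roots is correct for $\lambda\ne0$. Two distinct roots with the same exponent $(\beta-\k_i)(\beta-\k_j)$ satisfy $\beta+\beta'=\k_i+\k_j$. This forces the third root to be $\k_k$, hence $\lambda=\lambda(\k_k)=0$.

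At $\lambda=0$ the claim fails on the $[1,3]$ leg. There $V(\k_1)$ and $V(\k_3)$ both have exponent $0$, and $V(\k_1)+V(\k_3)=-V(\k_2)$ decays for $Y>0$. You must separate them on the lower legs instead. By Lemma~\ref{lem:Vjk-profile}, $V(\k_1)$ keeps a $\sech^2$ profile along the $[1,2]$ leg and decays along the $[2,3]$ leg, while $V(\k_3)$ does the reverse. This is how the paper obtains $c_1=c_2=0$.
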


\begin{proof}[Proof of Proposition~\ref{prop:noev}]
 By Lemma~\ref{lem:kerA-solU} and Remark~\ref{rem:betas}, we have
 $V(\k_1)$, $V(\k_2)$, $V(\k_3)$, $V(\beta_{*,\pm})\in L^2(\R:e^{\pm2\a X}dX)$
 and $V(\beta_{\#,\pm})\not\in L^2(\R:e^{\pm2\a X}dX)$ for $\a$ satisfying
 \eqref{eq:ass-alpha-1}.
 \par
 Let $\mathbf{U_2}=(U_2,\pd_X^{-1}\pd_YU_2)^T$.
By \cite[Proposition~C.1]{Miz23},
$\pd_X^i\pd_Y^jU_2\in\calX$ for every $i\in\Z$ and $j\ge0$,
$e^{\a X}\mathbf{U_2}(X,0)\in  L^2(\R)$ and
\begin{gather}
 \label{eq:U2+infty}
\lim_{Y\to\infty}e^{2\a a_{13}Y}\|e^{\a X}\mathbf{U}_2(\cdot,Y)
\|_{L^2}=0\,,
\\ \label{eq:U2-infty}
\lim_{Y\to-\infty}e^{2\a a_{12}Y}\|e^{\a X}\mathbf{U}_2(\cdot,Y)
\|_{L^2}=0\,.
\end{gather}
\par
First, we will prove that $\lambda=0$ is not an eigenvalue.
Since
\begin{equation*}
|V(\k_1)|+|V(\k_3)|=O(\min\{e^{-(\k_3-\k_2)z_{13}},e^{(\k_2-\k_1)z_{13}}\})
\quad\text{for $Y\ge0$,}
\end{equation*}
\begin{equation*}
  \sup_{Y\ge0}e^{-2\a a_{13}Y}(\|e^{-\a X}\mathbf{W_1^*}(0)\|_{L^2}
  +\|e^{-\a X}\mathbf{W_2^*}(0)\|_{L^2})<\infty\,.
\end{equation*}
Hence it follows from Lemma~\ref{lem:bfUU*} that for every $Y\in\R$,
\begin{equation}
  \label{eq:37}
\la \mathbf{U_2},\mathbf{W_1^*}(0)\ra=
\la \mathbf{U_2},\mathbf{W_2^*}(0)\ra=0\,.
\end{equation}
By \eqref{eq:37} and Lemma~\ref{lem:calA20-est}, there exist
$\mathbf{U_1}$ and $\mathbf{V}$ satisfying \eqref{eq:calA-B} and
\begin{equation}
  \label{eq:U1-U2bound}
  \|\mathbf{U_1}(\cdot,Y)\|_{(H^1\times L^2)(\R;e^{2\a X}dX)}
  \le C\|\mathbf{U_2}(\cdot,Y)\|_{(H^1\times L^2)(\R;e^{2\a X}dX)}\,,
\end{equation}
where $C$ is a constant independent of $Y$.
Since $\ker(A_4(0))=\{0\}$, it follows from
Lemmas~\ref{lem:U1-U2} and \ref{lem:compatibilityU1U2} that
$\mathbf{U_1}=(U_1,\pd_X^{-1}\pd_YU_1)^T$ and
$U_1$ is a solution of \eqref{eq:ef-u1} with $\lambda=0$.
Moreover, $U_1\in\calX$ by \eqref{eq:U1-U2bound}.
Since $0\in \rho(\mL_0)$ by Lemma~\ref{lem:intersection},
we have $U_1=0$ and $\mathbf{U_1}=0$. Hence it follows from
Lemma~\ref{lem:kermap0U1-U2} that there exist $c_1$, $c_2\in\C$ such that
$\mathbf{U_2}=c_1\mathbf{W_1}(0)+c_2\mathbf{W_2}(0)$.
Since $\mathbf{W_1}(0)$ and $\mathbf{W_2}(0)$ do not decay in the transverse
direction (see Lemma~\ref{lem:Vjk-profile})
and $\mathbf{U_2}\in\calX$, we have $c_1=c_2=0$ and $U_2=0$.
\par
Next, we will consider the case where $\lambda\in
\C\setminus\{0,\lambda(\beta_{*,\pm}),-\lambda(\beta_{*,\pm})\}$.
By Lemma~\ref{lem:imev}, we may assume that 
$\Re\tb_i(\lambda)\ne \k_j+\a$ for $i$, $j=1$, $2$, $3$
if $\lambda\in \rho(\mL_{0})\cap\rho(\mL_{23})$.
\par
Suppose that $\Re \tb_j(\lambda)-\a \in(\k_1,\k_3)$.
By \eqref{eq:1-1}, we have for $Y\ge0$,
$$\|e^{-\a z_{13}}\mathbf{W_j^*}(\lambda)\|_{L^2}
=O(e^{\Re(\tb_j(\lambda)-\k_1)(\tb_j(\lambda)-\k_3)Y})\,.$$
If $\Re(\tb_j(\lambda)-\k_1)(\tb_j(\lambda)-\k_3)\le0$,
it follows from Lemma~\ref{lem:bfUU*} and \eqref{eq:U2+infty}
that
\begin{equation}
  \label{eq:orthU2-Wj*}
  \la \mathbf{U_2},\mathbf{W_j^*}(\lambda)\ra=0
  \quad\text{for every $Y\in\R$.}
\end{equation}
\par
Suppose that
\begin{equation}
  \label{eq:tb1tb3>0}
\Re(\tb_j(\lambda)-\k_1)(\tb_j(\lambda)-\k_3)>0\,.  
\end{equation}
In view of  \eqref{eq:U2-infty},
it suffices to show
 $\lim_{Y\to-\infty}\|e^{-\a z_{12}}V(\tb_j(\lambda))\|_{L^2(\R)}=0$
to prove \eqref{eq:orthU2-Wj*}.
If $\Re \tb_j(\lambda)-\a \in(\k_1,\k_2)$,
then by \eqref{eq:1-2} and \eqref{eq:tb1tb3>0},
\begin{gather*}
\|e^{-\a z_{12}}V(\tb_j(\lambda))\|_{L^2(\R)}
=O\left(e^{\Re(\tb_j(\lambda)-\k_1)(\tb_j(\lambda)-\k_2)Y}\right)\,,  
\\
 \Re(\tb_j(\lambda)-\k_1)(\tb_j(\lambda)-\k_2)  
  >  (\k_3-\k_2)(\Re\tb_j(\lambda)-\k_1)
  > \a(\k_3-\k_2)>0\,.
\end{gather*}
If $\Re \tb_j(\lambda)-\a \in(\k_2,\k_3)$, then by \eqref{eq:tb1tb3>0},
\begin{align*}
 \Re(\tb_j(\lambda)-\k_2)(\tb_j(\lambda)-\k_3)
  > & (\k_2-\k_1)\{\k_3-\Re\tb_j(\lambda)\}\,.
\end{align*}
Using \eqref{eq:1-2} and the fact that
$-\a z_{12}=-\a z_{23}+\a(\k_3-\k_1)Y$, we have as $Y\to-\infty$,
\begin{align*}
\|e^{-\a z_{12}}V(\tb_j(\lambda))\|_{L^2}\lesssim &
e^{\{\a(\k_3-\k_1)+\Re(\tb_j-\k_2)(\tb_j-\k_3)\}Y}
\\ \lesssim & e^{(\k_2-\k_1)\{\a+\k_3-\Re\tb_j(\lambda)\}Y}\to0\,.
\end{align*}
\par
By \eqref{eq:orthU2-Wj*} and Lemma~\ref{lem:ImcalA2}, there exist
$\mathbf{U_1}\in\calX$ and $\mathbf{V}$ satisfying \eqref{eq:ef-u1},
\eqref{eq:calA-B} and \eqref{eq:U1-U2bound}.
By Lemmas~\ref{lem:U1-U2} and \ref{lem:compatibilityU1U2},
$U_1$ is a solution of \eqref{eq:ef-u1}.
Taking the Fourier transform of \eqref{eq:ef-u1},
we have $\widehat{U_1}(\xi,\eta)=0$ for \textit{a.e.}
$(\xi,\eta)\in\R^2$ and $\mathbf{U_1}=0$.
\par
Lemma~\ref{lem:kermapU1-U2} implies
$\mathbf{U_2}\in \spann\{\mathbf{W_j}(\lambda)\mid
  \k_1<\Re\beta_j(\lambda)+\a<\k_3\}$.
In view of Lemma~\ref{lem:no-ef} and its proof,
$\mathbf{W_j}(\lambda)\not\in\calX$ and does not decay
in the transverse direction. More precisely,
$\beta_j(\lambda)$ violates at least one of \eqref{eq:1-3}--\eqref{eq:1-3''}.
Combining this fact with \eqref{eq:11-1}, \eqref{eq:11-2}
and the asymptotic relation
$e^{\a z_{12}}V(\beta)\sim e^{\{(\beta-\k_2)(\beta-\k_3)+\eta_I^{23}\}Y}$
for $Y\le0$ when $z_{23}=0$, we obtain
  \begin{equation*}
 \spann\{\mathbf{W_j}(\lambda)\mid  \k_1<\Re\beta_j(\lambda)+\a<\k_3\}
\cap \calX=\{0\}\,,
\end{equation*}
whence  $U_2=0$.
We can prove that $U_2=0$ in the other cases in the same way.
Thus we complete the proof.
\end{proof}

Since the weight function of $\calX(\nu)^*$ localizes some perturbations
around $[2,3]$-soliton, $\sigma_r(\mL_\mY)\ne\emptyset$
as we see in Lemma~\ref{lem:adjointef}.
Using Lemmas \ref{lem:A40-est}--\ref{lem:kermapU1-U2},
we will determine $\sigma_r(\mL_\mY)$.

\begin{proposition}
  \label{prop:spec-res}
Let $\a_0$ be a sufficiently small positive number.
Assume that $\a\in(0,\a_0)$ and  \eqref{ass:X13nu}.
Let $\mL_{\mY}$ and $\mL_0$ be closed operators on $\calX(\nu)$ and
$\mL_{23}$ be a closed operator on $\calX_{23}$.
If $\lambda_0\in \sigma_r(\mL_\mY)\cap \rho(\mL_0)\cap \rho(\mL_{23})$,
then $\lambda_0=\lambda(\beta)$ for $\beta\in D_1(\nu)$
or $\lambda_0=-\lambda(\beta)$ for $\beta\in D_2\cup D_3(\nu)$. 
Moreover, if $\mL_\mY^*U^*=\bar{\lambda_0}U^*$ and $U^*\in\calX(\nu)^*$,
we have the following.
\begin{itemize}
\item[\textrm{(i)}]
If $\lambda_0 \in \lambda(D_1(\nu))\setminus -\lambda(D_2\cup D_3(\nu))$,
there exist $\beta\in D_1(\nu)$ and $c\in \C$ such that $U^*=c\,\widehat{V}(\beta)$.
\item[\textrm{(ii)}]
If $\lambda_0 \in -\lambda(D_2\cup D_3(\nu))\setminus \lambda(D_1(\nu))$,
\begin{equation*}
U^*\in \spann\{V(\beta)\mid \text{$\lambda_0=-\lambda(\beta)$
and $\beta\in D_2\cup D_3(\nu)$.}\}
\end{equation*}
\item[\textrm{(iii)}]
If $\lambda_0 \in -\lambda(D_2\cup D_3(\nu))\cap \lambda(D_1(\nu))$, there exist
$c\in\C$ and $\beta\in D_1(\nu)$ such that
\begin{equation*}
U^*-c\,\widehat{V}(\beta)\in \spann\{V(\beta)\mid
\text{$\lambda_0=-\lambda(\beta)$ and $\beta\in D_2\cup D_3(\nu)$.}\}  
\end{equation*}
\end{itemize}
Especially, $0\not\in\sigma_r(\mL_\mY)$ if $\nu=0$ and
$0\in\sigma_r(\mL_\mY)$ and $\ker(\mL_\mY^*)=\spann\{V(\k_3)\}$ if $\nu>0$.
\end{proposition}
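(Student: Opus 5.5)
We would prove Proposition~\ref{prop:spec-res} by running the Darboux transformation argument of Proposition~\ref{prop:noev} for the adjoint problem. Let $\lambda_0\in\sigma_r(\mL_\mY)\cap\rho(\mL_0)\cap\rho(\mL_{23})$. Then there is $U^*\in\calX(\nu)^*$, $U^*\ne0$, with $\mL_\mY^*U^*=\bar\lambda_0U^*$.

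\textbf{Step 1 (localization).} By Lemma~\ref{lem:imev}(ii), $U^*$ is slightly better localized. That is, $e^{\a'|z_{13}|}U^*\in\calX_{13}(\nu)^*$ and $e^{\a'|z_{12}|}U^*\in\calX_{12}^*$ for some $\a'>0$. This is the extra room needed to perturb the weight. It lets us assume that $\Re\beta_i(\pm\lambda_0)\pm\a\ne\k_j$, and that the boundary terms in the conserved pairings vanish.

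\textbf{Step 2 (transfer to the free problem).} We use the adjoint Darboux pair, namely $\mathcal{A}_1,\mathcal{B}_1$ transposed via $A_1^*=A_3$ and $A_2(\lambda)^*=A_4(-\bar\lambda)$, i.e.\ $\widetilde{\mathcal{M}}^*_{\lambda,+}=\widetilde{\mathcal{M}}_{-\bar\lambda,-}$. This maps $\mathbf{U^*}=(-\tfrac43b_2U^*+\pd_X^{-1}\pd_YU^*,U^*)^T$ to a solution $U_1^*$ of the adjoint of \eqref{eq:ef-u1}, i.e.\ of the zero-background operator $\mL_0^*$. To solve the intermediate ODE in $X$ with bounds uniform in $Y$, we use Lemmas~\ref{lem:A40-est}, \ref{lem:A4-est} and \ref{lem:A4*-est} with $\a$ replaced by $-\a$; the weight of $\calX(\nu)^*$ decays. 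The secular conditions needed there are pairings of $\mathbf{U^*}$ against the analogues of $\mathbf{W_j}$, and Lemma~\ref{lem:bfUU*} shows these pairings are independent of $Y$. As in the proof of Proposition~\ref{prop:noev}, we show they vanish by letting $Y\to+\infty$ or $Y\to-\infty$. The decay of $U^*$ from Step 1 is compared with the growth rates $e^{\Re(\beta-\k_i)(\beta-\k_j)Y}$ of $V(\beta)$ and $\widehat{V}(\beta)$ from \eqref{eq:1-1}, \eqref{eq:1-2} and \eqref{eq:10-1}--\eqref{eq:10-4}. Whichever end gives decay kills the constant. Then $U_1^*$ lies in the dual weighted space and solves $(\bar\lambda_0-\mL_0^*)U_1^*=0$. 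Since $\lambda_0\in\rho(\mL_0)$, Fourier transform (Lemma~\ref{lem:intersection}) gives $U_1^*=0$.

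\textbf{Step 3 (kernel of the map).} By the analogue of Lemmas~\ref{lem:kermap0U1-U2} and \ref{lem:kermapU1-U2}, $U^*$ lies in the span of the kernel elements $V(\beta)$ with $-\lambda(\beta)=\lambda_0$, together with $\pd_\beta V$ at $\beta_{*,\pm}$. These are the elements that are $L^2$ in $X$ with the dual weight. The other Darboux direction contributes the $\widehat{V}(\beta)$ with $\lambda(\beta)=\lambda_0$. This is built from $\Phi^1_1\Phi^{1,*}$, since $\widehat V_j$ solves the adjoint equation by Lemma~\ref{lem:sol-l-ad}, and the relation \eqref{eq:4} reduces the index $j$ to $j=1$. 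Membership in $\calX(\nu)^*$ is then decided by Lemma~\ref{lem:adjointef}: $V(\beta)\in\calX(\nu)^*$ iff $\beta\in D_2\cup D_3(\nu)$, and $\widehat V(\beta)\in\calX(\nu)^*$ iff $\beta\in D_1(\nu)$. We also use the fact, from the asymptotics \eqref{eq:11-1}, \eqref{eq:11-2}, that a nontrivial combination of modes with distinct transverse growth rates cannot be in $\calX(\nu)^*$ unless each term is. This yields the dichotomy (i)--(iii). Only one $\widehat V$ survives because $D_1(\nu)$ lies inside the strip $\Re\beta\in(\k_1-\a,\k_2)$, which contains at most one root of $\lambda(\beta)=\lambda_0$.

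\textbf{Step 4 ($\lambda_0=0$).} Here the roots of $\lambda(\beta)=0$ are $\beta=\k_1,\k_2,\k_3$. We check which of them lie in the relevant domains. $\k_1$ lies on $\partial D_1(\nu)$ when $\nu=0$, and the curves separate when $\nu>0$. $\k_2\notin D_2$, since $\k_2$ is the boundary point of $D_2$. $\k_3$ lies in $D_3(\nu)$ iff $\nu>0$, since by \eqref{eq:1} the condition is $\Re(\beta-\k_1)(\beta-\k_3)<\nu$. Hence $\ker\mL_\mY^*=\spann\{V(\k_3)\}$ for $\nu>0$ and the kernel is trivial for $\nu=0$.

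\textbf{Main obstacle.} We expect the main obstacle to be Step 2's vanishing of the secular pairings. Each candidate $\beta_j$ must be sorted by the sign of $\Re(\beta-\k_1)(\beta-\k_3)-\nu$ against the half-plane conditions, which is a careful case analysis. The degenerate points $\lambda_0=\pm\lambda(\beta_{*,\pm})$ also need the $\pd_\beta V$ modes, and these carry polynomial factors in the transverse growth.
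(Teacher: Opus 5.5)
\textbf{There is a genuine gap in Step~2, and it falls exactly on the case that produces the $\widehat V$ eigenfunctions.} You assert that every secular pairing can be killed by letting $Y\to+\infty$ or $Y\to-\infty$, but this fails there.

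\emph{Why both limits fail.} If $\beta_0=\beta_1(\lambda_0)\in D_1(\nu)$, then $\beta_0$ satisfies both \eqref{eq:5} and \eqref{eq:notzero-1}. So the dual mode built from $V(\beta_0)$ is controlled neither by the decay \eqref{eq:U2+infty'} at $Y=+\infty$ nor by \eqref{eq:U2-infty'} at $Y=-\infty$. This is not a technicality. For $U^*=\widehat V(\beta_0)$, which is a genuine element of $\calX(\nu)^*$ by Lemma~\ref{lem:adjointef}, the $Y$-independent pairing equals $-\tfrac23\lambda'(\beta_0)(\beta_0-\k_1)\neq0$.

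\emph{Why Steps~2 and~3 are inconsistent.} If all secular conditions held, the Darboux image would be a free solution, hence zero. By your own Step~3, $U^*$ would then lie in the span of $V$-type modes only: $V(\beta)$ with $-\lambda(\beta)=\lambda_0$, and $\pd_\beta V(\beta_{*,\pm})$. This is because $\ker\nabla M_-[v_\mY]$ consists of products $\Phi^1(-i\beta)\Phi^{1,*}_j$ (Lemma~\ref{cl:kerM-P}). So $\widehat V(\beta_0)$ could not exist, and there is no ``other Darboux direction'' in the argument that produces it.

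\emph{The missing idea, as in the paper.}
\begin{enumerate}
\item Pass to $U_2=\overline{\pd_xU^*}$. Since $\pd_x\mL_\mY^*=-\mL_\mY\pd_x$, it solves $(\mL_\mY+\lambda_0)U_2=0$ in $\calX^{-\a}(-\nu)$. Lemmas~\ref{lem:ImcalA2} and~\ref{lem:kermapU1-U2} then apply verbatim with $\a\to-\a$; your unspecified ``transposed'' pair would need its own versions of these lemmas.
\item Compute $\la\mathbf{\widetilde{W}}(\lambda_0),\mathbf{W_1^*}(-\lambda_0)\ra$, where $\mathbf{\widetilde{W}}=(\pd_X\widehat V(\beta_0),\pd_Y\widehat V(\beta_0))^T$. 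Do this from the $Y\to+\infty$ profiles $V_\infty,\widehat V_\infty$ together with Lemma~\ref{lem:bfUU*}, and use $\beta_0\neq\k_1,\beta_{*,\pm}$ to see that it is nonzero.
\item Subtract $c\,\mathbf{\widetilde{W}}(\lambda_0)$ from $\mathbf{U_2}$ to restore the secular condition, and only then run the Darboux argument.
\end{enumerate}
This is what yields (i) and (iii).

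\textbf{Your reason that only one $\widehat V$ survives is also wrong as stated.} For $\nu>0$, $D_1(\nu)$ contains real $\beta\in(\k_1,\beta_{13}^-(i\nu))$, so $\lambda_0=\lambda(\beta)<0$ is real. The root of $\lambda(\beta)=\lambda_0$ near $\k_2$ is then real and slightly below $\k_2$, so the strip $\Re\beta\in(\k_1-\a,\k_2)$ contains two roots. The paper instead uses Rouch\'e's theorem on a box in $\{\Re\beta<a_{12}\}$ with the quantitative bound $\max_{\overline{D_1(\nu)}}|\lambda|<\min_{\pd D_R}|\lambda|$. This is where smallness of $\a$ enters.

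\textbf{Step~4 has the same defect at $\lambda_0=0$ with $\nu>0$.} The functions $V(\k_j)=\Phi^1_j\Phi^{1,*}_j$ decay faster in $X$ than a generic $V(\beta)$. So the secular conditions at $\lambda=0$ are against $\mathbf{W_1^*}(0)$ and $\mathbf{W_2^*}(0)$, built from $V(\k_1)$ and $V(\k_3)$ (Lemma~\ref{lem:calA20-est}). For $\nu>0$ the pairing against the $V(\k_3)$ vector cannot be killed at either end:
\begin{itemize}
\item along the $[1,3]$-soliton the product of the rates is only $o(e^{\nu Y})$ as $Y\to+\infty$;
\item along the $[2,3]$-soliton it grows like $e^{-\a(\k_3-\k_1)Y}$ as $Y\to-\infty$.
\end{itemize}
So locating $\k_1,\k_2,\k_3$ relative to $D_1(\nu),D_2,D_3(\nu)$ proves $V(\k_3)\in\ker\mL_\mY^*$, but not the reverse inclusion. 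The paper closes this gap as follows:
\begin{enumerate}
\item It uses the extra zero modes $V_{12},V_{23}$ of Lemma~\ref{lem:sol-l-ad-0}.
\item Their vectors $\mathbf{W_3}(0),\mathbf{W_4}(0)$ pair nondegenerately with $\mathbf{W_1^*}(0),\mathbf{W_2^*}(0)$ by Lemma~\ref{lem:orth-bfUU*0}. Subtracting $c_3\mathbf{W_3}(0)+c_4\mathbf{W_4}(0)$ therefore enforces the secular conditions.
\item Lemma~\ref{lem:v1-v12-v23} shows that only $\mathbf{W_2}(0)$ survives in $\calX(\nu)^*$.
\end{enumerate}
For $\nu=0$ the $V(\k_3)$ pairing does vanish as $Y\to+\infty$, which is why $0\notin\sigma_r(\mL_\mY)$ in that case.
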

\begin{proof}
Suppose that  $\lambda_0\in \sigma_r(\mL_\mY)$.
Then $\mL_\mY^* U^*=\bar{\lambda_0}U^*$ for a nonzero $U^*\in\calX(\nu)^*$ and
\begin{equation*}
  \mL_0\pd_x^{-1}U^*=\frac{3}{2}u_\mY U^*
-\pd_x^{-1}\left(\frac32u_\mY U^*+\bar{\lambda_0}U^*\right)
\in \calX(\nu)^*\,.
\end{equation*}
Since $\calX(\nu)^*=\calX_{12}^*+\calX_{13}(\nu)^*$,
it follows from the proof of \cite[Proposition~C.1]{Miz23} that
for a $C>0$,
\begin{equation}
\label{eq:PropC1}
  \|\pd_x^2u\|_{\calX(\nu)^*}+\|\pd_yu\|_{\calX(\nu)^*}\lesssim
\|(\mL_0-C)u\|_{\calX(\nu)^*}\lesssim
\|u\|_{\calX(\nu)^*}+\|\mL_0 u\|_{\calX(\nu)^*}\,.
\end{equation}
Combining the above, we have $\pd_xU^*\in \calX(\nu)^*$.
Since $\pd_x\mL_\mY^*=-\mL_\mY\pd_x$, it follows that
$(\mL_\mY+\lambda_0)\overline{\pd_xU^*}=0$ and that $-\lambda_0$ is an
eigenvalue of $\mL_\mY$ on $\calX^{-\a}(-\nu)=\calX^\a(\nu)^*$.
\par
Let $\mathbf{U_2}=(U_2,\pd_X^{-1}\pd_YU_2)^T$ and
\begin{equation}
  \label{eq:adevp}
\mL_\mY U_2+\lambda_0 U_2=0\,,\quad U_2\in\calX^{-\a}(-\nu)\,.
\end{equation}
Then $e^{-\a X}\mathbf{U_2}(X,0)\in  L^2(\R)$ and 
\begin{gather}
\label{eq:U2+infty'}
\lim_{Y\to\infty}e^{-(2\a a_{13}+\nu)Y}\|e^{-\a X}\mathbf{U_2}(\cdot,Y)
\|_{L^2}=0\,,
\\ \label{eq:U2-infty'}
\lim_{Y\to-\infty}e^{-2\a a_{12}Y}\|e^{-\a X}\mathbf{U_2}(\cdot,Y)
\|_{L^2}=0\,.
\end{gather}
\par
By Lemma~\ref{lem:imev} (ii), we may assume that
$\Re\beta_j(\lambda_0)+\a\ne \k_j$ for $j=1$, $2$, $3$.
Let us consider the case where
$\lambda_0\in\C\setminus\{0,\lambda(\beta_{*,\pm})\}$ and
$\Re\beta_j(\lambda_0)+\a\in(\k_1,\k_3)$.
\par
Suppose that $\Re(\beta_j(\lambda_0)-\k_1)(\beta_j(\lambda_0)-\k_3)+\nu\le 0$.
By \eqref{eq:1-1},
\begin{equation*}
\sup_{Y\ge0}\|e^{\a z_{13}+\nu Y}\mathbf{W_j^*}(-\lambda_0)\|_{L^2(\R)}<\infty\,,
\end{equation*}
and it follows from \eqref{eq:U2+infty'} and Lemma~\ref{lem:bfUU*} that
\begin{equation}
  \label{eq:3}
\la \mathbf{U_2}, \mathbf{W_j^*}(-\lambda_0)\ra=0\,.
\end{equation}
\par

Suppose that $\beta_j(\lambda_0)$ satisfies
\begin{equation}
  \label{eq:5}
\Re(\beta-\k_1)(\beta-\k_3)+\nu>0\,.
\end{equation}
By \eqref{eq:1-2},
$\sup_{Y\le0}\|e^{\a z_{12}}\mathbf{W_j^*}(-\lambda_0)\|_{L^2(\R)}<\infty$,
and \eqref{eq:3} follows from \eqref{eq:U2-infty'} and Lemma~\ref{lem:bfUU*}
unless $\beta=\beta_j(\lambda_0)$ satisfies one of the following conditions:
\begin{gather}
\label{eq:notzero-1}
\Re(\beta-\k_1)(\beta-\k_2)<0\,,
\quad \Re\beta+\a\in(\k_1,\k_2)\,,
\\
\label{eq:notzero-2}
\Re(\beta-\k_2)(\beta-\k_3)+\eta_I^{23}<0\,,
\quad \Re\beta+\a\in(\k_2,\k_3)\,.
\end{gather}
By the definition, we have $\beta\in D_1(\nu)$ if and only if $\beta$ satisfies
\eqref{eq:5} and \eqref{eq:notzero-1}.
There is no $\beta$ satisfying both \eqref{eq:5} and \eqref{eq:notzero-2}
because either $\Re\beta <\beta_{13}^-(i\nu)<\k_2-\a$ or
$\Re\beta>\beta_{13}^+(i\nu)>\k_3-\a$ for $\beta$ satisfying \eqref{eq:5}.
\par
Suppose that $\lambda_0=\lambda(\beta_{*,\pm})$.
Since $\beta_{*,\pm}\in(\k_1,\k_3)$ by Remark~\ref{rem:betas},
it follows from \eqref{eq:1-1} that
\begin{equation*}
  \lim_{Y\to\infty}e^{(2\a a_{13}+\nu)Y}
(\|e^{\a X}V(\beta_{*,\pm})\|_{L^2}+
\|e^{\a X}\pd_\beta V(\beta_{*,\pm})\|_{L^2})=0
\end{equation*}
if $\a>0$ is sufficiently small.
Combining the above with \eqref{eq:U2+infty'} and Lemma~\ref{lem:bfUU*},
we have $\la \mathbf{U_2}, \mathbf{W_1^*}(-\lambda_0)\ra=
\la \mathbf{U_2}, \mathbf{W_2^*}(-\lambda_0)\ra=0$.
\par

Suppose that $\lambda_0\not\in\{0,\lambda(\beta_{*,\pm})\}$ and
$\beta_j(\lambda_0)\not\in D_1(\nu)$ for all
$\beta_j(\lambda_0)+\a\in(\k_1,\k_3)$ or $\lambda_0=\lambda(\beta_{*,\pm})$.
By Lemmas~\ref{lem:ImcalA2} and \ref{lem:kermapU1-U2},
we have for $\lambda_0\ne -\lambda(\beta_{*,\pm})$,
\begin{gather*}
\mathbf{U_2}\in \spann\{\mathbf{W_j}(-\lambda_0)\mid
  \k_1<\Re\tb_j(\lambda_0)-\a<\k_3\}\,,
\\
\mathbf{W_j}(-\lambda_0)=
(\pd_XV(\tb_j(\lambda_0)),\pd_YV(\tb_j(\lambda_0)))^T\,,
\end{gather*}
and for $\lambda_0= -\lambda(\beta_{*,\pm})$,
$\mathbf{U_2}\in \spann\{\mathbf{W_1}(-\lambda_0),\,
\mathbf{W_2}(-\lambda_0)\}$ and
\begin{gather*}
\mathbf{W_1}(-\lambda_0)=(\pd_XV(\beta_{*,\pm}),\pd_YV(\beta_{*,\pm}))^T\,, 
\\
\mathbf{W_2}(-\lambda_0)=
(\pd_X\pd_\beta V(\beta_{*,\pm}),\pd_Y\pd_\beta V(\beta_{*,\pm}))^T\,.
\end{gather*}
By Lemma~\ref{lem:adjointef},
$\mathbf{W_j}(-\lambda_0)\in\calX(\nu)^*$ if and only if
$\tb_j(\lambda_0)\in D_2\cup D_3(\nu)$.
Since $\beta_{*,-}<\k_2$ and $\beta_{*,+}\in
(\beta_{23}^-(-i\eta_I^{23}),\beta_{23}^+(-i\eta_I^{23}))$ 
for sufficiently small $\a>0$,
we have $\beta_{*,\pm}\not\in D_2\cup D_3(\nu)$.
In view of the asymptotics of $V(\beta)$ in the neighborhood
of $[j,k]$-soliton such as \eqref{eq:1-1} and \eqref{eq:1-2} obtained
from \eqref{eq:prod-PhiPhi*j},
$$\spann\{\mathbf{W_j}(-\lambda_0)\mid \mathbf{W_j}(-\lambda_0)\not\in\calX(\nu)^*\}
\cap \calX(\nu)^*=\emptyset\,.$$
Thus we prove (ii).
\par
Using Rouch\'e's theorem,
we will prove that if $\lambda_0\in \lambda(D_1(\nu))$,
then the equation $\lambda(\beta)=\lambda_0$ has at most one solution
in the half plane $\{z\in \C \mid \Re z \le a_{12}\}$.
Let $D_R=\{x+iy\mid -R< x < a_{12},\, -R< y< R\}$
and $R$ is a positive number such that $\overline{D_1(\nu)}\subset D_R$ and
\begin{equation*}
\min_{\pd D_R}|\lambda(\beta)|
=|\lambda(a_{12})|=\frac{1}{4}(\k_2-\k_1)^2(\k_3-a_{12})\,.
\end{equation*}
By the maximum principle and Remarks~\ref{rem:boundary} and \ref{rem:adjointef},
\begin{gather*}
\sup_{\beta\in  D_1(\nu)}|\lambda(\beta)|=\max_{\beta\in \pd  D_1(\nu)}|\lambda(\beta)|
=\max(M_1,M_2,M_3)\,,
\end{gather*}
where $M_1=\max|\lambda(\mathcal{C}_{1r}(\nu))|$,
$M_2=\max|\lambda(\mathcal{C}_{1\ell})|$ and
\begin{gather*}
M_3=\max\{|\lambda(\k_1-\a+i\eta_I)|\mid \sqrt{\a(\a+\k_2-\k_1)}\le |\eta_I|
\le \sqrt{\a(\a+\k_3-\k_1)+\nu}\}\,.
  \end{gather*}

Since $|\lambda_{13}^+(\eta_R+i\nu)|$ and $|\lambda_{12}^+(\eta_R)|$ are even
and monotone increasing for $\eta_R\ge0$
(see Lemma~\ref{lem:lambda13'} and its proof),
we have 
$M_1=|\lambda_{13}^+(\pm\eta_{13}^*(\a,\nu)+i\nu)|$
and $M_2=|\lambda_{12}^+(\eta_{12}^*(\a))|$.
Since $\lambda_{13}^+(\eta)=i\eta\{\k_2-\beta_{13}^-(\eta)\}$ by
\eqref{def:lambdajk}, it follows from
\eqref{eq:curve-endpoints1}, \eqref{ass:alpha23} and \eqref{ass:X13nu} that
\begin{align*}
 M_1^2=&
\left[(\k_3-\k_1+2\a)^2\{\a(\k_3-\k_1+\a)+\nu\}+\nu^2\right]
\\ & \qquad\times
\{(\k_2-\k_1+\a)^2+\a(\k_3-\k_1+\a)+\nu\}
\\ <& 27(\k_3-\k_1)^4(\k_2-\k_1)\a\,.
\end{align*}
Since the line $\Re\beta=\k_1-\a$ intersects $\mathcal{C}_{1r}(\nu)$ and
$\mathcal{C}_{1\ell}$ at $\beta_{13}^-(\pm\eta_{13}^*(\a,\nu)+i\nu)$ and
$\beta_{12}^-(\pm\eta_{12}^*(\a))$, respectively, and 
$|\lambda(\k_1-\a+i\eta_I)|$ is even and monotone increasing for $\eta_I\ge0$,
we have $\sup_{\beta\in  D_1(\nu)}|\lambda(\beta)|=M_1$.
Thus we have $\max_{\overline{D_1(\nu)}}|\lambda(\beta)|<\min_{\pd D_R}|\lambda(\beta)|\,.$ provided $\a>0$ is sufficiently small.
Since $\beta=\k_1$ is the unique solution of $\lambda(\beta)=0$ that belongs to $D_R$
and $\k_1\in \overline{D_1(\nu)}$, it follows from Rouch\'e's theorem that
if $\lambda_0=\lambda(\beta_0)$ for a $\beta_0\in \overline{D_1(\nu)}$, then
$\beta_1(\lambda_0)=\beta_0$ and $\beta_2(\lambda_0)$, $\beta_3(\lambda_0)\not\in D_R$.
\par

Suppose that $\beta_0:=\beta_1(\lambda_0)\in D_1(\nu)$. 
By Lemmas~\ref{lem:adjointef} and \ref{lem:sol-l-ad},
\begin{equation*}
\mL_\mY^*\widehat{V}(\beta_0)
=\lambda_0\widehat{V}(\beta_0)\,,
\quad \widehat{V}(\beta_0)\in \calX(\nu)^*\,,
\end{equation*}
and $\widehat{V}(\beta_0)$ is a solution of \eqref{eq:efhatVb}
with $\beta=\beta_0$.
Let
$$\mathbf{\widetilde{W_j}}(\lambda)
=(\pd_X\widehat{V}(\beta_1(\lambda)),
\pd_Y\widehat{V}(\beta_1(\lambda)))^T\,.$$
By the definitions, we have $W_1^*(-\lambda_0)=V(\beta_0)$ and
\begin{align*}
 \la \mathbf{\widetilde{W_j}}(\lambda_0), \mathbf{W_1^*}(-\lambda_0)\ra
=&  -\frac{4}{3}b_2
\la \pd_X\widehat{V}(\beta_0), V(\beta_0)\ra
\\ & -\la \widehat{V}(\beta_0),\pd_YV(\beta_0)\ra
+\la \pd_Y\widehat{V}(\beta_0),V(\beta_0)\ra\,.
\end{align*}
By Lemma~\ref{lem:bfUU*},
$\la \mathbf{\widetilde{W_j}}(\lambda_0), \mathbf{W_1^*}(-\lambda_0)\ra$
is independent of $Y$.
Since $e^{\theta_2}/\tau_\mY$ decays exponentially as $Y\to\infty$,
we have
\begin{gather*}
V(\beta)\simeq 
\frac{1}{2}e^{(\beta-\k_1)(\beta-\k_3)Y}V_\infty(\beta)\,,
\quad 
\widehat{V}(\beta)\simeq 
-\frac{\sqrt{c_{13}}}{\beta-\k_3}
e^{-(\beta-\k_1)(\beta-\k_3)Y}\widehat{V}_\infty(\beta)\,,
\\
V_\infty(\beta)=e^{(\beta-a_{13})z_{13}}
\sech\sqrt{c_{13}}z_{13}\left(\beta-a_{13}
-\sqrt{c_{13}}\tanh\sqrt{c_{13}}z_{13}\right)\,,
\\
\widehat{V}_\infty(\beta)=e^{-(\beta-a_{13})z_{13}}
\sech\sqrt{c_{13}}z_{13}\left(\beta-a_{13}
+\sqrt{c_{13}}\tanh\sqrt{c_{13}}z_{13}\right)\,,
\end{gather*}
and
\begin{align*}
 \la \mathbf{\widetilde{W_j}}(\lambda_0), \mathbf{W_j^*}(-\lambda_0)\ra
=&   
\frac{\sqrt{c_{13}}}{\beta_0-\k_3}
\left(\frac{2}{3}b_2-2a_{13}\right)
\la \pd_X\widehat{V}_\infty(\beta_0),V_\infty(\beta_0)\ra
\\ &
+\sqrt{c_{13}}(\beta_0-\k_1)
\la \widehat{V}_\infty(\beta_0),V_\infty(\beta_0)\ra\,.
\end{align*}
Here we use $\pd_YV_\infty(\beta_0)=2a_{13}\pd_XV_\infty(\beta_0)$
and $\pd_Y\widehat{V}_\infty(\beta_0)=2a_{13}\pd_X\widehat{V}_\infty(\beta_0)$.
By straightforward computations,
\begin{gather*}
\la \widehat{V}_\infty(\beta_0),V_\infty(\beta_0)\ra
= \frac{2}{\sqrt{c_{13}}}
\left\{(\beta_0-\k_1)(\beta_0-\k_3)+\frac{2}{3}c_{13}\right\}\,,
\\
\la \pd_X\widehat{V}_\infty(\beta_0),V_\infty(\beta_0)\ra
=  (\beta_0-a_{13})
\left(\frac{4}{3}\sqrt{c_{13}}-
\left\la \widehat{V}_\infty(\beta_0),V_\infty(\beta_0)\right\ra
\right)\,,
\end{gather*}
 and
 \begin{equation*}
   \la \mathbf{\widetilde{W_j}}(\lambda_0), \mathbf{W_j^*}(-\lambda_0)\ra
=-\frac{2}{3}\lambda'(\beta_0)(\beta_0-\k_1)\ne 0\,.
 \end{equation*}
Note that  $\k_1\not\in D_1(\nu)$ and that $\beta_{*,\pm}\not\in D_1(\nu)$
thanks to \eqref{eq:1hat}  and Remark~\ref{rem:betas}
provided $\a>0$ is sufficiently small.
Hence it follows that if $U_2$ is a solution of \eqref{eq:adevp}
and $\lambda=\lambda(\beta)$ for a $\beta\in D_1(\nu)$, then 
there exists a constant $c$ such that
$\la \mathbf{U_2}-c\mathbf{\widetilde{W_j}}(\lambda),
\mathbf{W_1^*}(-\lambda)\ra =0$ for every $Y\in\R$.
Applying Lemmas~\ref{lem:ImcalA2} and \ref{lem:kermapU1-U2},
we have
\begin{equation*}
\mathbf{U_2}-c\mathbf{\widetilde{W_j}}(\lambda)\in 
 \spann\{\mathbf{W_j}(-\lambda)\mid
  \k_1<\Re\tb_j(\lambda)-\a<\k_3\}\,.
\end{equation*}
Combining the above with Lemma~\ref{lem:adjointef}, we have (i) and (iii)
for $\lambda\ne0$.
\par
Finally, we consider the case where $\lambda=0$.
If $\nu=0$, then by using \eqref{eq:U2+infty'} and the fact that $\k_2, \k_3\not\in D_2\cup D_3(0)$,
we can prove $0\not\in\sigma_r(\mL_\mY)$ in the same way as Proposition~\ref{prop:noev}.

Suppose that $\nu>0$. By Lemma~\ref{lem:orth-bfUU*0}, there exist constants $c_3, c_4\in\C$ such that
\begin{equation*}
  \mathbf{\widetilde{U_2}}:=\mathbf{U_2}-c_3\mathbf{W_3}(0)-c_4\mathbf{W_4}(0)
\perp \mathbf{W_1^*}(0)\,,\; \mathbf{W_2^*}(0)\,.
\end{equation*}
Applying Lemmas~\ref{lem:calA20-est} and
\ref{lem:kermap0U1-U2}, we have
$\mathbf{\widetilde{U_2}}\in  \spann\{\mathbf{W_1}(0),\mathbf{W_2}(0)\}$.
Since $\k_3\in D_3(\nu)$, it follows from Lemmas~\ref{lem:adjointef} and
\ref{lem:v1-v12-v23} that
$\spann\{\mathbf{W_j}(0)\}_{j=1}^4\cap \calX(\nu)^*=\spann\{\mathbf{W_2}(0)\}$.
This completes the proof of Proposition~\ref{prop:spec-res}.
\end{proof}
\bigskip

\section{Proof of Theorem~\ref{thm:linearstability}}
\label{sec:proof-main}
Let $\wmL$ be the restriction of $\mL_\mY$ to $\calX_\mY(\nu)$.
First, we will investigate invertibility of $\lambda-\wmL$ 
for $\lambda\not\in\sigma(\mL_{12})\cap\sigma(\mL_{13})$.
\begin{lemma}
  \label{lem:non-degeneracy}
Let $\a_0$ be a sufficiently small positive number.
Assume  \eqref{ass:X13nu} and that $\a\in(0,\a_0)$.
If $\lambda\in(\rho(\mL_{12})\cup\rho(\mL_{13}))\cap
\rho(\mL_0)\cup \rho(\mL_{23})$, then
$\lambda-\wmL$ has a bounded inverse on $\calX_\mY$.
\end{lemma}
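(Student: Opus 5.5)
We read the hypothesis as $\lambda\in(\rho(\mL_{12})\cup\rho(\mL_{13}))\cap\rho(\mL_0)\cap\rho(\mL_{23})$. The proof has three steps: injectivity of $\lambda-\wmL$, an a priori bound $\|u\|_{\calX(\nu)}\lesssim\|(\lambda-\wmL)u\|_{\calX(\nu)}$ on $\calX_\mY(\nu)$, and surjectivity onto $\calX_\mY(\nu)$.

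Injectivity follows from Proposition~\ref{prop:noev}, since $\lambda\in\rho(\mL_0)\cap\rho(\mL_{23})$. For surjectivity, two facts are needed. First, $\calX_\mY(\nu)$ is invariant under $\lambda-\mL_\mY$: it is the annihilator of the adjoint eigenfunctions $V(\beta)$, $\widehat V(\beta)$ of Lemma~\ref{lem:adjointef}. Second, by Proposition~\ref{prop:spec-res} these span all of $\ker(\bar\lambda-\mL_\mY^*)$ whenever $\lambda\in\sigma_r(\mL_\mY)$. Hence the range of $\lambda-\mL_\mY$, once shown closed, is exactly ${}^\perp\ker(\bar\lambda-\mL_\mY^*)\supset\calX_\mY(\nu)$. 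A preimage $u$ of $f\in\calX_\mY(\nu)$ then lies in $\calX_\mY(\nu)$, because $\la u,V\ra=(\bar\lambda'-\bar\lambda)^{-1}\la f,V\ra=0$ whenever $\lambda'\neq\lambda$ is the eigenvalue attached to $V$. The case $\lambda'=\lambda$ is handled by Corollary~\ref{cor:orth-condition-identity} together with analyticity of the pairings.

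So everything reduces to the a priori bound. We argue by contradiction, as in Lemma~\ref{lem:spec1}. Take $u_n\in\calX_\mY(\nu)$ with $\|u_n\|=1$ and $(\lambda-\mL_\mY)u_n=f_n\to0$. A weak limit is an eigenfunction, so it vanishes by Proposition~\ref{prop:noev}, and hence $u_n\to0$ locally. Localize with the partition $\chi_{0,k},\chi_{1,k},\chi_{ij,k}$:
\begin{itemize}
\item on the $\chi_{1,k}$ region, $\tu^1_k\to0$ because $\lambda\in\rho(\mL_0)$;
\item on the $[2,3]$ region, $\tu^{23}_k\to0$ because $\lambda\in\rho(\mL_{23})$;
\item of the two remaining regions, $[1,2]$ and $[1,3]$, at least one has resolvent, so that piece, say $\tu^{13}_k$, also tends to $0$.
\end{itemize}
The only piece left is the one along the soliton for which $\lambda$ is a continuous eigenvalue, say $\tu^{12}_k$ with $\lambda=\lambda_{12}^\pm(\eta)$.

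For that piece we use the second part of Proposition~\ref{lem:wmL1-decay}: $\lambda-\mL_{12}$ is invertible on $(I-P_{12}(\eta_0))\calX_{12}$. It therefore suffices to show $P_{12}(\eta_0)\tu^{12}_k\to0$, that is, that the pairings of $\tu^{12}_k$ against $g^{0,*}_{12}(z_{12},\pm\eta)e^{iy\eta}$ vanish asymptotically. Since $u_n\in\calX_\mY(\nu)$, Lemma~\ref{lem:orth-condition-identity} gives $\la u_n,g_{12}^{*,\pm}(\eta)\ra=0$ for all relevant $\eta$. These exact adjoint modes agree with $g^{0,*}_{12}(z_{12},\pm\eta)e^{iy\eta}$ up to exponentially small errors on the $[1,2]$ region, by the asymptotics \eqref{eq:1-2} and \eqref{eq:10-3}. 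On the other regions, the other pieces of $u_n$ pair with them to something tending to $0$, since those pieces tend to $0$ in norm and the adjoint modes stay bounded in $\calX(\nu)^*$ after truncation. This yields $a_\ell(\eta)\to0$ in $L^2(-\eta_0,\eta_0)$, so $P_{12}(\eta_0)\tu^{12}_k\to0$ and then $\tu^{12}_k\to0$. This contradicts $\|u_n\|=1$. The case where $[1,3]$ is the resonant soliton is identical, using Lemma~\ref{lem:X13nu} and the $g_{13}^{*,\pm}(\cdot+i\nu)$ modes.

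The main obstacle is this last step: controlling the projection uniformly in the spectral parameter near the endpoints $\pm\eta_{12}^*(\a)$. There $g_{12}^{*,\pm}$ loses decay and Remark~\ref{rem:cef} warns that the modes blow up along the other solitons. We would first handle $|\eta|\le\eta_0$ with $\eta_0$ strictly inside the admissible interval, relying on the $L^2$ boundary convergence of Lemma~\ref{lem:orth-analytic}. The case where $\lambda$ lies near the endpoints we would treat by shrinking $\a_0$, so that those $\lambda$ fall into $\rho(\mL_{12})$.
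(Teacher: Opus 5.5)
Your localization skeleton matches the paper's contradiction argument. However, two steps fail as written.

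\textbf{Surjectivity.} Take $\lambda$ on one of the curves $\lambda_{12}^\pm$, $\lambda_{13}^\pm$, which is the only case needing work. There the range of $\lambda-\mL_\mY$ on $\calX(\nu)$ is \emph{not} closed. The operator is injective by Proposition~\ref{prop:noev}. The truncated modes in the proof of Lemma~\ref{lem:spec-pre} (e.g.\ $g_{13}^{\pm,R}(\eta)$ and their $[1,2]$ analogues) are approximate eigenvectors, so $\lambda-\mL_\mY$ is not bounded below on $\calX(\nu)$. Hence ``range $={}^\perp\ker(\bar\lambda-\mL_\mY^*)$ once closed'' cannot be used, and there is no preimage to feed into your annihilator argument. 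Your a priori bound lives only on $\calX_\mY(\nu)$. It makes $\operatorname{Range}(\lambda-\wmL)$ closed, but identifying that range with $\calX_\mY(\nu)$ would need an adjoint statement on the quotient $\calX(\nu)^*/\calX_\mY(\nu)^\perp$, which is not available. The paper instead approximates. It picks $\lambda_n\in\rho(\mL_\mY)$ with $\lambda_n\to\lambda$ and sets $u_n=(\lambda_n-\mL_\mY)^{-1}f$. This $u_n$ lies in $\calX_\mY$, since the resolvent preserves the annihilator of the adjoint eigenfunctions. A bound on $\|u_n\|$ then gives a weak limit solving $(\lambda-\mL_\mY)u=f$. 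If your fixed-$\lambda$ bound held, this repair would be immediate: $\|u_n\|\le C\|f\|+C|\lambda-\lambda_n|\,\|u_n\|$.

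\textbf{The projection near $\eta=0$.} The claim $a_\ell(\eta)\to0$ in $L^2(-\eta_0,\eta_0)$ breaks down at the degeneracy $\eta=0$, which is the real crux of the paper's proof. The projection $P_{12}(\eta_0)$ uses $g^{0,*}_{12,1}(\eta)=\{g^{0,*}_{12}(\eta)-g^{0,*}_{12}(-\eta)\}/(-2i\bar\eta)$. With only the modes $g_{12}^{*,\pm}$, its exact counterpart is $\{g^{*,+}_{12}(\eta)-g^{*,-}_{12}(\eta)\}/(-2i\eta)$. At $\eta=0$ the numerator is $-V(\k_1)-V(\k_2)=V(\k_3)$ by \eqref{eq:4}. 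By \eqref{eq:V3-1} this equals $\frac{\sqrt{c_{13}}}{2}\sech^2\sqrt{c_{13}}z_{13}$ along the entire $[1,3]$-soliton. In the same vein, Lemma~\ref{lem:g*-growth}(ii) gives $g^{*,+}_{12}(\eta)\approx e^{\{i\eta-(\k_3-\k_2)\delta_{12}(-\eta)\}Y}g^{\#,*}_{12}(z_{13},\eta)$ on $\R^2_+$. So this adjoint function has size $1/|\eta|$ on the support of $\tu^{13}_k$. It is not ``bounded after truncation'', and $\|\tu^{13}_k\|\to0$ gives no $L^2(d\eta)$ control of its pairing with $\tu^{13}_k$. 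Symmetrically, in the $[1,3]$ case the numerator is $V(\k_2)$ along the $[1,2]$-soliton.

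The paper removes this singularity with the combinations $\tg^{1,*},\tg^{2,*}$ (resp.\ $g^{1,*},g^{2,*}$). These bring in $g_{13}^{*,-}$ at $\eta''=\eta+i\delta_{12}^\#(\eta)$ and rescale the $[1,2]$ parameter to $r_2\eta$. The numerator then vanishes at $\eta=0$ by \eqref{eq:4}, and Lemma~\ref{lem:g*-growth'} gives uniform control on both half-planes. A projection localized near $\pm\tilde\eta$, as in Lemma~\ref{lem:continuation-1}, would avoid $\eta=0$. But it needs invertibility on the complement of that local projection, which Proposition~\ref{lem:wmL1-decay} does not supply.

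Conversely, the obstacle you single out is not one. The endpoints $\lambda_{12}^\pm(\pm\eta_{12}^*(\a))$ lie in $\sigma(\mL_0)$, which the hypothesis already excludes, and no choice of $\a_0$ puts them in $\rho(\mL_{12})$. Finally, $\la u_n,g_{12}^{*,\pm}(\eta)\ra=0$ comes from the boundary values in Lemma~\ref{lem:orth-analytic}, not from Lemma~\ref{lem:orth-condition-identity}.
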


Unlike in \cite[Lemmas~6.24 and 6.25]{Miz23}, we need to prove this
case separately from Lemma~\ref{lem:spec1} because spectral
projections from $\calX$ to $\spann\{g_{1j}(\eta)\}$ are not
well-defined (see Remark~\ref{rem:cef}).  To prove
Lemma~\ref{lem:non-degeneracy}, we need to resolve degeneracy of
$g_{ij}^{*,\pm}(\eta)$ at $\eta=0$.  Let
 \begin{gather}
\label{def:r12}
r_1=\frac{\k_3-\k_2}{\k_3-\k_1}\,,\quad
r_2=\frac{\k_2-\k_1}{\k_3-\k_1}\,,
\quad
\delta_{ij}(\eta)=\gamma_{ij}(\eta)-\sqrt{c_{ij}}\,,
\\ \notag
\delta_{13}^\#(\eta)=(\k_3-\k_2)\delta_{13}(\eta)-ir_1\eta\,,
\quad
\delta_{12}^\#(\eta)=ir_1\eta-(\k_3-\k_2)\delta_{12}(r_2\eta)\,,
\end{gather}
and let
\begin{align*}
& g^{1,*}(X,Y,\eta)=\frac{1}{\eta}\{g_{13}^{*,+}(X,Y,\eta)-g^{2,*}(X,Y,\eta)\}\,,
\\
& g^{2,*}(X,Y,\eta)=g_{12}^{*,-}(X,Y,\eta')+g_{13}^{*,-}(X,Y,\eta)\,,
\\ 
& \tg^{1,*}(X,Y,\eta)=\frac{1}{\eta}
\{g_{12}^{*,+}(X,Y,r_2\eta)-\tg^{2,*}(X,Y,\eta)\}\,,
\\
& \tg^{2,*}(X,Y,\eta)
=g_{12}^{*,-}(X,Y,r_2\eta)+g_{13}^{*,-}(X,Y,\eta'')\,,
\end{align*}
where $\eta'=r_2\eta+i\delta_{13}^\#(\eta)$ and
$\eta''=\eta+i\delta_{12}^\#(\eta)$.
We choose this specific form, taking into account \eqref{eq:4},
$g_{jk}^{*,+}(X,Y,0)=-\Phi^1_j\Phi^{1,*}_j$ and
$g_{jk}^{*,-}(X,Y,0)=\Phi^1_k\Phi^{1,*}_k$.
Lemma~\ref{lem:g*-growth'} tells us that 
$g^{1,*}(\eta)$ and $g^{2,*}(\eta)$ behave like $g^{0,*}_{13,1}(\eta)$ and
$g^{0,*}_{13,2}(\eta)$ as $Y\to\infty$ and that
$\tg^{1,*}(\eta)$ and $\tg^{2,*}(\eta)$ behave like $g^{0,*}_{12,1}(\eta)$ and
$g^{0,*}_{12,2}(\eta)$ as $Y\to-\infty$.

\begin{proof}[Proof of Lemma~\ref{lem:non-degeneracy}]
  If
  $\lambda\in\rho(\mL_{13})\cap \rho(\mL_{12})\cap \rho(\mL_{23})
  \cap\rho(\mL_0)$, then Lemma~\ref{lem:non-degeneracy} follows from
  Lemma~\ref{lem:spec1} and Propositions~\ref{prop:noev} and
  \ref{prop:spec-res}.  By Proposition~\ref{lem:wmL1-decay} and
  Lemma~\ref{lem:intersection}, we only have to consider the cases
  where $\lambda$ is a continuous eigenvalue associated with
  $[1,2]$-soliton or $[1,3]$-soliton.
\par
Suppose that $\tilde{\eta}\in(-\eta_{13}^*(\a),\eta_{13}^*(\a))$ and that
$\lambda_{13}^+(\tilde{\eta})\in\rho(\mL_{12})$. Let $f\in\calX_\mY$.
By Propositions~\ref{prop:noev} and \ref{prop:spec-res},
there exist $\lambda_n\in \rho(\mL)$ and $u_n\in \calX$ such that
\begin{gather*}
\left(\lambda_n-\mL_{\mY}\right)u_n=f\,,\quad
\lim_{n\to\infty}\lambda_n=\lambda_{13}^+(\tilde{\eta})\,.
\end{gather*}
Since $\lambda_n\in\rho(\mL)$ and $f\in \calX_\mY$, we have $u_n\in\calX_\mY$
and it follows from Lemma~\ref{lem:orth-analytic} that
\begin{equation}
\label{eq:un-orth}
\la u_n,g^{1,*}(\eta)\ra=\la u_n,g^{2,*}(\eta)\ra=0
\text{ for $\eta\in(-\eta_{13}^*(\a),\eta_{13}^*(\a))$.}
\end{equation}
Note that $g^{2,*}(\eta)$ is well-defined 
for $\eta\in(-\eta_{13}^*(\a),\,\eta_{13}^*(\a))$ because
$r_2\eta_{13}^*(\a)<\eta_{12}^*(\a)$.
\par
If $\liminf_{n\to\infty}\|u_n\|_\calX<\infty$, then there exist a subsequence
$\{u_n'\}$ and a $u_\infty\in\calX_\mY$ such that $u_n'$ converges weakly to  $u_\infty$
in $\calX$ and 
\begin{equation*}
  \left(\lambda_{13}^+(\tilde{\eta})-\mL_{\mY}\right)u_\infty=f\,.
\end{equation*}
\par
Suppose that $\lim_{n\to\infty}\|u_n\|_\calX=\infty$.
Let $\tilde{u}_n=u_n/\|u_n\|_\calX$ and $\tilde{f}_n=f_n/\|u_n\|_\calX$.
Then
\begin{gather*}
\left(\lambda_n-\mL_{\mY}\right)\tilde{u_n}=\tilde{f}_n\,,\quad
\|\tilde{u}_n\|_\calX=1\,,\quad \lim_{n\to\infty}\|\tilde{f}_n\|_\calX=0\,,
\\
\lim_{n\to\infty}\lambda_n=\lambda_{13}^+(\tilde{\eta})\,.
\end{gather*}
As in the proof Lemma~\ref{lem:spec1},
we can find a subsequence $\{u_n'\}$ of $\{\tilde{u}_n\}$ and $u_\infty\in\calX$ such that
$u_n'\to u_\infty$ weakly in $\mathcal{E}$, $u_n'\to u_\infty$ locally
in $\calX$, and 
\begin{equation*}
  \left(\lambda_{13}^+(\tilde{\eta})-\mL_{\mY}\right)u_\infty=0\,.
\end{equation*}
By Proposition~\ref{prop:noev}, we have $u_\infty=0$ and
$\{u_n'\}$ converges to $0$ weakly in $\mathcal{E}$. Thus
there exists a subsequence $\{\tilde{u}_{n(k)}\}$ satisfying
\eqref{eq:unk-0}--\eqref{eq:tfk-lim}. Since
$\lambda_{13}^+(\tilde{\eta})\in\rho(\mL_0)\cap\rho(\mL_{12})\cap\rho(\mL_{23})$,
it follows from \eqref{eq:tu-1}--\eqref{eq:tfk-lim} that as $k\to\infty$,
\begin{equation}
  \label{eq:tukto0}
  \|\tu^1_k\|_\calX+\|\tu^{12}_k\|_\calX+\|\tu^{23}_k\|_\calX\to0\,.
\end{equation}
By \eqref{eq:unk-0} and \eqref{eq:tukto0},
\begin{equation}
 \label{eq:tu13-1}
 \lim_{k\to\infty}\|\tu^{13}_k\|_\calX=1\,.
\end{equation}
\par
Let $\eta_0\in(|\tilde{\eta}|,\eta_{1j}^*(\a))$.
By \eqref{eq:unk-0}, \eqref{eq:un-orth} and \eqref{eq:tukto0},
\begin{equation}
  \label{eq:akto0}
a_k^j(\eta):=\la \tu_k^{13},g^{j,*}(\eta)\ra\to0
\text{ in $L^2(-\eta_0,\eta_0)$ for $j=1$, $2$ as $k\to\infty$.}
\end{equation}
\par
Let $\tu_k^{13}=v_k+w_k$ and
\begin{gather*}
a_k^{0,j}(\eta)=
\int_{\R^2}\tu_k^{13}\overline{g_{13,j}^{0,*}(z_{13},\eta)}
e^{-iy\eta}\,dXdY\,,
\\
w_k=\sum_{j=1,2}\int_{-\eta_0}^{\eta_0}
a_k^{0,j}(\eta)g_{13,j}^0(z_{13},\eta)e^{iy\eta}\,d\eta\,.
\end{gather*}
Then
\begin{gather}
\label{eq:vk}  
\left(\lambda_{13}^+(\tilde{\eta})-\mL_{13}\right)v_k=
  \tilde{f}^{13}_k-\left(\lambda_{13}^+(\tilde{\eta})-\mL_{13}\right)w_k\,,
\\  
\label{eq:vk-orth}
P_{13}(\eta_0)v_k=0\,.
\end{gather}
As $Y\to\infty$, continuous eigenfunctions of $\mL_\mY^*$ are
approximated by those of $\mL_{13}^*$.
Since $\supp\tu_k^{13}\subset\{(X,Y)\in\R^2_+\mid X^2+Y^2\ge 256k^2\}$,
it follows from Lemma~\ref{lem:g*-growth'}, \eqref{eq:7} and \eqref{eq:akto0} that
$$\|w_k\|_{\calX_{13}}\lesssim \|a_k^{0,1}\|_{L^2(-\eta_0,\eta_0)}
+\|a_k^{0,2}\|_{L^2(-\eta_0,\eta_0)}\to0\,.$$
Combining the above with Proposition~\ref{lem:wmL1-decay}, \eqref{eq:tfk-lim}
\eqref{eq:vk}  and \eqref{eq:vk-orth}, we have
\begin{align*}
  \|v_k\|_{\calX_{13}} \lesssim 
 \|\tilde{f}^{13}_k\|_{\calX_{13}}+\|w_k\|_{\calX_{13}}
\to 0\,,
\end{align*}
and $\|\tu_{13}^k\|_{\calX_{13}}\le
\|v_k\|_{\calX_{13}}+\|w_k\|_{\calX_{13}}\to 0$ as $k\to\infty$.
Since $\supp\tu^{13}_k\subset \R^2_+$,
\begin{equation*}
  \|\tu^{13}_k\|_\calX=\|\tu^{13}_k\|_{\calX_{13}}\to0\quad
  \text{as $k\to\infty$,}
\end{equation*}
which contradicts \eqref{eq:tu13-1}.
Thus we prove that
$\operatorname{Range}(\lambda_{13}^+(\tilde{\eta})-\wmL)=\calX_\mY$
if $0<|\eta|<\eta_{13}^*(\a)$.
We can prove the same results for $\lambda_{13}^-(\eta)-\wmL$ and
$\lambda_{12}^\pm(\eta)-\wmL$ in the same way.
This completes the proof of Lemma~\ref{lem:non-degeneracy}.
\end{proof}
\begin{remark}
If $\tilde{\eta}=0$, then
$\lambda_{13}^\pm(0)=\lambda_{12}^\pm(0)=0$ and
the proof of Lemma~\ref{lem:non-degeneracy} fails because
we cannot exclude the possibility that
\begin{equation*}
\liminf_{k\to\infty}\|\tu^{12}_k\|_\calX>0\quad\text{and}\quad
\liminf_{k\to\infty}\|\tu^{13}_k\|_\calX>0\,.
\end{equation*}
\end{remark}

Next, we will investigate the intersection of the sets
$\{\lambda_{13}^\pm(\eta_R+i\nu)\}$ and $\{\lambda_{12}^\pm(\eta_R)\}$.
\begin{lemma}
\label{lem:lambda12cap13}
\begin{itemize}
  \item[\rm{(i)}] There exists a $\delta_2>0$ such that
the curves $\lambda_{12}^+(\eta_R)$ and $\lambda_{12}^-(\eta_R)$ have
no intersection with the curves
$\lambda_{13}^+(\eta_R)$ and $\lambda_{13}^-(\eta_R)$ 
in the region $\{\lambda\in\C \mid \Re \lambda\ge -\delta_2,\,
\lambda\ne0\}$.
  \item[\rm{(ii)}] 
Suppose that $(\k_3-\k_2)^2>(\k_3-\k_1)(\k_2-\k_1)$.
There exist positive constants $\nu_0$ and $\delta_2$ such that
if $\nu\in(0,\nu_0)$,
the curves $\lambda_{12}^+(\eta_R)$ and $\lambda_{12}^-(\eta_R)$ have
no intersection with the curves $\lambda_{13}^+(\eta_R+i\nu)$
and $\lambda_{13}^-(\eta_R+i\nu)$ in the region
$\{\lambda\in\C\mid \Re \lambda\ge -\delta_2\}$.
  \item[\rm{(iii)}] Suppose that $(\k_3-\k_2)^2<(\k_3-\k_1)(\k_2-\k_1)$.
There exist positive constants $\nu_0$ and $\delta_2$ such that
if $\nu\in(0,\nu_0)$, then in the region
$\{\lambda\in\C\mid \Re \lambda\ge -\delta_2\}$,
the curve $\lambda_{12}^\pm(\eta_R)$
has no intersection with the curve $\lambda_{13}^\pm(\eta_R+i\nu)$,
respectively, whereas $\lambda_{12}^\pm(\eta_R)$ intersects with
$\lambda_{13}^\mp(\eta_R+i\nu)$ at a pair of distinct complex conjugate points,
respectively.
\item[\rm{(iv)}] Suppose that $(\k_3-\k_2)^2=(\k_3-\k_1)(\k_2-\k_1)$.
There exist positive constants $\nu_0$ and $\delta_2$ such that
if $\nu\in(0,\nu_0)$, then in the region
$\{\lambda\in\C\mid \Re \lambda\ge -\delta_2\}$, the curve $\lambda_{12}^-(\eta_R)$
has no intersection with the curves $\lambda_{13}^\pm(\eta_R+i\nu)$,
while the curve $\lambda_{12}^+(\eta_R)$
has no intersection with the curve $\lambda_{13}^+(\eta_R+i\nu)$
and intersects with $\lambda_{13}^-(\eta_R+i\nu)$  at a pair of distinct complex conjugate points.
\end{itemize}
\end{lemma}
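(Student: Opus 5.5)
The plan is to reduce everything to a local analysis near $\lambda=0$ and then compare the curves there, either exactly through the polynomial $\lambda(\beta)=\prod_{j=1}^3(\k_j-\beta)$ or by second-order Taylor expansion in $\eta$. The sign bookkeeping in the second-order comparison is not yet checked against the stated conditions; see the end.

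\emph{Reduction to a neighbourhood of $0$.} By Lemma~\ref{lem:lambda13'} and its $\nu=0$ analogue (Proposition~\ref{lem:wmL1-decay}, \cite[Lemma~7.20]{Miz23}), $\eta_R\pd_{\eta_R}\Re\lambda_{jk}^\pm<0$. Expanding $\gamma_{jk}(\eta)=\sqrt{c_{jk}}+\frac{i\eta}{2\sqrt{c_{jk}}}+\frac{\eta^2}{8c_{jk}^{3/2}}+O(\eta^3)$ gives
\[
\lambda_{jk}^\pm(\eta)=i v_{jk}^\pm\eta-\frac{\eta^2}{2\sqrt{c_{jk}}}+O(|\eta|^3),
\qquad v_{jk}^\pm=b_2-3a_{jk}\pm\sqrt{c_{jk}},
\]
and for $\lambda_{13}^\pm(\eta_R+i\nu)$ the same expansion holds with an extra term $\lambda_{13}^\pm(i\nu)+O(\nu|\eta_R|)$. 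Here $\lambda_{13}^+(i\nu)=-\nu(\k_2-\k_1)+O(\nu^2)$ and $\lambda_{13}^-(i\nu)=\nu(\k_3-\k_2)+O(\nu^2)$ by the proof of Lemma~\ref{lem:lambda13'}. Hence there are $\delta_2,\eta_\sharp>0$ such that the part of each curve lying in $\{\Re\lambda\ge-\delta_2\}$ has $|\eta_R|<\eta_\sharp$. All four curves are therefore small analytic arcs through (or, for $\nu>0$, near) $0$, tangent to the imaginary axis. Writing $s=\Im\lambda$, each is locally a graph
\[
\Re\lambda=\sigma-k\,s^2+O(|s|^3+\nu s^2),\qquad k=\frac{1}{2\sqrt{c_{jk}}\,(v^\pm_{jk})^2},
\]
where $\sigma$ is the real shift ($0$, or $\lambda_{13}^\pm(i\nu)$). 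Intersections in the region then amount to solving $\sigma_1-k_1s^2=\sigma_2-k_2s^2$ up to higher order.

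\emph{Part (i), $\nu=0$.} For the pairs $(\lambda_{12}^+,\lambda_{13}^+)$ I would argue exactly rather than by expansion. Both curves are $\lambda(\beta)$ with $\beta$ near $\k_1$, lying on $\{\Re(\beta-\k_1)(\beta-\k_2)=0\}$ and $\{\Re(\beta-\k_1)(\beta-\k_3)=0\}$ respectively, by \eqref{eq:judge1}. Since $\lambda'(\k_1)\ne0$, the map $\lambda$ is injective near $\k_1$, so a common point forces a common $\beta$. Subtracting the two conditions gives $\Re\beta=\k_1$, and then $-(\Im\beta)^2=0$, so $\beta=\k_1$ and $\lambda=0$. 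For the remaining pairs, whose preimages lie near different roots $\k_j$, I would use the graph description with $\sigma=0$. Distinct curvatures $k$ mean the arcs meet only at $0$ inside a small disc. The four values of $2\sqrt{c}\,v^2$ are, up to a common factor, $(\k_3-\k_1)(\k_2-\k_1)^2$, $(\k_3-\k_1)(\k_3-\k_2)^2$, $(\k_2-\k_1)(\k_3-\k_1)^2$ and $(\k_2-\k_1)(\k_3-\k_2)^2$. The only pairwise coincidences occur for the mixed-sign pairs, exactly when $(\k_3-\k_2)^2=(\k_3-\k_1)(\k_2-\k_1)$. In that degenerate case I would go to third order in $s$, or use the exact algebraic equation $\lambda(\beta)=-\lambda(\beta')$ with $\beta,\beta'$ on the respective quadrics, to rule out extra intersections. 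Combined with the reduction above, this yields (i).

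\emph{Parts (ii)--(iv), $0<\nu<\nu_0$.} Now the $[1,3]$-arcs are shifted: $\lambda_{13}^+$ to the left by about $\nu(\k_2-\k_1)$, and $\lambda_{13}^-$ to the right by about $\nu(\k_3-\k_2)$. The intersection equation becomes $s^2(k_{12}-k_{13})=-\sigma_{13}+O(\nu s^2+|s|^3)$. A real root pair $\pm s\ne0$, which gives the complex-conjugate intersection points, exists iff $-\sigma_{13}/(k_{12}-k_{13})>0$, and by the implicit function theorem it is then unique and nondegenerate for small $\nu$. It has size $s\sim\sqrt\nu$, so it stays in $\{\Re\lambda\ge-\delta_2\}$. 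The sign of $k_{12}-k_{13}$ is read off from the list of curvatures above. For the equal-sign pairs it is fixed and gives no root. For the mixed pairs it is governed by $(\k_3-\k_2)^2-(\k_3-\k_1)(\k_2-\k_1)$, which yields the trichotomy (ii)/(iii)/(iv).

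The main obstacle is this sign bookkeeping and, above all, the borderline case (iv). There the $s^2$ coefficients cancel, so the next-order terms must be computed: the $O(\nu s^2)$ and $O(s^3)$ corrections, including the $\nu$-dependence of $\lambda_{13}^\pm(i\nu)$ to second order. The goal is to show that $\lambda_{12}^+$ still meets $\lambda_{13}^-$ in a conjugate pair while $\lambda_{12}^-$ misses both. I would do this by writing the intersection condition exactly as $\lambda(\beta)=\pm\lambda(\beta')$ with $\beta,\beta'$ on explicit quadrics, and expanding in $(\nu,s)$ via a Newton-polygon argument.
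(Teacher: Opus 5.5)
Your scheme is essentially the paper's: write each curve near $0$ as a graph $\Re\lambda=\sigma-ks^2+\dots$ over $s=\Im\lambda$, compare curvatures, then insert the shifts $\lambda_{13}^\pm(i\nu)$. The paper does this with $\mu$, where $\Im\lambda=(\k_3-\k_1)^3\mu$. Your exact argument for the pair $\lambda_{12}^+,\lambda_{13}^+$, via \eqref{eq:judge1} and local injectivity of $\lambda(\beta)$ near $\k_1$, is a valid alternative to comparing coefficients.

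The sign check you postponed for (ii)/(iii) does come out as you hope. Normalize $\k_3-\k_1=1$, so $r_1=\k_3-\k_2$, $r_2=\k_2-\k_1$ and $s=\mu$. To leading order your formula $s^2=-\sigma_{13}/(k_{12}-k_{13})$ gives:
\begin{itemize}
\item $s^2=r_1^3r_2\nu/(r_2-r_1^2)$ for $(\lambda_{12}^+,\lambda_{13}^-)$;
\item $s^2=r_1^2r_2^3\nu/(r_2-r_1^2)$ for $(\lambda_{12}^-,\lambda_{13}^+)$;
\item a negative value for the two equal-sign pairs.
\end{itemize}

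The genuine gap is the borderline case $r_1^2=r_2$, which is all of (iv) and the degenerate sub-case of (i). There the conclusion is decided entirely by the sign of the first non-cancelling coefficient, and you neither compute it nor aim at the right order.

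\emph{Wrong order.} Since $\lambda_{jk}^\pm(-\eta_R+i\nu)=\overline{\lambda_{jk}^\pm(\eta_R+i\nu)}$, $\Re\lambda$ is even in $s$ on every curve. So there are no $s^3$ terms, and going ``to third order'' yields nothing; the next term is $s^4$. Conversely, the $\nu^2$ part of $\lambda_{13}^\pm(i\nu)$ and the $\nu s^2$ corrections you single out are irrelevant. The Newton-polygon balance is $\nu\sim s^4$, so intersections sit at $|s|\sim\nu^{1/4}$, and those terms are only of size $\nu^2$ and $\nu^{3/2}$.

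\emph{What is needed.} In the normalized variables the $s^4$-coefficients of $\Re\lambda$ are:
\begin{itemize}
\item $2(1+r_2)/r_2^5$ for $\lambda_{12}^+$;
\item $2(r_1-r_2)/(r_1^5r_2^5)$ for $\lambda_{12}^-$;
\item $2(1+r_2)/r_2^5$ for $\lambda_{13}^+$;
\item $2(1+r_1)/r_1^5$ for $\lambda_{13}^-$.
\end{itemize}
At $r_1=(\sqrt5-1)/2$, $r_2=r_1^2$, both differences $\lambda_{12}^+-\lambda_{13}^-$ and $\lambda_{12}^--\lambda_{13}^+$ have the same quartic coefficient $A=(20-32r_1)/(r_1^5r_2^5)>0$. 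Hence $\lambda_{12}^+-\lambda_{13}^-=-r_1\nu+As^4+\dots$ vanishes at exactly one pair $\pm s_1$, giving the conjugate intersection points. By contrast $\lambda_{12}^--\lambda_{13}^+=r_2\nu+As^4+\dots$ never vanishes. For $\nu=0$, $A\neq0$ is what rules out extra intersections in (i).

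Had $A$ been negative, the conclusions of (iv) would be swapped. So this quartic computation, or an equivalent exact analysis of $\lambda(\beta)=-\lambda(\beta')$, is the missing core of the proof rather than routine bookkeeping.
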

\begin{proof}
First, we prove (i).
By \eqref{def:lambdajk} and \cite[Lemma~7.20]{Miz23},
the real parts of $\lambda_{1j}^\pm(\eta_R)$ $(j=2, 3)$ are even in
$\eta_R\in\R$, strictly increasing for $\eta_R\ge0$ and
$\Re\lambda_{1j}^\pm\le 0$ for $\eta_R\in\R$.
Thus we only have to investigate intersections of $\lambda_{12}^\pm$
and $\lambda_{13}^\pm$ in the strip
$\{\lambda\in\C\mid -\delta_2\le \Re\lambda\le 0\}$ for sufficiently small
$\delta_2>0$.
\par
Substituting
$\gamma_{jk}(\eta)=c_{jk}^{1/2}+i\eta/(2c_{jk}^{1/2})
+\eta^2/(8c_{jk}^{3/2})
-i\eta^3/(16c_{jk}^{5/2})+O(\eta^4)$
into \eqref{def:lambdajk}, we have
\begin{gather*}
 \lambda_{12}^+(\eta)=
i \eta\left(\k_3-\k_1+\frac{\eta^2}{(\k_2-\k_1)^3}
+O(\eta^4)\right) 
-\frac{\eta^2}{\k_2-\k_1}+\frac{2 \eta^4}{(\k_2-\k_1)^5}+O(\eta^6)\,,
\\
 \lambda_{12}^-(\eta)=
i \eta\left(\k_3-\k_2-\frac{\eta^2}{(\k_2-\k_1)^3}
+O(\eta^4)\right)
 -\frac{\eta^2}{\k_2-\k_1}+\frac{2 \eta^4}{(\k_2-\k_1)^5}+O(\eta^6)\,,
\\
\lambda_{13}^+(\eta)=
i \eta\left(\k_2-\k_1+\frac{\eta^2}{(\k_3-\k_1)^3}
+O(\eta^4)\right) 
-\frac{\eta^2}{\k_3-\k_1}+\frac{2 \eta^4}{(\k_3-\k_1)^5}+O(\eta^6)\,,
\\
\lambda_{13}^-(\eta)=
-i \eta\left(\k_3-\k_2+\frac{\eta^2}{(\k_3-\k_1)^3}
+O(\eta^4)\right)
 -\frac{\eta^2}{\k_3-\k_1}+\frac{2 \eta^4}{(\k_3-\k_1)^5}+O(\eta^6)\,.
\end{gather*}
Using the change of variables $\Im\lambda_{jk}^\pm=(\k_3-\k_1)^3\mu$ and
\eqref{def:r12},  we have for small $\eta\in\R$,
\begin{align*}
(\k_3-\k_1)^{-3}\lambda_{12}^+=& i\mu-\frac{\mu^2}{r_2} 
+\frac{2(1+r_2)\mu^4}{r_2^5}+O(\mu^6)\,,
\\
(\k_3-\k_1)^{-3}\lambda_{12}^- =&  i\mu -\frac{\mu^2}{r_2r_1^2}
+ \frac{2(r_1-r_2)\mu^4}{r_1^5r_2^5}+O(\mu^6)\,,
\\
(\k_3-\k_1)^{-3}\lambda_{13}^+ =& i\mu-\frac{\mu^2}{r_2^2} 
+\frac{2(1+r_2)\mu^4}{r_2^5}+O(\mu^6)\,,
\\
(\k_3-\k_1)^{-3}\lambda_{13}^- =& i\mu-\frac{\mu^2}{r_1^2} 
+\frac{2(1+r_1)\mu^4}{r_1^5}+O(\mu^6)\,.
\end{align*}
If $\mu$ is sufficiently small and $\mu\ne0$, we have 
 $\lambda_{12}^\pm\ne \lambda_{13}^\pm$ by comparing the
quadratic terms of $\lambda_{12}^\pm$ and $\lambda_{13}^\pm$ since
$r_2\in(0,1)$.
Moreover, we have  $\lambda_{12}^\pm\ne \lambda_{13}^\mp$
for small $\mu\ne0$ if $r_1^2\ne r_2$.
\par
In the case where $r_1^2=r_2$, the quadratic terms
of $\lambda_{12}^\pm$ and $\lambda_{13}^\mp$ are equal, and
\begin{align*}
\frac{\lambda_{12}^+-\lambda_{13}^-}{(\k_3-\k_1)^3}=&
\frac{(2+2r_2)r_1^5-(2+2r_1)r_2^5}{r_1^5r_2^5}
\mu^4+O(\mu^6)\,,
\\
\frac{\lambda_{12}^--\lambda_{13}^+}{(\k_3-\k_1)^3}
=&
\frac{2r_1-2r_2-(2+2r_2)r_1^5}{r_1^5r_2^5}
\mu^4+O(\mu^6)\,.
\end{align*}
Since $r_1^2=r_2=1-r_1$ and $r_1\in(0,1)$, we have $r_1=(\sqrt{5}-1)/2$ and  
\begin{gather*}
(2+2r_2)r_1^5-(2+2r_1)r_2^5
=2r_1-2r_2-r_1^5(2+2r_2)=20-32r_1>0\,.
\end{gather*}
Thus we prove (i).
\par
Next, we will prove (ii) and (iii).
By Lemma~\ref{lem:lambda13'}, we only have to consider intersections
of the curves $\lambda_{12}^\pm(\eta_R)$ and the curves
$\lambda_{13}^\pm(\eta_R+i\nu)$ in a region
$\{\lambda \mid |\Re\lambda|<\delta_2\}$ for sufficiently small
$\delta_2>0$.
\par
For $\eta=\eta_R+i\nu$ with $\eta_R\in\R$ and $\nu>0$,
\begin{multline*}
\lambda_{13}^+(\eta)= -(\k_2-\k_1)\nu+\frac{\nu^2}{\k_3-\k_1}
\\
+i\eta_R\left\{\k_2-\k_1-\frac{2\nu}{\k_3-\k_1}
+\frac{\eta_R^2}{(\k_3-\k_1)^3}+O\left(\eta_R^4+\nu^2\right)\right\}
\\
-\frac{\eta_R^2}{\k_3-\k_1}\left\{1+\frac{3\nu}{(\k_3-\k_1)^2}\right\}
+\frac{2\eta_R^4}{(\k_3-\k_1)^5}+O\left(\eta_R^6+\nu^3\right)\,,
\end{multline*}
\begin{multline*}
\lambda_{13}^-(\eta)=(\k_3-\k_2)\nu+\frac{\nu^2}{\k_3-\k_1}
\\
-i \eta_R\left(\k_3-\k_2+\frac{2\nu}{\k_3-\k_1}+\frac{\eta_R^2}{(\k_3-\k_1)^3}
+O(\eta_R^4+\nu^2)\right)
\\
-\frac{\eta_R^2}{\k_3-\k_1}\left\{1-\frac{3\nu}{(\k_3-\k_1)^2}\right\}
+\frac{2 \eta_R^4}{(\k_3-\k_1)^5}+O(\eta_R^6+\nu^3)\,.
\end{multline*}
Let $\Im\lambda_{13}^+(\eta)=(\k_3-\k_1)^3\mu$ and
$\nu=(\k_3-\k_1)^2\tilde{\nu}$. Then
\begin{align*}
\frac{\eta_R}{(\k_3-\k_1)^2}=
\left(\frac{1}{r_2}+\frac{2\tilde{\nu} }{r_2^2}
-\frac{\mu^2}{r_2^4}+O(\mu^4+\tilde{\nu}^2)\right)\mu\,,
\end{align*}
and it follows that
\begin{align*}
\frac{\lambda_{13}^+}{(\k_3-\k_1)^3}=&  i\mu-r_2\tilde{\nu}+\tilde{\nu}^2
-\frac{\mu^2}{r_2^2}
\left(1+\frac{(4+3r_2)\tilde{\nu}}{r_2}\right)
\\ & +\frac{2+2r_2}{r_2^5}\mu^4+O(\mu^6+\tilde{\nu}^3)\,.
\end{align*}
Similarly,
\begin{align*}
\frac{\lambda_{13}^-}{(\k_3-\k_1)^3}=& i\mu+r_1\tilde{\nu}+\tilde{\nu}^2
-\frac{\mu^2}{r_1^2}
\left(1-\frac{(4+3r_1)\tilde{\nu}}{r_1}\right)
\\ & +\frac{2+2r_1}{r_1^5}\mu^4+O(\mu^6+\tilde{\nu}^3)\,.
\end{align*}
We have $\lambda_{13}^+\ne\lambda_{12}^+$ and $\lambda_{13}^-\ne\lambda_{12}^-$
if $\nu>0$ and $\mu\in\R$ are sufficiently small.
Moreover, $\lambda_{13}^+\ne\lambda_{12}^-$ and $\lambda_{13}^-\ne\lambda_{12}^+$
provided $r_1^2>r_2$.
If $r_1^2<r_2$, the equation
$\lambda_{12}^+=\lambda_{13}^-$ has roots $\mu=\pm \mu_1$ such that
\begin{gather*}
\mu_1^2=\frac{r_1^3r_2}{r_2-r_1^2}\tilde{\nu}
+O(\tilde{\nu}^2)\,,
\\
\lambda_{12}^+=\lambda_{13}^-=(\k_3-\k_1)^3\left\{
\pm i\mu_1-\frac{r_1^3}{r_2-r_1^2}\tilde{\nu}\right\}
+O(\tilde{\nu}^2)<0\,,
\end{gather*}
and the equation $\lambda_{12}^-=\lambda_{13}^+$ has roots $\mu=\pm \mu_2$ such that
\begin{gather*}
\mu_2^2=
\frac{r_2^3r_1^2}{r_2-r_1^2}
\tilde{\nu}+O(\tilde{\nu}^2)\,,
\\
\lambda_{12}^-=\lambda_{13}^+=(\k_3-\k_1)^3\left\{
\pm i\mu_2-\frac{r_2^2}{r_2-r_1^2}\tilde{\nu}\right\}
+O(\tilde{\nu}^2)<0\,.
\end{gather*}
\par
Finally, we will prove (iv).
If $r_1^2=r_2$, then
\begin{align*}
\frac{\lambda_{12}^+-\lambda_{13}^-}{(\k_3-\k_1)^3}=&
-r_1\tilde{\nu}-\tilde{\nu}^2
-\frac{4+3r_1}{r_1^3}\tilde{\nu}\mu^2
+\frac{20-32r_1}{r_1^5r_2^5}
\mu^4+O(\mu^6)\,,
\end{align*}
\begin{align*}
\frac{\lambda_{12}^--\lambda_{13}^+}{(\k_3-\k_1)^3}
=&
r_2\tilde{\nu}-\tilde{\nu}^2+\frac{4+3r_2}{r_2^3}\tilde{\nu}\mu^2
+\frac{20-32r_1}{r_1^5r_2^5}\mu^4+O(\mu^6)\,,
\end{align*}
and it follows that for small $\tilde{\nu}$, the equation
$\lambda_{12}^+=\lambda_{13}^-$ has two nonzero real solutions
$\pm \mu_1=O(\tilde{\nu}^{1/2})$ and the equation $\lambda_{12}^-=\lambda_{13}^+$
has no real solution in a neighborhood of $0$.
This completes the proof of Lemma~\ref{lem:lambda12cap13}.
\end{proof}
By Lemmas~\ref{lem:mL-12inX1}, \ref{lem:spec1}, \ref{lem:non-degeneracy}
and \ref{lem:lambda12cap13}, we have the following.

\begin{corollary}
    \label{cor:non-degeneracy}
Let $\a_0$ be a sufficiently small positive number.
If $\a\in(0,\a_0)$, there exist positive constants  $\delta_2$ and
$\delta_3$ such that $\lambda-\wmL$ has a bounded inverse on $\calX_\mY$
if $\Re\lambda\ge -\min\{\delta_2,\delta_3\a\}$ and $\lambda\ne0$.
  \end{corollary}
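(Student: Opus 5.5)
The plan is to cover the half plane $\{\Re\lambda\ge -\min\{\delta_2,\delta_3\a\}\}\setminus\{0\}$ by three regions and, in each one, quote the lemma that gives invertibility of $\lambda-\wmL$ on $\calX_\mY$.

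\emph{Step 1: locate the spectra of the limiting operators.} By Lemma~\ref{lem:intersection}, $\sigma(\mL_0)\subset\{\Re\lambda<-\delta_0\a\}$. By Lemma~\ref{lem:mL-12inX1}, $\mL_{23}$ has uniformly bounded resolvent on $\calX_{12}$ for $\Re\lambda\ge-\delta_1\a$. Take $\delta_3\le\min\{\delta_0,\delta_1\}$. Then every $\lambda$ in the half plane lies in $\rho(\mL_0)\cap\rho(\mL_{23})$. By Proposition~\ref{lem:wmL1-decay}, the only remaining spectrum comes from the curves $\lambda_{12}^\pm(\eta_R)$ and $\lambda_{13}^\pm(\eta_R)$.

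\emph{Step 2: points off all four curves.} Suppose $\lambda\in\rho(\mL_{12})\cap\rho(\mL_{13})$. Lemma~\ref{lem:spec1} reduces invertibility to excluding eigenvalues of $\mL_\mY$ and to restricting to ${}^\perp\ker(\lambda-\mL_\mY)^*$. Proposition~\ref{prop:noev} rules out eigenvalues of $\mL_\mY$. Proposition~\ref{prop:spec-res} identifies the adjoint eigenfunctions as the $V(\beta)$ and $\widehat V(\beta)$. Every element of $\calX_\mY$ is orthogonal to these by definition. Hence $\lambda-\wmL$ is boundedly invertible on $\calX_\mY$.

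\emph{Step 3: points on exactly one family of curves.} Take $\delta_2$ from Lemma~\ref{lem:lambda12cap13}(i). In $\{\Re\lambda\ge-\delta_2,\ \lambda\ne0\}$ the $[1,2]$-curves never meet the $[1,3]$-curves. So any $\lambda$ there lies in $\rho(\mL_{12})\cup\rho(\mL_{13})$, and together with Step 1 Lemma~\ref{lem:non-degeneracy} gives the bounded inverse. The remaining case is a point where two curves of the same family cross, for instance $\lambda_{13}^+$ and $\lambda_{13}^-$. Such crossings lie in the half plane only at $\lambda=0$, because by Lemma~\ref{lem:lambda13'} (with $\nu=0$) and the corresponding monotonicity for $\mL_{12}$ both real parts are strictly decreasing in $|\eta_R|$.

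\emph{Step 4: uniform bounds.} The statement only asks for a bounded inverse at each such $\lambda$, so the pointwise statement is enough. If a uniform bound on compact subsets away from $0$ is wanted, it follows from analyticity of the resolvent on the open set where it exists, combined with the resolvent estimates already used.

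The main point of care is Step 3: checking that the excluded set is exactly $\{0\}$ for small $\a$. This means the endpoints $\lambda_{1j}^\pm(\pm\eta_{1j}^*(\a))$ must lie in $\sigma(\mL_0)$, hence to the left of $-\delta_0\a$. That holds by the remark following Theorem~\ref{thm:specstability}, possibly after shrinking $\delta_3$.
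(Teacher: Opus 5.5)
Your proposal is correct and follows the paper's route: the paper obtains the corollary by combining Lemmas~\ref{lem:mL-12inX1}, \ref{lem:spec1}, \ref{lem:non-degeneracy} and \ref{lem:lambda12cap13}(i), exactly as in your Steps 1--3, and your explicit appeal to Lemma~\ref{lem:intersection} for $\lambda\in\rho(\mL_0)$ supplies an ingredient the paper leaves implicit. The one correction concerns the ``remaining case'' of same-family crossings in Step 3: it needs no separate treatment, and the reason you give for it is false. A nonzero crossing of two $[1,3]$-curves already lies in $\rho(\mL_{12})$ (resp.\ a $[1,2]$-crossing in $\rho(\mL_{13})$), so Lemma~\ref{lem:non-degeneracy} covers it directly, its proof using the projection $P_{13}(\eta_0)$ onto both $\pm$ families. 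Monotone real parts do not rule out crossings: if $\k_2-\k_1=\k_3-\k_2$, then $b_2=3a_{13}$ and $\lambda_{13}^+(\eta)=\lambda_{13}^-(-\eta)$ for all $\eta$.
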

  \begin{remark}
Corollary~\ref{cor:non-degeneracy} tells us that either
$\lambda=0$ is a regular point or an isolated singularity of
$(\lambda-\wmL)^{-1}$ on $\calX_\mY=\calX_\mY(0)$.
Since $\wmL$ does not have eigenvalues by Proposition~\ref{prop:noev}, 
the point $\lambda=0$ is either a regular point or an essential singularity.
If $\lambda=0$ is an essential singularity of the resolvent
operator $(\lambda-\wmL)^{-1}$, the spectral projection associated with
$\lambda=0$ is infinite-dimensional (see, e.g., \cite[XIII.4, Lemma~1]{RS4}).
It remains open whether $0\in\rho(\wmL)$.    
  \end{remark}
\par
If $\nu>0$, then $\wmL$ is invertible on $\calX_\mY(\nu)$.
\begin{lemma}
    \label{lem:continuation-1}
Let $\a_0>0$ be a sufficiently small number
and $\delta_2$ be as in Lemma~\ref{lem:lambda12cap13}.
If $\a\in(0,\a_0)$, there exist positive constants $\nu_0$ and $\delta_3$
such that $\lambda-\wmL$ has a bounded on $\calX(\nu)$ if $\nu\in(0,\nu_0)$ and
$\Re\lambda \ge -\min\{\delta_2,\delta_3\a\}$.
  \end{lemma}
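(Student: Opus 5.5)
The proof reduces to the case analysis already used for Lemma~\ref{lem:non-degeneracy} and Corollary~\ref{cor:non-degeneracy}, now on $\calX_\mY(\nu)$ with $\nu>0$. The gain from $\nu>0$ is that $\lambda=0$ is no longer a common point of the four curves of continuous eigenvalues. We split $\{\Re\lambda\ge-\min\{\delta_2,\delta_3\a\}\}$ into three sets: (a) $\lambda\in\rho(\mL_0)\cap\rho(\mL_{12})\cap\rho(\mL_{13})\cap\rho(\mL_{23})$; (b) $\lambda$ on exactly one curve $\{\lambda_{12}^\pm(\eta_R)\}$ or $\{\lambda_{13}^\pm(\eta_R+i\nu)\}$; (c) $\lambda$ on an intersection of an $[1,2]$-curve with an $[1,3]$-curve.

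Set (a) contains no spectrum of $\mL_0$ or $\mL_{23}$ if $\delta_3$ is small, by Lemmas~\ref{lem:intersection} and \ref{lem:mL-12inX1}. Lemma~\ref{lem:spec1} then leaves two possibilities: $\lambda$ is an eigenvalue, or $\lambda-\mL_\mY$ is invertible on ${}^\perp\ker(\lambda-\mL_\mY)^*$. Eigenvalues are excluded by Proposition~\ref{prop:noev}. Proposition~\ref{prop:spec-res} identifies $\ker(\bar\lambda-\mL_\mY^*)$ as spanned by $\widehat V(\beta)$, $\beta\in D_1(\nu)$, and $V(\beta)$, $\beta\in D_2\cup D_3(\nu)$. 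These are annihilated by the definition of $\calX_\mY(\nu)$, so $\lambda-\wmL$ is invertible there.

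For $\lambda=0$ with $\nu>0$, Proposition~\ref{prop:spec-res} gives $\ker\mL_\mY^*=\spann\{V(\k_3)\}$. Since $\k_3\in D_3(\nu)$, this kernel is already in the secular set. By Lemma~\ref{lem:lambda13'}, the points $\lambda_{13}^\pm(i\nu)$ lie away from $0$. Moreover $\lambda_{12}^\pm(\eta_R)=0$ only at $\eta_R=0$, and $\lambda=0$ is then isolated on the $[1,2]$-curves. So $0$ falls under case (b) with only the $[1,2]$-solitons involved, and the argument of Lemma~\ref{lem:non-degeneracy} runs as in the $\tilde\eta\neq0$ case.

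For (b), we repeat the proof of Lemma~\ref{lem:non-degeneracy} on $\calX(\nu)$. Take $\lambda_n\to\lambda$ from the resolvent set and normalized $\tilde u_n$, and localize with $\chi_{1,k},\chi_{ij,k}$ as in Lemma~\ref{lem:spec1}. All pieces vanish except the one along the single soliton whose curve contains $\lambda$. On that piece we use the orthogonality to $g^{j,*}$ or $\tg^{j,*}$ inherited via Lemma~\ref{lem:orth-analytic}, together with Corollary~\ref{cor:orth-condition-identity} and Lemma~\ref{lem:orth-condition-identity} (which gives $h_1\equiv0$ on $\overline{D_1(\nu)}$ when $\nu>0$). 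This shows the projected part $w_k$ tends to $0$. For the remainder we invoke Proposition~\ref{lem:wmL1-decay} for $\mL_{12}$, or its $\calX_{13}(\nu)$ analogue coming from Lemma~\ref{lem:X13nu}. This is a contradiction.

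Case (c) is the main obstacle. By Lemma~\ref{lem:lambda12cap13}(ii)--(iv), for $\nu\in(0,\nu_0)$ the intersections in $\{\Re\lambda\ge-\delta_2\}$ are at most finitely many pairs of complex conjugate points, all with $\Re\lambda<0$ of order $\nu$ and none at $0$. We try first to shrink $\delta_2$ (depending on $\nu$) so that they are excluded, but this conflicts with uniformity. So we treat them directly. At such a $\lambda$, the relevant $[1,3]$ continuous eigenfunction has $\eta=\eta_R+i\nu$ with $\Im\eta=\nu>0$. Its adjoint then decays in $Y$ toward the $[1,2]$ side, since $\Re(\beta-\k_1)(\beta-\k_3)=-\nu$ by \eqref{eq:judge1}, while the $[1,2]$ adjoint decays on the $Y>0$ side. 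This lets us split the blow-up sequence into $\tu^{12}_k$ and $\tu^{13}_k$ and run the single-soliton argument on each separately, using the approximations of Lemma~\ref{lem:g*-growth'}. The delicate point is that these decay rates are only of order $\nu$, so we must check that the commutator and cross terms are still $o(1)$ as $k\to\infty$ for each fixed $\nu$.
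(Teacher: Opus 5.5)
Your proposal is correct and follows the paper's proof. Non-crossing points, including $\lambda=0$ (where for $\nu>0$ only the $[1,2]$-curves pass), are handled as in Lemma~\ref{lem:non-degeneracy}; at the crossings of $\lambda_{12}^\mp(\eta_R)$ with $\lambda_{13}^\pm(\eta_R+i\nu)$ the paper, like you, splits the blow-up sequence into $\tu^{12}_k$ and $\tu^{13}_k$, kills the cross terms via the decay of $g_{12}^{*,-}$ as $Y\to\infty$ and of $g_{13}^{*,+}(\eta_R+i\nu)$ as $Y\to-\infty$, and concludes with single-branch spectral projections for $\mL_{12}$ and $\mL_{13}(\nu)$. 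Two small corrections: the decay of $g_{13}^{*,+}(\eta_R+i\nu)$ along the $[1,2]$-soliton is governed by $\Re(\beta-\k_1)(\beta-\k_2)=-\nu-(\k_3-\k_2)\Re\delta_{13}(-\bar\eta)<0$, not by \eqref{eq:judge1} alone, and the estimates needed at the crossings are those of Lemma~\ref{lem:g*-growth}, not the degeneracy-resolved Lemma~\ref{lem:g*-growth'}.
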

\begin{proof}
By Lemmas~\ref{lem:mL-12inX1} and \ref{lem:intersection},
there exists a $\delta_3>0$ such that 
$\sigma(\mL_0)\cup \sigma(\mL_{23})\subset \{\lambda\in\C\mid \Re\lambda<-\delta_3\a\}$
for every $\nu\in(0,2\sqrt{c_{12}}\a)$.
Except for intersections of $\lambda_{13}^\pm(\eta_R+i\nu)$
and $\lambda_{12}^\mp(\eta_R)$, we can prove Lemma~\ref{lem:continuation-1}
in the same way as Lemma~\ref{lem:non-degeneracy}.
\par
Let $\lambda_0$ be an intersection of the two curves
$\lambda_{13}^+(\eta_R+i\nu)$ and $\lambda_{12}^-(\eta_R)$ in the region
$\{\lambda\in\C\mid \Re\lambda\ge -\min\{\delta_2, \delta_3\a\}$.
In view of Lemma~\ref{lem:lambda12cap13}, we have
$\lambda_0=\lambda_{13}^+(\eta_1+i\nu)=\lambda_{12}^-(\eta_2)$
for $\eta_1, \eta_2\in\R\setminus\{0\}$.
\par
Propositions~\ref{prop:noev} and \ref{prop:spec-res} tell us
that for every $f\in\calX_\mY$, there exist
$\lambda_n\in \rho(\mL)$ and $u_n\in \calX_\mY$ such that
$\left(\lambda_n-\mL_{\mY}\right)u_n=f$ and
$\lim_{n\to\infty}\lambda_n=\lambda_0$.
\par
If $\liminf_{n\to\infty}\|u_n\|_\calX<\infty$, we can prove that
$(\lambda_0-\mL)u_\infty=f$ for  a $u_\infty\in \calX(\nu)$
as in the proof of Lemma~\ref{lem:non-degeneracy}.
\par
Suppose, to the contrary, that $\lim_{n\to\infty}\|u_n\|_\calX=\infty$.
Let $\tilde{u}_n=u_n/\|u_n\|_\calX$ and $\tilde{f}_n=f_n/\|u_n\|_\calX$.
Then
\begin{gather*}
\left(\lambda_n-\mL_{\mY}\right)\tilde{u_n}=\tilde{f}_n\,,\quad
\|\tilde{u}_n\|_\calX=1\,,\quad \lim_{n\to\infty}\|\tilde{f}_n\|_\calX=0\,.
\end{gather*}
As in the proof of Lemma~\ref{lem:non-degeneracy},
we can find a subsequence  $\{\tilde{u}_{n(k)}\}$ of $\{\tilde{u}_n\}$
satisfying \eqref{eq:unk-0}--\eqref{eq:tfk-lim}. Since
$\lambda_0\in\rho(\mL_0)\cap\rho(\mL_{23})$,
it follows from \eqref{eq:tu-1}--\eqref{eq:tfk-lim} that as $k\to\infty$,
\begin{equation}
  \label{eq:tukto0'}
  \|\tu^1_k\|_{\calX(\nu)}+\|\tu^{23}_k\|_{\calX(\nu)}\to0\,.
\end{equation}
By \eqref{eq:unk-0} and \eqref{eq:tukto0'},
\begin{equation}
 \label{eq:tu12+13=1}
 \lim_{k\to\infty}(\|\tu^{12}_k\|_{\calX_{12}}+
\|\tu^{13}_k\|_{\calX_{13}(\nu)})=1\,.
\end{equation}
\par
Since $\tu_n\in\calX_\mY(\nu)$, it follows from
 Lemma~\ref{lem:orth-analytic} that
\begin{gather}
\label{eq:un-orth'1}
\la \tu_n,g_{13}^{*,+}(\eta_R+i\nu)\ra=0
\text{ for $\eta_R\in(\eta_1-\eps_1,\eta_1+\eps_1)$,}
\\
\label{eq:un-orth'2}
\la \tu_n,g_{12}^{*,-}(\eta_R)\ra=0
\text{ for $\eta_R\in(\eta_2-\eps_1,\eta_2+\eps_1)$,}
\end{gather}
where $\eps_1$ is a positive number satisfying
$0<\eps_1<\min\{|\eta_1|, |\eta_2|\}$.
\par
Let \begin{gather*}
a_{12}^{k,-}(\eta_R)=\frac{1}{i\eta_R}\int_{\R^2}\tu_{12}^k
\overline{g_{12}^{*,-}(\eta_R)}\,dxdy\,,
\\
a_{13}^{k,+,+}(\eta_R)=\frac{1}{i\eta_R-\nu}\int_{\R^2}\tu_{13}^k
\overline{g_{13}^{*,+}(\eta_R+i\nu)}\,dxdy\,,
\\
a_{13}^{k,+,-}(\eta_R)=\frac{1}{i\eta_R-\nu}\int_{\R^2}\tu_{12}^k
\overline{g_{13}^{*,+}(\eta_R+i\nu)}\,dxdy\,.
\end{gather*}
Lemma~\ref{lem:g*-growth} tells us that 
$g_{12}^{*,-}(\eta_R)$ tends to $0$ as $Y\to\infty$.
Combining Lemma~\ref{lem:g*-growth} with \eqref{eq:unk-0}, \eqref{eq:tukto0'}, 
\eqref{eq:un-orth'1} and \eqref{eq:un-orth'2}, we have
\begin{equation}
\label{eq:lim-a1213}
\lim_{k\to\infty}
\left(\|a_{12}^{k,-}\|_{L^2(\eta_1-\eps_1,\eta_1+\eps_1)}
+\|a_{13}^{k,+,+}+a_{13}^{k,+,-}\|_{L^2(\eta_2-\eps_1,\eta_2+\eps_1)}\right)=0\,.
\end{equation}
\par
Let $P_{12}^-(\eta_1,\eps_1)$ and $P_{13}^+(\eta_2,\eps_1,\nu)$
be operators defined by
\begin{gather*}
P_{12}^-(\eta_1,\eps_1)f=\int_{\eta_1-\eps_1}^{\eta_1+\eps_1}
\tilde{a}_{12}(\eta)g_{12}^0(z_{12},-\eta)e^{iy\eta}\,d\eta\,,
\\
\tilde{a}_{12}(\eta)
=\frac{1}{i\eta}\int_{\R^2}f(x,y)\overline{g_{12}^{0,*}(z_{12},-\eta)}
e^{-iy\eta}\,dxdy\,,
\\
P_{13}^+(\eta_2,\eps_1,\nu)f=\int_{\eta_2-\eps_1}^{\eta_2+\eps_1}
\tilde{a}_{13}(\eta_R)g_{13}^0(z_{13},\eta_R+i\nu)e^{(-\nu+i\eta_R)y}\,d\eta_R\,,
\\
\tilde{a}_{13}(\eta_R)=\frac{1}{i\eta_R-\nu}\int_{\R^2}f(x,y)
\overline{g_{13}^{0,*}(z_{13},\eta_R+i\nu)}e^{(\nu-i\eta_R)y}\,dxdy\,.
\end{gather*}
Thanks to Lemma~\ref{lem:lambda12cap13},
the curves $\lambda_{12}^+(\eta_R)$ and $\lambda_{12}^-(\eta_R)$ 
as well as the curves $\lambda_{13}^+(\eta_R+i\nu)$ and
$\lambda_{13}^-(\eta_R+i\nu)$ do not intersect in
a sufficiently small neighborhood of $\lambda_0$.
In view of  \eqref{eq:g-g*-int} and the proof of \cite[Proposition~6.3]{Miz23},
the operator $P_{12}^-(\eta_1,\eps_1)$ is a spectral projection of
$\mL_{12}$ onto continuous eigenspaces
$$\spann\{g_{12}^0(-\eta_R)e^{iy\eta_R} \mid 
\eta_R\in[\eta_1-\eps_1,\eta_1+\eps_1]\}$$ and
that $\lambda_0-\mL_{12}$ is invertible on $(I-P_{12}^-(\eta_1,\eps_1))\calX_{12}$.
Similarly, the operator $P_{13}^+(\eta_2,\eps_1,\nu)$ is a spectral projection
of $\mL_{13}$ onto
$$\spann\{g_{13}^0(\eta_R+i\nu)e^{(-\nu+i\eta_R)y} \mid
\eta_R\in[\eta_2-\eps_1,\eta_2+\eps_1]\}$$
and $\lambda_0-\mL_{13}$ is invertible on
$(I-P_{13}^+(\eta_2,\eps_1,\nu))\calX_{13}(\nu)$.
Note that \eqref{eq:g-g*-int} holds for any $\eta\in\C$ as long as
$g_{13}^0(z,\eta)\overline{g_{13}^{0,*}(z,\pm\eta)}$ decays exponentially
as $z\to\pm\infty$.
\par
Let $\tu_k^{1j}=v_k^{1j}+w_k^{1j}$ ($j=2, 3$),
$w_k^{12}=P_{12}^-(\eta_1,\eps_1)\tu_{12}^k$
and $w_k^{13}=P_{13}^+(\eta_2,\eps_1,\nu)\tu_{13}^k$. 
Then
\begin{gather}
\label{eq:v12k}  
\left(\lambda_{n(k)}-\mL_{12}\right)v_k^{12}=
\tilde{f}^{12}_k-\left(\lambda_0-\mL_{12}\right)w_k^{12}\,,
\\ \label{eq:v13k}  
\left(\lambda_{n(k)}-\mL_{13}\right)v_k^{13}=
\tilde{f}^{12}_k-\left(\lambda_0-\mL_{13}\right)w_k^{13}\,.
\end{gather}
By Lemma~\ref{lem:g*-growth}, \eqref{eq:lim-a1213} and the fact that
$\supp\tu_k^{12}\subset\{(X,Y)\in\R^2_- \mid X^2+Y^2\ge 256k^2\}$,
\begin{equation}
\label{eq:limw12}
\lim_{k\to\infty}\|w_{12}^k\|_{\calX_{12}}=0\,.
\end{equation}
Since $\lambda_0-\mL_{12}$ is invertible on $(I-P_{12}^-(\eta_1,\eps_1))\calX_{12}$
and $\lim_{k\to\infty}\lambda_{n(k)}=\lambda_0$,
it follows from \eqref{eq:tfk-lim}, \eqref{eq:v12k} and \eqref{eq:limw12} that 
$\lim_{k\to\infty}\|v_{12}^k\|_{\calX_{12}}=0$.
Thus we prove
\begin{equation}
\label{eq:limtu12}
\lim_{k\to\infty}\|\tu_{12}^k\|_{\calX_{12}}=0\,.
\end{equation}
\par
For $\eta=\eta_R+i\nu$ with $\eta_R\in\R$ and $0<\nu<(\k_2-\k_1)(\k_3-\k_2)$,
\begin{align*}
\Re\{ i\bar{\eta}+(\k_3-\k_2)\delta_{13}(-\bar{\eta})\}
=& \nu +(\k_3-\k_2)\delta_{13}(-\bar{\eta})
\\ \ge &   \nu +(\k_3-\k_2)(i\nu)>0\,,
\end{align*}
and it follows from Lemma~\ref{lem:g*-growth} that
\begin{equation}
  \label{eq:16}
\lim_{k\to\infty}\sup_{|\eta_R-\eta_1|\le \eps_1}
\left\|\mathbf{1}_{\supp \chi_{12,n(k)}}g_{13}^{*,+}(\eta)
\right\|_{\calX(\nu)^*}=0\,.  
\end{equation}
By \eqref{eq:tu12+13=1}, \eqref{eq:lim-a1213} and \eqref{eq:16},
\begin{equation*}
\lim_{k\to\infty}
\left(\|a_{13}^{k,+,+}\|_{L^2(\eta_2-\eps_1,\eta_2+\eps_1)}
+\|a_{13}^{k,+,-}\|_{L^2(\eta_2-\eps_1,\eta_2+\eps_1)}\right)=0\,.
\end{equation*}
Combining the above with Lemma~\ref{lem:g*-growth} and the fact that
$\supp\tu_k^{13}\subset\{(X,Y)\in\R^2_+ \mid X^2+Y^2\ge 256k^2\}$, we have
\begin{equation}
  \label{eq:limw13}
\lim_{k\to\infty}\|w_{13}^k\|_{\calX_{13}(\nu)}=0\,.
\end{equation}
By \eqref{eq:tfk-lim}, \eqref{eq:v13k} and \eqref{eq:limw13}, we have
$\lim_{k\to\infty}\|v_{13}^k\|_{\calX_{13}(\nu)}=0$,
which contradicts \eqref{eq:tu12+13=1}, \eqref{eq:limtu12} and \eqref{eq:limw13}.
Thus we prove  $\operatorname{Range}(\lambda_0-\wmL)=\calX_\mY(\nu)$.

The intersections of $\lambda_{13}^-(\eta_R+i\nu)$ and
$\lambda_{12}^+(\eta_R)$ can be handled in the same way.
This completes the proof of Lemma~\ref{lem:continuation-1}.
\end{proof}

Finally, we will prove our main result.
\begin{theorem}
    \label{thm:main}
Let $\a$ be a sufficiently small positive number.
Then there exists $\nu_0>0$ such that if $\nu\in(0,\nu_0)$,
\begin{equation*}
\|e^{t\mL_{\mY}}f\|_{\calX(\nu)} \le K e^{-bt}\|f\|_{\calX(\nu)}
\quad\text{for every $f\in \calX_\mY(\nu)$ and $t\ge0$,} 
\end{equation*}
where $K$ and $b$ are positive constants that depend only on $\a$ and $\nu$.
\end{theorem}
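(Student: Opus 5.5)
The plan is to deduce Theorem~\ref{thm:main} from the Gearhart--Pr\"uss theorem applied to $\wmL$, the restriction of $\mL_\mY$ to the Hilbert space $\calX_\mY(\nu)$. There are three steps: show that $e^{t\mL_\mY}$ leaves $\calX_\mY(\nu)$ invariant, show that the resolvent of $\wmL$ exists on a right half-plane $\{\Re\lambda\ge -b'\}$, and show that it is uniformly bounded there.

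\emph{Invariance.} First I would check that $\calX_\mY(\nu)$ is a closed subspace of $\calX(\nu)$ which is invariant under $e^{t\mL_\mY}$. By Lemma~\ref{lem:adjointef}, every element of $\calX_\mY(\nu)$ is annihilated by eigenfunctions $\widehat V(\beta)$, $V(\beta)$ of $\mL_\mY^*$, with eigenvalues $\lambda(\beta)$ or $-\lambda(\beta)$. Hence
\[
\frac{d}{dt}\la e^{t\mL_\mY}f,V(\beta)\ra=-\lambda(\beta)\la e^{t\mL_\mY}f,V(\beta)\ra ,
\]
and the pairing stays zero; the same holds for $\widehat V(\beta)$. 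Closedness follows because each defining functional is continuous on $\calX(\nu)$. Consequently $e^{t\wmL}$ is a $C_0$-semigroup on $\calX_\mY(\nu)$, and its generator is $\wmL$.

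\emph{Resolvent existence.} Lemma~\ref{lem:continuation-1} gives positive constants $\nu_0$, $\delta_2$, $\delta_3$ such that, for $\nu\in(0,\nu_0)$, $\lambda-\wmL$ has a bounded inverse on $\calX_\mY(\nu)$ whenever $\Re\lambda\ge -b':=-\min\{\delta_2,\delta_3\a\}$. This includes $\lambda=0$, which is exactly where the positive weight $\nu$ is needed.

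\emph{Uniform bound.} It remains to show
\[
\sup_{\Re\lambda\ge -b'/2}\|(\lambda-\wmL)^{-1}\|_{B(\calX_\mY(\nu))}<\infty .
\]
On compact subsets this follows from continuity of the resolvent in $\lambda$. For $|\lambda|\to\infty$ I would use the bound for $\mL_0$ from Lemma~\ref{lem:intersection} and perturb. The difference $\mL_\mY-\mL_0=-\frac32\pd_X(u_\mY\,\cdot)$ is relatively bounded. Using the smoothing estimate \eqref{eq:mL0-inv} for $\pd_x(\lambda-\mL_0)^{-1}$ together with $\|\pd_x(\lambda-\mL_0)^{-1}\|\to0$ as $|\Im\lambda|\to\infty$ in the strip, which is the high-frequency part of the argument in \cite[Appendix~C]{Miz23}, a Neumann series gives
\[
\|(\lambda-\mL_\mY)^{-1}\|_{B(\calX(\nu))}\lesssim 1 \quad\text{for } |\lambda|\ge R,\ \Re\lambda\ge -b'/2 .
\]
The restriction to the invariant subspace inherits this bound. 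The point requiring care is that $(\lambda-\mL_\mY)^{-1}$ must exist on all of $\calX(\nu)$ for large $|\lambda|$. This holds because the residual spectral sets $\lambda(D_1(\nu))$, $-\lambda(D_2)$, $-\lambda(D_3(\nu))$ are bounded, with size $O(\a)$ by the Rouch\'e estimate in the proof of Proposition~\ref{prop:spec-res}.

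\emph{Conclusion and main obstacle.} Gearhart--Pr\"uss then gives $\|e^{t\wmL}\|\le Ke^{-bt}$ for any $b<b'/2$, which is the claim. I expect the main obstacle to be the uniform resolvent bound near the edges of the domain: one needs the constants in Lemma~\ref{lem:continuation-1} to be uniform as $\lambda$ approaches the intersection points of the curves $\lambda_{12}^\pm$ and $\lambda_{13}^\mp(\cdot+i\nu)$. The contradiction argument in the proof of Lemma~\ref{lem:continuation-1} yields this, since it is a compactness argument over sequences $\lambda_n$ and does not use a fixed $\lambda$.
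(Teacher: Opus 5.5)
Your proposal is correct and follows the paper's proof, which consists only of invoking Lemma~\ref{lem:continuation-1} for invertibility of $\lambda-\wmL$ on $\{\Re\lambda\ge-\min\{\delta_2,\delta_3\a\}\}$ and then Gearhart--Pr\"uss; your invariance check and your uniform resolvent bound (continuity on compact sets plus a Neumann series around $\mL_0$ for large $|\lambda|$) supply hypotheses of Gearhart--Pr\"uss that the paper leaves implicit. The worry in your last paragraph is unnecessary: Lemma~\ref{lem:continuation-1} already places the whole closed region in $\rho(\wmL)$, intersection points included, so boundedness near those points follows from your compactness-plus-continuity step.
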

\begin{proof}
By Lemma~\ref{lem:continuation-1}, the operator
$\lambda-\wmL$ is invertible on $\calX_\mY(\nu)$ provided
$\Re\lambda\ge -\min\{\delta_2,\delta_3\a\}$, and Theorem~\ref{thm:main}
follows from Gearhart-Pr\"{u}ss theory (\cite{Gh}, \cite[Corollary~4]{Pruss}).
Thus we complete the proof.
\end{proof}
\bigskip

\appendix

\section{The Miura transformations and the Lax pair}
\label{sec:Miura}
We recall a relationship
between the auto B\"{a}cklund transformation and the Lax pairs of
\eqref{eq:KPII}.
Suppose that $\tau$-functions
$\tau_1=\tau_1(\eps)$ and $\tau_2=\tau_2(\eps)$ 
satisfy the auto B\"acklund transformation
\begin{gather}
  \label{eq:BT-f1}
  (D_y+D_x^2)\tau_1\cdot\tau_2=0\,,
\\ \label{eq:BT-f2}
(4D_t+D_x^3-3D_xD_y)\tau_1\cdot\tau_2=0\,,
\end{gather}
and $u_i(\eps)=2\pd_x^2\log\tau_i(\eps)$ for $i=1$, $2$ and
\begin{equation*}
  D_x^iD_y^jD_t^k\tau_1\cdot\tau_2=
  \pd_{s_1}^i\pd_{s_2}^j\pd_{s_3}^k
\tau_1(x+s_1,y+s_2,t+s_3)\tau_2(x-s_1,y-s_2,t-s_3)\bigr|_{s_1=s_2=s_3=0}.
\end{equation*}
If $u_1(\eps)$ is a solution of \eqref{eq:KPII}, then $u_2(\eps)$
is a solution of \eqref{eq:KPII}, and vice versa.
See \cite{Nim-Fr}.
\par
Let $L_i(\eps)$ and $B_i(\eps)$ be Lax pairs defined by
\begin{gather*}
  L_i(\eps)=-\pd_y+\pd_x^2+u_i(\eps)\,,\\
B_i(\eps)
=4(\pd_t+\pd_x^3)+6u_i(\eps)\pd_x
+3\left(\pd_xu_i(\eps)+\pd_x^{-1}\pd_yu_i(\eps)\right)\,.
\end{gather*}
\par
Let
$h(\eps)=\tau_2(\eps)/\tau_1(\eps)$ and $v(\eps)=\pd_x\log h(\eps)$.
Then
\begin{equation}
  \label{eq:pdxyh}
\pd_xh=hv\,,\quad \pd_yh=h\pd_x^{-1}\pd_yv\,.
\end{equation}
By \eqref{eq:BT-f1} and \eqref{eq:BT-f2},
\begin{equation}
  \label{eq:kerLax}
L_1(\eps)h(\eps)=B_1(\eps)h(\eps)=0\,,\quad
L_2^*(\eps)h(\eps)^{-1}=B_2(\eps)^*h(\eps)^{-1}=0\,.
\end{equation}
See \cite{Hirota}. Moreover, we have the intertwining property
\begin{gather}
  \label{eq:intertwine-1}
h^{-1}L_2h\pd_x=\pd_xh^{-1}L_1h\,, \quad h^{-1}B_2h\pd_x=\pd_xh^{-1}B_1h\,,
\\   \label{eq:intertwine-2}
hL_1^*h^{-1}\pd_x=\pd_xhL_2^*h^{-1}\,,
\quad hB_1^*h^{-1}\pd_x=\pd_xhB_2^*h^{-1}\,.
\end{gather}
Let $U_1=\pd_\eps u_1(0)$, $U_2=\pd_\eps u_2(0)$ and
$V=\pd_\eps v(0)$. Then
\begin{equation}
  \label{eq:U-dL}
\left\{  \begin{aligned}
& U_1=\pd_\eps L_1(0)\,,\quad U_2=\pd_\eps L_2(0)\,,
\\ &    
\pd_\eps h(0)=h(0)\pd_x^{-1}V\,,\quad
\pd_\eps h(0)^{-1}=-h(0)^{-1}\pd_x^{-1}V\,.
\end{aligned}\right.
\end{equation}
Differentiating \eqref{eq:kerLax} with respect to $\eps$ and substituting
\eqref{eq:pdxyh} and \eqref{eq:U-dL} into the resulting equations, we have
\begin{gather}
\label{eq:Darboux-1}
 U_1+h(0)^{-1}L_1(0)h(0)\pd_x^{-1}V=0\,,\quad
 U_2-h(0)L_2(0)^*h(0)^{-1}\pd_x^{-1}V=0\,,\\
\label{eq:Darboux-2}
3(\pd_x+\pd_x^{-1}\pd_y+2v(0))U_1+ h(0)^{-1}B_1(0)h(0)\pd_x^{-1}V=0\,,\\
\label{eq:Darboux-2b}
 3(-\pd_x+\pd_x^{-1}\pd_y+2v(0))U_2-h(0)B_2(0)^*h(0)^{-1}\pd_x^{-1}V=0\,.
\end{gather}
Now we will translate \eqref{eq:Darboux-1}--\eqref{eq:Darboux-2b}
in terms of the Miura transformation. Let
\begin{gather*}
M_\pm[v]=(\pm\pd_x+\pd_x^{-1}\pd_y)v-v^2\,,
  \\
\widetilde{M}_\pm[u,v]=3(\mp\pd_x+\pd_x^{-1}\pd_y)u+4(\pd_x^{-1}\pd_t+\pd_x^2)v
 +6uv\mp6\pd_x(v^2)+4v^3\,.
\end{gather*}
If $v$ is a solution of the modified KP-II equation
\begin{equation}
  \label{eq:mkp}
4\pd_tv+\pd_x^3v+3\pd_x^{-1}\pd_y^2v-6v^2\pd_xv+6(\pd_xv)(\pd_x^{-1}\pd_yv)=0\,,
\end{equation}
then the Miura transformations $M_\pm$ map $v$ into solutions
of \eqref{eq:KPII} and
\begin{equation}
  \label{eq:Miura-transform}
u_\pm=M_\pm [v]\,,\quad \widetilde{M}_\pm[u_\pm,v]=0\,.  
\end{equation}
By the definitions and \eqref{eq:intertwine-1} and \eqref{eq:intertwine-2},
\begin{gather}
\label{eq:Darboux-Lax1}
\nabla_v M_-[v]=-h(0)^{-1}L_1(0)h(0)\pd_x^{-1}
=-\pd_x^{-1}h(0)^{-1}L_2(0)h(0)\,,
\\ \label{eq:Darboux-Lax2}
\nabla_v M_+[v]=h(0)L_2(0)^*h(0)^{-1}\pd_x^{-1}=\pd_x^{-1}h(0)L_1(0)^*h(0)^{-1}\,,
\\ \label{eq:Darboux-Lax3}
\nabla_v\widetilde{M}_-[u_1,v]=
h(0)^{-1}B_1(0)h(0)\pd_x^{-1}=\pd_x^{-1}h(0)^{-1}B_2(0)h(0)\,,
\\ \label{eq:Darboux-Lax4}
\nabla_v\widetilde{M}_+[u_2,v]
=-h(0)B_2(0)^*h(0)^{-1}\pd_x^{-1}=-\pd_x^{-1}h(0)B_1(0)^*h(0)^{-1}\,,
\end{gather}
and \eqref{eq:Darboux-1}--\eqref{eq:Darboux-2b} can be rewritten as
\begin{gather}
  \label{eq:Darboux-3}
U_1=\nabla M_-[v]V\,,\quad U_2=\nabla M_+[v]V\,,  
  \\  \label{eq:Darboux-4a}
  \nabla_u\widetilde{M}_-[u_1,v]U_1+\nabla_v\widetilde{M}_-[u_1,v]V=0\,,
  \\   \label{eq:Darboux-4b}
\nabla_u\widetilde{M}_+[u_2,v]U_2+\nabla_v\widetilde{M}_+[u_2,v]V=0\,.
\end{gather}
\par
By the Miura transformations \eqref{eq:BT-f1} and \eqref{eq:BT-f2},
Y-shaped solitons are connected to the null solution.
Indeed,  $\tau$-functions $\tau_1=1$ and $\tau_2=\tau_{\mY}$
satisfy \eqref{eq:BT-f1} and \eqref{eq:BT-f2},
and $v_{\mY}=\pd_x\log \tau_{\mY}$ satisfies
\begin{equation}
  \label{eq:Mpmv2}
  M_+[v_{\mY}]=u_{\mY}\,,\quad M_-[v_{\mY}]=0\,.
\end{equation}
Let $\tau_{\mY}$ be as \eqref{eq:tau-Y} and
\begin{gather*}
L_0=-\pd_y+\pd_x^2\,,\quad  L_{\mY}=-\pd_y+\pd_x^2+u_{\mY}\,,
\\
B_0=4(\pd_t+\pd_x^3)\,,\quad
B_\mY=
4(\pd_t+\pd_x^3)+6u_\mY\pd_x+3\left(\pd_xu_\mY+\pd_x^{-1}\pd_yu_\mY\right)\,.
\end{gather*}
By \eqref{eq:kerLax} and \eqref{eq:Darboux-Lax1}--\eqref{eq:Darboux-Lax4},
we have $L_0\tau_{\mY}=0$, $L_{\mY}^*\tau_{\mY}^{-1}=0$ and
\begin{align}
&  \label{eq:Miura-Lax1}
\nabla M_-[v_{\mY}]=-\tau_{\mY}^{-1}L_0\tau_{\mY}\pd_x^{-1}
=-\pd_x^{-1}\tau_{\mY}^{-1}L_{\mY}\tau_{\mY}\,,
\\ & \label{eq:Miura-Lax2}
\nabla M_+[v_{\mY}]=\tau_{\mY}L_{\mY}^*\tau_{\mY}^{-1}\pd_x^{-1}
=\pd_x^{-1}\tau_{\mY}L_0^*\tau_{\mY}^{-1}\,,
\\ \label{eq:Miura-Lax3}
& \nabla_v\widetilde{M}_-[0,v_\mY]=
\tau_\mY^{-1}B_0\tau_\mY\pd_x^{-1}=\pd_x^{-1}\tau_\mY^{-1}B_\mY\tau_\mY\,,
\\ \label{eq:Miura-Lax4}
& \nabla_v\widetilde{M}_+[u_\mY,v]
=-\tau_\mY B_\mY^*\tau_\mY^{-1}\pd_x^{-1}
=-\pd_x^{-1}\tau_\mY B_0^*\tau_\mY^{-1}\,.
\end{align}

Finally, we recall the correspondence between solutions of 
the linearized equation around $u_\mY$ and those of the linearized
equation around $0$.
\begin{lemma}
\label{cl:kerM-P}
  Let $\beta\in \C$ and $\beta'\in \C\setminus\{\k_1,\k_2,\k_3\}$. Then
  \begin{gather}
    \label{eq:M1+pdPhiPhi*}
\nabla M_+[v_{\mY}]\pd_x
\left(\Phi^0(-i\beta)\Phi^{1,*}(-i\beta')\right)
=2\pd_x\left(\Phi^1(-i\beta)\Phi^{1,*}(-i\beta')\right)\,,
\\    \label{eq:M1-pdPhiPhi*}
\nabla M_-[v_{\mY}]\pd_x
\left(\Phi^0(-i\beta)\Phi^{1,*}(-i\beta')\right)
=2\pd_x\left(\Phi^0(-i\beta)\Phi^{0,*}(-i\beta')\right)\,,
\\
\nabla M_-[v_{\mY}]\left(\Phi^1(-i\beta)\Phi^{1,*}_j\right)=0\,.
\end{gather}
\end{lemma}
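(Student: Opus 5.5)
The plan is to verify all three identities by direct computation. I would use the explicit formula $\nabla M_\pm[v]V=(\pm\pd_x+\pd_x^{-1}\pd_y)V-2vV$ with $v=v_\mY=\pd_x\log\tau_\mY$, together with two first-order binary Darboux relations linking $\Phi^0,\Phi^1$ and $\Phi^{0,*},\Phi^{1,*}$. The only inputs are $u_\mY=2\pd_xv_\mY$, the Lax equations $L_0\Phi^0=0$ and $L_\mY^*\Phi^{1,*}=0$, and the explicit shifted $\tau$-functions.

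\emph{Step 1 (Darboux relations).} Write $f=\Phi^0(-i\beta)$, so $\pd_xf=\beta f$, and $g=\Phi^{1,*}(-i\beta')$. Since $\tau_\Phi(-i\beta)=\sum_j(\beta-\k_j)e^{\theta_j}=\beta\tau_\mY-\pd_x\tau_\mY$, we get
\begin{equation*}
\Phi^1(-i\beta)=f(\beta-v_\mY)=\pd_xf-v_\mY f .
\end{equation*}
For the dual side, $\tau_{\Phi^*}(-i\beta')=\sum_j e^{\theta_j}/(\beta'-\k_j)$ is well defined because $\beta'\notin\{\k_1,\k_2,\k_3\}$. A one-line computation gives
\begin{equation*}
(\pd_x+v_\mY)g=e^{-\beta'x+\cdots}\,\frac{\sum_j(\k_j-\beta')e^{\theta_j}/(\beta'-\k_j)}{\tau_\mY}=-\Phi^{0,*}(-i\beta'),
\end{equation*}
that is, $\Phi^{0,*}(-i\beta')=-(\pd_x+v_\mY)g$.

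\emph{Step 2 (first two identities).} Take $V=\pd_x(fg)$, so that $\pd_x^{-1}\pd_yV=\pd_y(fg)$. I would substitute $f_y=f_{xx}$ and $g_y=-g_{xx}-u_\mY g$ (the latter from $L_\mY^*g=0$). For $\nabla M_+$ this gives
\begin{equation*}
2f_{xx}g+2f_xg_x-u_\mY fg-2v_\mY(fg)_x .
\end{equation*}
Expanding $2\pd_x\{(f_x-v_\mY f)g\}$ and using $u_\mY=2\pd_xv_\mY$ produces the same expression, which proves \eqref{eq:M1+pdPhiPhi*}. For $\nabla M_-$ the computation gives
\begin{equation*}
-2f_xg_x-2fg_{xx}-u_\mY fg-2v_\mY(fg)_x .
\end{equation*}
This equals $-2\pd_x\{f(g_x+v_\mY g)\}=2\pd_x(f\Phi^{0,*})$ by Step 1, which proves \eqref{eq:M1-pdPhiPhi*}.

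\emph{Step 3 (third identity).} Here $\Phi^{1,*}_j=1/\tau_\mY$, so the relevant function is $\Phi^1(-i\beta)/\tau_\mY$. By \eqref{eq:Miura-Lax1}, $\nabla M_-[v_\mY]=-\pd_x^{-1}\tau_\mY^{-1}L_\mY\tau_\mY$, hence
\begin{equation*}
\nabla M_-[v_\mY]\bigl(\Phi^1(-i\beta)/\tau_\mY\bigr)=-\pd_x^{-1}\tau_\mY^{-1}L_\mY\Phi^1(-i\beta)=0,
\end{equation*}
since $\Phi^1$ is a Jost solution. To avoid interpreting $\pd_x^{-1}$ on non-decaying functions, I would instead check directly that $(-\pd_x+\pd_x^{-1}\pd_y-2v_\mY)(\Phi^1/\tau_\mY)=0$ by applying $\pd_x$ and using $L_\mY\Phi^1=0$.

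The main obstacle I expect is bookkeeping rather than substance. One point is the normalization and sign of the residue defining $\Phi^{1,*}_j$, and of $\Phi^{0,*}$ in Step 1. The other is making sense of $\pd_x^{-1}$ acting on exponentials, which should be read formally as $\pd_x^{-1}\pd_x(fg)=fg$. I would fix both by working at the level of $\pd_x$ of each identity, where everything is a genuine differential identity.
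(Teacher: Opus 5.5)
Your Steps 1 and 2 are correct, and this direct verification is the ``straightforward computation'' the paper refers to. The Darboux relations $\Phi^1(-i\beta)=(\pd_x-v_\mY)\Phi^0(-i\beta)$ and $\Phi^{0,*}(-i\beta')=-(\pd_x+v_\mY)\Phi^{1,*}(-i\beta')$ check out. Both expansions also check out, with $\pd_x^{-1}\pd_y\pd_x(fg)$ read as $\pd_y(fg)$.

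The one soft spot is the fallback you propose in Step 3, and the closing remark about working ``at the level of $\pd_x$''. Applying $\pd_x$ to $\nabla M_-[v_\mY](\Phi^1/\tau_\mY)$ and using $L_\mY\Phi^1=0$ only shows that this expression is independent of $x$, i.e.\ a function of $(y,t)$ alone. Differentiating discards precisely the constant of integration that the choice of $\pd_x^{-1}$ is meant to fix, so this check cannot replace the identity.

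Fix it the same way you handled Step 2. From $\Phi^1(-i\beta)=f_x-v_\mY f$ you get
$\Phi^1(-i\beta)\Phi^{1,*}_j=\Phi^1(-i\beta)/\tau_\mY=\pd_x\bigl(\Phi^0(-i\beta)/\tau_\mY\bigr)$, which is \eqref{eq:prod-PhiPhi*j}. So take $\pd_x^{-1}\pd_y$ of it to be $\pd_y\bigl(\Phi^0(-i\beta)/\tau_\mY\bigr)$. The first form in \eqref{eq:Miura-Lax1} then gives $\nabla M_-[v_\mY]\pd_x\bigl(\Phi^0/\tau_\mY\bigr)=-\tau_\mY^{-1}L_0\Phi^0(-i\beta)=0$. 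Equivalently, set $H=\Phi^0(-i\beta)/\tau_\mY$ and check $-H_{xx}+H_y-2v_\mY H_x=0$ directly, using $H_x=(\beta-v_\mY)H$ and $H_y=(\beta^2-\pd_xv_\mY-v_\mY^2)H$.

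Alternatively, multiply \eqref{eq:M1-pdPhiPhi*} by $\beta'-\k_j$ and let $\beta'\to\k_j$. There $\Phi^{1,*}(-i\beta')$ has residue $1/\tau_\mY$ while $\Phi^{0,*}(-i\beta')$ is entire. So the third identity is the residue of the second, with the same antiderivative convention.

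This convention is also the right one in the weighted spaces $L^2(\R;e^{2\a X}dX)$ where $\pd_x^{-1}$ is a genuine operator. There $\Phi^0/\tau_\mY$ and $\Phi^1/\tau_\mY$ have the same exponential rates as $X\to\pm\infty$, so your formal argument $-\pd_x^{-1}(0)=0$ is rigorous as well.
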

We can prove Lemma~\ref{cl:kerM-P} by a straightforward computation
(see e.g. \cite[Lemmas~3.2--Corollary~3.5, Claim~4.2]{Miz23}
and \cite[Lemma~3.6 and Corollary~3.7]{Miz24}).
\bigskip

\section{Proof of Lemmas~\ref{lem:U1-U2} and
 \ref{lem:compatibilityU1U2}}
\label{ap:lem5.1-5.2}
\begin{proof}[Proof of Lemma~\ref{lem:U1-U2}]
To prove Lemma~\ref{lem:U1-U2}, we use a relationship
of the eigenvalue problems and Darboux transformations
\eqref{eq:LMiura-1} and \eqref{eq:LMiura-2}. Let
\begin{gather*}
e_1(U)=3\pd_Y^2U-4b_2\pd_X\pd_YU+4\lambda \pd_XU+\pd_X^2(\pd_X^2-4b_1)U\,,
\\  e_2(U)=e_1(U)+6\pd_X^2(u_{\mY}U)\,.
\end{gather*}
We have $u_{\mY}=2\pd_xv_{\mY}=2(\pd_x^{-1}\pd_yv_{\mY}-v_{\mY}^2)$ by
by \eqref{eq:Mpmv2},
\begin{multline*}
e_2(U_2)-e_1(U_1)= \pd_X^3\{\widetilde{\mathcal{M}}_{\lambda,+}(U_2,V)
+\widetilde{\mathcal{M}}_{\lambda,-}(U_1,V)\}
  \\  +\left\{4(\lambda\pd_X-b_1\pd_X^2-b_2\pd_X\pd_Y+\pd_X^4)
   +3\pd_X^2(u_{\mY}\cdot)+3\pd_Y^2\right\}(U_2-U_1-2\pd_XV)
  \\  
-3\pd_X^2(\pd_Y+2v_{\mY}\pd_X)(U_1+U_2-2\pd_X^{-1}\pd_YV+4v_{\mY}V)\,.
\end{multline*}
If $U_1$, $U_2$ and $V$ satisfy \eqref{eq:LMiura-1}, then
\begin{equation*}
U_2-U_1=2\pd_XV\,,\quad 
U_1+U_2=2\pd_X^{-1}\pd_YV-4v_{\mY}V\,.
\end{equation*}
By \eqref{eq:LMiura-2} and  the above,
we have $e_1(U_1)=0$ if and only if $e_2(U_2)=0$.  Thus we complete the proof.
\end{proof}

To prove Lemma~\ref{lem:compatibilityU1U2}, we need the following.
\begin{lemma}
\label{rem:Lax-Darboux}
\begin{align}
  A_2(\lambda)V=& -e^{-\lambda t}\nabla_v\left(\widetilde{M}_-[0,v_{\mY}]+4b_2M_-[v_{\mY}]\right)
(e^{\lambda t}V) \notag
  \\ \label{eq:A2-Lax}
= & -e^{-\lambda t}\pd_x^{-1}\tau_{\mY}^{-1}(B_{\mY}-4b_2L_{\mY})\tau_{\mY}(e^{\lambda t}V)\,,
\end{align}
\begin{align}
A_4(\lambda)V=& -e^{-\lambda t}\nabla_v\left(\widetilde{M}_+[u_{\mY},v_{\mY}]+4b_2M_+[v_{\mY}]\right)
(e^{\lambda t}V)
 \notag   \\= & \label{eq:A4-Lax}
e^{-\lambda t}\pd_x^{-1}\tau_{\mY}
(B_0^*-4b_2L_0^*)\tau_{\mY}^{-1}(e^{\lambda t}V)\,.
\end{align}  
\end{lemma}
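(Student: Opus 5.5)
The plan is to verify the two identities directly. Both sides are first-order in $V$, and everything is already encoded in the linearized Miura maps of Appendix~\ref{sec:Miura}. For \eqref{eq:A2-Lax} I will compute $\nabla_v\widetilde{M}_-[0,v]$ and $\nabla_v M_-[v]$ at $v=v_\mY$ from the definitions of $\widetilde{M}_\pm$ and $M_\pm$. With $u=0$ one gets
\[
\nabla_v\widetilde{M}_-[0,v_\mY]W=4(\pd_x^{-1}\pd_t+\pd_x^2)W+12\pd_x(v_\mY W)+12v_\mY^2W,
\]
\[
\nabla_v M_-[v_\mY]W=(-\pd_x+\pd_x^{-1}\pd_y)W-2v_\mY W.
\]
I then take $W=e^{\lambda t}V(X,Y)$ and conjugate by $e^{-\lambda t}$. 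In the moving frame $X=x-b_1t$, $Y=y-b_2t$ this turns $\pd_t$ into $\lambda-b_1\pd_X-b_2\pd_Y$, $\pd_x$ into $\pd_X$ and $\pd_y$ into $\pd_Y$. The combination $\nabla_v(\widetilde{M}_-+4b_2M_-)$ should then cancel the $\pd_X^{-1}\pd_Y$ terms, since $-4b_2\pd_X^{-1}\pd_Y+4b_2\pd_X^{-1}\pd_Y=0$. Collecting the remaining terms, and using $12\pd_x(v_\mY W)=12v_\mY\pd_XW+6u_\mY W$ because $u_\mY=2\pd_xv_\mY$, should reproduce exactly $-A_2(\lambda)V$. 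That gives the first equality in \eqref{eq:A2-Lax}, and the same bookkeeping gives the first equality in \eqref{eq:A4-Lax}. For the latter I use $u=u_\mY$ in $\widetilde{M}_+$, which brings in the terms $6u_\mY v$ and $-6\pd_x(v^2)$; the resulting $\pm 6u_\mY$ contributions are expected to cancel, consistent with the absence of a $u_\mY$ term in $A_4$.

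The second equalities are the operator identities \eqref{eq:Miura-Lax1}--\eqref{eq:Miura-Lax4}. For \eqref{eq:A2-Lax} I will combine $\nabla_v\widetilde{M}_-[0,v_\mY]=\pd_x^{-1}\tau_\mY^{-1}B_\mY\tau_\mY$ from \eqref{eq:Miura-Lax3} with $\nabla M_-[v_\mY]=-\pd_x^{-1}\tau_\mY^{-1}L_\mY\tau_\mY$ from \eqref{eq:Miura-Lax1}. Their combination is
\[
\nabla_v\left(\widetilde{M}_-+4b_2M_-\right)=\pd_x^{-1}\tau_\mY^{-1}(B_\mY-4b_2L_\mY)\tau_\mY,
\]
which is the claimed form after the overall minus sign. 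For \eqref{eq:A4-Lax} I use the right-hand versions: \eqref{eq:Miura-Lax4} gives $-\pd_x^{-1}\tau_\mY B_0^*\tau_\mY^{-1}$ and \eqref{eq:Miura-Lax2} gives $\pd_x^{-1}\tau_\mY L_0^*\tau_\mY^{-1}$. Their combination is $-\pd_x^{-1}\tau_\mY(B_0^*-4b_2L_0^*)\tau_\mY^{-1}$, and the outer minus sign in the definition turns it into the stated expression. Throughout, $e^{\lambda t}$ commutes with everything except $\pd_t$, so the conjugation by $e^{\pm\lambda t}$ passes through unchanged.

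The main obstacle is the sign and term bookkeeping in the first equalities, in particular verifying \eqref{eq:Miura-Lax1}--\eqref{eq:Miura-Lax4} themselves if they are not taken as given. These follow from differentiating \eqref{eq:kerLax} and using the intertwining relations \eqref{eq:intertwine-1}--\eqref{eq:intertwine-2} with $h=\tau_\mY$ (so that $\tau_1=1$, $\tau_2=\tau_\mY$). If needed, I would check them directly by expanding $\tau_\mY^{-1}\pd_x\tau_\mY=\pd_x+v_\mY$ and $\tau_\mY^{-1}\pd_y\tau_\mY=\pd_y+\pd_x^{-1}\pd_y v_\mY$, and then using $\pd_x^{-1}\pd_yv_\mY=v_\mY^2+\tfrac12u_\mY$ from $M_-[v_\mY]=0$.
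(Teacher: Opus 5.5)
Your proposal is correct and follows the paper's route. The first equalities are the direct linearization bookkeeping you describe (the $\pd_X^{-1}\pd_Y$ terms and the $\pm 6u_\mY$ terms do cancel, leaving exactly $-A_2(\lambda)V$ and $-A_4(\lambda)V$). The second equalities come from combining the right-hand forms of \eqref{eq:Miura-Lax1}--\eqref{eq:Miura-Lax4} with the signs you give, which is all the paper's one-line proof invokes.
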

Lemma~\ref{rem:Lax-Darboux} follows immediately from
\eqref{eq:Miura-Lax1}--\eqref{eq:Miura-Lax4}.

\begin{proof}[Proof of Lemma~\ref{lem:compatibilityU1U2}]
By \eqref{eq:calA-B},
\begin{gather}
\label{eq:calA-B-1}
U_{11}=A_1V_1+V_2\,,\quad
-(3A_1+4b_2)U_{11}+3U_{12}=A_2(\lambda)V_1\,,
\\ \label{eq:calA-B-2}
U_{21}=A_3V_1+V_2\,,\quad 
-(3A_3+4b_2)U_{21}+3U_{22}=A_4(\lambda)V_1\,.
\end{gather}
Subtracting the first equation of \eqref{eq:calA-B-1}
from that of \eqref{eq:calA-B-2}, we have
$U_{21}-U_{11}=(A_3-A_1)V_1=2\pd_XV_1$. By \eqref{eq:calA-B-1} and
\eqref{eq:calA-B-2},
\begin{align*}
  3(U_{22}-U_{12})=& \{A_4(\lambda)-A_2(\lambda)\}V_1
+3(A_3U_{21}-A_1U_{11})+4b_2(U_{21}-U_{11})
\\ =&
(6u_\mY-12\pd_xv_\mY)V_1+6\pd_XV_2\,.
\end{align*}
Since $u_\mY=2\pd_xv_\mY$, we have $U_{22}-U_{12}=2\pd_XV_2$.
\par
By \eqref{eq:calA-B-0},
\begin{equation*}
 \pd_Y(U_{21}-U_{11})-\pd_X(U_{22}-U_{12})=2\pd_X(\pd_YV_1-\pd_XV_2)\,.
\end{equation*}
Substituting $\pd_YU_{21}=\pd_XU_{22}$ into the above, we have
the first equation of \eqref{eq:calA-B-0'}.
By a straightforward computation, we have
\begin{align*}
\widetilde{\mathcal{M}}_{\lambda,-}(U_{11},V_1)
=& -(3A_3+4b_2)U_{11}+3U_{12}-A_2(\lambda)V_1
\\ & +
3(\pd_X^{-1}\pd_YU_{11}-U_{12})+4b_2(V_2-\pd_X^{-1}\pd_YV_1)\,,
\end{align*}
\begin{align*}
\widetilde{\mathcal{M}}_{\lambda,+}(U_{21},V_1)
=&-(3A_3+4b_2)U_{21}+3U_{22}-A_2(\lambda)V_1
\\ & +
3(\pd_X^{-1}\pd_YU_{21}-U_{22})+4b_2(V_2-\pd_X^{-1}\pd_YV_1)\,.
\end{align*}
Substituting \eqref{eq:calA-B-1} and \eqref{eq:calA-B-2} into the above,
we have \eqref{eq:calA-B-t0} and \eqref{eq:calA-B-t0'}.
\par
Differentiating the second equation of \eqref{eq:calA-B-2} with
respect to $Y$ and substituting $\pd_YU_{21}=\pd_XU_{22}$ into
the resulting equation, we have
\begin{equation}
\label{eq:cal-comp1}
\begin{split}
&  -3A_3\pd_XU_{22}-4b_2\pd_YU_{21}+6(\pd_Yv_\mY)U_{21}+3\pd_X^{-1}\pd_Y^2U_{21}
\\ = &
A_4(\lambda)\pd_YV_1+[\pd_Y,A_4(\lambda)]V_1\,.
\end{split}
\end{equation}
Since $[B_0^*,L_0^*]=0$ and $\nabla M_+(v_\mY)=\pd_X^{-1}\pd_Y+A_3$,
it follows from \eqref{eq:Miura-Lax2} and 
\eqref{eq:A4-Lax}  that $\nabla M_+(v_\mY)\pd_XA_4(\lambda)
=A_4(\lambda)\pd_X\nabla M_+(v_\mY)$ and that
\begin{equation*}
A_4(\lambda)\pd_XA_3-A_3\pd_XA_4(\lambda)=[\pd_Y,A_4(\lambda)]\,.  
\end{equation*}
Combining the above with \eqref{eq:calA-B-2}, we have
\begin{multline}
\label{eq:cal-comp1'}
3A_3\pd_XU_{22}= A_3\pd_X\left\{A_4(\lambda)V_1+(3A_3+4b_2)U_{21}\right\}
\\=
A_4(\lambda)\pd_XA_3V_1-[\pd_Y,A_4(\lambda)]V_1
+A_3\pd_X(3A_3+4b_2)U_{21}
\\= 
-A_4(\lambda)\pd_XV_2-[\pd_Y,A_4(\lambda)]V_1
+\left\{A_4(\lambda)\pd_X+A_3\pd_X(3A_3+4b_2)\right\}U_{21}\,.
\end{multline}
Using $u_\mY=2\pd_Xv_\mY$ and $M_-(v_\mY)=0$, we have
\begin{equation}
\label{eq:cal-comp2}
\begin{split}
&A_4(\lambda)\pd_X+A_3\pd_X(3A_3+4b_2)+4b_2\pd_Y-6\pd_Yv_\mY
-3\pd_X^{-1}\pd_Y^2
\\=& 
4(\mL_\mY-\lambda)-6\pd_XM_-(v_\mY)=4(\mL_\mY-\lambda)\,.
\end{split}
\end{equation}
Substituting \eqref{eq:cal-comp1'} and \eqref{eq:cal-comp2}
into \eqref{eq:cal-comp1}, we have \eqref{eq:calA-B-0''}.
Thus we complete the proof.
\end{proof}

\bigskip
\section{Adjoint continuous eigenfunctions}
\label{sec:projection}
First, we will investigate the behavior of $g_{13}^{*,\pm}(\eta)$ and
$g_{12}^{*,\pm}(\eta)$ on $\R^2_+$ and $\R^2_-$.
Let
\begin{gather}
\label{eq:defg13-s*}
g_{13}^{\#,*}(x,\eta)=\frac12C_{32}(-\bar{\eta})
\pd_x\left\{e^{\{\sqrt{c_{12}}+\delta_{13}(-\bar{\eta})\}x}
\sech\sqrt{c_{12}}x\right\}\,,
\\
\label{eq:defg12-s*}
g_{12}^{\#,*}(x,\eta)=\frac12C_{23}(-\bar{\eta})\pd_x
\left\{e^{\{\sqrt{c_{13}}+\delta_{12}(-\bar{\eta})\}x}
\sech\sqrt{c_{13}}x\right\}\,,
\end{gather}
where
\begin{align}
\notag
C_{jk}(\eta)=&\frac{(\k_k-\k_1)(\k_j-\beta_{1j}^-(\eta))}
{(\k_j-\k_1)(\k_k-\beta_{1j}^-(\eta))}\,,
\\ =& 
\label{eq:C1jk}
1+i\frac{\k_k-\k_j}{(\k_j-\k_1)^2(\k_k-\k_1)}\eta+O(\eta^2)\,.
\end{align}
We remark that $e^{-\a x}g_{13}^{\#,*}(x,\eta)\in L^2(\R)$
if $0<\a<2\sqrt{c_{12}}$ and $\eta\in(-\eta_{13}^*(\a),\eta_{13}^*(\a))$
and that $e^{-\a x}g_{12}^{\#,*}(x,\eta)\in L^2(\R)$
if $0<\a<2\sqrt{c_{13}}$ and $\eta\in(-\eta_{12}^*(\a),\eta_{12}^*(\a))$.

Let
\begin{gather*}
\calX_{\eps,+}^*=L^2(\R^2_+;e^{-2\a z_{13}+2\eps(|z_{13}|+|Y|)}dXdY)\,,
\\ \calX_{\eps,-}^*=L^2(\R^2_-;e^{-2\a z_{12}+2\eps(|z_{12}|+|Y|)}dXdY)\,,
\end{gather*}
where $(X,Y)$ is the moving coordinate given by \eqref{eq:b1-b2Y}.
\begin{lemma}
  \label{lem:g*-growth}
Assume \eqref{ass:alpha23} and \eqref{ass:X13nu}.
  \begin{itemize}
\item[\rm{(i)}] 
Let $0\le\eta_0<\eta_{13}^*(\a,-\nu)$.
There exists an $\eps>0$
such that for $\eta=\eta_R+i\nu$ with
$\eta_R\in[-\eta_0,\eta_0]$ and $j=0$, $1$,
\begin{gather*}
\left\|\pd_\eta^j\{e^{-iY\bar{\eta}}g_{13}^{*,\pm}(\eta)-
g_{13}^{0,*}(z_{13},\pm\eta)\}\right\|_{\calX_{\eps,+}^*}\le C(\eta_0)\,,
\\
\left\|\pd_\eta^j\{e^{-iY\bar{\eta}}g_{13}^{*,-}(\eta)\}
\right\|_{\calX_{\eps,-}^*}\le C(\eta_0)\,,
\\
\left\|\pd_\eta^j\{e^{-\{i\bar{\eta}+(\k_3-\k_2)\delta_{13}(-\bar{\eta})\}Y}g_{13}^{*,+}(\eta)
-g_{13}^{\#,*}(z_{12},\eta)\}
\right\|_{\calX_{\eps,-}^*}\le C(\eta_0)\,,
\end{gather*}
where $C(\eta_0)$ is a constant with $\limsup_{\eta_0\to+0}C(\eta_0)<\infty$.
\item[\rm{(ii)}]
Let $0\le\eta_0<\eta_{12}^*(\a)$.
There exists an $\eps>0$ such that for $\eta\in[-\eta_0,\eta_0]$
and $j=0$, $1$,
\begin{gather*}
\left\|\pd_\eta^j\{e^{-iY\eta}g_{12}^{*,\pm}(\eta)- g_{12}^{0,*}(z_{12},\pm\eta)\}
\right\|_{\calX_{\eps,-}^*}\le C(\eta_0)\,,
\\ 
\left\|\pd_\eta^j\{e^{-iY\eta}g_{12}^{*,-}(\eta)\}
\right\|_{\calX_{\eps,+}^*}\le C(\eta_0)\,,
\\
\left\|
\pd_\eta^j\{e^{-\{i\eta-(\k_3-\k_2)\delta_{12}(-\eta)\}Y}g_{12}^{*,+}(\eta)
-g_{12}^{\#,*}(z_{13},\eta)\}
\right\|_{\calX_{\eps,+}^*}\le C(\eta_0)\,,
\end{gather*}
where $C(\eta_0)$ is a constant with $\limsup_{\eta_0\to+0}C(\eta_0)<\infty$.
\end{itemize}
\end{lemma}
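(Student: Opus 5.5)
The estimates all come from writing $g_{13}^{*,\pm}$ and $g_{12}^{*,\pm}$ in closed form via \eqref{eq:Vbeta=g*} and \eqref{eq:6}, and expanding the $\tau$-function ratios in each asymptotic region. By \eqref{eq:Vbeta=g*}, $g_{13}^{*,-}(\eta)=V(\beta_{13}^+(\bar\eta))$ and $g_{13}^{*,+}(\eta)$ is a constant multiple of $\widehat V(\beta_{13}^-(-\bar\eta))$; similarly for $[1,2]$. Using \eqref{eq:tauXY} with $(i_1,i_2,i_3)=(1,3,2)$ on $\R^2_+$ and $(1,2,3)$ on $\R^2_-$, together with \eqref{eq:btitj}, I would write each function as an explicit main term plus a correction. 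The main term is a $1$-soliton adjoint mode in the relevant variable $z_{jk}$, times $e^{iY\bar\eta}$ or $e^{\{i\bar\eta+(\k_3-\k_2)\delta_{13}(-\bar\eta)\}Y}$. The correction carries an extra factor $e^{\theta_2-\theta_1}$, $e^{\theta_2-\theta_3}$ (on $\R^2_+$) or $e^{\theta_3-\theta_2}$, $e^{\theta_1-\theta_2}$ (on $\R^2_-$), divided by a bounded-below denominator.

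\textbf{Step 1: main terms on $\R^2_+$ for (i).} Here $e^{\theta_2}/(e^{\theta_1}+e^{\theta_3})$ is small by \eqref{eq:2/tau}, and the leading term of $e^{-iY\bar\eta}g_{13}^{*,\pm}$ is exactly $g^{0,*}_{13}(z_{13},\pm\eta)$. This is the same computation as the asymptotics of $V_\infty$ and $\widehat V_\infty$ in the proof of Proposition~\ref{prop:spec-res}, rewritten through \eqref{eq:def-gij*0}. The remainder is bounded by
\[
\min\{e^{\theta_2-\theta_1},e^{\theta_2-\theta_3}\}\times e^{(\Re\beta-a_{13})z_{13}}\sech\sqrt{c_{13}}z_{13}.
\]
By \eqref{eq:Y-13} this decays like $e^{-(\k_2-\k_1)(\k_3-\k_2)Y}$ for $Y>0$, uniformly in $z_{13}$ within the weight $e^{-\a z_{13}}$, because $|\Re\gamma_{13}(\pm\eta)-\a|<\sqrt{c_{13}}$ for $|\eta_R|\le\eta_0<\eta_{13}^*(\a,-\nu)$. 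A small $\eps$ loss in $|z_{13}|+|Y|$ is then absorbed.

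\textbf{Step 2: $\R^2_-$ bounds for (i).} For $g_{13}^{*,-}=V(\beta_{13}^+(\bar\eta))$, use \eqref{eq:1-2}. With $\beta=\beta_{13}^+(\bar\eta)$ near $\k_3$, \eqref{eq:judge1}-type identities give $\Re(\beta-\k_2)(\beta-\k_3)-\Re(\beta-\k_1)(\beta-\k_3)=(\k_1-\k_2)(\Re\beta-\k_3)$ to leading order. In the weighted norm $e^{-\a z_{12}}=e^{-\a z_{23}+\a(\k_3-\k_1)Y}$ this yields exponential decay as $Y\to-\infty$ near both the $[2,3]$ and $[1,2]$ crests. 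For $g_{13}^{*,+}$, the dominant exponentials on $\R^2_-$ near the $[1,2]$-soliton are $e^{\theta_1},e^{\theta_2}$. Extracting $e^{(\theta_1+\theta_2)/2}$ and applying \eqref{eq:6} with $(i,j)=(1,2)$ produces the growth factor $e^{\{i\bar\eta+(\k_3-\k_2)\delta_{13}(-\bar\eta)\}Y}$ and the profile in $z_{12}$. The constant $C_{32}(-\bar\eta)$ in \eqref{eq:defg13-s*} comes from the ratio $(\k_3-\beta)/(\k_2-\beta)$ of the factors $(ik-\k_j)$ in $\tau_\Phi$. The error again carries $e^{\theta_3-\theta_2}$, which is handled by \eqref{eq:Y-12}.

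\textbf{Step 3: part (ii) and the derivatives.} Part (ii) is symmetric, swapping the roles of $Y\gtrless0$, $z_{12}$ and $z_{13}$, and using \eqref{eq:Y-13} for the $[1,3]$ side. The $\eta$-derivatives are handled by differentiating the explicit formulas. This only produces polynomial factors in $z$ and $Y$, which the $\eps$-margin absorbs. This is why $\eps$ is taken small relative to the spectral gaps $\sqrt{c_{jk}}-|\Re\gamma-\a|$ and $(\k_2-\k_1)(\k_3-\k_2)$.

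\textbf{Expected obstacle.} The hardest part will be uniformity as $\eta_0\to0$, i.e.\ showing $\limsup_{\eta_0\to+0}C(\eta_0)<\infty$. Near $\eta=0$ the decay rates of the correction terms, such as $\Re\{(\beta-\k_1)(\beta-\k_2)\}$, degenerate toward the critical exponents, and the prefactor $i\bar\eta/(\k_1-\k_3)$ together with the division by $\beta-\k_1$ in $\widehat V$ must be tracked. I would first check that all exponents at $\eta=0$ stay bounded away from the weight thresholds by a margin of order $\a$ or $(\k_2-\k_1)(\k_3-\k_2)$, using \eqref{ass:alpha23} and \eqref{ass:X13nu}. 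I would then choose $\eps$ smaller than half of that margin, uniformly for small $\eta_0$.
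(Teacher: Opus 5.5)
Your strategy matches the paper's. You write $g_{13}^{*,\pm}$ and $g_{12}^{*,\pm}$ in closed form, then compare them region by region with one-soliton profiles. The paper gets the closed forms from Lemma~\ref{cl:pd-1prod}, e.g.\ $g_{jk}^{*,-}(\eta)=e^{iY\bar\eta}\pd_x\{e^{\delta_{jk}(\bar\eta)z_{jk}}e^{\theta_k}/\tau_\mY\}$; this is equivalent to your route through \eqref{eq:Vbeta=g*} and \eqref{eq:6}. Your argument is fine on $\R^2_+$, where $\theta_2\le\max(\theta_1,\theta_3)$ and the corrections are genuinely small by \eqref{2divtau}. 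That covers the first estimate of (i) and the second and third estimates of (ii).

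The gap is on $\R^2_-$, in the region $\{Y\le0,\ z_{13}\ge0\}$ around and to the right of the $[2,3]$-soliton, where $e^{\theta_3}$ dominates. There $e^{\theta_3-\theta_2}$ is not small, and \eqref{eq:Y-12}, which holds only for $z_{13}\le0$, does not apply. In the third estimate of (i),
$e^{-\{i\bar\eta+(\k_3-\k_2)\delta_{13}(-\bar\eta)\}Y}g_{13}^{*,+}(\eta)=\pd_x\{e^{\delta_{13}(-\bar\eta)z_{12}}(e^{\theta_3}+C_{32}(-\bar\eta)e^{\theta_2})/\tau_\mY\}$.
Its difference from $g_{13}^{\#,*}(z_{12},\eta)$ is of size $e^{\Re\delta_{13}(-\bar\eta)z_{12}}$ on that region, with no small factor. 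For the same reason, (ii) is not a mirror image of (i): $\R^2_+$ carries one soliton but $\R^2_-$ carries two. In the first estimate of (ii), the difference $e^{-iY\eta}g_{12}^{*,\pm}(\eta)-g_{12}^{0,*}(z_{12},\pm\eta)$ is of size $e^{\Re\delta_{12}(\eta)z_{12}}$ near and to the right of the $[2,3]$-crest. For instance, $e^{\theta_2}/\tau_\mY$ drops from $1$ to $0$ across that crest, while $e^{\theta_2}/(e^{\theta_1}+e^{\theta_2})$ stays near $1$ there for large $|Y|$.

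The repair is the split the paper makes for the first estimate of (ii):
\begin{itemize}
\item on $\{Y\le0,\ z_{13}\le0\}$, use \eqref{3divtau};
\item on $\{Y\le0,\ z_{13}\ge0\}$, bound the two terms separately by $e^{\Re\delta\, z_{12}}$, with $\delta=\delta_{13}(-\bar\eta)$, resp.\ $\delta_{12}(\pm\eta)$, and let the weight supply the decay.
\end{itemize}
On the second region, $z_{12}=z_{13}+(\k_3-\k_2)|Y|\ge(\k_3-\k_2)|Y|$, and $\Re\delta<\a$ on the admissible $\eta$-range. Hence $e^{(\Re\delta-\a+\eps)z_{12}+\eps|Y|}$ is square integrable over that region for small $\eps$. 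This is the mechanism you already use for $g_{13}^{*,-}$, so the fix is routine, but as written the step fails.

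Finally, uniformity as $\eta_0\to+0$ is not the real difficulty. The decay rates you need stay bounded away from zero as $\eta\to0$: $(\k_2-\k_1)(\k_3-\k_2)$ from \eqref{2divtau}, and $\a(\k_3-\k_2)$ from $e^{-\a z_{12}}=e^{-\a z_{13}+\a(\k_3-\k_2)Y}$. Moreover, $C(\eta_0)$ can be taken nondecreasing in $\eta_0$. The margins only shrink as $\eta_0$ approaches the upper threshold, where $\Re\delta\to\a$.
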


To prove Lemma~\ref{lem:g*-growth}, we need the following.
\begin{lemma}
  \label{cl:pd-1prod}
Let $\beta\in\C$ and $\beta'\in\C\setminus\{\k_1,\k_2,\k_3\}$.
\begin{align}
    \label{eq:prod-PhiPhi*}
&  \Phi^1(-i\beta)\Phi^{1,*}(-i\beta')
  \\  =& \notag
\pd_x\left\{
\Phi^0(-i\beta)\left(\frac{1}{\beta-\beta'}\Phi^{0,*}(-i\beta')
+ \Phi^{1,*}(-\beta')\right)\right\}
\quad\text{if $\beta\ne\beta'$,}
\end{align}
\begin{align}
  \label{eq:prod-PhiPhi*=}
  \Phi^1(-i\beta)\Phi^{1,*}(-i\beta)=1+\pd_x\left\{
\Phi^0(-i\beta)\Phi^{1,*}(-\beta)\right\}\,,
\end{align}
\begin{gather}
\label{eq:prod-PhiPhi*j}
  \Phi^1(-i\beta)\Phi^{1,*}_j
  =\pd_x\left(\Phi^0(-i\beta)\Phi^{1,*}_j\right)
\quad\text{for $j=1$, $2$, $3$,}
  \\
\label{eq:prod-PhijPhi*}
\Phi^1_j\Phi^{1,*}(-i\beta')=\pd_xJ_j
\quad\text{for $j=1$, $2$, $3$,}
 \end{gather}
 where
 \begin{gather*}
J_j=\Phi^{0,*}(-i\beta')\frac{1}{\tau_{\mY}}\sum_{1\le k\le 3,\,k\ne j}
\frac{(\k_k-\k_j)e^{\theta_j+\theta_k}}{(\beta'-\k_j)(\beta'-\k_k)}\,.     
\end{gather*}
\end{lemma}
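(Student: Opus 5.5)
The plan is to verify all four identities by direct differentiation, using only the explicit formulas \eqref{eq:Jost-sol}--\eqref{def:Phi0} and $\tau=\tau_\mY=\sum_j e^{\theta_j}$. Evaluating at $k=-i\beta$ gives $\Phi^0(-i\beta)=e^{\beta x+\beta^2y-\beta^3t}$, $\Phi^{0,*}(-i\beta')=e^{-\beta'x-\beta'^2y+\beta'^3t}$, and
\[
\Phi^1(-i\beta)=\Phi^0(-i\beta)\frac{\sum_j(\beta-\k_j)e^{\theta_j}}{\tau},\qquad
\Phi^{1,*}(-i\beta')=\Phi^{0,*}(-i\beta')\frac{1}{\tau}\sum_j\frac{e^{\theta_j}}{\beta'-\k_j}.
\]
I read $\Phi^{1,*}(-\beta')$ in the statement as $\Phi^{1,*}(-i\beta')$. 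The key observation is that $\Phi^1(-i\beta)=\pd_x\Phi^0(-i\beta)-\Phi^0(-i\beta)\pd_x\log\tau$, since $\pd_x\tau=\sum_j\k_je^{\theta_j}$.

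\eqref{eq:prod-PhiPhi*j} follows immediately from this. Since $\Phi^{1,*}_j=1/\tau$,
\[
\pd_x\bigl(\Phi^0(-i\beta)/\tau\bigr)=\bigl(\pd_x\Phi^0-\Phi^0\pd_x\log\tau\bigr)/\tau=\Phi^1(-i\beta)\Phi^{1,*}_j .
\]

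For \eqref{eq:prod-PhiPhi*}, write $S=\sum_j e^{\theta_j}/(\beta'-\k_j)$ and $E=\Phi^0(-i\beta)\Phi^{0,*}(-i\beta')=e^{(\beta-\beta')x+\cdots}$. Then
\[
\pd_x\Bigl\{E\Bigl(\frac{1}{\beta-\beta'}+\frac{S}{\tau}\Bigr)\Bigr\}
=E\Bigl(1+(\beta-\beta')\frac{S}{\tau}+\frac{\pd_xS}{\tau}-\frac{S\,\pd_x\tau}{\tau^2}\Bigr).
\]
The target is $E\,\tau^{-2}\bigl(\sum_i(\beta-\k_i)e^{\theta_i}\bigr)S$. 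Multiplying through by $\tau^2$, I will compare coefficients of $e^{\theta_i+\theta_j}$ on both sides. The diagonal terms ($i=j$) match, because $\tau^2$ contributes $1$ and $(\beta-\beta'+\k_j)/(\beta'-\k_j)+\cdots$ collapses to $(\beta-\k_j)/(\beta'-\k_j)$. For $i\ne j$ the matching reduces to the partial-fraction identity
\[
\frac{\k_i-\k_j}{\beta'-\k_j}+\frac{\k_j-\k_i}{\beta'-\k_i}=\frac{(\k_i-\k_j)^2}{(\beta'-\k_i)(\beta'-\k_j)}\cdot(-1),
\]
which I will check carefully. This bookkeeping is the only real obstacle: one must keep track of the symmetric cross terms of $\tau^2$ against $\pd_xS\cdot\tau-S\pd_x\tau$. \eqref{eq:prod-PhiPhi*=} is the same computation with $\beta=\beta'$, where $E\equiv1$ and the constant term $1$ appears in place of the $1/(\beta-\beta')$ part.

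For \eqref{eq:prod-PhijPhi*}, $\Phi^1_j=\Phi^1(-i\k_j)=\sum_{k\ne j}(\k_j-\k_k)e^{\theta_j+\theta_k}/\tau$, since $\Phi^0(-i\k_j)=e^{\theta_j}$. I will compute $\pd_xJ_j$ directly. The numerator $e^{-\beta'x}e^{\theta_j+\theta_k}$ produces the factor $(\k_j+\k_k-\beta')$, and $\pd_x(1/\tau)$ gives $-\pd_x\tau/\tau^2$. Again I will match coefficients of $e^{\theta_j+\theta_k+\theta_l}$ against $\Phi^1_j\,S/\tau$ after multiplying by $\tau^2$. The needed identities are partial-fraction relations of the form
\[
\frac{\k_j+\k_k-\beta'-\k_l}{(\beta'-\k_j)(\beta'-\k_k)}
\]
versus $1/(\beta'-\k_l)$-type terms. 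These reduce to $(\k_k-\beta')+(\k_j-\k_l)$ splittings, and the check is routine but tedious.
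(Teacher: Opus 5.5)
Your proposal is correct and is the same direct verification the paper intends; the paper only says the lemma follows from straightforward computations. Your coefficient matching does close, and your displayed partial-fraction identity is true. As a shortcut in your notation: after dividing by $E$ and multiplying by $\tau^2$, both \eqref{eq:prod-PhiPhi*} and \eqref{eq:prod-PhiPhi*=} reduce to the single identity $\pd_xS-\beta'S=-\tau$ (with $\beta'=\beta$ in the second). Moreover, \eqref{eq:prod-PhijPhi*} is just \eqref{eq:prod-PhiPhi*} with $\beta=\k_j$, which is allowed since $\beta'\ne\k_j$, because $\frac{1}{\k_j-\beta'}+\frac{S}{\tau}=\frac{1}{\tau}\sum_{k\ne j}\frac{(\k_k-\k_j)e^{\theta_k}}{(\beta'-\k_j)(\beta'-\k_k)}$. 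So no coefficient matching is needed at all.
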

Lemma~\ref{cl:pd-1prod} follows from straightforward computations.
\begin{proof}[Proof of Lemma~\ref{lem:g*-growth}]
By \eqref{def:lambdajk} and \eqref{eq:btitj}, we have for $1\le j<k\le 3$,
  \begin{align*}
\Phi^0(-i\beta_{jk}^+(\eta))=&
e^{\theta_k+\delta_{jk}(\eta)z_{jk}+iY\eta-\lambda_{jk}^-(-\eta)t}\,,
\\
\Phi^0(-i\beta_{jk}^-(\eta))=&
e^{\theta_j-\delta_{jk}(\eta)z_{jk}+iY\eta+\lambda_{jk}^+(\eta)t}\,,
\\
\Phi^{0,*}(-i\beta_{jk}^-(-\eta))=&
e^{-\theta_j+\delta_{jk}(-\eta)z_{jk}+iY\eta-\lambda_{jk}^+(-\eta)t}\,,
\\
\Phi^{0,*}(-i\beta_{jk}^+(-\eta))=&
e^{-\theta_k-\delta_{jk}(-\eta)z_{jk}+iY\eta+\lambda_{jk}^-(\eta)t}\,.
\end{align*}
Combining the above with Lemma~\ref{cl:pd-1prod}, we have
  \begin{align*}
g_{13}^{*,+}(\eta)=&
e^{iY\bar{\eta}}\pd_x\left\{e^{\delta_{13}(-\bar{\eta})z_{13}}
\frac{e^{\theta_3}}{\tau_{\mY}}
+C_{32}(-\bar{\eta})e^{\delta_{13}(-\bar{\eta})z_{13}}
\frac{e^{\theta_2}}{\tau_{\mY}}\right\}
\\ =&: e^{iY\bar{\eta}}\{g_{131}^{*,+}(\eta)+g_{132}^{*,+}(\bar{\eta})\}\,,
\end{align*}
\begin{align*}
g_{132}^{*,+}(\eta)=& C_{32}(-\bar{\eta})e^{(\k_3-\k_2)\delta_{13}(-\bar{\eta})Y}
\pd_x\left\{\frac{e^{\theta_2+\delta_{13}(-\bar{\eta})z_{12}}}{e^{\theta_1}+e^{\theta_2}}
\left(1-\frac{e^{\theta_3}}{\tau_{\mY}}\right)\right\}
\\=&
e^{(\k_3-\k_2)\delta_{13}(-\bar{\eta})Y}g_{13}^{\#,*}(z_{12},\eta)
\left(1+O\left(\frac{e^{\theta_3}}{\tau_{\mY}}\right)\right)\,,
\end{align*}
\begin{align*}
g_{12}^{*,+}(\eta)=&
e^{iY\bar{\eta}}\pd_x\left\{
\frac{e^{\theta_2+\delta_{12}(-\bar{\eta})z_{12}}}{\tau_{\mY}}
+C_{23}(-\bar{\eta})
\frac{e^{\theta_3+\delta_{12}(-\bar{\eta})z_{12}}}{\tau_{\mY}}\right\}
\\=&: e^{iY\bar{\eta}}\{g_{121}^{*,+}(\eta)+g_{122}^{*,+}(\eta)\}\,,
\end{align*}
\begin{align*}
g_{122}^{*,+}(\eta)=& C_{23}(-\bar{\eta})e^{-(\k_3-\k_2)\delta_{12}(-\bar{\eta})Y}
\pd_x\left\{\frac{e^{\theta_3+\delta_{12}(-\bar{\eta})z_{13}}}{e^{\theta_1}+e^{\theta_3}}
\left(1-\frac{e^{\theta_2}}{\tau_{\mY}}\right)\right\}
\\=&
e^{-(\k_3-\k_2)\delta_{12}(-\bar{\eta})Y}g_{12}^{\#,*}(z_{13},\eta)
\left(1+O\left(\frac{e^{\theta_2}}{\tau_{\mY}}\right)\right)\,,
\end{align*}
and
\begin{equation*}
g_{jk}^{*,-}(\eta)=e^{iY\bar{\eta}}\pd_x\left\{
    e^{\delta_{jk}(\bar{\eta})z_{jk}}\frac{e^{\theta_k}}{\tau_{\mY}}
    \right\}\,.
  \end{equation*}
Suppose that $Y\ge0$. Then $\theta_2\le \max(\theta_1,\theta_3)$.
In view of \eqref{eq:def-gij*0},
\begin{align*}
  g_{13}^{*,-}(\eta)=&  e^{iY\bar{\eta}}g_{13}^{0,*}(z_{13},-\eta)
\left(1+O\left(\frac{e^{\theta_2}}{\tau_{\mY}}\right)\right)\,,
\\
  g_{131}^{*,+}(\eta)=& g_{13}^{0,*}(z_{13},\eta)
\left(1+O\left(\frac{e^{\theta_2}}{\tau_{\mY}}\right)\right)\,,
\\
 g_{12}^{*,-}(\eta)=&  e^{iY\eta}g_{12}^{0,*}(z_{12},-\eta)
\left(1+O\left(\frac{e^{\theta_3}}{\tau_{\mY}}\right)\right)\,,
\\
g_{121}^{*,+}(\eta)=&   g_{12}^{0,*}(z_{12},\eta)
\left(1+O\left(\frac{e^{\theta_3}}{\tau_{\mY}}\right)\right)\,.
\end{align*}
Since $0\le  \Re \delta_{13}(\pm\bar{\eta})\le \Re \delta_{13}(\eta_0-i\nu)<\a$
for $\eta_R\in[-\eta_0,\eta_0]$ and
  \begin{equation}
\label{2divtau}
\frac{e^{\theta_2}}{\tau_{\mY}}\le\min_{j=1,3}e^{\theta_2-\theta_j}\le
e^{-(\k_3-\k_2)(\k_2-\k_1)Y}\min\{e^{(\k_2-\k_1)z_{13}},\,e^{-(\k_3-\k_2)z_{13}}\}\,,
\end{equation}
we have the first estimate in (i).
Applying 
\begin{equation}
   \label{3divtau}
\frac{e^{\theta_3}}{\tau_{\mY}}\le \min\{1,e^{\theta_3-\theta_2}\}
=\min\{1,\,
e^{(\k_3-\k_2)(\k_2-\k_1)Y+(\k_3-\k_2)z_{13}}\}
\end{equation}
for $Y\le 0$ and $z_{13}\le0$ and
\begin{equation*}
|e^{\{-\a+\delta_{12}(\pm\eta)\}z_{12}}|\le
e^{\{\a-\Re\delta_{12}(\eta_0)\}\{-z_{13}+\a(\k_3-\k_2)Y\}}
\end{equation*}
for $Y\le0$ and $z_{13}\ge0$,
we have the first estimate in (ii).
\par
If  $Y\le0$ and $\eta_R\in[-\eta_0,\eta_0]$,
\begin{align*}
|e^{-\a z_{12}}g_{13}^{*,-}(\eta)| \lesssim &
e^{\{\Re \delta_{13}(\bar{\eta})-\a\}z_{13}+\a(\k_3-\k_2)Y}\min(1,e^{2\sqrt{c_{13}}z_{13}})
\,.
\end{align*}
Since $0\le\Re \delta_{13}(\bar{\eta})\le\Re \delta_{13}(\eta_0-i\nu)<\a$ and
$\a<\sqrt{c_{13}}$,
\begin{equation*}
\{e^{-iY\bar{\eta}}g_{13}^{*,-}(\eta) \mid \text{$\eta=\eta_R+i\nu$, $\eta\in[-\eta_0,\eta_0]$}\}
\end{equation*}
is bounded in $\calX_{\eps,-}^*$ provided $\eps>0$ is sufficiently small.
Similarly, we can prove the uniform boundedness of
$e^{-iY\bar{\eta}}g_{131}^{*,+}(\eta)$ 
and $e^{-(\k_3-\k_2)\delta_{13}(-\bar{\eta})Y}g_{132}^{*,+}(\eta)-g_{13}^{\#,*}(z_{12},\eta)$.
Thus we prove the second and the third estimates in (i).
\par
Since $0\le \Re \delta_{12}(\eta_0)<\a$ and
$|g_{12}^{*,-}(\eta)|+|g_{121}^{*,+}(\eta)| \lesssim
e^{\Re\gamma_{12}(\eta) z_{12}}
\sech\sqrt{c_{12}}z_{12}$,
there exists an $\eps>0$ such that
\begin{align*}
& \sup_{\eta\in[-\eta_0,\eta_0]}  
\left( \|g_{12}^{*,-}(\eta)\|_{\calX_{\eps,+}^*}
+\|g_{121}^{*,+}(\eta)\|_{\calX_{\eps,+}^*}\right)
\\ & \lesssim
\left(\|e^{\{-\a+\Re\delta_{12}(\eta_0)z_{12}+\eps\}z_{12}}\|_{L^2(z_{12}\ge0)}
+\|e^{(2\sqrt{c_{12}}-\a-\eps)z_{12}}\|_{L^2(z_{12}\le0)}\right)
\\ & \phantom{e^{\{-\a+\Re\delta_{12}(\eta_0)z_{12}+\eps\}z_{12}}}
\times
\|e^{-\{(\a-\eps)(\k_3-\k_2)-\eps\}Y}\|_{L^2(Y\ge0)}
\\ & <\infty\,.
\end{align*}
Combining the above with \eqref{2divtau} and
$|g_{12}^{\#,*}(\eta)| \lesssim   
\min\{e^{2\sqrt{c_{13}}z_{13}},e^{\Re \delta_{12}(\eta_0)z_{13}}\}$,
we have the second and the third estimates in (ii).
\par
Thus we complete the proof.
\end{proof}
\begin{remark}
\label{rem:delta-sharp}
Lemma~\ref{lem:g*-growth} tells us that
$g_{13}^{*,\pm}(\eta)$ oscillate like $e^{iY\eta}$ as $Y\to+\infty$
whereas $g_{13}^{*,+}(\eta)$ oscillates like 
$e^{\{ir_2\eta+\delta_{13}^\#(-\eta)\}Y}$
and $g_{13}^{*,-}(\eta)$ vanishes as $Y\to-\infty$ and that
$g_{12}^{*,\pm}(\eta)$ oscillate like $e^{iY\eta}$ as $Y\to-\infty$ whereas
$g_{12}^{*,+}(r_2\eta)$ oscillates like 
$e^{\{i\eta+\delta_{12}^\#(\eta)\}Y}$ and
$g_{12}^{*,-}(\eta)$ vanishes as $Y\to\infty$. We have
\begin{equation}
\label{eq:delta-sharp}
\delta_{13}^\#(\eta)=\delta_{13}^{\#,0}(\eta)+O(\eta^3)\,,\quad
\delta_{12}^\#(\eta)=\delta_{12}^{\#,0}(\eta)+O(\eta^3)\,,
\end{equation}
where
\begin{gather*}
\delta_{13}^{\#,0}(\eta)=\frac{\k_3-\k_2}{(\k_3-\k_1)^3}\eta^2\,,
\quad
\delta_{12}^{\#,0}(\eta)=-\frac{\k_3-\k_2}{(\k_3-\k_1)^2(\k_2-\k_1)}\eta^2\,.
 \end{gather*}
\end{remark}

\bigskip

Let
\begin{gather*}
\tg_{jk}^{0,*}(z,\eta)=
\frac{g_{jk}^{0,*}(z,\eta)-g_{jk}^{0,*}(z,0)}{\bar{\eta}}\,,
\quad
\tg_{jk}^{\#,*}(z,\eta)=
\frac{g_{jk}^{\#,*}(z,\eta)-g_{jk}^{\#,*}(z,0)}{\bar{\eta}}\,.
\end{gather*}
By Lemma~\ref{lem:g*-growth}, we have the following.
\begin{lemma}
 \label{lem:g*-growth'}
Let $0<\a<\k_2-\k_1$ and $0\le \eta_0<\eta_{12}^*(\a)$.
There exists an $\eps>0$ such that for $\eta\in[-\eta_0,\eta_0]$,
\begin{gather*}
\left\|g^{1,*}(\eta)-2ie^{iY\eta}g_{13,1}^{0,*}(z_{13},\eta)
\right\|_{\calX_{\eps,+}^*}
+
\left\|g^{2,*}(\eta)-e^{iY\eta}g_{13}^{0,*}(z_{13},-\eta)
\right\|_{\calX_{\eps,+}^*}\le C(\eta_0)\,,
\end{gather*}
\begin{multline*}
\Bigl\|g^{1,*}(\eta)-e^{\{ir_2\eta+\delta_{13}^\#(\eta)\}Y}
\bigl\{\tg_{13}^{\#,*}(z_{12},\eta)
\\ -\left(r_2-i\eta^{-1}\delta_{13}^\#(-\eta)\right)
  \tg_{12}^{0,*}(z_{12},-\eta')\bigr\}
\Bigr\|_{\calX_{\eps,-}^*}\le C(\eta_0)\,,
\end{multline*}
\begin{gather*}
\left\|g^{2,*}(\eta)-  e^{\{ir_2\eta+\delta_{13}^\#(-\eta)\}Y}
g_{12}^{0,*}(z_{12},-\eta')\right\|_{\calX_{\eps,-}^*}\le C(\eta_0)\,,
\end{gather*}
\begin{multline*}
\Bigl\|\tg^{1,*}(\eta)-
e^{\{i\eta+\delta_{12}^\#(-\eta)\}Y}
\bigl\{r_2\tg_{12}^{\#,*}(z_{13},r_2\eta)
\\ -\left(1-i\eta^{-1}\delta_{12}^\#(-\eta)\right)
\tg_{13}^{0,*}(z_{13},-\eta'')
\bigr\}\Bigr\|_{\calX_{\eps,+}^*}\le C(\eta_0)\,,
\end{multline*}
\begin{gather*}
\left\|\tg^{2,*}(\eta)- e^{\{i\eta+\delta_{12}^\#(-\eta)\}Y}
g_{13}^{0,*}(z_{13},-\eta'')\right\|_{\calX_{\eps,+}^*}\le C(\eta_0)\,,  
\\
\left\|\tg^{1,*}(\eta)
-2ir_2e^{ir_2Y\eta}g_{12,1}^{0,*}(z_{12},r_2\eta)
\right\|_{\calX_{\eps,-}^*}\le C(\eta_0)\,,
\\
\left\|\tg^{2,*}(\eta)-e^{ir_2Y\eta}g_{12}^{0,*}(z_{12},-r_2\eta)
\right\|_{\calX_{\eps,-}^*}\le C(\eta_0)\,.
\end{gather*}
\end{lemma}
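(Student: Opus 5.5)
The plan is to reduce Lemma~\ref{lem:g*-growth'} to Lemma~\ref{lem:g*-growth} by expanding each combination $g^{1,*},g^{2,*},\tg^{1,*},\tg^{2,*}$ into its constituent adjoint continuous eigenfunctions. We then subtract the leading profiles on $\R^2_+$ and $\R^2_-$ separately. The difference quotients in $\eta$ are controlled using the $j=1$ (i.e.\ $\pd_\eta$) bounds of Lemma~\ref{lem:g*-growth} together with the mean value theorem.

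\textbf{The half plane $\R^2_+$ for $g^{1,*},g^{2,*}$.} By Lemma~\ref{lem:g*-growth}(i), on $\calX_{\eps,+}^*$ we have $g_{13}^{*,\pm}(\eta)\approx e^{iY\eta}g_{13}^{0,*}(z_{13},\pm\eta)$. By Lemma~\ref{lem:g*-growth}(ii), $g_{12}^{*,-}(\eta')$ is bounded in $\calX_{\eps,+}^*$ and carries no oscillating profile. This gives the estimate for $g^{2,*}$ at once.

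For $g^{1,*}=\eta^{-1}\{g_{13}^{*,+}(\eta)-g_{13}^{*,-}(\eta)-g_{12}^{*,-}(\eta')\}$ we must handle the factor $\eta^{-1}$. At $\eta=0$ the bracket vanishes identically: by \eqref{eq:4} and $g_{jk}^{*,+}(0)=-\Phi^1_j\Phi^{1,*}_j$, $g_{jk}^{*,-}(0)=\Phi^1_k\Phi^{1,*}_k$, the sum is $-(V(\k_1)+V(\k_2)+V(\k_3))=0$. So the bracket equals $\eta$ times the average of its $\eta$-derivative, and the uniform $\pd_\eta$ bounds of Lemma~\ref{lem:g*-growth} bound the remainder. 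The leading term is $\eta^{-1}e^{iY\eta}\{g_{13}^{0,*}(z,\eta)-g_{13}^{0,*}(z,-\eta)\}=2i\,e^{iY\eta}g_{13,1}^{0,*}(z,\eta)$ by \eqref{eq:7}, using $\bar\eta=\eta$ for real $\eta$.

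\textbf{The half plane $\R^2_-$ for $g^{1,*},g^{2,*}$.} By Lemma~\ref{lem:g*-growth}(i), $g_{13}^{*,-}$ is bounded there. Also $g_{13}^{*,+}(\eta)\approx e^{\{i\eta+(\k_3-\k_2)\delta_{13}(-\eta)\}Y}g_{13}^{\#,*}(z_{12},\eta)$, and $g_{12}^{*,-}(\eta')\approx e^{iY\eta'}g_{12}^{0,*}(z_{12},-\eta')$.

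The choice $\eta'=r_2\eta+i\delta_{13}^\#(\eta)$ is exactly what makes the two exponents agree. To see this, write $i\eta'=ir_2\eta-\delta_{13}^\#(\eta)$ and compare with $i\eta+(\k_3-\k_2)\delta_{13}(-\eta)$. Using $1-r_2=r_1$ and the definition \eqref{def:r12} of $\delta_{13}^\#$, the phases become identical, up to the sign convention $\delta_{13}^\#(\pm\eta)$ that must be tracked carefully.

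With a common exponential factor, the estimate for $g^{2,*}$ follows directly. For $g^{1,*}$ we again divide the bracket by $\eta$, now writing it as
\[
\bigl[g_{13}^{\#,*}(\eta)-g_{13}^{\#,*}(0)\bigr]-\bigl[g_{12}^{0,*}(-\eta')-g_{12}^{0,*}(0)\bigr].
\]
The constant terms cancel because $g_{13}^{\#,*}(z,0)=g_{12}^{0,*}(z,0)$, which holds since $C_{32}(0)=1$. The first bracket divided by $\eta$ gives $\tg_{13}^{\#,*}$. The second is rewritten via $\eta'/\eta=r_2+i\eta^{-1}\delta_{13}^\#$, which produces the stated coefficient in front of $\tg_{12}^{0,*}(z_{12},-\eta')$.

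\textbf{The functions $\tg^{1,*},\tg^{2,*}$.} These are treated symmetrically. Interchange the roles of $Y>0$ and $Y<0$, use Lemma~\ref{lem:g*-growth}(ii) for $g_{12}^{*,+}(r_2\eta)$ on $\R^2_+$, and use the phase-matching choice $\eta''=\eta+i\delta_{12}^\#(\eta)$.

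\textbf{Expected obstacle.} The main difficulty is uniformity of the remainders after division by $\eta$ as $\eta\to0$. We need the $\pd_\eta$ bounds of Lemma~\ref{lem:g*-growth} to apply along the complex arguments $\eta',\eta''$, where $\Im\eta'=O(\eta^2)$ stays inside the admissible strip for small $\eta_0$. We also need the exponential prefactors, whose real parts are $O(\eta^2)Y$, to be absorbed by the extra weight $e^{2\eps|Y|}$. I would handle this by shrinking $\eps$ relative to $\eta_0$: bound $|\mathrm{Re}\,\delta^\#|\le C\eta_0^2$ using \eqref{eq:delta-sharp}, and then choose $\eps$ accordingly.
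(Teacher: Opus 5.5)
Your skeleton (cancel the $\eta=0$ values via \eqref{eq:4}, match phases through $\eta'$ and $\eta''$, control the difference quotients) is the right one. Near $\eta=0$, a mean-value argument with the $j=1$ bounds is a reasonable substitute for the paper's explicit difference quotients, provided you apply it after subtracting the leading profiles and give up a little $\eps$ to the factor $Y$ coming from $\pd_\eta e^{iY\eta}$.

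The gap is that you treat Lemma~\ref{lem:g*-growth} as a black box at the complex arguments $\eta'$ and $\eta''$, and that lemma does not cover them. Part (ii) is stated only for real $\eta$. The function $g_{13}^{*,-}(\eta'')$ would need part (i) with the $\eta$-dependent imaginary part $\Im\eta''=-(\k_3-\k_2)\Re\delta_{12}(r_2\eta)\le0$, which lies outside \eqref{ass:X13nu}.

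More seriously, the adjoint functions carry the factor $e^{iY\bar\zeta}$. So the terms you treat as merely bounded, $g_{12}^{*,-}(\eta')$ on $\R^2_+$ and $g_{13}^{*,-}(\eta'')$ on $\R^2_-$, contain the growing factors $e^{(\k_3-\k_2)\Re\delta_{13}(\eta)|Y|}$ and $e^{(\k_3-\k_2)\Re\delta_{12}(r_2\eta)|Y|}$. These rates are $O(\eta^2)$ only for small $\eta$, since \eqref{eq:delta-sharp} is a small-$\eta$ expansion. They become comparable to $\a$ as $|\eta|\to\eta_{12}^*(\a)$. The $\eps$-margin furnished by Lemma~\ref{lem:g*-growth} shrinks as $\eta_0$ grows, so it cannot absorb them. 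Your plan therefore gives the estimates only for small $\eta_0$. The lemma, however, is asserted for every $\eta_0<\eta_{12}^*(\a)$, and it is used with non-small $\eta_0$ (namely $\eta_0>|\tilde\eta|$) in the proof of Lemma~\ref{lem:non-degeneracy}.

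What is missing is genuine decay taken from the explicit structure, which is how the paper argues. By Lemma~\ref{cl:pd-1prod}, $g_{13}^{*,+}=e^{iY\bar\eta}(g_{131}^{*,+}+g_{132}^{*,+})$. The paper pairs $g_{131}^{*,+}$ with $e^{-iY\eta}g_{13}^{*,-}(\eta)$ (common factor $e^{\theta_3}/\tau_\mY$) and $g_{132}^{*,+}$ with $e^{-iY\eta}g_{12}^{*,-}(\eta')$ (common factor $e^{\theta_2}/\tau_\mY$). Each pair vanishes at $\eta=0$ and has an explicit difference quotient, for example $\pd_x\bigl(\eta^{-1}(e^{\delta_{13}(\eta)z_{13}}-e^{\delta_{13}(-\eta)z_{13}})e^{\theta_3}/\tau_\mY\bigr)$.

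The growth is then beaten as follows:
\begin{itemize}
\item On $\R^2_+$, \eqref{2divtau} gives $e^{\theta_2}/\tau_\mY\le e^{-(\k_3-\k_2)(\k_2-\k_1)Y}$. This dominates $e^{(\k_3-\k_2)\Re\delta_{13}(\eta)Y}$ because $\Re\delta_{13}(\eta)<\a<\k_2-\k_1$.
\item On $\R^2_-$, the weight $e^{-\a z_{12}}=e^{-\a z_{13}}e^{\a(\k_3-\k_2)Y}$ supplies the decay rate $\a(\k_3-\k_2)$. This beats $(\k_3-\k_2)\Re\delta_{12}(r_2\eta)$ because $r_2|\eta|<\eta_{12}^*(\a)$.
\end{itemize}
This is exactly where the hypotheses $\a<\k_2-\k_1$ and $\eta_0<\eta_{12}^*(\a)$ enter; they play no role in your handling of these factors.

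Two smaller corrections. First, the phases match only with the conjugate, $i\overline{\eta'}=i\eta+(\k_3-\k_2)\delta_{13}(-\eta)$, so the common factor is $e^{\{ir_2\eta+\delta_{13}^\#(-\eta)\}Y}$. With $i\eta'$ as you wrote it, the real parts have opposite signs. Second, \eqref{eq:7} gives $\eta^{-1}\{g_{13}^{0,*}(z,\eta)-g_{13}^{0,*}(z,-\eta)\}=-2i\,g_{13,1}^{0,*}(z,\eta)$, not $+2i$. The paper's proof carries the same sign slip.
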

\begin{proof}
First, we remark that $\delta_{12}(0)=\delta_{13}(0)=0$, that
$g_{131}^{*,+}(0)=g_{13}^{*,-}(0)$ and that
$g_{132}^{*,+}(0)=g_{12}^{*,-}(0)$.

For $Y\ge 0$,
\begin{align*}
 \frac{g_{131}^{*,+}(\eta)-e^{-iY\eta}g_{13}^{*,-}(\eta)}{\eta}
=& 
2ig_{13,1}^{0,*}(z_{13},\eta)\left(
1+O\left(\frac{e^{\theta_2}}{\tau_\mY}\right)\right)\,,
\end{align*}
and 
\begin{gather*}
\frac{g_{131}^{*,+}(\eta)+e^{-iY\eta}g_{13}^{*,-}(\eta)}{\eta}
- 2ig_{13,1}^{0,*}(z_{13},\eta)\in \calX_{\eps,+}^*\,.
\end{gather*}
As in the proof of Lemma~\ref{lem:g*-growth},
\begin{align*}
& \frac{g_{131}^{*,+}(\eta)-e^{-iY\eta}g_{13}^{*,-}(\eta)}{\eta}
 =
\pd_x\left(
\frac{e^{\delta_{13}(\eta)z_{13}}-e^{\delta_{13}(-\eta)z_{13}}}{\eta}
\frac{e^{\theta_3}}{\tau_\mY}\right)
\in \calX_{\eps,-}^*\,.
\end{align*}
Since $\delta_{13}(-\eta)z_{13}+iY\eta=
\delta_{13}(-\eta)z_{12}+i\overline{\eta'}Y$ and
$g_{13}^{\#,*}(z_{12},0)=g_{12}^{*,0}(z_{12},0)$, we have  for $Y\ge0$,
\begin{align*}
 \frac{g_{132}^{*,+}(\eta)-e^{-iY\eta}g_{12}^{*,-}(\eta')}{\eta}
=&
\pd_x\left\{\frac{C_{32}(-\eta)e^{\delta_{13}(-\eta)z_{13}}
-e^{\delta_{12}(\overline{\eta'})z_{12}+i(\overline{\eta'}-\eta)Y}}{\eta}
\frac{e^{\theta_2}}{\tau_\mY} \right\}
\\=&
O\left((1+|z_{13}|+|Y|)\frac{e^{\theta_2}}{\tau_\mY} \right)
\in \calX_{\eps,+}^*\,,
\end{align*}
and for $Y\le 0$,
\begin{align*}
& \frac{g_{132}^{*,+}(\eta)-e^{-iY\eta}g_{12}^{*,-}(\eta')}{\eta}
\\ =&
e^{\{-ir_1\eta+\delta_{13}^\#(-\eta)\}Y}
\left\{\tg_{13}^{\#,*}(z_{12},\eta)
+\eta^{-1}\overline{\eta'}\tg_{12}^{0,*}(z_{12},-\eta')\right\}
\left(1+O\left(\frac{e^{\theta_3}}{\tau_\mY}\right) \right)\,,
\end{align*}
and
\begin{multline*}
\frac{g_{132}^{*,+}(\eta)-e^{-iY\eta}g_{12}^{*,-}(\eta')}{\eta}
\\-
e^{\{-ir_1\eta+\delta_{13}^\#(-\eta)\}Y}
\left\{\tg_{13}^{\#,*}(z_{12},\eta)
+\eta^{-1}\overline{\eta'}\tg_{12}^{0,*}(z_{12},-\eta')\right\}
\in \calX_{\eps,+}^*\,.
\end{multline*}
We can prove the rest in the same way.
\end{proof}

Finally, we give asymptotic
profiles of $\tg_{jk}^{0,*}(\eta)$ and $\tg_{jk}^{\#,*}(\eta)$ around $\eta=0$.
\begin{lemma}
\label{lem:tg-profile}
\begin{gather*}
\tg_{jk}^{0,*}(z,\eta)
 = -\frac{i}{4}\int_{-\infty}^z\pd_c\varphi_{c_{jk}}+O(\eta)\,,
\\
\tg_{13}^{\#,*}(z,\eta)=
\frac{i}{2}\frac{\k_3-\k_2}{(\k_2-\k_1)^2(\k_3-\k_1)^2}\varphi_{c_{12}}(z)
-\frac{ir_2}{4}\int_{-\infty}^z\pd_c\varphi_{c_{12}}(z)+O(\eta)\,,
\\
\tg_{12}^{\#,*}(z,\eta)=
\frac{i}{2}\frac{\k_2-\k_3}{(\k_2-\k_1)^2(\k_3-\k_1)^2}\varphi_{c_{13}}(z)
-\frac{i}{4r_2}\int_{-\infty}^z\pd_c\varphi_{c_{13}}(z)
+O(\eta)\,.
\end{gather*}
\end{lemma}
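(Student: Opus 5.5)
The expansions follow from the definitions \eqref{eq:def-gij*0} and \eqref{eq:defg13-s*}, \eqref{eq:defg12-s*} by Taylor expansion in $\eta$ at $\eta=0$. Since $\tg^{0,*}_{jk}(z,\eta)$ and $\tg^{\#,*}_{jk}(z,\eta)$ are difference quotients in $\bar\eta$ of functions analytic in $\bar\eta$ near $0$, each equals the $\bar\eta$-derivative at $0$ plus $O(\eta)$. The $O(\eta)$ remainder is uniform in $z$ after the exponential weights are taken into account, because $\delta_{jk}(0)=0$ and $\Re\delta_{jk}$ is small for small $\eta$. So for each identity it suffices to compute the derivative at $\eta=0$.

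For $\tg^{0,*}_{jk}$, the leading term is $\pd_{\bar\eta}g^{0,*}_{jk}(z,0)$, which is recorded in Remark~\ref{rem:gij0} as $-\frac{i}{4}\int_{-\infty}^z\pd_c\varphi_{c_{jk}}$. I would check it once by differentiating $e^{\gamma_{jk}(-\bar\eta)z}$ at $0$: since $\pd_{\bar\eta}\gamma_{jk}(-\bar\eta)|_0=-i/(2\sqrt{c_{jk}})$, one gets
\[
\pd_{\bar\eta}g^{0,*}_{jk}(z,0)=-\frac{i}{4\sqrt{c_{jk}}}\pd_z\bigl(z e^{\sqrt{c_{jk}}z}\sech\sqrt{c_{jk}}z\bigr).
\]
This should be compared with $\int_{-\infty}^z\pd_c\varphi_c$, using $\varphi_c=2c\sech^2\sqrt c z$ and $e^{\sqrt cz}\sech\sqrt cz=1+\tanh\sqrt cz$. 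The two can differ by a constant multiple of $g^{0,*}_{jk}(z,0)=\varphi_{c_{jk}}/(4\sqrt{c_{jk}})$. I would accept the form quoted in Remark~\ref{rem:gij0} and absorb any such term consistently with that remark.

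For $\tg^{\#,*}_{13}$, I write $g^{\#,*}_{13}(z,\eta)=\frac12C_{32}(-\bar\eta)\pd_z\{e^{(\sqrt{c_{12}}+\delta_{13}(-\bar\eta))z}\sech\sqrt{c_{12}}z\}$ and differentiate the product.
\begin{itemize}
\item The derivative of the factor $C_{32}(-\bar\eta)$ comes from \eqref{eq:C1jk} with $(j,k)=(3,2)$. It gives $\pd_{\bar\eta}C_{32}(-\bar\eta)|_0=-i\frac{\k_2-\k_3}{(\k_3-\k_1)^2(\k_2-\k_1)}$, multiplied by $\frac12\pd_z(e^{\sqrt{c_{12}}z}\sech\sqrt{c_{12}}z)=\frac12\sqrt{c_{12}}\sech^2\sqrt{c_{12}}z=\varphi_{c_{12}}/(4\sqrt{c_{12}})$.
\item With $\sqrt{c_{12}}=(\k_2-\k_1)/2$, this term becomes $\frac{i}{2}\frac{\k_3-\k_2}{(\k_2-\k_1)^2(\k_3-\k_1)^2}\varphi_{c_{12}}$, matching the first term claimed.
\item The derivative of the exponent uses $\pd_{\bar\eta}\delta_{13}(-\bar\eta)|_0=-i/(2\sqrt{c_{13}})=-i/(\k_3-\k_1)$.
\item Relative to the $c_{12}$ case, where the corresponding factor is $-i/(\k_2-\k_1)$, this is a factor $r_2=(\k_2-\k_1)/(\k_3-\k_1)$. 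By the computation above for $\tg^{0,*}_{12}$, it produces $-\frac{ir_2}{4}\int_{-\infty}^z\pd_c\varphi_{c_{12}}$.
\end{itemize}

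The computation for $\tg^{\#,*}_{12}$ is identical with the roles of $2$ and $3$ swapped. The factor from $C_{23}$ gives the opposite sign of $\k_2-\k_3$ with the same denominator $(\k_2-\k_1)^2(\k_3-\k_1)^2$, after using $\sqrt{c_{13}}=(\k_3-\k_1)/2$. The exponent derivative ratio becomes $\sqrt{c_{13}}/\sqrt{c_{12}}=1/r_2$.

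The only delicate point, and the one I expect to be the main obstacle, is bookkeeping. One must match the derivative of $z e^{\sqrt cz}\sech\sqrt cz$ with $\int_{-\infty}^z\pd_c\varphi_c$, keeping track of which multiple of $\varphi_c$ is absorbed. This has to be done consistently with the normalization in Remark~\ref{rem:gij0}, and the constant term in the first expansion for $\tg^{\#,*}$ depends on it. I would fix this by computing $\int_{-\infty}^z\pd_c\varphi_c$ explicitly as $\pd_c(2\sqrt c(1+\tanh\sqrt cz))$.
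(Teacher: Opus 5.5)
Your proposal is correct and is essentially the paper's proof. In both, you reduce to $\pd_{\bar\eta}$ at $\eta=0$, use \eqref{eq:C1jk} for the $C_{32}$ and $C_{23}$ factors and $\pd_{\bar\eta}\gamma(-\bar\eta)|_{\eta=0}=-i/(2\sqrt{c})$ for the exponents, and convert via $\pd_z\bigl(z e^{\sqrt{c}z}\sech\sqrt{c}z\bigr)=\sqrt{c}\int_{-\infty}^z\pd_c\varphi_c$.

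Your worry about absorbing a multiple of $\varphi_c$ is unfounded, because that identity is exact: both sides equal $1+\tanh\sqrt{c}z+\sqrt{c}\,z\sech^2\sqrt{c}z$. This matters, since any discrepancy would have changed the $\varphi_{c_{12}}$ and $\varphi_{c_{13}}$ coefficients and could not have been absorbed.
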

\begin{proof}
The first estimate follows from Remark~\ref{rem:gij0}.
Since $\tg_{jk}^{\#,*}(\eta)=\pd_{\bar{\eta}} g_{jk}^{\#,*}(0)+O(\eta)$,
 it follows from \eqref{eq:defg13-s*}--\eqref{eq:C1jk} that
\begin{multline*}
\tg_{13}^{\#,*}(z,\eta)= \pd_{\bar{\eta}} g_{13}^{\#,*}(z,0)+O(\eta)
\\= 
-\frac{\pd_\eta C_{32}(0)}{2}\sqrt{c_{12}}\sech^2\sqrt{c_{12}}z
-\frac{C_{32}(0)}{2}\pd_\eta\delta_{13}(0)
\pd_z(ze^{\sqrt{c_{12}}z}\sech\sqrt{c_{12}}z)+O(\eta)
\\=
\frac{i}{2}\frac{\k_3-\k_2}{(\k_2-\k_1)^2(\k_3-\k_1)^2}\varphi_{c_{12}}
-\frac{i}{4}\frac{\k_2-\k_1}{\k_3-\k_1}\int_{-\infty}^z\pd_c\varphi_{c_{12}}
+O(\eta)\,.
\end{multline*}
We use $\varphi_c(z)=2c\sech^2\sqrt{c}z$ and
$\pd_z(ze^{\sqrt{c}z}\sech\sqrt{c}z)=\sqrt{c}\int_{-\infty}^z\pd_c\varphi_c$
in the last line.
We can estimate $\tg_{12}^{\#,*}(\eta)$ in the same way.
\end{proof}
\bigskip

\section{Orthogonality relations between $\mathbf{W_j}(0)$ and
  $\mathbf{W_k^*}(0)$}

To solve $\mathcal{A}_2(0)\mathbf{V}=\mathcal{B}_2\mathbf{U_2}$
in the proof of Proposition~\ref{prop:spec-res},
we need to find solutions to \eqref{eq:efVb} with $\beta=0$ that are
linearly independent of $V(\k_1)$ and $V(\k_3)$. Let
\begin{gather*}
V_{12}=(\k_3-\k_2)\{\pd_\beta V(\k_1)-1\}
-(\k_3-\k_1)\{\pd_\beta\widehat{V}_1(\k_2)+1\}\,,
\\
V_{23}=(\k_2-\k_1)\pd_\beta V(\k_3)-(\k_3-\k_1)\pd_\beta \widehat{V}_3(\k_2)\,.
\end{gather*}
\begin{lemma}
  \label{lem:sol-l-ad-0}
If $V=V_{12}$ or $V= V_{23}$, then
\begin{equation}
  \label{eq:12}
(3\pd_Y^2-4b_2\pd_X\pd_Y-\pd_XA\pd_X)V=0\,,
\end{equation}
and $V\in L^2(\R;e^{-2\a X}dX)$ for $Y\in\R$ and $\a$ satisfying
$0<\a<\min\{\k_2-\k_1,\,\k_3-\k_2\}$.
\end{lemma}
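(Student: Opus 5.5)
The plan has two parts. First I verify \eqref{eq:12} by differentiating the eigenfunction identities of Lemma~\ref{lem:sol-l-ad} in $\beta$. Then I check the weighted $L^2$ bound from the explicit formulas \eqref{def:Phi1s} and Lemma~\ref{cl:pd-1prod}.

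\emph{The equation.} Write $\mathcal{P}=3\pd_Y^2-4b_2\pd_X\pd_Y-\pd_XA\pd_X$. By \eqref{eq:efVb} we have $\mathcal{P}V(\beta)+4\lambda(\beta)\pd_XV(\beta)=0$ for every $\beta$. Differentiating in $\beta$ at $\beta=\k_j$ and using $\lambda(\k_j)=0$ gives
\[
\mathcal{P}\pd_\beta V(\k_j)=-4\lambda'(\k_j)\pd_XV(\k_j).
\]
Likewise, \eqref{eq:efhatVb} is stated for $\widehat{V}_j(\beta)=(\beta-\k_2)e^{\lambda(\beta)t}\Phi^1_j\Phi^{1,*}(-i\beta)$. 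Differentiating it at $\beta=\k_2$ gives
\[
\mathcal{P}\pd_\beta\widehat{V}_j(\k_2)=4\lambda'(\k_2)\pd_X\widehat{V}_j(\k_2).
\]
Here $\widehat{V}_j(\k_2)=\Phi^1_j\Phi^{1,*}_2$ is the residue term. The derivatives are
\[
\lambda'(\k_1)=-(\k_2-\k_1)(\k_3-\k_1),\quad \lambda'(\k_2)=(\k_2-\k_1)(\k_3-\k_2),\quad \lambda'(\k_3)=-(\k_3-\k_1)(\k_3-\k_2).
\]
The constants $1$ appearing in $V_{12}$ are killed by $\mathcal{P}$, since every term carries a $\pd$. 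Consequently $\mathcal{P}V_{12}$ is a constant multiple of
\[
-(\k_3-\k_1)(\k_2-\k_1)\bigl[\pd_X\Phi^1_1\Phi^{1,*}_1+\pd_X\Phi^1_1\Phi^{1,*}_2\bigr]
\]
after the coefficients combine, and similarly for $V_{23}$. It remains to see that this bracket vanishes. I will express both products as $\Phi^1_1$ times a residue of $\Phi^{1,*}$; all such residues equal $1/\tau_\mY$, so $\Phi^{1,*}_1=\Phi^{1,*}_2$. With the signs carried by the coefficients $(\k_3-\k_2)$ and $(\k_3-\k_1)$, the two terms cancel exactly. The same computation handles $V_{23}$, giving \eqref{eq:12}. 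I will check the sign conventions of the residue definition carefully, since the cancellation depends on them.

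\emph{The weighted $L^2$ bound.} I write each piece explicitly using \eqref{eq:prod-PhiPhi*j} and \eqref{eq:prod-PhijPhi*}. This gives
\[
\pd_\beta V(\k_j)=\pd_\beta\bigl[\pd_x(\Phi^0(-i\beta)/\tau_\mY)\bigr]_{\beta=\k_j},
\]
which is $\pd_x\bigl((x+2\k_jy)e^{\theta_j}/\tau_\mY\bigr)$ up to the $t$ factor. The term $\pd_\beta\widehat{V}_j(\k_2)$ is obtained from $\pd_xJ_j$ by expanding the simple pole at $\beta'=\k_2$ to the next order. Individually these functions only have polynomially-times-bounded size. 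For example, $(x+2\k_1y)e^{\theta_1}/\tau_\mY\to x$ as $X\to-\infty$, which is why the constants $-1$ and $+1$ are subtracted in $V_{12}$.

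The main step, and the main obstacle, is the asymptotic bookkeeping. I must show that in each of the three sectors, where $\theta_1$, $\theta_2$ or $\theta_3$ dominates, the linear growth terms and the constant terms cancel in the chosen combination. The remainder must be $O(e^{-c|X|})$ for fixed $Y$, with $c\ge\min\{\k_2-\k_1,\k_3-\k_2\}$, like the bounds \eqref{eq:13} for $V(\k_j)$. That cancellation yields $e^{-\a X}V\in L^2(\R)$ for $0<\a<\min\{\k_2-\k_1,\k_3-\k_2\}$.

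For $V_{12}$, the relevant sector is $X\to-\infty$. There $V(\k_1)$ and $\widehat{V}_1(\k_2)$ both contribute leading terms, weighted by $(\k_3-\k_2)$ and $(\k_3-\k_1)$. I expect the identity $\Phi^1_1+\Phi^1_2+\Phi^1_3=0$ from \eqref{eq:4} to produce the cancellation of these leading terms. For $V_{23}$, the relevant sector is $X\to+\infty$, and the analogous cancellation should work in the same way.
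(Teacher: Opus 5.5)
\paragraph{The equation.} Your derivation of \eqref{eq:12} is sound and is the paper's argument unpacked. The paper differentiates the single family $\overline V_1(\eps)$, whose two spectral parameters move so that $\lambda(\gamma_2)-\lambda(\gamma_1)=O(\eps^2)$. Its derivative $\pd_\eps\overline V_1(0)=\lambda'(\k_2)\pd_\beta V(\k_1)+\lambda'(\k_1)\pd_\beta\widehat V_1(\k_2)$ equals $(\k_2-\k_1)$ times $V_{12}$ plus a constant. You instead differentiate the two families separately. You left the sign open, so settle it: with your $\mathcal{P}$ the outcome is
\[
\mathcal{P}V_{12}=4(\k_2-\k_1)(\k_3-\k_1)(\k_3-\k_2)\,\pd_X\bigl(V(\k_1)-\widehat V_1(\k_2)\bigr).
\]
This is a difference, not the sum you wrote; a sum would not vanish. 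It is zero because $V(\k_1)=\widehat V_1(\k_2)=\Phi^1_1/\tau_\mY$. Likewise $\mathcal{P}V_{23}$ is the same constant times $\pd_X\bigl(V(\k_3)-\widehat V_3(\k_2)\bigr)=0$.

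\paragraph{The weighted $L^2$ bound.} This part is aimed at a false target. $V_{12}$ and $V_{23}$ are not $O(e^{-c|X|})$, and the constant terms do not cancel in every sector. As $X\to+\infty$ ($\theta_3$ dominant), \eqref{eq:11a}--\eqref{eq:11b} give $\pd_\beta V(\k_1)\to0$, $\pd_\beta\widehat V_1(\k_2)\to0$, $\pd_\beta V(\k_3)\to1$ and $\pd_\beta\widehat V_3(\k_2)\to-1$. Hence $V_{12}\to\k_1+\k_2-2\k_3$ and $V_{23}\to\k_2+\k_3-2\k_1$, both nonzero. In particular the ``analogous cancellation'' you expect for $V_{23}$ at $X\to+\infty$ does not happen.

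None of this is needed. Since $\a>0$, the weight $e^{-2\a X}$ decays as $X\to+\infty$, so boundedness there suffices. The only end that matters, for both $V_{12}$ and $V_{23}$, is $X\to-\infty$. There $\theta_1$ dominates, and you need neither a cancellation between the $V$-term and the $\widehat V$-term nor \eqref{eq:4}: each piece decays on its own,
\[
\pd_\beta V(\k_1)-1,\ \pd_\beta\widehat V_1(\k_2)+1=O\bigl((1+|X|)e^{(\k_2-\k_1)X}+e^{(\k_3-\k_2)X}\bigr),\quad
\pd_\beta V(\k_3),\ \pd_\beta\widehat V_3(\k_2)=O\bigl((1+|X|)e^{(\k_3-\k_1)X}+e^{(\k_3-\k_2)X}\bigr).
\]
The $e^{(\k_3-\k_2)X}$ terms come from $V(\k_1)e^{\theta_1-\theta_2}$, respectively $V(\k_3)e^{\theta_1-\theta_2}$, in \eqref{eq:11b}. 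These rates are exactly what force $\a<\min\{\k_2-\k_1,\k_3-\k_2\}$, and this is the paper's argument.

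For fixed $Y$ the set where $\theta_2$ dominates is a bounded $X$-interval, so no separate analysis of that sector is required.
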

\begin{proof}
Let
\begin{gather*}
\gamma_1(\eps)=\k_1+\eps\lambda'(\k_2)\,,\quad
  \gamma_2(\eps)=\k_2+\eps\lambda'(\k_1)\,,\\
  \overline{V}_1(\eps)=(\gamma_2(\eps)-\k_2)
  e^{\{\lambda(\gamma_2(\eps))-\lambda(\gamma_1(\eps))\}t}
  \Phi^1(-i\gamma_1(\eps))\Phi^{1,*}(-i\gamma_2(\eps))\,,
\\
\gamma_3(\eps)=\k_3+\eps\lambda'(\k_2)\,,\quad
\gamma_4(\eps)=\k_2+\eps\lambda'(\k_3)\,,
\\
\overline{V}_3(\eps)=(\gamma_4(\eps)-\k_2)
e^{\{\lambda(\gamma_4(\eps))-\lambda(\gamma_3(\eps))\}t}
\Phi^1(-i\gamma_3(\eps))\Phi^{1,*}(-i\gamma_4(\eps))\,.
\end{gather*}
It follows from \eqref{eq:6} that $\overline{V}_1(\eps)$ and $\overline{V}_3(\eps)$
depend only on $X$ and $Y$. 
Thus by Lemma~\ref{lem:prodJdJ},
\begin{gather*}
(3\pd_Y^2-4b_2\pd_X\pd_Y-\pd_XA\pd_X)\overline{V}_1(\eps)
=4\{\lambda(\gamma_2)-\lambda(\gamma_1)\}\pd_X\overline{V}_1(\eps)\,,
\\
(3\pd_Y^2-4b_2\pd_X\pd_Y-\pd_XA\pd_X)\overline{V}_3(\eps)
=4\{\lambda(\gamma_4)-\lambda(\gamma_3)\}\pd_X\overline{V}_3(\eps)\,.
\end{gather*}
Since $\lambda(\k_1)=\lambda(\k_2)=0$ and
$\gamma_1'(0)\lambda'(\k_1)=\gamma_2'(0)\lambda'(\k_2)$,
\begin{equation*}
(3\pd_Y^2-4b_2\pd_X\pd_Y-\pd_XA\pd_X)\pd_\eps\overline{V}_1(0)=  0\,.  
\end{equation*}
By the definitions,
\begin{align*}
\pd_\eps \overline{V}_1(0)=& 
\pd_\eps V(\gamma_1)|_{\eps=0}+ \pd_\eps\widehat{V}_1(\gamma_2)|_{\eps=0}
\\ =& \lambda'(\k_2)\pd_\beta V(\k_1)+
\lambda'(\k_1)\pd_\beta \widehat{V}_1(\k_2) 
\\=&
(\k_2-\k_1)\{(\k_3-\k_2)\pd_\beta V(\k_1)
-(\k_3-\k_1)\pd_\beta \widehat{V}_1(\k_2)\}
\\=& (\k_2-\k_1)(V_{12}+2\k_3-\k_1-\k_2)\,.
\end{align*}
Similarly,
\begin{align*}
 (3\pd_Y^2-4b_2\pd_X\pd_Y-\pd_XA\pd_X)\pd_\eps\overline{V}_3(0)=0\,,
\end{align*}
\begin{align*}
\pd_\eps \overline{V}_3(0)=& \lambda'(\k_2)\pd_\beta V(\k_3)
+\lambda'(\k_3)\pd_\beta \widehat{V}_3(\k_2) 
  \\=&
(\k_3-\k_2)\{(\k_2-\k_1)\pd_\beta V(\k_3)
-(\k_3-\k_1)\pd_\beta \widehat{V}_3(\k_2)\}
=(\k_3-\k_2)V_{23}\,.
\end{align*}
In view of \eqref{eq:13} and
\begin{align}
  \label{eq:11a}
\pd_\beta V(\k_j)=& \frac{e^{\theta_j}}{\tau_\mY}+(X+2\k_jY)V(\k_j)\,,
\\ \label{eq:11b}
\pd_\beta\widehat{V}_j(\k_2)=&
V(\k_j) \left(
 \frac{e^{\theta_1-\theta_2}}{\k_2-\k_1}+\frac{e^{\theta_3-\theta_2}}{\k_2-\k_3}
 -X-2\k_2Y \right)\,,
\end{align}
we have as $X\to\infty$,
\begin{gather*}
|\pd_\beta V(\k_1)|+|\pd_\beta \widehat{V}_1(\k_2)|
=O\left((1+|X|)e^{-(\k_3-\k_1)X}+e^{-(\k_2-\k_1)X}\right)\,,
\\
|\pd_\beta V(\k_3)-1|+|\pd_\beta \widehat{V}_3(\k_2)+1|
=O\left((1+|X|)e^{-(\k_3-\k_2)X}+e^{-(\k_2-\k_1)X}\right)\,,
\end{gather*}
and as $X\to-\infty$,
\begin{gather*}
|\pd_\beta V(\k_1)-1|+|\pd_\beta\widehat{V}_1(\k_2)+1|
=O\left((1+|X|)e^{(\k_2-\k_1)X}+e^{(\k_3-\k_2)X}\right)\,,
\\
|\pd_\beta V(\k_3)|+|\pd_\beta \widehat{V}_3(\k_2)|
=O\left((1+|X|)e^{(\k_3-\k_1)X}+e^{(\k_3-\k_2)X}\right)\,.
\end{gather*}
Thus we prove Lemma~\ref{lem:sol-l-ad-0}.
\end{proof}

Let
\begin{gather*}
\mathbf{W_3}(0)=\left(\pd_XV_{12},\pd_YV_{12}\right)^T\,,\quad
\mathbf{W_3^*}(0)=
\left(-\frac{4}{3}b_2V_{12}+\pd_X^{-1}\pd_YV_{12},V_{12}\right)^T\,,
\\
\mathbf{W_4}(0)=\left(\pd_XV_{23},\pd_YV_{23}\right)^T\,,\quad
\mathbf{W_4^*}(0)=
\left(-\frac{4}{3}b_2V_{23}+\pd_X^{-1}\pd_YV_{23},V_{23}\right)^T\,.
\end{gather*}

We will show orthogonality relations between $\mathbf{W_j}(0)$ and
$\mathbf{W_k^*}(0)$.
\begin{lemma}
  \label{lem:orth-bfUU*0}
Let $0<\a<\min(\k_2-\k_1\k_3-\k_2)$. Then for $j, k=1, \ldots, 4$,
$\mathbf{W_j}(0)\in C^1(\R;e^{2\a X}dX)$,
$\mathbf{W_k^*}(0)\in C^1(\R;e^{-2\a X}dX)$ and
\begin{gather*}
\la \mathbf{W_j}(0),\mathbf{W_k^*}(0)\ra=
-\la \mathbf{W_k}(0),\mathbf{W_j^*}(0)\ra\,.
\\
\la \mathbf{W_1}(0),\mathbf{W_3^*}(0)\ra\ne0\,,\quad
  \la \mathbf{W_1}(0),\mathbf{W_k^*}(0)\ra=0\text{ if $k\ne3$,}
\\
\la \mathbf{W_2}(0),\mathbf{W_4^*}(0)\ra\ne0\,, \quad
\la \mathbf{W_2}(0),\mathbf{W_k^*}(0)\ra=0\text{ if $k\ne4$.}
\end{gather*}
\end{lemma}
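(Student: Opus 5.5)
The plan has three steps: regularity, skew-symmetry, and a computation of the bilinear form at $Y\to\pm\infty$.

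\textbf{Step 1: regularity.} The membership statements follow from the asymptotics already available. By \eqref{eq:13}, $V(\k_1)$ and $V(\k_3)$ decay exponentially as $X\to\pm\infty$, at rates at least $\min(\k_2-\k_1,\k_3-\k_2)>\a$, and the same holds for their $X$- and $Y$-derivatives. The asymptotics of $\pd_\beta V(\k_j)$ and $\pd_\beta\widehat V_j(\k_2)$ in the proof of Lemma~\ref{lem:sol-l-ad-0} show that $V_{12}$ and $V_{23}$ tend to constants at worst. Hence their derivatives, and so $\mathbf{W_3}(0)$ and $\mathbf{W_4}(0)$, decay like $e^{-c|X|}$. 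Meanwhile $V_{12},V_{23}\in L^2(e^{-2\a X}dX)$. Because each of these is a finite combination of explicit exponentials over $\tau_\mY$, smooth in $Y$, $C^1$ dependence on $Y$ follows.

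\textbf{Step 2: skew-symmetry.} Every $V\in\{V(\k_1),V(\k_3),V_{12},V_{23}\}$ solves \eqref{eq:12}, i.e.\ the $\lambda=0$ equation. So both $\mathbf{W}=(\pd_XV,\pd_YV)^T$ and $\mathbf{W}^*=(-\frac43b_2V+\pd_X^{-1}\pd_YV,V)^T$ satisfy the first-order systems of Lemma~\ref{lem:bfUU*} with $\lambda=0$. Note that $\mathbf{W_j}=\pd_X$ applied componentwise to the $\mathbf{W_j^*}$-type vector, up to the $b_2$ correction. Writing out
\[
\la \mathbf{W_j},\mathbf{W_k^*}\ra=\int\Bigl\{\pd_XV_j\bigl(-\tfrac43b_2V_k+\pd_X^{-1}\pd_YV_k\bigr)+\pd_YV_j\,V_k\Bigr\}dX,
\]
one integrates by parts in $X$; the boundary terms vanish by Step 1. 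The $b_2$ term $-\frac43b_2\int \pd_XV_j\,V_k$ is antisymmetric in $(j,k)$, and the remaining terms combine to $\int(\pd_YV_j\,V_k-V_j\,\pd_YV_k)\,dX$, which is also antisymmetric. This gives $\la \mathbf{W_j},\mathbf{W_k^*}\ra=-\la \mathbf{W_k},\mathbf{W_j^*}\ra$. In particular $\la\mathbf{W_j},\mathbf{W_j^*}\ra=0$.

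\textbf{Step 3: the pairings.} By Lemma~\ref{lem:bfUU*} each pairing is independent of $Y$, so it can be computed in the limit $Y\to+\infty$ or $Y\to-\infty$, whichever is convenient.
\begin{itemize}
\item For $Y\ge0$, the factor $e^{\theta_2}/\tau_\mY$ decays like $e^{-(\k_2-\k_1)(\k_3-\k_2)Y}$ by \eqref{2divtau}. There $V(\k_1)\simeq -V(\k_3)\simeq \frac14\pd_z\varphi_{c_{13}}(z_{13})/\sqrt{c_{13}}$, up to sign and normalization, i.e.\ both look like the translation mode $\Phi^1_1\Phi^{1,*}_1$ of the $[1,3]$-soliton.
\item For $Y\le0$, $V(\k_1)$ localizes on the $[1,2]$-soliton and $V(\k_3)$ on the $[2,3]$-soliton.
\item By \eqref{eq:11a}--\eqref{eq:11b}, $V_{12}$ and $V_{23}$ contain the secular terms $(X+2\k_jY)V(\k_j)$ plus the step functions $e^{\theta_j}/\tau_\mY$. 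Asymptotically they are the modulation modes $\pd_c$ (via $\int^z\pd_c\varphi$) and $z\pd_z\varphi$ of the appropriate solitons.
\end{itemize}
Choosing the side where the relevant functions have disjoint supports asymptotically gives the vanishing pairings. For example, $\la\mathbf{W_1},\mathbf{W_2^*}\ra$ is evaluated at $Y\to-\infty$, where $V(\k_1)$ and $V(\k_3)$ live on different solitons separated by a distance $\sim|Y|$, so the pairing is $O(e^{-c|Y|})\to0$. The other vanishing cases are handled the same way, combined with Step 2 and with \eqref{eq:4}, which relates $V(\k_2)$ to $V(\k_1)+V(\k_3)$. The nonvanishing pairings $\la\mathbf{W_1},\mathbf{W_3^*}\ra$ and $\la\mathbf{W_2},\mathbf{W_4^*}\ra$ reduce, in the appropriate limit, to the one-soliton pairing of $\pd_z\varphi_c$ with $\int^z\pd_c\varphi_c$. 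That pairing equals a nonzero multiple of $\frac{d}{dc}\int\varphi_c^2/2\ne0$, as in \eqref{eq:g-g*-int} and Remark~\ref{rem:gij0}.

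\textbf{Main obstacle.} The delicate step is the bookkeeping of the non-decaying parts of $V_{12}$ and $V_{23}$. The constants $\pm1$ subtracted in the definition of $V_{12}$, and the linear growth in $Y$ coming from $(X+2\k_jY)V(\k_j)$, must cancel exactly in the pairing so that its $Y$-independent value is finite and explicit. I would handle this first by computing the leading terms as $Y\to\infty$ with Lemma~\ref{lem:tg-profile}-type profiles. I would then check that the coefficient of $Y$ is $\la\mathbf{W_1},\mathbf{W_1^*}\ra=0$, which holds by Step 2, and only afterwards evaluate the constant term.
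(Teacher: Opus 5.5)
Your skeleton is the paper's: skew-symmetry by integration by parts, $Y$-independence from Lemma~\ref{lem:bfUU*}, and evaluation of each pairing as $Y\to\pm\infty$ via Lemma~\ref{lem:Vjk-profile}. The vanishing pairings go through. Evaluating $\la\mathbf{W_1}(0),\mathbf{W_2^*}(0)\ra$ at $Y\to-\infty$, where $V(\k_1)$ and $V(\k_3)$ sit on different solitons, works as well as the paper's computation at $Y\to+\infty$.

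\textbf{The gap: nonvanishing.} Your argument for $\la\mathbf{W_1},\mathbf{W_3^*}\ra\ne0$ and $\la\mathbf{W_2},\mathbf{W_4^*}\ra\ne0$ does not work, because these pairings do not reduce to the one-soliton pairing. Since $V(\k_1)$ decays at both ends, integration by parts gives
\begin{multline*}
\la\mathbf{W_1},\mathbf{W_3^*}\ra=\frac43(b_2-3a_{12})\int_\R V(\k_1)\,\pd_XV_{12}\,dX
\\
+\int_\R V_{12}\,(\pd_Y-2a_{12}\pd_X)V(\k_1)\,dX-\int_\R V(\k_1)\,(\pd_Y-2a_{12}\pd_X)V_{12}\,dX\,.
\end{multline*}
This must be evaluated as $Y\to-\infty$, using \eqref{eq:V1-2} and \eqref{eq:V12-2}. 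As $Y\to+\infty$, $V_{12}$ and $V_{23}$ grow like $e^{(\k_2-\k_1)(\k_3-\k_2)Y}$ along the $[1,3]$-soliton, because of the factors $e^{\theta_1-\theta_2}$, $e^{\theta_3-\theta_2}$ in \eqref{eq:11b} (cf.\ the proof of Lemma~\ref{lem:v1-v12-v23}). So the plan in your last paragraph has to be run on the $Y\to-\infty$ side. There the three terms behave as follows.
\begin{enumerate}
\item The first term is where the one-soliton pairing $\int\varphi_c\,\pd_c\varphi_c$ enters, with weight $\frac43(b_2-3a_{12})=\frac43(\k_3-a_{12})$. It gives $\frac43\sqrt{c_{12}}(\k_3-a_{12})^2$.
\item The middle term tends to $0$.
\item The last term gives $-c_{12}^2\int_\R\sech^4\sqrt{c_{12}}z\,dz=-\frac43c_{12}^{3/2}$, of the opposite sign.
\end{enumerate}
The last contribution is exactly the secular effect you anticipated. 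On the $[1,2]$-side, $V_{12}=4c_{12}Y\,V(\k_1)+f(z_{12})$ up to exponentially small errors. The coefficient of $Y$ does drop out by $\la\mathbf{W_1},\mathbf{W_1^*}\ra=0$. But $\pd_Y$ falling on the factor $Y$ leaves $(\pd_Y-2a_{12}\pd_X)V_{12}\simeq4c_{12}V(\k_1)\simeq-2c_{12}^{3/2}\sech^2\sqrt{c_{12}}z_{12}$.

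The total is $\frac43\sqrt{c_{12}}\{(\k_3-a_{12})^2-c_{12}\}=\frac23(\k_3-\k_1)(\k_3-\k_2)(\k_2-\k_1)$. So nonvanishing rests on the factorization $(\k_3-a_{12})^2-c_{12}=(\k_3-\k_1)(\k_3-\k_2)$, not on $\frac{d}{dc}\int\varphi_c^2\ne0$. The explicit two-term computation is indispensable; the same applies to $\la\mathbf{W_2},\mathbf{W_4^*}\ra=-\frac23(\k_3-\k_1)(\k_3-\k_2)(\k_2-\k_1)$. Incidentally, as $Y\to+\infty$ one has $V(\k_1)\simeq-\frac{1}{4\sqrt{c_{13}}}\varphi_{c_{13}}(z_{13})$, which is the profile itself rather than $\pd_z\varphi_{c_{13}}$.

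\textbf{A secondary issue: Step~2 is too broad.} With $\pd_X^{-1}=\int_{-\infty}^X$ (forced by $\mathbf{W_k^*}(0)\in L^2(\R;e^{-2\a X}dX)$), integration by parts gives
\[
\la\mathbf{W_j},\mathbf{W_k^*}\ra+\la\mathbf{W_k},\mathbf{W_j^*}\ra=\Bigl[-\frac43b_2V_jV_k+V_j\,\pd_X^{-1}\pd_YV_k+V_k\,\pd_X^{-1}\pd_YV_j\Bigr]_{X=-\infty}^{X=+\infty}\,.
\]
This bracket vanishes when one of $V_j,V_k$ is $V(\k_1)$ or $V(\k_3)$. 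By your own Step~1, however, $V_{12}\to C_{12}:=\k_1+\k_2-2\k_3$ and $V_{23}\to\k_2+\k_3-2\k_1$ as $X\to+\infty$. So for $j,k\in\{3,4\}$ the bracket need not vanish. For $j=k=3$ it gives
\[
\la\mathbf{W_3},\mathbf{W_3^*}\ra=C_{12}\Bigl(\int_\R\pd_YV_{12}\,dX-\frac23b_2C_{12}\Bigr)\,,\qquad \int_\R\pd_YV_{12}\,dX=-2\k_1(\k_3-\k_2)-2\k_2(\k_3-\k_1)\,,
\]
which equals $-6$ for $(\k_1,\k_2,\k_3)=(-1,0,1)$. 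The paper's one-line integration by parts has the same limitation. What is used later, in the proof of Proposition~\ref{prop:spec-res}, is only antisymmetry between an index in $\{1,2\}$ and one in $\{3,4\}$. Your argument is correct there, so restrict the claim accordingly.
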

To prove Lemma~\ref{lem:orth-bfUU*0}, we need asymptotic profiles of
$\mathbf{W_j}$ and $\mathbf{W_k^*}$ as $Y\to\pm\infty$.
\begin{lemma}
 \label{lem:Vjk-profile}
If $(X,Y)\in\R^2_+$,
 \begin{gather}
 \label{eq:V1-1}
V(\k_1) =-\frac{\sqrt{c_{13}}}{2}\sech^2\sqrt{c_{13}}z_{13}
+O\left(e^{\min\{\theta_2-\theta_1, \theta_1+\theta_2-2\theta_3\}}\right)\,,
\\  \label{eq:V3-1}
V(\k_3)=\frac{\sqrt{c_{13}}}{2}\sech^2\sqrt{c_{13}}z_{13}
+O\left(e^{\min\{\theta_2+\theta_3-2\theta_1,\theta_2-\theta_3\}}\right)\,.
 \end{gather}
If $(X,Y)\in\R^2_-$,
\begin{gather}
\label{eq:V1-2}
V(\k_1)=\left\{
  \begin{aligned}
&  O\left(e^{\min\{\theta_1-\theta_2,\theta_1-\theta_3\}}\right)
\quad\text{for $z_{13}\ge0$,}
\\ &
 -\frac{\sqrt{c_{12}}}{2}\sech^2\sqrt{c_{12}}z_{12}
+ O\left(e^{\min\{\theta_3-\theta_1,\theta_1+\theta_3-2\theta_2\}}\right)
\quad\text{for $z_{13}\le0$,}
  \end{aligned}\right.
\\ \label{eq:V3-2}
 V(\k_3)=\left\{
  \begin{aligned}
& \frac{\sqrt{c_{23}}}{2}\sech^2\sqrt{c_{23}}z_{23}
+O\left(e^{\min\{\theta_1+\theta_3-2\theta_2,\theta_1-\theta_3\}}\right)
\quad\text{for $z_{13}\ge0$,}
\\ &
 O\left(e^{\min\{\theta_3-\theta_1,\theta_3-\theta_2\}}\right)
\quad\text{for $z_{13}\le0$,}
  \end{aligned}\right.
\\ \label{eq:V12-2}
V_{12}=\left\{
  \begin{aligned}
& 2a_{12}-2\k_3+O\left((1+|X|+|Y|)e^{\theta_1-\theta_2}\right)
\quad\text{for $z_{13}\ge0$,}
\\ &
(a_{12}-\k_3)\pd_X\{z_{12}(1+\tanh \sqrt{c_{12}}z_{12})\}
-\left(\frac{\sqrt{c_{13}}}{2}+2c_{12}^{3/2}Y\right)\sech^2\sqrt{c_{12}}z_{12}
\\ & \qquad
+O\left((1+|X|+|Y|)e^{\theta_3-\theta_2}\right)
\quad\text{for $z_{13}\le0$,}
  \end{aligned}\right.
\\
  \label{eq:V23-2}
V_{23}=\left\{
  \begin{aligned}
& (a_{23}-\k_1)\pd_X\{z_{23}(1+\tanh \sqrt{c_{23}}z_{23})\}
-\left(\frac{\sqrt{c_{13}}}{2}+2c_{23}^{3/2}Y\right)\sech^2\sqrt{c_{23}}z_{23}
\\ & \qquad 
+O\left((1+|X|+|Y|)e^{\theta_1-\theta_2}\right)
\quad\text{for $z_{13}\ge0$,}
\\ &
 O\left((1+|X|+|Y|)e^{\theta_3-\theta_2}\right)
\quad\text{for $z_{13}\le0$.}
  \end{aligned}\right.
\end{gather}
\end{lemma}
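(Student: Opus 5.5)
The plan is to write $V(\k_1)$, $V(\k_3)$, $V_{12}$ and $V_{23}$ explicitly in terms of the exponentials $e^{\theta_j}$ and then localize in whichever exponential dominates. Recall $V(\k_j)=\Phi^1_j\Phi^{1,*}_j=\Phi^1_j/\tau_\mY$. By \eqref{def:Phi1s},
\begin{equation*}
V(\k_1)=\frac{(\k_1-\k_2)e^{\theta_1+\theta_2}+(\k_1-\k_3)e^{\theta_1+\theta_3}}{\tau_\mY^2}\,.
\end{equation*}
On $\R^2_+$ we have $\theta_2\le\max(\theta_1,\theta_3)$. There we will factor $\tau_\mY=(e^{\theta_1}+e^{\theta_3})(1+e^{\theta_2}/(e^{\theta_1}+e^{\theta_3}))$ and use \eqref{eq:tauXY} with $(i_1,i_2)=(1,3)$. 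The leading term is
\begin{equation*}
(\k_1-\k_3)\frac{e^{\theta_1+\theta_3}}{(e^{\theta_1}+e^{\theta_3})^2}=-\frac{\sqrt{c_{13}}}{2}\sech^2\sqrt{c_{13}}z_{13}\,.
\end{equation*}
The remainder is bounded by $e^{\theta_1+\theta_2}/\tau_\mY^2$ plus the expansion error, which is of order $e^{\theta_2}/(e^{\theta_1}+e^{\theta_3})$ times the leading factor. Splitting into $\theta_1\ge\theta_3$ and $\theta_3\ge\theta_1$ gives the error $O(e^{\min\{\theta_2-\theta_1,\theta_1+\theta_2-2\theta_3\}})$, which is \eqref{eq:V1-1}. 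Formula \eqref{eq:V3-1} follows identically, or by the symmetry $\k_j\mapsto-\k_{4-j}$.

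On $\R^2_-$ the plan is to split at $z_{13}=0$, i.e.\ $\theta_1=\theta_3$. For $z_{13}\ge0$ we have $\theta_1\le\theta_3$, and the pair $(\theta_2,\theta_3)$ controls $\tau_\mY$. Then $V(\k_1)$ carries an extra $e^{\theta_1}$ factor relative to $\tau_\mY$, which gives the $O$-bound. For $V(\k_3)$ the leading term is $(\k_3-\k_2)e^{\theta_2+\theta_3}/(e^{\theta_2}+e^{\theta_3})^2=\frac{\sqrt{c_{23}}}2\sech^2\sqrt{c_{23}}z_{23}$. For $z_{13}\le0$ the roles are swapped, with the pair $(\theta_1,\theta_2)$ and the $[1,2]$-profile. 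These cases yield \eqref{eq:V1-2} and \eqref{eq:V3-2}.

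For $V_{12}$ and $V_{23}$ I will use the formulas \eqref{eq:11a} and \eqref{eq:11b} for $\pd_\beta V(\k_j)$ and $\pd_\beta\widehat V_j(\k_2)$, so that everything is expressed via $V(\k_j)$, $e^{\theta_j}/\tau_\mY$ and the polynomial weights $X+2\k_jY$. For example,
\begin{equation*}
V_{12}=(\k_3-\k_2)\Bigl(\tfrac{e^{\theta_1}}{\tau_\mY}-1\Bigr)+(\k_3-\k_2)(X+2\k_1Y)V(\k_1)-(\k_3-\k_1)V(\k_1)\Bigl(\tfrac{e^{\theta_1-\theta_2}}{\k_2-\k_1}+\tfrac{e^{\theta_3-\theta_2}}{\k_2-\k_3}-X-2\k_2Y\Bigr)-(\k_3-\k_1)\,.
\end{equation*}
On $\{Y\le0,z_{13}\ge0\}$, $V(\k_1)$ and $e^{\theta_1}/\tau_\mY$ are $O(e^{\theta_1-\theta_2})$. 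The only subtle term is $V(\k_1)e^{\theta_3-\theta_2}$, which is $O(e^{\theta_1-\theta_2})$ because $V(\k_1)=O(e^{\theta_1-\theta_3})$ there. The constant is then $-(\k_3-\k_2)-(\k_3-\k_1)=2a_{12}-2\k_3$, with error $(1+|X|+|Y|)e^{\theta_1-\theta_2}$.

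On $\{Y\le0,z_{13}\le0\}$ I will replace $\tau_\mY$ by $e^{\theta_1}+e^{\theta_2}$, with relative error $e^{\theta_3-\theta_2}$, and substitute the $\sech^2$ form of $V(\k_1)$ from \eqref{eq:V1-2}. The exponentially large term $V(\k_1)e^{\theta_1-\theta_2}/(\k_2-\k_1)$ combines with $e^{\theta_1}/\tau_\mY-1$. Expressing $X+2\k_jY=z_{12}+2(\k_j-a_{12})Y$ then produces the terms $z_{12}\sech^2$ and $Y\sech^2$. Identifying these with $(a_{12}-\k_3)\pd_X\{z_{12}(1+\tanh\sqrt{c_{12}}z_{12})\}-(\frac{\sqrt{c_{13}}}2+2c_{12}^{3/2}Y)\sech^2\sqrt{c_{12}}z_{12}$ is a direct one-variable computation using $e^{\theta_1}/(e^{\theta_1}+e^{\theta_2})=\frac12(1-\tanh\sqrt{c_{12}}z_{12})$. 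The treatment of $V_{23}$ is symmetric, with the $[2,3]$-soliton region $z_{13}\ge0$ carrying the profile.

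The main obstacle is this last bookkeeping step: checking that the coefficients of $Y\sech^2$ and the constant in front of $\sech^2$ come out exactly as stated, while the exponentially growing pieces cancel. I would verify it first at $Y=0$, far from the other soliton, and then track the $Y$-linear terms separately.
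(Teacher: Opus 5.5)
Your proposal is correct and follows the paper's own route. The explicit formulas \eqref{def:Phi1s} together with the localization \eqref{eq:8} of $1/\tau_\mY$ give the profiles of $V(\k_1)$ and $V(\k_3)$, and \eqref{eq:11a}--\eqref{eq:11b} reduce $V_{12}$ and $V_{23}$ to these; I checked that the coefficients come out exactly as stated. One small correction to your bookkeeping: on $\{Y\le0,\,z_{13}\le0\}$ the term $V(\k_1)e^{\theta_1-\theta_2}/(\k_2-\k_1)$ is not exponentially large, since it equals $-\frac14(1-\tanh\sqrt{c_{12}}z_{12})^2+O(e^{\theta_3-\theta_2})$, so no cancellation of growing pieces is needed; its $\frac14\sech^2\sqrt{c_{12}}z_{12}$ part is precisely what produces the term $-\frac{\sqrt{c_{13}}}{2}\sech^2\sqrt{c_{12}}z_{12}$ in $V_{12}$.
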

\begin{proof}
We have \eqref{eq:V1-1}--\eqref{eq:V3-2} from \eqref{def:Phi1s} and the fact that
\begin{equation}
  \label{eq:8}
  \frac{1}{\tau_\mY}=\frac{1}{e^{\theta_{i_1}+\theta_{i_2}}}
\left(1+
O(e^{\min\{\theta_{i_3}-\theta_{i_1},\,\theta_{i_3}-\theta_{i_2}\}})\right)
\quad\text{for $\{i_1,i_2,i_3\}=\{1,2,3\}$.}
\end{equation}
\par
By \eqref{eq:11a}, \eqref{eq:11b},
\eqref{eq:V1-2}, \eqref{eq:V3-2} and \eqref{eq:8},
we have for $Y\le 0$ and $z_{13}\le0$,
\begin{equation*}
|\pd_\beta V(\k_3)|+ |\pd_\beta \widehat{V}_3(\k_2)|\lesssim
(1+|X|+|Y|)e^{\theta_3-\theta_2}\,,
\end{equation*}
\begin{align*}
 \pd_\beta V(\k_1)=&       
\frac{e^{\theta_1}}{e^{\theta_1}+e^{\theta_2}}
\left(1+O(e^{\min\{\theta_3-\theta_1,\,\theta_3-\theta_2\}})\right)
+(X+2\k_1Y)V(\k_1)
  \\=&
\frac{1}{2}\left(1-\tanh\sqrt{c_{12}}z_{12}\right)
+\left(c_{12}Y-\frac{\sqrt{c_{12}}}{2}z_{12}\right)\sech^2\sqrt{c_{12}}z_{12}
\\ &       
+O\left((1+|X|+|Y|)e^{\min\{\theta_3-\theta_1,\,\theta_3-\theta_2\}}\right)\,,
\end{align*}
\begin{align*}
  \pd_\beta \widehat{V}_1(\k_2)=&
V(\k_1)\left(\frac{e^{\theta_1-\theta_2}}{\k_2-\k_1}-X-2\k_2Y\right)
+O\left(e^{\min\{\theta_3-\theta_1,\,\theta_3+\theta_1-2\theta_2\}}\right)
\\=&
\frac12\left(\tanh \sqrt{c_{12}}z_{12}-1 \right)
+\left(\frac{1}{4}+\frac{\sqrt{c_{12}}}{2}z_{12}+c_{12}Y\right)
\sech^2 \sqrt{c_{12}}z_{12}
\\ &
+O\left((1+|X|+|Y|)e^{\theta_3-\theta_2}\right)\,.
\end{align*}
Similarly, we have for $Y\le 0$ and $z_{13}\ge0$,
\begin{equation*}
|\pd_\beta V(\k_1)|+|\pd_\beta \widehat{V}_1(\k_2)|
\lesssim (1+|X|+|Y|)e^{\theta_1-\theta_2}\,,
\end{equation*}
\begin{align*}
 \pd_\beta V(\k_3)=&
\frac{1}{2}\left(1+\tanh \sqrt{c_{23}}z_{23}\right)
+\left(\frac{\sqrt{c_{23}}}{2}z_{23}+c_{23}Y\right)\sech^2Z_{23}
\\ & +O\left((1+|X|+|Y|)e^{\min\{\theta_1-\theta_2,\,\theta_1-\theta_3\}}\right)\,,
\end{align*}
\begin{align*}
\pd_\beta \widehat{V}_3(\k_2)=&
-\frac12\left(1+\tanh\sqrt{c_{23}} z_{23} \right)
+\left(\frac{1}{4}-\frac{\sqrt{c_{23}}}{2}z_{23}+c_{23}Y\right)
\sech^2\sqrt{c_{23}}z_{23}
\\ & 
+O\left((1+|X|+|Y|)e^{\theta_1-\theta_2}\right)\,.
\end{align*}
Combining the above, we have \eqref{eq:V12-2} and \eqref{eq:V23-2}.
Thus we complete the proof.
\end{proof}
By Lemma~\ref{lem:Vjk-profile}, we have the following.
\begin{lemma}
  \label{lem:v1-v12-v23}
Let $0<\a<\min\{\k_2-\k_1, \k_3-\k_2\}$ and $\nu<(\k_2-\k_1)(\k_3-\k_2)$.
Then $\spann\{V(\k_1),\,V_{12},\,V_{23}\}\cap\calX(\nu)^*=\{0\}$.
\end{lemma}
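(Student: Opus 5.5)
The statement to prove is Lemma~\ref{lem:v1-v12-v23}: a linear combination $W=c_1V(\k_1)+c_2V_{12}+c_3V_{23}$ lying in $\calX(\nu)^*$ must vanish. The plan is to test membership separately in the regions where the asymptotic profiles of Lemma~\ref{lem:Vjk-profile} are explicit. Membership in $\calX(\nu)^*=\calX_{13}(\nu)^*+\calX_{12}^*$ requires decay of the form $e^{-\a z_{13}-\nu Y}W\in L^2(\R^2_+)$ and $e^{-\a z_{12}}W\in L^2(\R^2_-)$, up to the usual interplay of the two weights. In each region we look at a single line, or a half-plane far from the other solitons, on which one coefficient dominates. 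That coefficient must vanish because the corresponding profile does not decay in $Y$ fast enough, or not at all.

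\textbf{Step 1 (kill $c_2$).} Work in $Y\le0$, $z_{13}\ge0$, away from the $[2,3]$-soliton, that is, where $e^{\theta_1-\theta_2}$ is small, for instance $z_{23}\to+\infty$ along a fixed ray. By \eqref{eq:V1-2}, $V(\k_1)$ is exponentially small there. By \eqref{eq:V23-2}, $V_{23}$ tends to $(a_{23}-\k_1)\cdot 2$ as $z_{23}\to\infty$; this is the derivative of $z_{23}(1+\tanh)$, up to exponentially small and $(1+|X|+|Y|)e^{\theta_1-\theta_2}$ errors. By \eqref{eq:V12-2}, $V_{12}\to 2a_{12}-2\k_3$ there. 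So $W\to 2c_2(a_{12}-\k_3)+2c_3(a_{23}-\k_1)$ on a region of the form $\{z_{23}\ge R,\ Y\le -R\}$. Since the weight $e^{-2\a z_{12}}$ is not integrable on a half-strip of infinite $Y$-extent, this gives one linear relation. A second relation comes from the region $z_{13}\le 0$, $Y\le0$ with $z_{12}\to+\infty$: there $V(\k_1)\to0$, $V_{23}\to0$ by \eqref{eq:V23-2}, and $V_{12}\to 2(a_{12}-\k_3)$. Hence $c_2=0$, and then $c_3=0$ from the first relation, since $a_{23}\ne\k_1$.

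\textbf{Step 2 (kill $c_1$).} With $c_2=c_3=0$ we have $W=c_1V(\k_1)$. On $\R^2_+$, \eqref{eq:V1-1} gives $V(\k_1)\approx-\frac{\sqrt{c_{13}}}2\sech^2\sqrt{c_{13}}z_{13}$, which does not decay in $Y$. Then $\|e^{-\a z_{13}-\nu Y}V(\k_1)\|_{L^2(\R^2_+)}$ is finite only because of the factor $e^{-\nu Y}$ when $\nu>0$. So this region alone is not decisive, and we use $\R^2_-$ instead. There, near the $[1,2]$-soliton ($z_{13}\le0$), \eqref{eq:V1-2} gives a nondecaying profile $-\frac{\sqrt{c_{12}}}2\sech^2\sqrt{c_{12}}z_{12}$. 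Its $\calX_{12}^*$ norm $\int_{Y<0}\int e^{-2\a z_{12}}\sech^4\,dz_{12}\,dY$ diverges, which forces $c_1=0$.

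\textbf{Main obstacle: the sum structure of the dual space.} Membership in $\calX_{12}^*+\calX_{13}(\nu)^*$ is a sum condition, not an intersection. In regions where the two weights overlap, a nondecaying piece could in principle be split between the two summands. The thing to verify is that in the chosen regions, deep in $Y\ll -1$ near the $[1,2]$-soliton or beyond the $[2,3]$-soliton, the $\calX_{13}(\nu)^*$ weight $e^{-2\a z_{13}-2\nu Y}$ is even more restrictive. Indeed, for $Y<0$ we have $-\nu Y>0$, and $z_{13}-z_{12}=(\k_3-\k_2)Y<0$. So a nonintegrable constant or $\sech^2$ profile cannot be absorbed by either summand. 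Making this precise is straightforward by testing against cutoffs $\chi(Y/R)$ localized in these regions: the pairing grows like $R$ while the dual norms stay controlled. The assumption $\nu<(\k_2-\k_1)(\k_3-\k_2)$ ensures that the error terms $e^{\theta_1-\theta_2}$ and $e^{\theta_3-\theta_2}$ remain negligible under the weights.
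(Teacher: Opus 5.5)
\textbf{There is a genuine gap: $c_3=0$ is never established.} The ``first linear relation'' from the region $\{z_{23}\ge R,\ Y\le -R\}$ to the right of the $[2,3]$-soliton does not exist. There $z_{12}=z_{23}+(\k_3-\k_1)|Y|$, so the $\R^2_-$ part of the dual weight is $e^{-2\a z_{12}}=e^{-2\a z_{23}}e^{-2\a(\k_3-\k_1)|Y|}$. This is integrable over that region, which is not a half-strip in $z_{12}$. A nonzero constant limit there is therefore compatible with membership in $\calX(\nu)^*$.

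In fact, by \eqref{eq:V23-2}, $V_{23}$ satisfies the whole $\R^2_-$ part of the dual norm condition. Along the $[2,3]$-soliton it grows at most linearly in $|Y|$, against a weight decaying like $e^{-2\a(\k_3-\k_1)|Y|}$. Elsewhere it decays like $e^{\theta_3-\theta_2}$ or $e^{\theta_1-\theta_2}$, which outpaces the weight since $\a<\k_3-\k_2$. So no argument confined to $\R^2_-$ can give $c_3=0$.

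The rest of your elimination is essentially fine. On a strip $z_{12}\in I$, $Y\to-\infty$ (the paper's choice), or in your $\theta_2$-dominant wedge, $V_{23}\to0$ while $V(\k_1)$ and $V_{12}$ do not decay. Note that $V_{12}$ does not tend to a constant there: it carries the term $-2c_{12}^{3/2}Y\sech^2\sqrt{c_{12}}z_{12}$. This yields $c_1=c_2=0$ without assuming $c_3=0$.

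\textbf{The missing ingredient lies on $\R^2_+$.} This is exactly where the hypothesis $\nu<(\k_2-\k_1)(\k_3-\k_2)$ is needed; you invoke it only vaguely for error terms. Along the $[1,3]$-soliton, $\pd_\beta V(\k_3)=O(1+Y)$ by \eqref{eq:11a} and \eqref{eq:V3-1}. By \eqref{eq:11b}, \eqref{eq:V3-1} and \eqref{eq:2/tau}, however, $\pd_\beta\widehat V_3(\k_2)$ equals, up to $O(1+|Y|)$,
\[
\frac{\sqrt{c_{13}}}{2}\sech^2\sqrt{c_{13}}z_{13}\Bigl(\frac{e^{-2\sqrt{c_{12}}z_{13}}}{\k_2-\k_1}-\frac{e^{2\sqrt{c_{23}}z_{13}}}{\k_3-\k_2}\Bigr)e^{(\k_2-\k_1)(\k_3-\k_2)Y}.
\]
Hence $|V_{23}|\ge C_1e^{(\k_2-\k_1)(\k_3-\k_2)Y}-C_2$ for $z_{13}$ in an interval of positive length and $Y\ge0$. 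The weight $e^{-2\a z_{13}-2\nu Y}$ cannot compensate this growth when $\nu<(\k_2-\k_1)(\k_3-\k_2)$. So, once $c_1=c_2=0$, the condition $c_3V_{23}\in\calX(\nu)^*$ forces $c_3=0$.

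Finally, the ``sum structure'' you flag as the main obstacle is not one. The space $\calX_{12}^*+\calX_{13}(\nu)^*$ is simply the weighted $L^2$ space with the pointwise minimum of the two weights. That is the piecewise weight $e^{-2\a z_{13}-2\nu Y}$ on $\R^2_+$ and $e^{-2\a z_{12}}$ on $\R^2_-$, so membership is a single integrability condition and no splitting issue arises.
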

\begin{proof}
We have $\pd_\beta V(\k_3)=O(1+Y)$ on $\R^2_+$ from \eqref{eq:11a} and \eqref{eq:V3-1}.
On the other hand, it follows from
\eqref{eq:2/tau}, \eqref{eq:11b} and \eqref{eq:V3-1} that
\begin{multline*}
\pd_\beta \widehat{V}_3(\k_2)=
V(\k_3)\left(\frac{e^{\theta_1-\theta_2}}{\k_2-\k_1}
-\frac{e^{\theta_3-\theta_2}}{\k_3-\k_2}\right)+O(1+|Y|)
\\= 
\frac{\sqrt{c_{13}}}{2}\sech^2\sqrt{c_{13}}z_{13}
\left(\frac{e^{-2\sqrt{c_{12}}z_{13}}}{\k_2-\k_1}
-\frac{e^{2\sqrt{c_{23}}z_{13}}}{\k_3-\k_2}\right)e^{(\k_2-\k_1)(\k_3-\k_2)Y}+O(1+|Y|)\,.
\end{multline*}
Combining the above, we see that there exist an interval $I$ of
positive length and positive constants $C_1$, $C_2$ such that
\begin{equation*}
|V_{23}|\ge C_1e^{(\k_2-\k_1)(\k_3-\k_2)Y}-C_2\quad\text{for $z_{13}\in I$ and $Y\ge0$.}
\end{equation*}
Thus we prove $V_{23}\not\in\calX(\nu)^*$.
\par
It follows from \eqref{eq:V1-2}, \eqref{eq:V12-2}
and \eqref{eq:V23-2} that  along the half line $z_{12}=0$ and $Y\le0$,
$V_{23}$ decays to $0$  as $Y\to-\infty$ whereas $V(\k_1)$ and $V_{12}$ do not decay to $0$ as $Y\to-\infty$. Thus for any constants $(c_1,c_2)\ne(0,0)$ and $c_3$,
there exist a nondegenerate interval $I$ 
and positive constants $C_1$ and $C_2$ such that
\begin{equation*}
|c_1V(\k_1)+c_2V_{12}+c_3V_{23}|\ge C_1\quad\text{ if $z_{12}\in I$ and $Y\le -C_2$,}
\end{equation*}
and $c_1V(\k_1)+c_2V_{12}+c_3V_{23}\not\in\calX(\nu)^*$.
Thus we prove Lemma~\ref{lem:v1-v12-v23}.  
\end{proof}

Finally, we will prove Lemma~\ref{lem:orth-bfUU*0}.
\begin{proof}[Proof of Lemma~\ref{lem:orth-bfUU*0}]
 By integration by parts, we have
 $\la \mathbf{W_j},\mathbf{W_k^*}\ra+\la \mathbf{W_k},\mathbf{W_j^*}\ra
 =0$. In particular, $\la \mathbf{W_j},\mathbf{W_j^*}\ra=0$.
It follows from Lemmas~\ref{lem:sol-l-ad-0}, \ref{lem:sol-l-ad} and
\ref{lem:bfUU*} that
\begin{equation}
  \label{eq:15}
\la \mathbf{W_j},\mathbf{W_k^*}\ra
=\lim_{Y\to\pm\infty}\la \mathbf{W_j},\mathbf{W_k^*}\ra\,.    
\end{equation}
By the definitions,
\begin{multline*}
\la \mathbf{W_1},\mathbf{W_2^*}\ra
= \frac{4}{3}(b_2-3a_{13})\int_\R V(\k_1)\pd_XV(\k_3)\,dX     
  \\  +
\int_\R V(\k_1)(2a_{13}\pd_X-\pd_Y)V(\k_3)\,dX
+\int_\R V(\k_3)(\pd_Y-2a_{13}\pd_X)V(\k_1)\,dX\,.
\end{multline*}
Substituting \eqref{eq:2/tau}, \eqref{eq:V1-1} and \eqref{eq:V3-1}
into the above and letting $Y\to\infty$, we have
\begin{align*}
\lim_{Y\to\infty}\la \mathbf{W_1},\mathbf{W_2^*}\ra =& 
-\frac{c_{13}}{3}(b_2-3a_{13})\int_\R \sech^2\sqrt{c_{13}}z_{13}
\pd_X\left(\sech^2\sqrt{c_{13}}z_{13}\right)\,dX
\\=& 0\,.
\end{align*}
Combining the above with \eqref{eq:15}, we have
$\la \mathbf{W_1},\mathbf{W_2^*}\ra=0$ for every $Y\in\R$.

By \eqref{eq:V1-2}--\eqref{eq:V23-2} and \eqref{eq:15},
\begin{equation*}
\la \mathbf{W_j},\mathbf{W_k^*}\ra=0
\quad\text{for $\{j,k\}=\{1,4\}, \{2,3\}, \{3,4\}$.}
\end{equation*}
Moreover,  as $Y\to-\infty$, we have for $z_{13}\le0$,
\begin{equation}
  \label{eq:18}
(\pd_Y-2a_{12}\pd_X)V(\k_1)\simeq 0\,,\enskip
(\pd_Y-2a_{12}\pd_X)V_{12}\simeq  -2c_{12}^{3/2}\sech^2\sqrt{c_{12}}z_{12}\,,
\end{equation}
and for $z_{13}\ge0$,
\begin{equation}
  \label{eq:19}
(\pd_Y-2a_{23}\pd_X)V(\k_3)\simeq 0\,,\enskip
(\pd_Y-2a_{23}\pd_X)V_{23}\simeq -2c_{23}^{3/2}\sech^2\sqrt{c_{23}}z_{23}\,.  
\end{equation}
\begin{equation*}
\end{equation*}
By \eqref{eq:15}--\eqref{eq:19},
  \begin{align*}
& \la \mathbf{W_1},\mathbf{W_3^*}\ra
= \frac{4}{3}(b_2-3a_{12})\int_\R V(\k_1)\pd_XV_{12}\,dX
\\ & +\int_\R (\pd_Y-2a_{12}\pd_X)V(\k_1)V_{12}\,dX
 -\int_\R V(\k_1)(\pd_Y-2a_{12}\pd_X)V_{12}\,dX
\\=& \frac{4}{3}(b_2-3a_{12})\lim_{Y\to-\infty}\int_\R V(\k_1)\pd_XV_{12}\,dX
-c_{12}^2\int_\R \sech^4\sqrt{c_{12}}z_{12}\,dX
    \\=& \{(\k_3-a_{12})^2-c_{12}\}c_{12}\int_\R \sech^4\sqrt{c_{12}}z_{12}\,dz_{12}
\\ =& \frac{2}{3}(\k_3-\k_1)(\k_3-\k_2)(\k_2-\k_1)\ne0\,,
  \end{align*}
  \begin{align*}
& \la \mathbf{W_2},\mathbf{W_4^*}\ra
= \frac{4}{3}(b_2-3a_{23})\int_\R V(\k_3)\pd_XV_{23}\,dX
\\ & +\int_\R (\pd_Y-2a_{23}\pd_X)V(\k_3)V_{23}\,dX
 -\int_\R V(\k_3)(\pd_Y-2a_{23}\pd_X)V_{23}\,dX
\\=& -\frac{2}{3}(\k_3-\k_1)(\k_3-\k_2)(\k_2-\k_1)\ne0\,.
  \end{align*}
Thus we complete the proof.
\end{proof}

\end{document}